\def\a{\alpha}
\def\b{\beta}
\def\d{\delta}
\def\g{\gamma}
\def\l{\lambda}
\def\vphi{\varphi}
\def\n{\nu}
\def\s{\sigma}
\def\t{\tau}
\def\e{\varepsilon}
\def\Om{\Omega}
\newcommand{\hmu}{{\hat \mu}}
\newcommand{\ophi}{{\overline \phi}} 
\newcommand{\cB}{{\mathcal B}}
\newcommand{\cC}{{C}}   
\newcommand{\cE}{{\mathcal E}}
\newcommand{\cF}{{\mathcal F}}
\newcommand{\cL}{{\mathcal L}}
\newcommand{\cK}{{\mathcal K}}
\newcommand{\cI}{{\mathcal I}}
\newcommand{\I}{{\mathcal I}}
\newcommand{\cJ}{{\mathcal J}}
\newcommand{\cS}{{S}} 
\newcommand{\cU}{{\mathcal U}}
\newcommand{\fF}{{\mathfrak F}}
\newcommand{\fX}{{\mathfrak X}}
\newcommand{\trho}{{\widetilde\rho}}
\newcommand{\R}{{\mathbb R}}
\newcommand{\N}{\mathbb{N}}
\newcommand{\Rinf}{{\R \cup \{+\infty\}}}
\newcommand{\Rinfm}{{\R \cup \{-\infty\}}}
\newcommand{\Rinfb}{{\R \cup \{\pm \infty\}}}
\newcommand{\E}{{\textbf E}}
\newcommand{\ov}{\overline}
\newcommand{\K}{{\mathcal K}}
\newcommand{\weakly}{\ensuremath{\rightharpoonup}}
\newcommand{\weaklys}{\stackrel{\star}{\rightharpoonup}}
\newcommand{\da}{\ensuremath{\downarrow}}
\newcommand{\mres}{\mathbin{\vrule height 1.6ex depth 0pt width
0.13ex\vrule height 0.13ex depth 0pt width 1.3ex}}
\newcommand{\sB}{\mathscr{B}}
\newcommand{\sD}{\mathscr{D}}
\newcommand{\sL}{\mathscr{L}}
\newcommand{\sH}{{\mathscr H}}
\newcommand{\sM}{{\mathscr M}}
\newcommand{\sP}{{\mathscr P}}
\newcommand{\one}{\mathbbm{1}}
\newcommand{\ds}{\displaystyle}
\newcommand{\spt}{{\rm{spt}}}
\newcommand{\diver}{\nabla\cdot}
\renewcommand{\ae}{{\rm{a.e.}}}
\newcommand{\dd}{\hspace{0.7pt}{\rm d}}
\newcommand{\id}{{\rm id}}
\newcommand{\tE}{\widetilde{\textbf E}}
\renewcommand{\v}{{\textbf{v}}}
\renewcommand{\n}{{\textbf n}}
\newcommand{\be}{\begin{equation}}
\newcommand{\ee}{\end{equation}}
\newcommand{\ba}{\begin{array}}
\newcommand{\ea}{\end{array}}
\newcommand{\pt}{{p^\t}}
\def\osc{\mathop{\textnormal{osc}}\limits}%
\newcommand{\rhot}{{\rho^\t}}
\newcommand{\Et}{{\E^\t}}
\newcommand{\Sa}{{S_a}}
\newcommand{\Sb}{{S_b}}
\newcommand{\Ro}{{\R^+ \setminus \{1\}}}
\newcommand{\Rp}{{\R^+}}
\newcommand{\Rpz}{{[0, +\infty)}}
\newtheorem{remark}{\textbf{Remark}}[section]
\newtheorem{theorem}{\textbf{Theorem}}[section]
\newtheorem{lemma}[theorem]{\textbf{Lemma}}
\newtheorem{corollary}[theorem]{\textbf{Corollary}}
\newtheorem{proposition}[theorem]{\textbf{Proposition}}
\newtheorem{definition}[remark]{\textbf{Definition}}
\newtheorem{example}[remark]{\textbf{Example}}
\newtheorem{assumption}[remark]{\textbf{Assumption}}
\providecommand{\customgenericname}{}
\newcommand{\newcustomtheorem}[2]{%
  \newenvironment{#1}[1]
  {%
   \renewcommand\customgenericname{#2}%
   \renewcommand\theinnercustomgeneric{##1}%
   \innercustomgeneric
  }
  {\endinnercustomgeneric}
}
\numberwithin{equation}{section}
\DeclareMathOperator*{\argmin}{\arg\!\min}
\def\liminf{\mathop{\lim\,\inf}\limits}%
\def\limsup{\mathop{\lim\,\sup}\limits}%
\def\argmin{\mathop{\arg\,\min}\limits}%
\title[]{Degenerate nonlinear parabolic equations with discontinuous diffusion coefficients}  
\author[D. Kwon]{Dohyun Kwon}  
\date{\today}
\address{Department of Mathematics, University of Wisconsin-Madison, 480 Lincoln Dr., Madison, WI 53706, USA}
\email{dkwon7@wisc.edu}
\author[A.R. M\'esz\'aros]{Alp\'ar Rich\'ard M\'esz\'aros}  
\date{\today}
\address{Department of Mathematical Sciences, Durham University, Durham DH1 3LE, UK}
\email{alpar.r.meszaros@durham.ac.uk}
\begin{document}
\maketitle

\begin{abstract}
This paper is devoted to the study of some nonlinear parabolic equations with discontinuous diffusion intensities. Such problems appear naturally in physical and biological models. Our analysis is based on variational techniques and in particular on gradient flows in the space of probability measures equipped with the distance arising in the Monge-Kantorovich optimal transport problem. The associated internal energy functionals in general fail to be differentiable, therefore classical results do not apply directly in our setting. We study the combination of both linear and porous medium type diffusions and we show the existence and uniqueness of the solutions in the sense of distributions in suitable Sobolev spaces. Our notion of solution allows us to give a fine characterization of the emerging  critical regions, observed previously in numerical experiments. A link to a three phase free boundary problem is also pointed out.
\end{abstract}

2020 Mathematics Subject Classification: 35K65; 49Q22; 46N10; 35D30.

\tableofcontents

\section{Introduction}
In this paper we investigate a class of degenerate nonlinear parabolic equations, with discontinuous diffusion intensities. These can be written formally as the Cauchy problem for the unknown $\rho:[0,T]\times\Om\to[0,+\infty)$
\begin{align}\label{dd}
\begin{cases}
\partial_t\rho -\Delta\vphi(\rho)  -\nabla\cdot(\nabla\Phi\rho) =0, &\hbox{ in } (0,T) \times \Om,\\
(\nabla \vphi(\rho)+\nabla\Phi\rho)  \cdot \n = 0 , &\hbox{ on }  (0,T) \times \partial \Om,\\
\rho(0,\cdot)=\rho_0, &\hbox{ in } \Om,
\end{cases}
\end{align}
where $T>0$ is a given time horizon, $\Om \subset \R^d$ is the closure of a bounded convex open set with smooth boundary, $\Phi:\Om\to\R$ is a given Lipschitz continuous potential function, $\rho_0\in\sP(\Om)$ is a nonnegative Borel probability measure and the diffusion intensity function $\vphi:[0,+\infty)\to\R$ is supposed to have a discontinuity at $\rho=1$. Therefore, $\vphi$ is extended to be a multi-valued function at the discontinuity and in addition, it is supposed to be monotone in the sense that if $\eta^i\in\vphi(\rho^i)$, then
\begin{align*}
(\eta^1-\eta^2)(\rho^1-\rho^2)\ge 0.
\end{align*}
Our aim is to identify a large class of potentials $\Phi$, nonlinearities $\vphi$ and initial data $\rho_0$, for which we show the well-posedness of \eqref{dd} in a suitable distributional sense. Furthermore, we aim to describe some fine properties of the solutions. Let us remark that our results are expected to be valid also in the case of $\Om=\R^d$, without running into many technical difficulties, provided we work in the space of measures having enough uniform moment bounds, just as in the original works \cite{JKO,otto}.

\medskip

Such problems appear naturally in physical and biological models. Let us briefly describe two of these. In \cite{BanJan}, the authors study so-called phenomena of {\it self-organized criticality}. These arise typically in {\it sandpile models}, in which the sand particles are subject to a constant diffusion only at regions where their density is greater than a given threshold, otherwise they remain still. At the macroscopic level, in the cited reference such models were described by equations similar to \eqref{dd}, with $\Phi=0$ and $\vphi(\rho)=0$, if $\rho<\rho_c$ and $\vphi=const$ if $\rho\ge \rho_c$ (where $\rho_c$ is a given threshold value). Via an approximation procedure and numerical investigations, the authors observe the growth (in time) of the {\it critical region}, where $\rho=\rho_c$, therefore, they conclude that  particles following this diffusion law `self-organize into criticality'. Our main results in this paper will rigorously confirm such phenomena. 

In \cite{ChoKim} the authors study diffusion models for biological organisms that increase their motility when food or other resource is insufficient. They refer to such phenomena as {\it starvation driven diffusion}. At the mathematical level, their model consists in a system of reaction-diffusion equations for two species, where the diffusion rates are discontinuous functions depending on the (food supply)/(food demand) ratio in the global population. In this model, a Lotka-Volterra type competition is implemented and a particular example is provided when one species follows the starvation driven diffusion and the other follows the linear diffusion. The authors conclude, by means of numerical simulations, that in heterogeneous environments the starvation driven diffusion turns out to be a better survival strategy than the linear one. Therefore, by this conclusion the authors would like to underline also the fact that in biological models, discontinuous diffusion rates might appear in a very natural way, resulting many times in a better description of competing biological systems. 

\medskip

Degenerate nonlinear parabolic problems like \eqref{dd} received a lot of attention in the past couple of decades. For a non-exhaustive list of classical works on this subject we refer to \cite{BenBreCra, BenCra, BenBocHer, CafEva, Car} and the references therein. In majority of the literature, however, the nonlinearity $\vphi$ is taken to be a continuous function.

To the best of our knowledge, except in particular cases involving linear type diffusions and/or bounded initial data (see for instance in \cite{BlaRockRus, BarRockRuss,BarRoc18}), our model problem in its full generality has not been addressed previously in the literature. The  solution obtained in the aforementioned references heuristically can be written as pairs $(\rho,\eta_\rho)$ belonging to well-chosen function spaces, such that
$$\partial_t\rho-\Delta(\eta_\rho)-\nabla\cdot(\nabla\Phi\rho)=0$$
is fulfilled either in the  distributional or entropic sense and $\rho(t,x)\in\eta_\rho(t,x)$ a.e. 


\medskip

In this paper, we rely on the gradient flow structure of \eqref{dd} in the space of probability measures, when equipped with the distance $W_2$ arising in the Monge-Kantorovich optimal transport problem. To \eqref{dd}, we associate an entropy functional $\cE:\sP(\Om)\to\R\cup\{+\infty\}$, defined as 
\begin{align}\label{energy}
\ds\cE(\rho):=\left\{
\begin{array}{ll}
\ds\int_\Om \cS(\rho(x))\dd x + \int_\Om\Phi(x)\dd\rho(x), & {\rm{if}}\ S(\rho)\in L^1(\Om),\\
+\infty, & {\rm{otherwise}},
\end{array}
\right.
\end{align}
where $S:[0,+\infty)\to\R$ is a given function.
At the formal level, the relationship between $\vphi$ and $S$ can be written as
\begin{align*}
\vphi(\rho) = \rho \cS'(\rho) - \cS(\rho) + \cS(1) \hbox{ and }  \vphi'(\rho) = \rho S''(\rho), \ \ {\rm{if}}\ \rho\neq 1.
\end{align*}
We observe that the discontinuity of $\vphi$ at $\rho=1$ corresponds to the non-differentiability of $S$ at $\rho=1$. Furthermore, as $\vphi$ is monotone, we impose that $S$ is convex and the multiple values of $\vphi$ can be represented by the subdifferential of $S$. 
In this sense, throughout the paper we consider $S$ to be given which satisfies the following assumption.
\begin{assumption}
\label{as:gen}
$\cS : \Rpz \rightarrow \R$ is continuous, strictly convex and superlinear, in the sense that $\lim_{\rho\to+\infty}S(\rho)/\rho=+\infty$. Furthermore, $\cS$ is twice continuously differentiable in $(0,+\infty)\setminus\{1\}$. 
\end{assumption}

Let us notice that in this manuscript the internal energy part of the functional $\cE$ in general will satisfy the well-known condition introduced by McCann (\cite{mccann}), so it will be displacement convex. But this energy fails to be differentiable on $(\sP(\Om),W_2)$. Furthermore, in general we do not impose $\l$-convexity assumptions on the potential $\Phi$ (so the potential energy in general fails to be displacement $\l$-convex). Because of these two deficiencies, the classical results from \cite{AmbGigSav} do not apply directly in our setting. The lack of geodesic $\l$-convexity in the context of Wasserstein gradient flows typically poses serious obstructions (as we can see for instance in \cite{DifMat14,MatMcCSav,KimMes18}). Even though the existence of the gradient flow of $\cE$ in $(\sP(\Om),W_2)$ is expected, the fine characterization of the density curves, their velocities and the {\it critical region} $\{\rho=1\}$, in as general settings as possible, is a challenging task. Because of the same reasons, an approach by maximal monotone operators as in \cite{BarRoc18} would not be satisfactory in our setting either. In this context, ours seems to be the first contribution which gives fine characterization of the gradient flows of a general class of non-differentiable internal energies in $(\sP(\Om),W_2)$.

\medskip

In our analysis, we rely on the classical {\it minimizing movements} scheme of De Giorgi (see also \cite{JKO} and \cite{San:17-GF}). This, for a given $\rho_0\in\sP(\Om)$ (and for a small parameter $\t>0$ and $N\in\N$ such that $N\t=T$) iteratively constructs $\left(\rho_k\right)_{k=0}^N$ as
\begin{equation}\label{eq:MM_intro}
\rho_{k+1}={\rm{argmin}}\left\{\cE(\rho)+\frac{1}{2\t}W_2^2(\rho_k,\rho):\ \rho\in\sP(\Om)\right\},\ k\in\{0,\dots,N-1\}.
\end{equation}

\medskip

In order to write down the first order necessary optimality conditions associated to \eqref{eq:MM_intro}, in Section \ref{sec:MM_opt} as our first contribution in this paper, we give a precise characterization of the subdifferential of $\cE$ in $(\sP(\Om),W_2)$ (cf. \cite{AmbGigSav}) in various settings (depending on the growth condition of $S$ and the summability of $\rho_0$). Our analysis in this section relies on classical results from convex analysis, carefully adapted to \eqref{eq:MM_intro}. As an intermediate result, we show (see Lemma \ref{lem:apriori-2star}) that optimizers of the problem \eqref{eq:MM_intro} enjoy higher summability estimates than the a priori ones coming from the growth condition of $S$ at $+\infty$.

In order to give a precise description of the optimality conditions associated to \eqref{eq:MM_intro}, we introduce a function $p_k$ which encodes the `transition' between the phases $\{\rho_k<1\}$ and $\{\rho_k>1\}$ through the {\it critical region} $\{\rho_k=1\}$. This is very much inspired by the derivation of the {\it pressure variable} in recent models studying crowd movements under density constraints (see in \cite{MauRouSan1}, \cite{DimMauSan16}, \cite{MesSan}). Because of this similarity, throughout the paper, we sometimes use the abused terminology of {\it pressure} to refer to the variable $p$. Interestingly, numerical experiments suggest (see Figure~\ref{fig:jko}) that the critical region emerges in general already after one minimizing movement iteration.

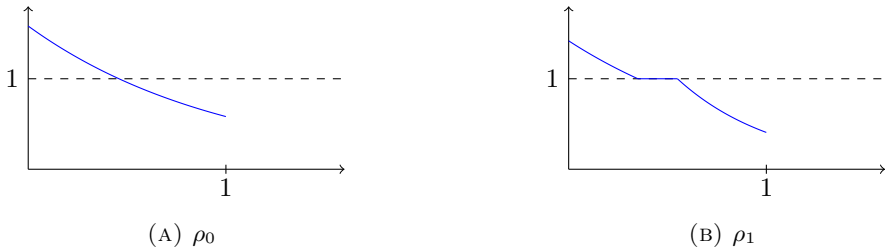
\begin{figure}[h]
	\centering
	\begin{subfigure}[t]{0.3\textwidth}
\begin{tikzpicture}[xscale=1.3,yscale=0.6]
\draw [scale=2,->] (0,0) -- (1.6,0);
\draw [scale=2,->] (0,0) -- (0,1.8);
\draw [scale=2,dashed] (0,1) -- (1.6,1);
\draw[scale=2,domain=0:1,smooth,variable=\x,blue] plot ({\x},{exp(-\x)/(1-exp(-1))});
\node [left] at (0,2) {$1$};
\node[below] at (2,0){$1$};
\draw[scale=2] (1,-0.05) -- (1,0.05);
\end{tikzpicture}
		\caption{$\rho_0$}
	\end{subfigure}
	\hspace{2cm}
	\begin{subfigure}[t]{0.3\textwidth}
\begin{tikzpicture}[xscale=1.3,yscale=0.6]
\draw [scale=2,->] (0,0) -- (1.6,0);
\draw [scale=2,->] (0,0) -- (0,1.8);
\draw [scale=2,dashed] (0,1) -- (1.6,1);
\draw[scale=2,domain=0:0.35,smooth,variable=\x,blue] plot ({\x},{exp(0.35-\x)});
\draw[scale=2,domain=0.35:0.55,smooth,variable=\x,blue] plot ({\x},{1});
\draw[scale=2,domain=0.55:1,smooth,variable=\x,blue] plot ({\x},{exp(1.1-2*\x)});
\node [left] at (0,2) {$1$};
\node[below] at (2,0){$1$};
\draw[scale=2] (1,-0.05) -- (1,0.05);
\end{tikzpicture}		
		\caption{$\rho_1$}
	\end{subfigure}
	\caption{One minimizing movement step in 1D, for $\Phi(x) = 2x$, $\Omega=[0,1]$ and $S$ in \eqref{S:log_intro}}
	\label{fig:jko}
\end{figure}

After obtaining the necessary compactness results, we pass to the limit with the time discretization parameter $\t\da 0$ and we recover a PDE (which precisely describes the weak distributional solutions of \eqref{dd}) satisfied by the limit quantities $(\rho,p)$. This formally reads as 
\be\label{eq:main1}
\left\{
\ba{ll}
\partial_t\rho - \Delta (L_S(\rho,p)) -\nabla\cdot(\nabla\Phi\rho) =0, & {\rm{in}}\ (0,T)\times\Om,\\
\rho(0,\cdot)=\rho_0, & {\rm{in}}\ \Om,\\
(\nabla (L_S(\rho,p) )+\nabla\Phi\rho ) \cdot \n = 0 , & {\rm{in}}\ [0,T] \times \partial \Om.
\ea
\right.
\ee
Here, the operator $L_S$ is defined pointwisely for functions $(\rho,p):[0,T]\times \Om\to\R$ by
\begin{align}
\label{eq:ls}
L_\cS(\rho,p)(t,x):= \left[ \rho(t,x) \cS'(\rho(t,x)) - \cS(\rho(t,x)) + \cS(1) \right] \one_{ \{ \rho \neq 1 \} }(t,x) + p(t,x) \one_{\{ \rho=1 \}}(t,x)
\end{align}
and the pressure variable $p : [0,T]\times \Om \to \R$ satisfies
\begin{align}
\label{eq:main2}
\begin{cases}
p=\cS'(1-) &\hbox{ {\rm{if}} } 0 \leq \rho < 1,\\
p \in [\cS'(1-),\cS'(1+)] &\hbox{ {\rm{if}} } \rho = 1,\\
p=\cS'(1+) &\hbox{ {\rm{if}} } \rho > 1.
\end{cases}
\end{align}
Formally, \eqref{eq:main1} and \eqref{eq:main2} correspond to the {\it three phase free boundary problem}
$$\Delta p=-\Delta\Phi,\ {\rm{in}}\ \{\rho=1\},\ \ \ p=\cS'(1-)\ {\rm{in}}\ \{\rho<1\}\ {\rm{and}}\ p=S'(1+)\ {\rm{in}}\ \{\rho>1\}.$$

Throughout the paper we distinguish cases depending on the diffusion rates in the two phases $\{\rho<1\}$ and $\{\rho>1\}$. We consider the combination of linear and porous medium type diffusions, which correspond to a behavior as $S(\rho)\sim\rho\log(\rho)$ and $S(\rho)\sim\rho^m$ (for $m>1$), in $\{\rho<1\}$ and $\{\rho>1\}$. So, typical examples we have in mind include 
\begin{align*}
\cS(\rho):= \begin{cases}
\rho \log\rho, &{\rm{if\ }} \rho \in [0,1],\\
\frac{\rho^m}{m-1}-\frac{1}{m-1}, &{\rm{if\ }} \rho\in(1,+\infty),
\end{cases}
\ \ {\rm{or}}\ \ 
\cS(\rho):= \begin{cases}
\frac{\rho^m}{m-1}, &{\rm{if\ }} \rho \in [0,1],\\
\frac{\rho^r}{r-1}-\frac{1}{r-1}+\frac{1}{m-1}, &{\rm{if\ }} \rho\in(1,+\infty),
\end{cases}
{\rm{for}}\ m>r>1.
\end{align*}
Energies of only logarithmic type or power like ones with the same power on both phases will also be considered (as in \eqref{S:log_intro} or \eqref{S:m_intro}). The analysis in the case of general energies is quite involved. In the same time, optimizers in the minimizing movements scheme possess different characteristics in the case of logarithmic type and porous medium type internal energies (such as fully supported vs. not fully supported; Lipschitz continuous vs. not Lipschitz continuous, etc.). As a result of this, we have to use different arguments to obtain the needed estimates. Therefore, to keep the paper as much readable as possible, we carefully break the cases (depending on the behavior of the internal energies) into specific sections.

In order to emphasize the main ideas of the paper, we present two toy problems in details. These turn out to be building blocks of our analysis for more general cases. Section \ref{sec:gfl} is devoted to the case when the entropy is of logarithmic type on both phases $\{\rho<1\}$ and $\{\rho>1\}$ and in particular $S$ is given by
\begin{align}\label{S:log_intro}
\cS(\rho):= \begin{cases}
\rho \log\rho, &{\rm{if\ }} \rho \in [0,1],\\
2\rho\log\rho, &{\rm{if\ }} \rho\in(1,+\infty).
\end{cases}
\end{align}
In this case, it turns out that the solution $(\rho,p)$ satisfies $p = 1$ in $\{ \rho < 1\}$, $p \in [1,2]$ in $\{\rho=1\}$, $p = 2$ in $\{ \rho > 1\}$ and we have the simplified expression $L_S(\rho, p) = p \rho$.

Similarly, Subsection \ref{sec:51} presents the analysis in the case when $S$ is given by
\begin{align}\label{S:m_intro}
\ds\cS(\rho):= \begin{cases}
\ds\frac{\rho^m}{m-1}, &\hbox{ for } \rho \in [0,1],\\[5pt]
\ds\frac{2 \rho^m}{m-1} - \frac{1}{m-1} , &\hbox{ for } \rho\in(1,+\infty),
\end{cases}
\end{align}
for some $m>1$. For this energy, the first equation of \eqref{eq:main1} can be written as 
\begin{align*}
\partial_t\rho - \nabla \cdot \left(\rho \left[\nabla \left( \rho^{m-1} p \right)+\nabla\Phi\right] \right) =0, \ \ & {\rm{in}}\ (0,T)\times\Om.
\end{align*}
Furthermore, $p=\frac{m}{m-1}$ in $\{ \rho < 1\}$, $p \in [\frac{m}{m-1},\frac{2m}{m-1}]$ in $\{\rho=1\}$ and $p = \frac{2m}{m-1}$ in $\{ \rho > 1\}$.

\medskip

Starting with Section \ref{sec:gfn}, we consider general entropies. Assumptions are made on the growth of $S$ in the two different phases $\{\rho<1\}$ and $\{\rho>1\}$. First, we impose   
\begin{assumption}\label{as:intro_main1}
\begin{align}
\label{eq:g1}
\cS : \Rpz \rightarrow \R {\rm{\ satisfies\ }} \frac{\rho^{m-2}}{\s_2} < \cS''(\rho)  &{\rm{\ if}}\ \rho\in(0,1) {\rm{\ for\ some\ }} m \geq 1 {\rm{\ and\ }} \s_2>0.
\end{align}
\end{assumption}

The imposed summability assumption on the initial data $\rho_0\in\sP(\Om)$  plays also a crucial role in our analysis. If $\rho_0\in L^\infty(\Om)$, it turns out that the entire iterated sequence $(\rho_k)_{k=1}^N$ obtained in the scheme \eqref{eq:MM_intro} remains essentially uniformly bounded, provided the potential $\Phi$ is regular enough. This fact does not depend on the differentiability of $S$ and it is well-known in the literature (see \cite{OTAM}). In this case,  imposing only the assumption \eqref{eq:g1} on $S$ is enough to obtain the well-posedness of \eqref{eq:main1}-\eqref{eq:main2}.

The other `extreme' case is when we only impose that $\rho_0$ has finite energy, i.e. $\cE(\rho_0)<+\infty.$ We show that the iterated sequence will have improved summability estimates for $k\in\{1,\dots,N\}$ (see in Lemma \ref{lem:apriori-2star}), provided $S$ satisfies the additional growth condition \eqref{eq:1as22}-\eqref{eq:2as22} below. These summability estimates on the iterated sequence will be enough to obtain the necessary a priori estimates and pass to the limit as $\t\da 0$ to obtain a weak solution to \eqref{eq:main1}-\eqref{eq:main2}.

As a consequence of these arguments, we will always distinguish two cases with respect to the previous two summability assumptions when stating our main results. Our main result in the case of $\rho_0\in L^\infty(\Om)$ reads as:

\begin{theorem}[Theorems \ref{thm:exi}, \ref{thm:gm}, \ref{thm:exip}, \ref{thm:gmpg} and Theorem \ref{thm:L1contr}]
\label{thm:main1}
Suppose that Assumptions \ref{as:gen}-\ref{as:intro_main1} hold and $\Phi$ satisfies \eqref{as:Phi2}. For $\rho_0 \in L^\infty(\Om)$, there exists $\rho\in L^{\infty}([0,T]\times\Om)$, $\rho^{m}\in L^2([0,T];H^1(\Om))$ and $p\in L^2([0,T]; H^1(\Om))\cap L^\infty([0,T]\times\Om)$ such that $(\rho,p)$ is a unique solution of \eqref{eq:main1}-\eqref{eq:main2} in the sense of distributions. 
\end{theorem}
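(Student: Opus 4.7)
My plan is to construct $(\rho,p)$ as the limit of time-discrete iterates produced by the minimizing movements scheme \eqref{eq:MM_intro}, and to verify \eqref{eq:main1}--\eqref{eq:main2} together with uniqueness afterwards. Fix $N\in\N$, $\tau=T/N$, and define $(\rho_k)_{k=0}^N$ iteratively via \eqref{eq:MM_intro}. Existence of each minimizer follows from the direct method: $\cE$ is narrowly lower semicontinuous and bounded below on $\sP(\Om)$ (use superlinearity of $S$ from Assumption~\ref{as:gen}, the Lipschitz bound on $\Phi$, and boundedness of $\Om$), while $W_2^2(\rho_k,\cdot)$ is narrowly continuous; strict convexity of $S$ then yields uniqueness of $\rho_{k+1}$.

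Next I would derive the discrete Euler--Lagrange condition for \eqref{eq:MM_intro}. Since $S$ fails to be differentiable at $\rho=1$, I would invoke the subdifferential characterisation for $\cE$ on $(\sP(\Om),W_2)$ built in Section~\ref{sec:MM_opt}, where the multi-valued map $\partial S$ takes the interval $[S'(1-),S'(1+)]$ at the discontinuity. This produces, at each step, a discrete pressure $p_{k+1}$ satisfying \eqref{eq:main2} pointwise together with
\begin{equation*}
\frac{\id-T_k}{\tau}\,\rho_{k+1}=\nabla L_\cS(\rho_{k+1},p_{k+1})+\rho_{k+1}\nabla\Phi\qquad\text{a.e.},
\end{equation*}
where $T_k$ is the optimal transport map from $\rho_{k+1}$ to $\rho_k$. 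Two families of a priori bounds are then needed. The $L^\infty$ bound on $\rho^\tau$ uses that $\rho_0\in L^\infty(\Om)$ together with the regularity of $\Phi$ encoded in \eqref{as:Phi2}: a standard flow-interchange argument (in the spirit of \cite{OTAM}) gives $\|\rho_k\|_\infty\le C\|\rho_0\|_\infty$ with $C$ independent of $\tau$ for $k\tau\le T$, and this transfers to an $L^\infty$ bound on $p^\tau$ through \eqref{eq:main2}. The $H^1$ bounds come from the discrete energy dissipation inequality
\begin{equation*}
\sum_{k=0}^{N-1}\frac{W_2^2(\rho_k,\rho_{k+1})}{2\tau}\le\cE(\rho_0)-\cE(\rho_N),
\end{equation*}
together with the Euler--Lagrange relation tested against $L_\cS(\rho_{k+1},p_{k+1})$; the quantitative lower bound $S''(\rho)\ge\rho^{m-2}/\sigma_2$ on $(0,1)$ from Assumption~\ref{as:intro_main1} is exactly what converts the entropy dissipation into an $L^2_tH^1_x$ estimate for $\rho^m$ on the soft phase and, in combination with the $L^\infty$ bound, into an $L^2_tH^1_x$ estimate for $p^\tau$.

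With these estimates in hand I would pass to the limit $\tau\downarrow 0$ on piecewise constant interpolants $\rho^\tau,p^\tau$. The $W_2$ equicontinuity $\sum W_2^2(\rho_k,\rho_{k+1})\le 2\tau[\cE(\rho_0)-\cE(\rho_N)]$ combined with the spatial $H^1$ control gives, via Aubin--Lions, strong $L^q$ convergence of $\rho^\tau$ for every finite $q$; $p^\tau$ converges weakly in $L^2_tH^1_x$ and weak-$\star$ in $L^\infty$. The linear transport term passes to the limit immediately. To identify \eqref{eq:main2} in the limit, I would use a Minty monotonicity argument on the graph $\partial S$: the discrete inclusion $p_{k+1}\in\partial S(\rho_{k+1})$ survives under strong convergence of $\rho^\tau$ and weak convergence of $p^\tau$, yielding $p\in\partial S(\rho)$ a.e.\ in $(0,T)\times\Om$. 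Uniqueness will then follow from the $L^1$-contraction scheme of Theorem~\ref{thm:L1contr}: for two solutions $(\rho^i,p^i)$, subtract the equations, test against a regularised $\mathrm{sgn}\bigl(L_\cS(\rho^1,p^1)-L_\cS(\rho^2,p^2)\bigr)$, and exploit the monotonicity
\begin{equation*}
\bigl(L_\cS(\rho^1,p^1)-L_\cS(\rho^2,p^2)\bigr)\bigl(\rho^1-\rho^2\bigr)\ge 0,
\end{equation*}
inherited from the multi-valuedness of $\varphi$, which kills the diffusion contribution, while a Gr\"onwall loop using the Lipschitz regularity of $\Phi$ absorbs the transport term.

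The principal obstacle I anticipate is the identification of the limit pressure on the critical region $\{\rho=1\}$: this set may be thin or evolve in a complicated way, and because $\cE$ is not geodesically $\lambda$-convex, the clean EDI/slope formalism of \cite{AmbGigSav} is unavailable. The Minty identification must be carried out by hand, relying crucially on the $L^\infty$ propagation that $\rho_0\in L^\infty(\Om)$ provides and on the higher-summability improvement of Lemma~\ref{lem:apriori-2star} to upgrade weak to strong convergence of the nonlinear quantities entering $L_\cS(\rho^\tau,p^\tau)$.
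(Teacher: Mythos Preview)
Your overall skeleton---minimizing movements, discrete optimality, $L^\infty$ propagation, $H^1$-type gradient bounds, Aubin--Lions compactness, then $L^1$ contraction---matches the paper's architecture, and the identification of the limit pressure via strong-$\rho^\tau$/weak-$p^\tau$ limits on the graph of $\partial S$ is essentially what Lemma~\ref{lem:rel} does (though the paper writes it out as product-limit identities $(\rho-1)_\pm(p-S'(1\pm))=0$ rather than invoking Minty abstractly). But two of the technical hinges you sketch differ substantially from what the paper actually does, and your descriptions are too vague to be sure your route would close.

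For the $H^1$ bounds, the paper does \emph{not} test the Euler--Lagrange relation against $L_S(\rho_{k+1},p_{k+1})$. Instead it writes the optimality condition pointwise (Theorem~\ref{thm:opg}), extracts a discrete pressure $p_k$ via the truncation formula \eqref{p:compact}, and---after a key decomposition $S=S_a+S_b$ with $S_a$ carrying the jump (Sections~\ref{sec:gfn}, \ref{sec:51})---differentiates the identity $p_k(1+\log\rho_k)+S_b'(\rho_k)+\ophi_k/\tau+\Phi=C$ (or its porous-medium analogue) to obtain the Pythagorean-type splitting
\[
\rho_k\Bigl|\tfrac{\nabla\ophi_k}{\tau}+\nabla\Phi\Bigr|^2\;\ge\;|\nabla p_k|^2+\text{(gradient of a power of }\rho_k)^2.
\]
The cross term vanishes because $\nabla p_k\cdot\nabla\rho_k=0$ a.e., a fact proved via the coarea formula (Proposition~\ref{prop:con}, Lemma~\ref{lem:sig}). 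This orthogonality and the $S_a/S_b$ splitting are the genuine mechanisms behind both the $p\in L^2_tH^1_x$ and $\rho^m\in L^2_tH^1_x$ bounds; your ``test against $L_S$'' does not obviously produce them, and you should either make that precise or adopt the paper's device.

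For uniqueness, the paper does not test against a regularised $\mathrm{sgn}$ directly. It runs a duality argument in the style of V\'azquez (Theorem~\ref{thm:L1contr}): one writes the difference equation with coefficient $A=(\rho^1-\rho^2)/(L_S(\rho^1,p^1)-L_S(\rho^2,p^2))$, constructs smooth solutions $\varphi_\theta$ of a regularised backward equation $A_\theta\partial_t\varphi+\Delta\varphi-A_\theta\nabla\Phi\cdot\nabla\varphi=0$, and controls the error via uniform bounds $\|A_\theta^{-1/2}\Delta\varphi_\theta\|_{L^2}\le C$ (Lemma~\ref{lem:estim_reg}) together with a careful two-zone regularisation of $A$ (bounded where densities are bounded below, $A^{-1}\in L^2$ elsewhere). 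A direct Kru\v{z}kov-type sign test, as you propose, is delicate here precisely because the diffusion is doubly degenerate and $\rho$ need not be regular enough to absorb the commutator; if you can make that route work it would be a genuinely simpler alternative, but you should be explicit about how the Gr\"onwall step handles the degeneracy at $\{\rho=1\}$ and at $\{\rho=0\}$.
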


\medskip

For general initial data such that $\cE(\rho_0) < +\infty$ we shall impose the following additional growth condition on $S$. 
\begin{assumption}\label{as:intro-19}
\begin{subequations}
\label{eq:g2}
\begin{align}
\label{eq:2as22}
\cS : \Rpz \rightarrow \R {\rm{\ satisfies\ }} \frac{\rho^{r-2}}{\s_1} \leq \cS''(\rho)  &{\rm{\ if\ }} \rho\in(1,+\infty) {\rm{\ and\ }}\\
\label{eq:1as22}
\cS''(\rho) \leq \s_1 \rho^{r-2} &{\rm{\ if\ }} \rho\in(1,+\infty) {\rm{\ for\ some\ }} r, \s_1 \geq 1.
\end{align}
\end{subequations}
\end{assumption}
Notice that under \eqref{eq:g2} and $r>1$, $\cE(\rho_0) < +\infty$  is equivalent to $\rho_0 \in L^r(\Om)$. Similarly to Theorem \ref{thm:main1}, we can formulate the corresponding well-posedness result.

\begin{theorem}[Theorems \ref{thm:exi}, \ref{thm:gm}, \ref{thm:exip}, \ref{thm:gmpg} and Theorem \ref{thm:L1contr}]\label{thm:main2}
Suppose that Assumptions \ref{as:gen}, \ref{as:intro_main1} and \ref{as:intro-19} are fulfilled and
\begin{align}\label{eq:1main2}
m <  r + \frac{\beta}{2}
\end{align} 
hold true for $\beta>1$ (its precise value is given in \eqref{eq:beta}). For $\rho_0 \in \sP(\Om)$ such that $\cE(\rho_0) < +\infty$, there exists $\rho\in L^{\beta}([0,T]\times\Om)$ and $p\in L^2([0,T]; H^1(\Om))\cap L^\infty([0,T]\times\Om)$ such that $(\rho,p)$ is a solution of \eqref{eq:main1}-\eqref{eq:main2} in the sense of distributions.  Furthermore, we have
\begin{align*}
\rho^{m - \frac12} \in L^2([0,T]; H^1(\Om)), {\rm{\ if\ }} m \leq r {\rm{\ and\ }} \rho^{m-\frac12} \in L^q([0,T]; W^{1,q}(\Om)) {\rm{\ if\ }} r < m <  r + \frac{\beta}{2}
\end{align*}
for some $q \in (1,2)$. If in addition $\beta \geq 2r$, then the pair $(\rho,p)$ is unique.
\end{theorem}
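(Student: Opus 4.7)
The plan is to carry out a JKO minimizing-movements scheme as in \eqref{eq:MM_intro}, produce discrete first-order optimality conditions with an associated pressure variable $p_k$ satisfying \eqref{eq:main2}, pass to the limit $\tau\downarrow 0$ to recover a solution of \eqref{eq:main1}, and then establish uniqueness by an $L^1$-contraction argument. The proof should be assembled from the building blocks quoted in the statement (Theorems \ref{thm:exi}, \ref{thm:gm}, \ref{thm:exip}, \ref{thm:gmpg} and \ref{thm:L1contr}), each dedicated to one of these steps.

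First, for each $\tau>0$, I would construct iterates $(\rho_k)_{k=0}^N$ via \eqref{eq:MM_intro}. Existence of each minimizer is standard from lower semicontinuity of $\cE$ on $(\sP(\Om),W_2)$ under Assumption \ref{as:gen} and compactness. Using the characterization of the subdifferential of $\cE$ developed in Section \ref{sec:MM_opt} (adapted to finite-energy initial data), together with Lemma \ref{lem:apriori-2star} which yields a uniform $L^\beta(\Om)$ bound on the iterates, I would translate the Euler-Lagrange condition into a discrete elliptic PDE involving a pressure $p_k\in H^1(\Om)\cap L^\infty(\Om)$ fulfilling the three-phase constraint \eqref{eq:main2} pointwisely. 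Summing the discrete energy dissipation along the scheme produces uniform-in-$\tau$ bounds on $\cE(\rho_k)$ and on the dissipation.

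Second, to pass to the limit, I would derive Sobolev-type estimates on $\rho^{m-1/2}$ by testing the discrete optimality conditions with appropriate powers of $\rho$. The growth Assumption \ref{as:intro_main1} makes this test legal, and H\"older interpolation between the entropy-dissipation term and the $L^\beta$ mass distinguishes the announced two regimes, giving $\rho^{m-1/2}\in L^2([0,T];H^1(\Om))$ when $m\leq r$ (the natural $L^{2r}$ dissipation absorbs everything) and only $L^q([0,T];W^{1,q}(\Om))$ with $q>1$ when $r<m<r+\beta/2$ (which is exactly what \eqref{eq:1main2} guarantees). Time regularity of the piecewise-constant interpolations $\rho^\tau$ in a negative Sobolev norm is an immediate consequence of the discrete equation, so Aubin-Lions-Simon yields strong convergence of $\rho^\tau$ along a subsequence; meanwhile $p^\tau$ converges weakly in $L^2([0,T];H^1(\Om))$ and weakly-$*$ in $L^\infty$ by its uniform bounds. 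The delicate step will be to identify the limit of $L_\cS(\rho^\tau,p^\tau)$: on $\{\rho\neq 1\}$ the strong convergence of $\rho^\tau$ suffices to pass to the limit in $\vphi(\rho^\tau)$, while on the critical region $\{\rho=1\}$ a Minty-type argument exploiting the monotonicity of the subdifferential of $\cS$ is needed to confirm that the weak limit $p$ still satisfies \eqref{eq:main2}.

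Finally, under the extra assumption $\beta\geq 2r$, I would apply the $L^1$-contraction strategy of Theorem \ref{thm:L1contr}: taking two solutions, subtracting, and testing with a regularized sign of $\rho^1-\rho^2$, the monotonicity of $L_\cS(\cdot,\cdot)$ in its first slot provides the contraction, while the drift with $\nabla\Phi$ is controlled by the Lipschitz regularity of $\Phi$; the assumption $\beta\geq 2r$ is precisely what is needed to make the nonlinear cross-terms globally integrable so that this formal calculation becomes rigorous. I expect the principal obstacle throughout to be the identification of the pressure on the critical region $\{\rho=1\}$ and the treatment of the multi-valued structure of $\vphi$ at $\rho=1$; a secondary difficulty, present only in the regime $m>r$, is that the diffusion estimate lives only in $W^{1,q}$ with $q<2$, so the Aubin-Lions compactness, the limit passage in the nonlinear term, and the $L^1$-contraction argument must all be adapted to this lower-regularity setting.
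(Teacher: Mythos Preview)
Your overall architecture is correct and matches the paper's: JKO scheme, discrete optimality conditions with a pressure variable, a priori estimates, compactness, limit passage, and $L^1$-contraction for uniqueness. The differences are in the technical implementation of three steps.

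\textbf{Compactness.} You propose time regularity in a negative Sobolev norm plus standard Aubin--Lions--Simon. The paper instead uses the Wasserstein quasi-H\"older estimate $W_2(\rho^\tau_t,\rho^\tau_s)\le C(t-s+\tau)^{1/2}$ coming directly from the scheme, together with the Rossi--Savar\'e version of Aubin--Lions (Appendix~C). Both work; the paper's route avoids having to justify which interpolation (piecewise constant vs.\ geodesic) satisfies which approximate PDE.

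\textbf{Limit identification.} You propose a Minty-type monotonicity argument to identify the weak limit of $L_\cS(\rho^\tau,p^\tau)$. The paper takes a more explicit route: it decomposes $\cS=\cS_a+\cS_b$ where $\cS_a$ carries the non-differentiability (a power-type entropy with a jump in the coefficient at $\rho=1$) and $\cS_b$ is $C^1$. This yields an algebraic identity $L_\cS(\rho^\tau,p^\tau)=\tfrac{1}{l}\big((l-1)(\rho^\tau)^l+1\big)p^\tau+\rho^\tau\cS_b'(\rho^\tau)-\cS_b(\rho^\tau)+\cS_b(1)$, which is \emph{linear} in $p^\tau$; strong convergence of $\rho^\tau$ and weak-$*$ convergence of $p^\tau$ then pass to the limit directly. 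The constraint \eqref{eq:main2} is verified separately by passing to the limit in the pointwise relations $(p^\tau-\cS'(1+))(\rho^\tau-1)_+=0$ and $(p^\tau-\cS'(1-))(\rho^\tau-1)_-=0$ (Lemma~\ref{lem:rel}). Your region-by-region phrasing (``on $\{\rho\neq1\}$ strong convergence suffices, on $\{\rho=1\}$ use Minty'') is slightly misleading since the sets $\{\rho^\tau=1\}$ move with $\tau$, but a global maximal-monotone closure argument would indeed work; the paper's decomposition is more constructive and also drives the gradient estimates in Proposition~\ref{prop:conpg}.

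\textbf{Uniqueness.} You propose testing with a regularized sign of $\rho^1-\rho^2$. The paper instead uses the dual backward equation method: one solves (approximately) $A\partial_t\varphi+\Delta\varphi-A\nabla\Phi\cdot\nabla\varphi=0$ with final datum $\zeta$, where $A=(\rho^1-\rho^2)/(L_\cS(\rho^1,p^1)-L_\cS(\rho^2,p^2))$, and plugs $\varphi$ into the weak formulation. The paper explicitly notes that neither the pure Di~Marino--M\'esz\'aros nor the pure V\'azquez argument applies directly because of the ``double degeneracy'' (the operator degenerates both at $\rho=0$ and on $\{\rho=1\}$), and a careful combination is needed, splitting $Q$ into the region where $A$ is bounded and the region where $A^{-1}\in L^2$. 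Your regularized-sign approach may well be adaptable, but the paper's route is tailored to handle exactly this degeneracy, and the hypothesis $\beta\ge 2r$ enters in the same way (to ensure $L_\cS(\rho^i,p^i)\in L^2(Q)$).
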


Let us comment on the additional technical assumption \eqref{eq:1main2} in the previous theorem. While this condition has to be required for purely technical reasons and we do not claim anything about its sharpness, we believe that it could be physically motivated. This would just mean that for unbounded initial data, the diffusion rate on the region $\{\rho<1\}$ cannot be `too much slower' than the one on the region $\{\rho>1\}.$  With other words, `too fast' diffusion rate on $\{\rho>1\}$ and `too slow' diffusion on $\{\rho<1\}$ might result in unphysical phenomena and in non-existence of solutions. 

It worth also noticing that the previous phenomenon is not expected for bounded solutions. Also, in particular from the definition of $\b$ in \eqref{eq:beta}, we see that  $\b<+\infty$ can be arbitrary large if $d=2$ and $\b=+\infty$, if $d=1$. Therefore, in such cases the previous theorem holds true without the additional assumption \eqref{eq:1main2}. The same is true in the case when $1\le m\le r.$

\medskip

Let us make a brief comment also on the proof of the previous theorems. In the case when the diffusion rates are equal on the two phases $\{\rho<1\}$ and $\{\rho>1\}$, i.e. $m=r$, the derivation of the optimality conditions already gives us enough a priori estimates on gradients of suitable powers of the density variable. Then, these are enough to obtain the strong compactness of the interpolated curves of the discrete in time densities and pass to the limit as $\t\da 0$. The situation is way more challenging in the case when $m\neq r$. In these situations, we actually obtain the required estimates on the gradients of the discrete in time densities raised on a carefully chosen `intermediate' power (depending on both $m$ and $r$). This idea seems to be crucial in our analysis and this is one of the most technical parts of the paper. 

\medskip

It worth to comment also on the fact that in Theorem \ref{thm:main2} we obtain improved summability estimates of the density variable, even if one merely imposes $L^r$ summability on $\rho_0$ and the diffusion rate in $\{\rho>1\}$ is $r$, we obtain $\rho\in L^\beta([0,T]\times\Om)$ (and $\b$ given in \eqref{eq:beta} satisfies $\b>r$; in particular $\b<+\infty$ is arbitrary large for $d=2$ and $\b=+\infty$ for $d=1$). This improved summability estimate (with respect to the summability of the initial data) seems to be well-known in the case of standard porous medium equations (for instance in the case of $\Phi=0$, this is a consequence of \cite[Theorem 8.7]{Vaz07}). Our proof, which is based on purely optimal transport techniques, implies this estimate in particular also in the classical porous medium equation.

\medskip

When studying the well-posedness of the system \eqref{eq:main1}-\eqref{eq:main2}, one can ask the natural question whether these PDEs can be represented as continuity equations. Under suitable additional assumptions, this is always the case, as we can show in Theorem~\ref{cor:modg}, Theorem~\ref{cor:emodp} and Theorem~\ref{cor:gmpgg} when \eqref{eq:main1} also reads as 
\be\label{eq:main3}
\left\{
\ba{ll}
\partial_t\rho - \nabla \cdot \left(\rho \nabla \left( \cS'(\rho)\one_{ \{ \rho \neq 1 \} } + p\one_{\{ \rho=1 \}} \right) \right) -\nabla\cdot(\rho\nabla\Phi) =0, & {\rm{in}}\ (0,T)\times\Om,\\
\rho(0,\cdot)=\rho_0, & {\rm{in}}\ \Om,\\
\rho\left[ \nabla \left( \cS'(\rho)\one_{ \{ \rho \neq 1 \} } + p\one_{\{ \rho=1 \}} \right) +\nabla\Phi\right]\cdot \n = 0 , & {\rm{in}}\ [0,T] \times \partial \Om.
\ea
\right.
\ee
We underline that the required additional assumptions are needed to guarantee Sobolev estimates on $S'(\rho)$. We can summarize our results in this direction as follows. 
\begin{theorem}[Theorems~\ref{cor:modg} and \ref{cor:gmpgg}]
\label{thm:main3}
Let us suppose that we are in the setting of Theorem~\ref{thm:main2} and  $(\rho,p)$ is the solution of\eqref{eq:main1}-\eqref{eq:main2}. If we additionally assume
\begin{align}
\label{eq:1main3}
m < r+\frac{1}{2}
\end{align}
and
\begin{align}
\label{eq:2main3}
\beta > 2 {\rm{\ and\ }} m < \frac{\beta}{2} + \frac12, 
\end{align}
then $(\rho,p)$ is a weak solution of \eqref{eq:main3} in the sense of distribution. The uniqueness of the solution holds under the same assumption as in Theorem~\ref{thm:main2}. If in addition $\rho_0\in L^\infty(\Om)$ and $\Phi$ satisfies \eqref{as:Phi2}, we can drop \eqref{eq:2main3} from the statement.
\end{theorem}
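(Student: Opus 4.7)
The goal is to pass from the weak formulation of \eqref{eq:main1} to that of \eqref{eq:main3}. Formally, this amounts to rewriting $\Delta L_S(\rho,p)$ as $\nabla\cdot(\rho\nabla P)$, where
\begin{equation*}
P := \cS'(\rho)\,\one_{\{\rho\neq 1\}} + p\,\one_{\{\rho = 1\}}.
\end{equation*}
Hence the entire proof reduces to establishing the chain-rule identity $\nabla L_S(\rho,p) = \rho\,\nabla P$ in the sense of distributions on $(0,T)\times\Om$, and then integrating by parts once in the weak form of \eqref{eq:main1}.

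First I would show that $\rho\,\nabla P$ is a well-defined locally integrable object by combining the Sobolev regularity of $\rho^{m-1/2}$ given by Theorem \ref{thm:main2} with the growth bounds on $\cS''$ in Assumptions \ref{as:intro_main1}--\ref{as:intro-19}. On $\{\rho>1\}$, the upper bound $\cS''(\rho)\le\s_1\rho^{r-2}$ together with the identity
\begin{equation*}
\rho\,|\nabla \cS'(\rho)| \;=\; \rho\,\cS''(\rho)\,|\nabla\rho| \;\le\; C\,\rho^{r-m+1/2}\,|\nabla\rho^{m-1/2}|
\end{equation*}
makes the integrand controllable, and the exponent $r-m+1/2$ being manageable on $\{\rho>1\}$ is exactly what condition \eqref{eq:1main3} enforces. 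A symmetric (and more delicate) computation handles $\{0<\rho<1\}$; condition \eqref{eq:2main3} is the ingredient that guarantees compatibility of the integrability exponents when $\rho$ lies only in $L^\beta$ rather than in $L^\infty$. On the critical set $\{\rho=1\}$, $\nabla P=\nabla p$ is controlled directly by $p \in L^2([0,T];H^1(\Om))$ from Theorem \ref{thm:main2}.

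Next I would derive the chain-rule identity piecewise. On $\{\rho\neq 1\}$, $L_S(\rho,p)=\rho \cS'(\rho)-\cS(\rho)+\cS(1)$ is a $C^1$ function of $\rho$ with derivative $\rho \cS''(\rho)$, and the standard chain rule in Sobolev spaces produces $\nabla L_S=\rho \cS''(\rho)\nabla\rho=\rho\nabla P$ almost everywhere. On $\{\rho=1\}$, applying the Stampacchia-type lemma to a suitable Sobolev representative (for instance $\rho^{m-1/2}$) gives $\nabla\rho=0$ a.e.; since $L_S=p$, $P=p$, and $\rho\equiv 1$ on this set, one obtains $\nabla L_S=\nabla p=\rho\nabla P$ there as well. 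Gluing the two regions via a standard truncation/approximation argument yields the identity globally, and its substitution into \eqref{eq:main1} followed by one integration by parts produces precisely the weak form of \eqref{eq:main3}. Uniqueness is inherited from Theorem \ref{thm:main2}, since the two weak formulations are equivalent once the chain-rule identity is established. Finally, in the bounded data setting $\rho_0\in L^\infty(\Om)$ under \eqref{as:Phi2}, the iterates (and hence $\rho$) are uniformly bounded, so all the exponent balancings above become trivial and the high-integrability assumption \eqref{eq:2main3} can be dropped.

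The principal obstacle is the chain-rule step across the interface $\{\rho=1\}$: a priori $L_S(\rho,p)$ may have a jump there (the interior value $p$ need not match the one-sided limits $\cS'(1\pm)$), and it is the Sobolev regularity of $L_S$ furnished by Theorem \ref{thm:main2} that ultimately forces this jump to vanish in a measure-theoretic sense, exactly as required by the Stampacchia chain rule.
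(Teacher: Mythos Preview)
Your overall strategy—reducing the problem to the identity $\nabla L_S(\rho,p)=\rho\nabla P$ and then showing $\rho\nabla P\in L^1(Q)$—is correct, and your reading of where conditions \eqref{eq:1main3} and \eqref{eq:2main3} enter is accurate. However, your proposed execution differs from the paper's in an important way, and the difference exposes a real gap.

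You work directly with the piecewise definition of $P$ and propose to establish the chain rule on $\{\rho\neq 1\}$ and on $\{\rho=1\}$ separately, then ``glue''. The difficulty you flag in your last paragraph is genuine: $P$ is defined case-by-case and you have not shown it lies in any Sobolev space, so $\nabla P$ has no a~priori meaning as a weak derivative, and a pointwise identity on each region does not by itself yield a distributional identity on $Q$ (surface contributions on $\partial\{\rho=1\}$ are not excluded). Your appeal to ``Sobolev regularity of $L_S$ furnished by Theorem~\ref{thm:main2}'' does not close this, since that theorem gives regularity of $\rho^{m-1/2}$ and of $p$, not of $L_S$ or $P$ directly.

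The paper sidesteps the gluing issue entirely via the decomposition $S=S_a+S_b$ of \eqref{eq:sapg}--\eqref{eq:sbpg}, where $S_a$ is a piecewise power absorbing the jump of $S'$ at $1$ and $S_b\in C^1$ with $S_b'(1)=0$. The key observation (see the proof of Theorem~\ref{cor:gmpgg}) is the algebraic identity
\[
P \;=\; S'(\rho)\,\one_{\{\rho\neq1\}}+p\,\one_{\{\rho=1\}} \;=\; p\,\rho^{\,l-1}+S_b'(\rho),
\]
valid for \emph{all} $\rho\ge 0$ without case distinctions, precisely because $p$ takes the boundary values $S'(1\pm)$ off the critical set and $S_b'(1)=0$. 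The right-hand side is a product and composition of functions whose Sobolev regularity is already known, so $\rho\nabla P$ can be computed by the ordinary product and chain rules, shown to lie in $L^1(Q)$ under \eqref{eq:1main3}--\eqref{eq:2main3} (this is where the auxiliary exponent $l$ chosen in \eqref{eq:gmpgg11} enters), and identified with $\nabla L_S$ via \eqref{eq:gmpg23}. No interface argument is needed. For your route to go through, you would effectively need this same global representation (or an equivalent one) to give $\nabla P$ a distributional meaning before the chain rule can be invoked.
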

In the same way as in Theorem \ref{thm:main2} (by the definition of $\beta$ in \eqref{eq:beta}), \eqref{eq:2main3} holds for any $m, r\geq 1$ if $d=1$ or $d=2$. Moreover, when $r=m$, then the second inequality in \eqref{eq:2main3} is satisfied for all $m\geq1$ and $\beta >2$ is equivalent to $m > \frac{3d-4}{2d}$. 

\medskip

The attentive reader could observe that in the statements of Theorem \ref{thm:main1} and \ref{thm:main2} we included the corresponding uniqueness results as well. Indeed, Section \ref{sec:uniqueness} is entirely devoted to this issue and in particular we obtain an $L^1$ contraction result for the density variable $\rho$ (see in Theorem \ref{thm:L1contr}), implying its uniqueness. This will then imply the uniqueness of the corresponding $p$ variable as well. Our approach is inspired by \cite[Section 3]{DiMMes} and  \cite[Theorem 6.5]{Vaz07}, and as expected, the {\it monotonicity} of the operator $L_S$ (see Lemma \ref{lem:mon}) plays a crucial role in our argument. By the `double degeneracy' of our problem, neither of the previously mentioned two approaches apply directly and a very careful combination of the two is required to obtain the desired $L^1$ contraction. Similarly as in \cite[Theorem 6.5]{Vaz07}, in this analysis an additional summability assumption is needed on the density variable. Due to the extra $L^\beta$ summability obtained in Theorem \ref{thm:main2} or in the case of bounded solutions as in Theorem \ref{thm:main1}, this is automatically fulfilled in many cases. Let us mention that we expect a $W_2$-type contraction argument (in the spirit of \cite{CarMcCVil03,CarMcCVil06}) to hold in our setting as well, provided we impose some convexity assumptions on the potential $\Phi$. The results from \cite{BolCar} imply that the Wasserstein contraction is equivalent to the geodesic convexity of the  energy, provided the energy is smooth. Since the internal energies considered in this article, even though geodesically convex, in general fail to be differentiable, the results from \cite{BolCar} do not apply directly in our setting. These investigations represent the subject of future study. Finally, since in general we do not impose ($\l$-)convexity assumptions on $\Phi$, our energy will in general lack the displacement ($\l$-)convexity property, which also motivates our search for an $L^1$-contraction instead.

\medskip

Section \ref{sec:disc} is devoted to further discussions on the models studied in this paper. In particular, we discuss examples where the emergence of the critical region $\{\rho=1\}$ can be observed for positive times, even if that was not present in the case of the initial data, i.e. $\sL^d(\{\rho_0=1\})=0.$ We illustrate this in dimension one and we describe stationary solutions (minimizers of the free energy) corresponding to suitable potential functions $\Phi$, where the critical region is present. As we mentioned before, our problems can be linked to three phase free boundary problems, and in this section we also derive these ones formally.

\medskip

We end our paper with three small appendices, where we collected some well-known facts (or consequences of well-known results) from the theory of optimal transport and convex analysis. Here, we present also a suitable version of the classical Aubin-Lions lemma, which is repeatedly used throughout the paper to obtain compactness of families of time dependent functions in Lebesgue spaces.

\medskip

\medskip

\section{The minimizing movements scheme, optimality conditions and properties of the energy}\label{sec:MM_opt}

Throughout the paper $\Om\subset\R^d$ is given, as the closure of a bounded, convex open set with smooth boundary. $\sP(\Om)$ denotes the space of Borel probability measures on $\Om$ and $\sL^d$ stands the Lebesgue measure on $\R^d$. We also use the notation $\sP^{\rm{ac}}(\Om):=\left\{\mu\in\sP(\Om):\ \mu\ll\sL^d\mres\Om \right\}.$ $T>0$ is a fixed time horizon and we often use the notations $Q:= [0,T] \times \Omega$ and $\R^+:=(0,+\infty)$.

As $\cS'$ is strictly increasing in $\Ro$ from Assumption~\ref{as:gen}, $\cS'(0+)$ and $\cS'(1\pm)$ are well-defined in $\Rinfm$ and $\R$, respectively, as follows.
\begin{align}
\label{eq:slr}
\cS'(0+) := \lim_{\e \to 0+} \cS'(\e),\quad \cS'(1-) := \lim_{\e \to 1-} \cS'(\e) \hbox{ and } \cS'(1+) := \lim_{\e \to 1+} \cS'(\e).
\end{align}
In particular, we have that $S'(1-)\leq S'(1+).$

We define the corresponding internal energy $\cJ : \sP(\Om) \rightarrow \Rinf$ by 
\begin{align}
\label{eq:j}
\cJ(\rho) := 
\begin{cases}
\ds\int_\Om \cS(\rho(x))\dd x\quad &\hbox{ if } \rho \in \sP^{\rm{ac}}(\Om),\\ 
+\infty &\hbox{ otherwise. }
\end{cases}
\end{align}
Furthermore, we suppose that there is given $\Phi:\Om\to\R$ a 
potential function  in $W^{1,\infty}(\Om)$ and the associated potential energy $\cF:\sP(\Om)\to\R$ given by
$$
\cF(\rho):=\int_\Om\Phi(x)\dd\rho(x).
$$

Let $\rho_0\in\sP(\Om)$ be given and consider a time discretization parameter $\t>0$ and $N\in\N$ such that $N\t = T$. We define the {\it minimizing movements} $(\rho_{k})_{k=1}^N$ 
of $\cJ+\cF$ as follows: for  $k \in \{1,\dots,N\}$ set,
\begin{align}\label{eq:step}
\rho_{k} := \argmin\limits_{\rho\in \sP(\Om)}\left\{\cJ(\rho)+\cF(\rho)+\frac{1}{2\t}W_2^2(\rho,\rho_{k-1})\right\}.
\end{align}
Note that the existence and uniqueness of the solutions in the minimization problems \eqref{eq:step} follow from standard compactness, lower semicontinuity and convexity arguments (similarly as in \cite[Proposition 8.5]{OTAM}, for instance).

In what follows, in our analysis we differentiate two cases with respect to the summability assumption on $\rho_0$. Since these need slightly different arguments, we separate them in two different subsections. In particular, if one assumes $L^\infty$ summability on $\rho_0$, the presented results will hold true under no additional assumptions on $S$ (other than in Assumption \ref{as:gen}). However, in \eqref{eq:step} we can allow general measure initial data, in which case an additional growth condition (see \eqref{eq:g2}) has to be imposed on $S$ in order to obtain the same optimality conditions.

\subsection{Optimality conditions for $\rho_0\in L^\infty(\Om)$}

\begin{lemma}\label{lem:Linfty}
Suppose that Assumption \ref{as:gen} takes place and $\rho_0\in L^\infty(\Om)$. If $\Phi$ is non-constant, let us assume that $\Phi\in C^1(\ov\Om)$ and
\begin{align}\label{as:Phi2}
\nabla\Phi(x_0)\cdot\n(x_0)>0, \ \ \forall x_0\in\partial\Om \ \ {\rm{and}}\ \ \nabla\Phi\in BV(\Om;\R^d)\ {\rm{with}}\ [\Delta\Phi]_+\in L^\infty(\Om)
\end{align}
where $\n$ stands for the outward normal vector to $\partial\Om$ and $[\Delta\Phi]_+$ denotes the positive part of the measure $\Delta\Phi$. Let $(\rho_k)_{k=1}^N$ be constructed via the scheme \eqref{eq:step}. Then we have
$$\|\rho_k\|_{L^\infty}\le\|\rho_{k-1}\|_{L^\infty}\left(1+\t\|[\Delta\Phi]_+\|_{L^\infty}\right)^d\le \|\rho_0\|_{L^\infty}\left(1+\t\|[\Delta\Phi]_+\|_{L^\infty}\right)^{kd}\le \|\rho_0\|_{L^\infty} e^{dT\|[\Delta\Phi]_+\|_{L^\infty}},$$ 
$\forall k\in\{1,\dots, N\}.$
\end{lemma}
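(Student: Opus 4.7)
The plan is to derive a first-order optimality condition for each step of \eqref{eq:step}, translate it into a Monge-Amp\`ere identity for the optimal transport map $T_k$ from $\rho_k$ to $\rho_{k-1}$, and then read off a pointwise bound at the essential supremum of $\rho_k$. The case of constant $\Phi$ is trivial (the $L^\infty$ norm is preserved), so I focus on the setting of \eqref{as:Phi2}. Perturbing $\rho_k$ along inner variations of the form $((1-\e)\id + \e\xi)_\#\rho_k$ for smooth vector fields $\xi$ (admissibility uses $\nabla\Phi\cdot\n>0$ at $\partial\Om$ to keep mass inside $\Om$) and differentiating the objective in $\e$ yields the Euler--Lagrange equation
\begin{align*}
\cS'(\rho_k(x))+\Phi(x)+\frac{\varphi_k(x)}{\t}=\mathrm{const} \quad \text{on } \{\rho_k>0\},
\end{align*}
where $\varphi_k$ is a Kantorovich potential for the optimal transport from $\rho_k$ to $\rho_{k-1}$, so that $T_k(x)=x-\nabla\varphi_k(x)=x+\t\nabla\cS'(\rho_k)(x)+\t\nabla\Phi(x)$.

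Plugging this into the Monge-Amp\`ere identity $\rho_k(x)=\rho_{k-1}(T_k(x))\,\det DT_k(x)$ and evaluating at a point $x_0$ where $\rho_k$ attains its essential maximum, one has $\nabla\rho_k(x_0)=0$ and $D^2\rho_k(x_0)\preceq 0$, hence $D^2\cS'(\rho_k)(x_0)=\cS''(\rho_k(x_0))\,D^2\rho_k(x_0)\preceq 0$ by strict convexity of $\cS$. This gives the matrix inequality $DT_k(x_0)\preceq I+\t D^2\Phi(x_0)$ in the PSD order. Monotonicity of the determinant on the PSD cone combined with the arithmetic-geometric mean inequality then yields
\begin{align*}
\det DT_k(x_0)\le\det\bigl(I+\t D^2\Phi(x_0)\bigr)\le \Bigl(1+\tfrac{\t}{d}\Delta\Phi(x_0)\Bigr)^d\le \bigl(1+\t\|[\Delta\Phi]_+\|_{L^\infty}\bigr)^d,
\end{align*}
so $\rho_k(x_0)\le \|\rho_{k-1}\|_{L^\infty}\bigl(1+\t\|[\Delta\Phi]_+\|_{L^\infty}\bigr)^d$. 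Iterating this bound from $k=1$ to $N$ and using $(1+x)^{Nd}\le e^{Ndx}$ together with $N\t=T$ delivers the stated chain of inequalities.

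The main obstacle is that this argument is formal: it assumes classical $C^2$ regularity of $\rho_k$ and $\Phi$, pointwise existence of a maximum of $\rho_k$, and ignores the case $\rho_k(x_0)=1$ at which $\cS''$ need not exist. To make it rigorous, I would approximate $\cS$ by a family of strictly convex $C^2$ entropies $\cS_\e$ (smoothing out the corner at $\rho=1$ while keeping $\cS_\e''>0$), regularize $\Phi$ by mollification while preserving \eqref{as:Phi2} and $\|[\Delta\Phi]_+\|_{L^\infty}$, apply the above argument to the corresponding minimizers $\rho_k^\e$ using Alexandrov-type second derivatives of the Brenier potential (which are defined $\sL^d$-a.e.\ on $\{\rho_k^\e>0\}$ and allow a rigorous version of the chain rule), and then pass to the limit $\e\to 0$ by stability of JKO minimizers. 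The boundary condition in \eqref{as:Phi2} plays a dual role: it guarantees that the relevant transport and perturbation maps remain inside $\Om$, so the pushforward identity holds without boundary corrections, and it is precisely what is needed to construct admissible inner variations when deriving the Euler-Lagrange equation.
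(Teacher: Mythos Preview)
Your overall strategy matches the paper's: regularize $S$ (and $\Phi$, and $\rho_{k-1}$) so that the optimality condition $S_\e'(\rho_k^\e)+\Phi_\e+\phi_k^\e/\t=C$ holds classically, locate the maximum $x_0$ of $\rho_k^\e$, use second-order information together with the Monge--Amp\`ere identity and the AM--GM inequality, and pass to the limit. The pointwise computations you sketch are correct.

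There is, however, a genuine gap concerning the boundary. You evaluate $\nabla\rho_k(x_0)=0$ and $D^2\rho_k(x_0)\preceq 0$, but these second-order conditions only hold if $x_0$ lies in the \emph{interior} of $\Om$; at a boundary maximum neither is available. The paper deals with this as follows: since $S_\e'$ is strictly increasing, $x_0$ is a minimum of $\Phi_\e+\phi_k^\e/\t$, so if $x_0\in\partial\Om$ then $(\nabla\phi_k^\e(x_0)+\t\nabla\Phi_\e(x_0))\cdot\n(x_0)\le 0$. Convexity of $\Om$ forces $(x_0-\nabla\phi_k^\e(x_0))\cdot\n(x_0)\le 0$, i.e.\ $\nabla\phi_k^\e(x_0)\cdot\n(x_0)\ge 0$, and together with the assumption $\nabla\Phi(x_0)\cdot\n(x_0)>0$ this yields a contradiction. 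Thus $x_0$ is interior, and the second-order analysis is legitimate. You misidentify the role of \eqref{as:Phi2}: it is not used to make inner variations admissible or to keep pushforwards inside $\Om$ (the EL equation follows from standard convex perturbations in the density variable and needs no such hypothesis), but precisely to exclude boundary maxima.

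A secondary point: to justify the $C^{2,\alpha}$ regularity of $\phi_k^\e$ needed to write the Monge--Amp\`ere equation pointwise, the paper also approximates $\rho_{k-1}$ by strictly positive $C^{0,\alpha}$ densities and takes $S_\e'(0+)=-\infty$ so that $\rho_k^\e>0$, then invokes Caffarelli's regularity theory. Your mention of Alexandrov second derivatives is a possible alternative route, but you should be explicit that the determinant inequality survives in that weaker setting.
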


\begin{remark}
Let us notice that the second part of assumption \eqref{as:Phi2} is sharp and it is very much related to the ones imposed in the work of Ambrosio (see \cite{Amb}), as an improvement of the classical DiPerna-Lions theory (\cite{DipLio}), on transport equations with $BV$ vector fields. 
\end{remark}

\begin{proof}[Proof of Lemma \ref{lem:Linfty}]
The proof of this result in the case when $\Phi\equiv 0$ is essentially the same as the proof of \cite[Proposition 7.32]{OTAM} (since that proof is not assuming any differentiability on $S$).

\medskip

For general $\Phi$, we use some ideas from the proof of \cite[Theorem 1]{CarSan18}. Let us approximate $S$ with a sequence $(S_\e)_{\e>0}$ of smooth convex functions such that $S_\e ''\ge c_\e>0$ for any $\e>0$ with $S_\e'(0+)=-\infty.$ Let $\Phi_\e$ be a smooth approximation of $\Phi$ which satisfies \eqref{as:Phi2} and such that $\Phi_\e\to\Phi$, $\nabla\Phi_\e\to\nabla\Phi$, uniformly as $\e\da 0$ and $\|[\Delta\Phi_\e]_+\|_{L^\infty}\le \|[\Delta\Phi]_+\|_{L^\infty}$, for $\e>0$. Let $\rho_k^\e$ be the unique solution of \eqref{eq:step}, when we replace $S$ with $S_\e$ and $\Phi$ by $\Phi_\e$. Writing down the optimality conditions we obtain 
$$S_\e'(\rho_k^\e)+\Phi_\e+\frac{\phi_k^\e}{\t}=C \ \ae,$$
where $\phi_k^\e\in\cK(\rho_k^\e,\rho_{k-1}).$ Let us suppose that $\phi_k^\e\in C^{2,\alpha}(\Om)$, otherwise we approximate $\rho_{k-1}$ by strictly positive $C^{0,\alpha}$ measures (and $\rho_k^\e$ is Lipschitz continuous and strictly positive), and we use Caffarelli's regularity theory to deduce the desired regularity for the potential.

Now, let $x_0$ a maximum point of $\rho_k^\e$. From the previous equality, since $S_\e'$ is strictly increasing, we certainly have that $x_0$ is a minimum point of $\Phi_\e+\frac{\phi_k^\e}{\t}$.

We claim that $x_0\notin\partial\Om$. Indeed, if $x_0$ would belong to $\partial\Om$, we would have that 
$$(\nabla\phi_k^\e(x_0)+\t\nabla\Phi_\e(x_0))\cdot\n(x_0)\le 0.$$ However, by the convexity of $\Om$, we have that $(x_0-\nabla\phi_k^\e(x_0))\cdot \n(x_0)\le 0$, from where $\nabla\phi_k^\e(x_0)\cdot \n(x_0)\ge 0$. This fact together with the assumption \eqref{as:Phi2} yields a contradiction. Indeed, from the uniform convergence of $\nabla\Phi_\e\to\nabla\Phi$, we have that 
$$\nabla\Phi_\e(x_0)\cdot\n\ge\nabla\Phi(x_0)\cdot\n-\e>0,$$
for sufficiently small $\e>0$.

Therefore, the maximum point $x_0$ of $\rho_k^\e$ belongs to the interior of $\Om$. This implies that $\Delta\phi_k^\e(x_0)+\t\Delta\Phi_\e(x_0)\ge 0.$ Using the Monge-Amp\`ere equation we find
\begin{align*}
\|\rho_k^\e\|_{L^\infty}&=\rho_k^\e(x_0)=\rho_{k-1}(x_0-\nabla\phi_k^\e(x_0))\det\left(I_d-D^2\phi_k^\e(x_0)\right)\le\|\rho_{k-1}\|_{L^\infty}(1-\Delta\phi_k^\e(x_0))^d\\
&\le \|\rho_{k-1}\|_{L^\infty}(1+\t\Delta\Phi_\e(x_0))^d\le \|\rho_{k-1}\|_{L^\infty}(1+\t\|[\Delta\Phi_\e]_+\|_{L^\infty})^d\le \|\rho_{k-1}\|_{L^\infty}(1+\t\|[\Delta\Phi]_+\|_{L^\infty})^d\\
&\le\|\rho_{0}\|_{L^\infty}(1+\t\|[\Delta\Phi]_+\|_{L^\infty})^{kd}\le \|\rho_0\|_{L^\infty} e^{dT\|[\Delta\Phi]_+\|_{L^\infty}},
\end{align*}
where in the first inequality we have used the inequality between the arithmetic and geometric means. Since the last three bounds depend only on the data, these will also remain valid also in the limit $\e\da 0$ (since the minimizers of both the approximated and the original problems are unique). Therefore the thesis of the lemma follows.
\end{proof}

Now, we state the main result of this subsection on the first order necessary optimality conditions for the problems in \eqref{eq:step}. 

\begin{theorem}
\label{thm:opg}
Suppose that $\rho_0\in L^\infty(\Om)$. For all $k \in \{1,\dots,N\}$, there exists $\cC=\cC(k) \in \R$ and $\ophi_{k} \in \cK(\rho_{k}, \rho_{k-1})$ such that
\begin{align}
\label{eq:1opg}
\begin{cases}
\cC - \frac{\ophi_{k}}{\tau} -\Phi \leq \cS'(0+)	&\hbox{ {\rm{in}} } \{\rho_{k} =0 \},
\\ \cC - \frac{\ophi_{k}}{\tau} -\Phi \in [\cS'(1-),\cS'(1+)]	&\hbox{ {\rm{in}} } \{ \rho_{k} =1 \},
\\ \cC - \frac{\ophi_{k}}{\tau} -\Phi = \cS' \circ \rho_{k} &\hbox{ {\rm{otherwise.}} }
\end{cases}
\end{align}
Here, $\cK(\rho_k,\rho_{k-1})$ is given in Definition~\ref{def:kan}. Also, $\cS'(0+)$ and $\cS'(1\pm)$ are given in \eqref{eq:slr}. Note that if if $S'(0+)=-\infty$, then $\rho_k > 0$ a.e. (see Lemma~\ref{lem:postg}), and in this case the first inequality in \eqref{eq:1opg} is not present.
\end{theorem}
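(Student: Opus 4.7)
The plan is to establish the first-order necessary conditions for the minimizer of \eqref{eq:step} by perturbing $\rho_k$ in the direction of a competitor $\mu$, and then to extract the three regimes in \eqref{eq:1opg} by a careful localization. By Lemma~\ref{lem:Linfty}, $\rho_k \in L^\infty(\Om)$, so for any $\mu \in \sP^{\rm{ac}}(\Om) \cap L^\infty(\Om)$ the convex combination $\rho_\varepsilon := (1-\varepsilon)\rho_k + \varepsilon\mu$ is an admissible competitor in \eqref{eq:step} for every $\varepsilon \in (0,1]$. Using $\cJ(\rho_\varepsilon) + \cF(\rho_\varepsilon) + \frac{1}{2\tau}W_2^2(\rho_\varepsilon,\rho_{k-1}) \ge \cJ(\rho_k) + \cF(\rho_k) + \frac{1}{2\tau}W_2^2(\rho_k,\rho_{k-1})$ and differentiating each term at $\varepsilon=0^+$, the argument should yield the variational inequality
\begin{equation*}
\int_\Om D^+\cS(\rho_k; \mu - \rho_k)\,\dd x + \int_\Om \Bigl(\Phi + \tfrac{\ophi_k}{\tau}\Bigr)(\mu - \rho_k)\,\dd x \ge 0,
\end{equation*}
where $D^+\cS(a;v) := \lim_{\varepsilon\to 0^+}[\cS(a+\varepsilon v)-\cS(a)]/\varepsilon$ is the one-sided directional derivative and $\ophi_k \in \cK(\rho_k,\rho_{k-1})$ is a Kantorovich potential.

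The term-by-term justification uses convexity of $\cS$, which makes $\varepsilon \mapsto [\cS(\rho_k + \varepsilon(\mu-\rho_k)) - \cS(\rho_k)]/\varepsilon$ non-increasing as $\varepsilon \da 0$, so monotone convergence identifies its integral limit with $\int_\Om D^+\cS(\rho_k;\mu-\rho_k)\,\dd x$. For the transport term, Kantorovich duality immediately supplies the lower bound $\frac{1}{2\varepsilon}[W_2^2(\rho_\varepsilon,\rho_{k-1})-W_2^2(\rho_k,\rho_{k-1})] \ge \int_\Om \ophi_k (\mu - \rho_k)\,\dd x$; testing the duality the other way against an optimal potential $\phi_\varepsilon \in \cK(\rho_\varepsilon,\rho_{k-1})$ produces the matching upper bound $\int_\Om \phi_\varepsilon(\mu-\rho_k)\,\dd x$, and the uniform compactness of Kantorovich potentials attached to uniformly bounded densities forces $\phi_\varepsilon \to \ophi_k$ modulo additive constants; these constants are invisible in the pairing because $\int_\Om(\mu-\rho_k)\,\dd x = 0$.

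To extract \eqref{eq:1opg} I would then test the variational inequality against ``mass-preserving'' perturbations of the form $\mu - \rho_k = \zeta_1 - \zeta_2$ with $\zeta_1,\zeta_2 \ge 0$ bounded and $\int_\Om\zeta_1\,\dd x = \int_\Om \zeta_2\,\dd x$. Choosing both $\zeta_i$ supported in $\{0 < \rho_k,\ \rho_k \neq 1\}$ makes two-sided variations available (swap $\zeta_1 \leftrightarrow \zeta_2$); since $D^+\cS(\rho_k;v) = \cS'(\rho_k)v$ there, the inequality becomes an equality, forcing $\cS'(\rho_k) + \Phi + \ophi_k/\tau \equiv \cC$ a.e.\ on this set for some constant $\cC \in \R$. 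Placing $\zeta_1$ on $\{\rho_k = 1\}$ and compensating with $\zeta_2$ in the previous region, the identity $D^+\cS(1;\zeta_1) = \cS'(1+)\zeta_1$ (valid since $\zeta_1 \ge 0$) delivers $\cC - \Phi - \ophi_k/\tau \le \cS'(1+)$ a.e.\ on $\{\rho_k = 1\}$; exchanging the roles and using $D^+\cS(1;-\zeta_2) = -\cS'(1-)\zeta_2$ produces the matching $\cC - \Phi - \ophi_k/\tau \ge \cS'(1-)$. Finally, on $\{\rho_k = 0\}$ only non-negative perturbations are admissible (otherwise $\rho_\varepsilon$ would exit $\sP(\Om)$), and $D^+\cS(0;\zeta) = \cS'(0+)\zeta$ for $\zeta \ge 0$ yields $\cC - \Phi - \ophi_k/\tau \le \cS'(0+)$ a.e.\ there, which is vacuous when $\cS'(0+) = -\infty$ (in which case Lemma~\ref{lem:postg} will eliminate the set).

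The principal technical obstacle is the matching upper bound for the derivative of the transport term, which requires the convergence of an arbitrary selection of optimal potentials $\phi_\varepsilon \to \ophi_k$ modulo constants as $\varepsilon \da 0$; this relies on the standard compactness of Kantorovich potentials for uniformly bounded densities together with the $W_1$-convergence $\rho_\varepsilon \to \rho_k$. A secondary delicacy is the careful accounting for the one-sided directional derivative of $\cS$ at the singular points $\rho \in \{0,1\}$, which together with the explicit form $\partial \cS(1) = [\cS'(1-),\cS'(1+)]$ of the subdifferential is precisely what produces the three-regime structure in \eqref{eq:1opg}.
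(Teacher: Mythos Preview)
Your approach is correct and takes a genuinely different route from the paper. The paper proceeds entirely through abstract convex analysis in $L^\infty(\Om)$: it verifies that $\cJ$, $\cF$, and $\tfrac12 W_2^2(\cdot,\rho_{k-1})$ lie in $\Gamma(L^\infty(\Om))$, applies a subdifferential sum rule (Proposition~\ref{prop:add}), identifies $\partial\bigl(\tfrac12 W_2^2(\cdot,\rho_{k-1})\bigr)(\rho_k)$ with $\cK(\rho_k,\rho_{k-1})$ via Legendre duality (Proposition~\ref{prop:wul}), and then invokes Rockafellar's pointwise characterization of subdifferentials of integral functionals (Lemma~\ref{lem:roc}, Proposition~\ref{prop:sul}) to read off the three regimes directly from $\xi(x)\in\partial\cS(\rho_k(x))$. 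Your argument instead follows the classical ``inner variation'' template: perturb along convex interpolations, pass to the limit via monotone difference quotients for $\cJ$ and via stability of Kantorovich potentials for the transport term, and then localize with mass-balanced test functions. This is closer in spirit to the standard derivations in, e.g., \cite[Chapter~7]{OTAM}. What the paper's approach buys is that the three-regime structure comes for free from Rockafellar's theorem once one knows $\xi\in L^1(\Om)$, and the constant $\cC$ emerges naturally from the additive freedom in $\cK(\rho_k,\rho_{k-1})$; what your approach buys is that it avoids the machinery of $\Gamma(L^\infty)$, the subdifferential sum rule, and the delicate l.s.c.\ verification for $H$ in Lemma~\ref{lem:Hlsc}. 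One small gap in your sketch: the determination of $\cC$ via two-sided variations on $\{0<\rho_k,\ \rho_k\neq 1\}$ presupposes that this set has positive measure; in the degenerate case where $\rho_k\in\{0,1\}$ a.e.\ you would need a separate argument to produce $\cC$ (e.g., by bounding the oscillation of $\Phi+\ophi_k/\tau$ on $\{\rho_k=1\}$ via the one-sided inequalities), but this is a routine patch.
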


The proof of the previous results relies on the precise derivation of the subdifferential of the corresponding objective functional in \eqref{eq:step}. Let us point out that the subdifferential of sum is not always the sum of subdifferentials (see for instance \cite[Example 7.22]{OTAM}). Therefore, we need to carefully choose the domain of definition of $\cJ$. In the spirit of Lemma \ref{lem:Linfty}, we consider it as a functional on $L^\infty(\Om)$ instead of $\sP(\Om)$. The additive property of subdifferentials on $L^\infty(\Om)$ holds under suitable conditions (cf. \cite{EkeTem}). 

\begin{remark}
Let us underline that in our analysis we rely on the classical subdifferential calculus in $L^p$ spaces rather than directly computing Wasserstein subdifferentials (cf. \cite{AmbGigSav}). This is mainly because of the already available powerful classical results on precise representations of subdifferentials of integral functionals, such as $\cJ$, on $L^p$ spaces (cf. \cite{Roc68, Roc71}). In the same time, this framework is well suited also for computing the subdifferential of $\rho\mapsto W_2^2(\cdot,\rho_{k-1})$ (see for instance in \cite{OTAM}). 

It worth mentioning, however, that at the heuristic level  there is an intimate link between the two notions of subdifferentials, namely: if $\xi\in\partial\cJ(\rho)$ (i.e. $\xi$ is an element of the classical $L^p$ subdifferential) is sufficiently regular, then $\nabla\xi\in\partial_{W_2}\cJ(\rho)$ (that is, $\nabla\xi$ is an element of the Wasserstein subdifferential). Nevertheless, this connection at this point remains only formal, because typically we do not have any a priori information on the regularity of $\xi$ to justify this link.
\end{remark}

\begin{proposition}
\label{prop:add} For all $k \in \{1,\dots, N\}$ we have 
\begin{align}
\label{eq:1add}
\left. \partial \left(\cJ(\rho)+\cF(\rho)+\frac{1}{2\t}W_2^2(\rho,\rho_{k-1}) \right)\right |_{\rho = \rho_{k}} = \partial \cJ(\rho_{k}) + \Phi + \frac{1}{2\t} \partial(W_2^2(\rho,\rho_{k-1})) |_{\rho = \rho_{k}}.
\end{align}
\end{proposition}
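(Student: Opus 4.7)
The plan is to localize the analysis in the Banach space $X := L^\infty(\Om)$ — made possible by Lemma \ref{lem:Linfty}, which guarantees that every iterate $\rho_k$ is uniformly bounded — and to invoke a classical additive sum rule for subdifferentials of convex functionals (cf.\ \cite{EkeTem}). As a warm up, the linear and $L^\infty$-continuous term $\cF(\rho) = \int_\Om \Phi\,\rho\,\dd x$ poses no difficulty: since $\Phi \in L^\infty(\Om)$, its subdifferential is the singleton $\{\Phi\}$ at every $\rho$, and the classical Moreau--Rockafellar argument yields $\partial(G + \cF) = \partial G + \Phi$ for any proper convex lsc $G$ on $X$. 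The real task is thus to establish the additivity for $\cJ$ and the functional $\mathcal{W}(\rho) := \frac{1}{2\t}W_2^2(\rho,\rho_{k-1})$.

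Both $\cJ$ and $\mathcal{W}$, extended to $X$ by $+\infty$ outside $\sP^{\mathrm{ac}}(\Om)$, are proper, convex and lower semicontinuous; convexity of $\mathcal{W}$ with respect to the linear interpolation of densities relies on the standard observation that if $\pi_i$ is optimal between $\rho_i$ and $\rho_{k-1}$, then $\l\pi_1+(1-\l)\pi_2$ is admissible between $\l\rho_1+(1-\l)\rho_2$ and $\rho_{k-1}$. The qualification condition needed for the sum rule — a common point of the effective domains at which one of the functionals is continuous along the admissible set — I would verify at the uniform density $\bar\rho \equiv 1/|\Om|$: sequences $(\rho_n)\subset\sP^{\mathrm{ac}}(\Om)$ with $\rho_n\to\bar\rho$ in $L^\infty$ stay uniformly bounded and bounded away from $0$, so by continuity of $\cS$ on the relevant compact subinterval of $(0,+\infty)$ one gets $\cJ(\rho_n)\to\cJ(\bar\rho)$ (via dominated/uniform convergence), while the $\rho_n$ converge narrowly as probability measures on the bounded set $\Om$, hence also in $W_2$, giving $\mathcal{W}(\rho_n)\to\mathcal{W}(\bar\rho)$.

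Applying the Ekeland--Temam sum rule then yields $\partial(\cJ+\mathcal{W})(\rho_k)=\partial\cJ(\rho_k)+\partial\mathcal{W}(\rho_k)$ at every point, and combining with the contribution of $\cF$ gives \eqref{eq:1add}. I expect the single delicate point of the argument to be the proper handling of the mass constraint $\int\rho=1$ implicit in $\sP^{\mathrm{ac}}(\Om)$: it has to be treated with a version of the sum rule suitable for constrained convex functionals, and the Lagrange multiplier attached to it is precisely the arbitrary additive constant $\cC(k)$ that will appear in the optimality condition \eqref{eq:1opg} of Theorem \ref{thm:opg}.
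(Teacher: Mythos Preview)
Your overall plan --- work in $L^\infty(\Om)$ and apply the Ekeland--Temam sum rule (Lemma~\ref{lem:subsum}) --- is exactly the paper's approach. The gap is in the qualification condition.

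You extend \emph{both} $\cJ$ and $\mathcal{W}$ by $+\infty$ outside $\sP^{\mathrm{ac}}(\Om)$, and then check continuity of each at $\bar\rho=1/\sL^d(\Om)$ only along sequences $(\rho_n)\subset\sP^{\mathrm{ac}}(\Om)$. That is not what Lemma~\ref{lem:subsum} asks: it requires one of the two functionals to be continuous at the qualification point in the ambient $L^\infty$ topology. With the mass constraint baked into both functionals, every $L^\infty$-ball around $\bar\rho$ contains functions whose integral is not $1$, so both functionals equal $+\infty$ arbitrarily close to $\bar\rho$ and neither is continuous there. You flag this yourself at the end as ``the single delicate point'', but it is not resolved; invoking an unspecified constrained sum rule is precisely the missing step.

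The paper's fix is simple and worth absorbing: leave $\cJ(\rho)=\int_\Om S(\rho)\,\dd x$ as the \emph{unconstrained} integral functional on $L^\infty(\Om)$. By continuity of $S$ on $\Rp$, this $\cJ$ is genuinely $L^\infty$-continuous at a strictly positive bounded density $\hmu$, so the standard sum rule applies. The probability constraint is then carried entirely by $\mathcal{W}$, and the paper shows $\mathcal{W}\in\Gamma(L^\infty(\Om))$ by identifying $\tfrac12 W_2^2(\cdot,\rho_{k-1})$ with the Legendre transform $H^*$ of $H(\phi)=-\int_\Om\phi^c\,\dd\rho_{k-1}$ on $L^1(\Om)$ (Proposition~\ref{prop:dual}), which gives convexity and lower semicontinuity for free. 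The additive constant $\cC(k)$ in Theorem~\ref{thm:opg} then arises because $\partial H^*(\rho_k)=\cK(\rho_k,\rho_{k-1})$ is closed under adding constants --- this is the dual shadow of the mass constraint, but no separate Lagrange-multiplier argument is needed.
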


\begin{proof}
To simplify the writing, we consider only the case $k=1$. Let us check that $\cJ$ and $W_2^2(\cdot,\rho_{0})$ satisfy the assumptions in Lemma~\ref{lem:subsum}. The convexity of $\cS$ implies that of $\cJ$. Also, the continuity
of $\cJ$ in $L^\infty(\Om)$ follows from the continuity of $S$. From Lemma~\ref{lem:equ}, we conclude $\cJ \in \Gamma(L^\infty(\Om))$. We have the same conclusion for the functional $\cF$ (which is actually linear in $\rho$).

\medskip

Let us show that $W_2^2(\cdot,\rho_{0}) \in \Gamma(L^\infty(\Om))$. Define $H : L^1(\Om) \rightarrow \Rinf$ by
\begin{align}\label{def:H}
H(\phi):= -\int_{\Om} \phi^c \dd\rho_0.
\end{align}
Proposition~\ref{prop:dual} implies that $H^* : L^\infty(\Om) \rightarrow \Rinf$ is given (in the sense of  \eqref{def:lt}) by
\begin{align*}
H^* = \frac{1}{2} W_2^2(\cdot,\rho_{0}) \hbox{ on } L^\infty(\Om).
\end{align*}
We conclude $W_2^2(\cdot,\rho_{0}) \in \Gamma(L^\infty(\Om))$.

\medskip

Lastly, choose $A\subseteq\Om$ a Borel set such that $\sL^d(A)\neq 1$ and define
\begin{align}
\label{eq:hmu}
\hmu := \frac{1}{\sL^d(A)}\one_{A}.
\end{align}
$\cJ(\hmu)$, $\cF(\hmu)$ and $W_2^2(\hmu,\rho_{0})$ are finite. Furthermore, by the continuity of $\cS$ in $\Rp$, $\cJ$ is continuous at $\hmu$. In the same way $\cF$ is also continuous at $\hmu$. Thus, we conclude \eqref{eq:1add} from Lemma~\ref{lem:subsum}.
\end{proof}

Next, let us find the subdifferential of $W_2^2(\cdot,\rho_{k-1})$. While this subdifferential is expected to be the set of Kantarovich potentials $\cK(\rho_k,\rho_{k-1})$, it is not straight forward to conclude about this as we consider the subdifferential for the functional on $L^\infty(\Om)$. We rely on the ideas from \cite[Proposition 7.17]{OTAM}, tailored to our setting.

\begin{lemma}
\label{lem:subdual}
\cite[Lemma~7.15]{OTAM}
Let $\fX$ be a Banach space and $H : \fX \rightarrow \Rinf$ be convex and lower semicontinuous. Set $H^*(y) = \sup\limits_{x \in \fX} \{\langle x,y \rangle_{\fX,\fX^*} - H(x)\}$. Then, we have
\begin{align}
\label{eq:subdual}
\partial H^*(y) = \arg\max\limits_{x \in \fX} \left\{\langle x,y \rangle_{\fX,\fX^*} - H(x)\right\}.
\end{align}
\end{lemma}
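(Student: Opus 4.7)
The plan is to prove both set inclusions by combining the Fenchel--Young inequality $\langle x, z\rangle_{\fX,\fX^*} \le H(x) + H^*(z)$, which follows immediately from the definition of $H^*$ as a supremum, with the Fenchel--Moreau biconjugation theorem. Throughout, I interpret $\partial H^*(y)$ as the set of $x \in \fX$, viewed inside $\fX^{**}$ through the canonical embedding, for which the subgradient inequality $H^*(z) \ge H^*(y) + \langle x, z-y\rangle_{\fX,\fX^*}$ holds for every $z \in \fX^*$.

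The easy inclusion $\argmax \subseteq \partial H^*(y)$ needs no assumption on $H$ and I would dispatch it first. If $x$ attains the supremum defining $H^*(y)$, then $\langle x, y\rangle = H(x) + H^*(y)$; plugging this into Fenchel--Young applied at an arbitrary $z \in \fX^*$ gives $H^*(z) \ge \langle x, z\rangle - H(x) = H^*(y) + \langle x, z-y\rangle$, which is precisely the subgradient inequality.

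For the reverse inclusion, take $x \in \partial H^*(y)$ and rewrite the subgradient inequality as $\langle x, z\rangle - H^*(z) \le \langle x, y\rangle - H^*(y)$ for every $z \in \fX^*$. Taking the supremum over $z$ on the left yields $H^{**}(x) \le \langle x, y\rangle - H^*(y)$, while the opposite inequality $H^{**}(x) + H^*(y) \ge \langle x, y\rangle$ is just Fenchel--Young applied to the pair $(H^*, H^{**})$; together they give the equality $H^{**}(x) + H^*(y) = \langle x, y\rangle$. This is the step where the hypotheses on $H$ enter, and it is the main (essentially only) substantive ingredient: since $H$ is convex, lower semicontinuous, and proper (tacit once $H^*\not\equiv+\infty$), the Fenchel--Moreau theorem yields $H^{**} = H$, so $H(x) + H^*(y) = \langle x, y\rangle$. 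This last identity is exactly the equality case of Fenchel--Young characterising $x$ as a maximiser, so $x \in \argmax_{z \in \fX}\{\langle z, y\rangle - H(z)\}$, closing the proof. The main obstacle is thus purely the correct invocation of Fenchel--Moreau; without convexity and lower semicontinuity of $H$, only the easy inclusion $\argmax \subseteq \partial H^*(y)$ would survive.
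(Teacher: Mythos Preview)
Your argument is correct and is essentially the standard proof via Fenchel--Young and Fenchel--Moreau. Note, however, that the paper does not give its own proof of this lemma: it is quoted directly from \cite[Lemma~7.15]{OTAM} and used as a black box, so there is no in-paper argument to compare against.
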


\begin{lemma}\label{lem:Hlsc}
$H:L^1(\Om)\to\R\cup\{+\infty\}$ given in \eqref{def:H} is convex and l.s.c.
\end{lemma}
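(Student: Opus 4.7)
Convexity falls out of a pointwise-in-$y$ observation. For each fixed $y \in \Om$, the value
$$\phi^c(y) \;=\; \inf_{x \in \Om}\left\{ \tfrac{1}{2}|x-y|^2 - \phi(x)\right\}$$
is a pointwise infimum of functionals affine in $\phi$, hence concave in $\phi$; thus $\phi \mapsto -\phi^c(y)$ is convex for every $y$, and integration against the probability measure $\rho_0$ preserves convexity. This gives the convexity of $H$ on $L^1(\Om)$.

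For lower semicontinuity my plan is a Fatou-type argument combined with the standard subsequence principle. Assume $\phi_n \to \phi$ in $L^1(\Om)$ and extract a subsequence, not relabelled, along which $\phi_n \to \phi$ almost everywhere on a set $E \subseteq \Om$ of full measure; by the subsequence principle it suffices to verify $H(\phi) \leq \liminf_n H(\phi_n)$ along such subsequences. For every $y \in \Om$ and $x \in \Om$, the definition of the $c$-transform gives the elementary inequality $\phi_n(x) - \tfrac{1}{2}|x-y|^2 \leq -\phi_n^c(y)$; restricting to $x \in E$, taking $\liminf_n$, and then the supremum over $x \in E$ yields the pointwise bound
$$\liminf_{n \to \infty}\bigl(-\phi_n^c(y)\bigr) \;\geq\; \sup_{x \in E}\bigl\{\phi(x) - \tfrac{1}{2}|x-y|^2\bigr\} \;=\; -\phi^c(y) \qquad \hbox{ for every } y \in \Om,$$
once representatives are fixed so that $\phi$ agrees with $\lim_n \phi_n$ on $E$. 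To invoke Fatou's lemma I still need a uniform integrable minorant for $-\phi_n^c$ against $\rho_0$. Averaging the same defining inequality over the normalized Lebesgue measure on $\Om$ gives
$$-\phi_n^c(y) \;\geq\; \frac{1}{\sL^d(\Om)}\int_\Om \phi_n(x) \dd x \;-\; \frac{1}{2\sL^d(\Om)}\int_\Om |x-y|^2 \dd x,$$
whose right-hand side is uniformly bounded below in $n$ (since $\|\phi_n\|_{L^1}$ converges, hence is bounded) and is a continuous, hence bounded, function of $y \in \Om$. Fatou's lemma then promotes the pointwise lower bound to $H(\phi) \leq \liminf_n H(\phi_n)$, completing the proof.

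The only genuine subtlety is that $\phi^c$ depends on the pointwise representative of $\phi$, while $L^1(\Om)$ quotients by null sets: this is precisely what forces me to work along an a.e.-convergent subsequence and to restrict the pointwise supremum to the common full-measure set of convergence, rather than trying to argue directly in the $L^1$ topology.
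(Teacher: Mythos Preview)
Your proof is correct and follows the same overall architecture as the paper's (extract an a.e.\ convergent subsequence, establish the pointwise bound $\liminf_n(-\phi_n^c(y))\ge -\phi^c(y)$, then apply Fatou and the subsequence principle), but you execute the two technical steps differently and in each case more simply.

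For the pointwise bound, the paper invokes Egorov's theorem to upgrade a.e.\ convergence to uniform convergence off a small set, and then passes to the limit inside the essential supremum. You bypass Egorov entirely: fixing $y$, you use that the defining inequality of the $c$-transform holds for a.e.\ $x$, take $\liminf_n$ at each such $x$, and then observe that the essential supremum is dominated by the actual supremum over any full-measure set. One small imprecision worth flagging: you state the inequality $\phi_n(x)-\tfrac12|x-y|^2\le -\phi_n^c(y)$ as holding ``for every $x$'', but with the essential-infimum definition it holds only for a.e.\ $x$, with a null set that a priori depends on $(y,n)$. This is harmless, since for each fixed $y$ only countably many $n$ are in play, so the union of exceptional sets is still null and can be absorbed into your full-measure set $E$; but it is exactly the kind of point your last paragraph should make explicit.

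For the Fatou minorant, the paper uses the diagonal bound $-\phi_n^c(y)\ge \phi_n(y)$, which forces one to control $\int \phi_n\,\dd\rho_0$ and hence implicitly uses $\rho_0\in L^\infty$. Your averaging trick produces a uniform constant lower bound, which works against any $\rho_0\in\sP(\Om)$ and is cleaner. Overall your argument is a genuine simplification of the paper's.
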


\begin{proof}
The proof of convexity of $H$ is the same as in \cite[Proposition 7.17]{OTAM}, where one needs to change only the definition of $\vphi^c$ using essential infima.

Let us show now that $H$ is l.s.c. For this, let $\vphi\in L^1(\Om)$ and  $(\vphi_n)_{n\in\N}$ a sequence in $L^1(\Om)$ such that $\vphi_n\to\vphi$ strongly in $L^1(\Om)$ as $n\to+\infty$.

Notice first that by definition, 
$$-\vphi^c(y)\ge\vphi(y),\ {\rm{a.e.\ in\ }}\Om,$$
from where we have that $H(\vphi)>-\infty.$ 
Because of the strong $L^1$ convergence, we know that there exists a subsequence of $(\vphi_n)_{n\in\N}$ (that we do not relabel), which is converging pointwise a.e. in $\Om$ to $\vphi$. We shall work with this sequence from now on.

Writing the previous inequality for $\vphi^c_n$ and $\vphi_n$, we have that 
$$\liminf_{n\to+\infty}-\vphi^c_n(y)\ge\liminf_{n\to+\infty}\vphi_n(y)=\vphi(y),\ {\rm{a.e.\ in\ }}\Om,$$
where we used the fact that $\vphi_n(y)\to\vphi(y)$ a.e. in $\Om$, as $n\to+\infty$. 

Let us define $g:\Om\to\R\cup\{+\infty\}$ as $g(y):=\liminf_{n\to+\infty}-\vphi^c_n(y)$. Notice that this is measurable function. Indeed, $(-\vphi^c_n)_{n\in\N}$ is a sequence of measurable functions (infima of measurable functions), and using Fatou's lemma for the non-negative sequence of measurable functions $(-\vphi^c_n-\vphi_n)_{n\in\N}$, one concludes that $g$ is measurable and
$$\int_{\Om}\vphi(y)\rho_{0}(y)\dd y\le\int_\Om g(y)\rho_{0}(y)\dd y\le\liminf_{n\to+\infty} H(\vphi_n).$$

{\it Claim.} $\vphi(y)\le-\vphi^c(y)\le g(y)$ for a.e. $y\in\Om$.

{\it Proof of the claim.} Actually the first inequality was shown before, thus we show only the second one.
Thus, by Egorov's theorem, we have that for any $\d>0$ there exists a measurable set $B_\d\subseteq\Om$ such that $\sL^d(B_\d)<\d$ and $(\vphi_{n})_{n\in\N}$ converges uniformly to $\vphi$ as $n\to+\infty$ on $\Om\setminus B_\d$. Let us fix a small $\d>0$. We have furthermore that for any $\e>0$ there exists $N_\e\in\N$ such that 
$$\vphi(x)-\e\le\vphi_n(x)\le\vphi(x)+\e$$
for a.e. $x\in\Om\setminus B_\d$ and $n\ge N_\e$. Because of this, we have the following chain of inequalities for all $n\ge N_\e$
\begin{align*}
-\vphi^c_n(y)&=\sup_{x\in\Om}\left\{\vphi_n(x)-|x-y|^2\right\}\ge\sup_{x\in\Om\setminus B_\d}\left\{\vphi_n(x)-|x-y|^2\right\}\ge\sup_{x\in\Om\setminus B_\d}\left\{\vphi(x)-\e-|x-y|^2\right\}.
\end{align*}
Taking $\liminf_{n\to+\infty}$ of both sides, one obtains
$$g(y)\ge \sup_{x\in\Om\setminus B_\d}\left\{\vphi(x)-\e-|x-y|^2\right\}$$
for a.e. $y\in\Om$. By the arbitrariness of $\e$ and $\d$ (in this order), one gets that
$$g(y)\ge \sup_{x\in\Om}\left\{\vphi(x)-|x-y|^2\right\}=-\vphi^c(y),$$
as we claimed. 

Notice that we have proved the following: if $(\vphi_n)_{n\in\N}$ is converging to $\vphi$ in $L^1(\Om)$, then there exists a subsequence $(\vphi_{n_j})_{j\in\N}$  of the original sequence such that 
$$H(\vphi)\le\liminf_{j\to+\infty}H(\vphi_{n_j}).$$
This statement actually implies the l.s.c. of $H$ on the full sequence. Indeed, observe that by the definition of $\liminf$, there exists a subsequence $(\vphi_{n_k})_{k\in\N}$ of the original sequence such that 
$$\liminf_{n\to+\infty} H(\vphi_n)=\lim_{k\to+\infty} H(\vphi_{n_k}).$$
We have shown previously that there exists a subsequence $(\vphi_{n_{k_j}})_{j\in\N}$ of $(\vphi_{n_k})_{k\in\N}$ such that 
$$H(\vphi)\le\liminf_{j\to+\infty} H(\vphi_{n_{k_j}}).$$
On the other hand 
$$\liminf_{j\to+\infty} H(\vphi_{n_{k_j}})=\lim_{j\to+\infty} H(\vphi_{n_{k_j}})=\lim_{k\to+\infty} H(\vphi_{n_k})=\liminf_{n\to+\infty} H(\vphi_n),$$
thus the l.s.c. of $H$ follows.
\end{proof}

\medskip
 
\begin{proposition}
\label{prop:wul}
For all $k \in \{1,\dots, N\}$ we have 
\begin{align}
\label{eq:add21}
\frac{1}{2}\partial(W_2^2(\rho,\rho_{k-1})) {\big{|}}_{\rho = \rho_{k}} = \cK(\rho_{k}, \rho_{k-1}).
\end{align}
\end{proposition}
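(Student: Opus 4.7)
The plan is to invoke Lemma~\ref{lem:subdual} applied to the function $H$ defined in \eqref{def:H}, viewed as a functional on $L^1(\Om)$, combined with the identification $H^* = \tfrac{1}{2}W_2^2(\cdot,\rho_{k-1})$ already provided by Proposition~\ref{prop:dual}. Since Lemma~\ref{lem:Hlsc} established that $H$ is convex and lower semicontinuous on $L^1(\Om)$, the lemma applies and gives
\begin{align*}
\partial H^*(\rho_{k}) \;=\; \argmax_{\phi \in L^1(\Om)} \left\{ \int_\Om \phi \, \rho_{k} \dd x - H(\phi) \right\} \;=\; \argmax_{\phi \in L^1(\Om)} \left\{ \int_\Om \phi \dd \rho_{k} + \int_\Om \phi^c \dd\rho_{k-1} \right\}.
\end{align*}
Observe that the duality pairing makes sense because $\rho_{k} \in L^\infty(\Om)$ by Lemma~\ref{lem:Linfty}, so $\phi \mapsto \int \phi \, \rho_{k}$ is a well-defined continuous linear functional on $L^1(\Om)$.

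Next I would identify the above argmax with the Kantorovich potentials. By Kantorovich duality for the quadratic cost (which is essentially what Proposition~\ref{prop:dual} encodes), the supremum of $\int \phi \dd\rho_{k} + \int \phi^c \dd\rho_{k-1}$ over $\phi \in L^1(\Om)$ equals $\tfrac12 W_2^2(\rho_{k},\rho_{k-1})$, and the maximizers are precisely the Kantorovich potentials between $\rho_k$ and $\rho_{k-1}$ as specified in Definition~\ref{def:kan}. In other words, the argmax on the right-hand side of the displayed equation is, by definition, $\cK(\rho_{k},\rho_{k-1})$. Combining with $H^* = \tfrac12 W_2^2(\cdot,\rho_{k-1})$ on $L^\infty(\Om)$, we obtain the claimed identity \eqref{eq:add21}.

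The main subtlety I expect is the functional-analytic issue of computing the subdifferential of $H^*$ on the non-reflexive dual space $L^\infty(\Om) = L^1(\Om)^*$. Strictly speaking, Lemma~\ref{lem:subdual} produces elements of $\partial H^*(\rho_{k})$ lying in the predual $L^1(\Om)$, and one must check that this exhausts the whole subdifferential relevant for our purpose (that is, the subdifferential used in Proposition~\ref{prop:add} when applying Lemma~\ref{lem:subsum}). This is handled exactly as in \cite[Proposition~7.17]{OTAM}: the Fenchel--Moreau theorem guarantees $H^{**} = H$ on $L^1(\Om)$, so that the subdifferential inclusion $\xi \in \partial H^*(\rho_k)$ with $\xi \in L^1(\Om)$ is equivalent to $\rho_k \in \partial H(\xi)$, which in turn is equivalent to $\xi$ being a maximizer in the Kantorovich dual problem. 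This identification, together with the boundedness of $\rho_k$, is sufficient to carry the argument through in the $L^\infty$-setting used in Proposition~\ref{prop:add}.
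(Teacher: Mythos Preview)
Your proposal is correct and follows essentially the same route as the paper: invoke Lemma~\ref{lem:Hlsc} to verify that $H$ is convex and l.s.c.\ on $L^1(\Om)$, apply Lemma~\ref{lem:subdual} with $\fX=L^1(\Om)$ to obtain $\partial H^*(\rho_k)=\argmax_{\phi\in L^1(\Om)}\{\int\phi\dd\rho_k+\int\phi^c\dd\rho_{k-1}\}$, and then identify this argmax with $\cK(\rho_k,\rho_{k-1})$ via Definition~\ref{def:kan}. Your added paragraph on the $L^\infty=(L^1)^*$ subtlety is a reasonable elaboration, but the paper simply relies on Lemma~\ref{lem:subdual} as stated (which already asserts the full equality $\partial H^*(y)=\argmax$), so no extra justification is required.
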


\begin{proof}
To simplify the notation, we set $k=1$. Recall from Proposition~\ref{prop:dual} that
\begin{align*}
\frac{1}{2}\partial(W_2^2(\rho,\rho_{0})) |_{\rho = \rho_{1}} = \partial H^*(\rho_{1})
\end{align*}
for $H$ given in \eqref{def:H}. From Lemma~\ref{lem:subdual} and Lemma~\ref{lem:Hlsc}, it holds that
\begin{align*}
\partial H^*(\rho_{1}) = {\rm{argmax}}_{\phi \in L^1(\Om)} \left\{\int_{\Om} \phi \dd\rho_{1} + \int_{\Om} \phi^c \dd\rho_{0}\right\}.
\end{align*}
From Definition~\ref{def:kan}, we conclude.
\end{proof}

Lastly, let us compute the subdifferential of $\cJ$ based on \cite{Roc71}. Before, we need the following preparatory result.

\begin{lemma}
\label{lem:roc}
\cite[Corollary 1B]{Roc71}
Let $\psi$ and $\Psi$ be given as in \eqref{eq:psi}. Assume that $\psi(\mu(x))$ is majorized by a summable function of $x$ for at least one $\mu \in L^\infty(\Om)$ and that $\psi^*(\zeta(x))$ is majorized by a summable function of $x$ for at least one $\zeta \in L^1(\Om)$. Then, an element $\xi \in L^\infty(\Om)^*$ belongs to $\partial \Psi(\mu)$ given in \eqref{eq:2psi} if and only if $\xi^{\rm{ac}}(x) \in \partial \psi(\mu(x))$ for a.e. $x\in \Om$  where $\xi^{\rm{ac}}$ is the absolutely continuous component of $\xi$, and the singular component $\xi^{\rm{s}}$ of $\xi$ attains its maximum at $\mu$ over $$\{ \nu \in L^\infty(\Om) : \Psi(\nu) < + \infty \}.$$
\end{lemma}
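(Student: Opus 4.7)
The plan is to reduce the problem to the Fenchel duality characterization of subgradients, using the Yosida--Hewitt decomposition $L^\infty(\Om)^*=L^1(\Om)\oplus L^\infty(\Om)^*_{\rm{s}}$ of the dual into absolutely continuous and purely finitely additive (singular) parts. The hypothesis that $\psi\circ\mu$ is majorized by a summable function for some $\mu\in L^\infty(\Om)$ ensures that $\Psi$ is proper on $L^\infty(\Om)$, while the analogous hypothesis on $\psi^*\circ\zeta$ plays the dual role for the conjugate $\Psi^*$ on $L^1(\Om)$.

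First I would compute $\Psi^*$ on $L^1(\Om)$ by a pointwise maximization combined with a standard measurable selection argument to exchange the supremum with the integral, obtaining
\begin{equation*}
\Psi^*(\zeta)=\sup_{\mu\in L^\infty(\Om)}\left\{\int_\Om\zeta(x)\mu(x)\dd x-\Psi(\mu)\right\}=\int_\Om\psi^*(\zeta(x))\dd x,\ \ \forall\zeta\in L^1(\Om).
\end{equation*}
Next, I would extend $\Psi^*$ to the full dual $L^\infty(\Om)^*$. Writing $\xi=\xi^{\rm{ac}}+\xi^{\rm{s}}$, and using that $\Psi\in\Gamma(L^\infty(\Om))$ so that $\Psi^{**}=\Psi$ in the $(L^\infty(\Om),L^\infty(\Om)^*)$ duality, the target identity is the decoupled expression
\begin{equation*}
\Psi^*(\xi)=\int_\Om\psi^*(\xi^{\rm{ac}}(x))\dd x+\sigma(\xi^{\rm{s}}),\quad\sigma(\xi^{\rm{s}}):=\sup\left\{\langle\xi^{\rm{s}},\nu\rangle:\nu\in L^\infty(\Om),\ \Psi(\nu)<+\infty\right\}.
\end{equation*}
The key point is that purely finitely additive functionals concentrate outside ordinary integration; combined with an approximation argument on the effective domain (made available by the majorization hypothesis), this yields the splitting.

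With both ingredients in place, I would invoke the Fenchel equality $\xi\in\partial\Psi(\mu)\iff\Psi(\mu)+\Psi^*(\xi)=\langle\xi,\mu\rangle$, which, after substitution, reduces to
\begin{equation*}
\int_\Om\left[\psi(\mu(x))+\psi^*(\xi^{\rm{ac}}(x))-\xi^{\rm{ac}}(x)\mu(x)\right]\dd x+\left[\sigma(\xi^{\rm{s}})-\langle\xi^{\rm{s}},\mu\rangle\right]=0.
\end{equation*}
The integrand is nonnegative by Young's inequality and the bracket is nonnegative by the very definition of $\sigma$, so both must vanish separately: the integral equality forces $\xi^{\rm{ac}}(x)\in\partial\psi(\mu(x))$ a.e.\ by the equality case of Young's inequality, and the bracket equality forces $\xi^{\rm{s}}$ to attain its maximum at $\mu$ over $\{\nu\in L^\infty(\Om):\Psi(\nu)<+\infty\}$. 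These are exactly the two conditions in the statement.

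The main obstacle I expect is the careful handling of the singular part $\xi^{\rm{s}}$, specifically proving that only $\xi^{\rm{ac}}$ enters the integral in $\Psi^*(\xi)$ while $\xi^{\rm{s}}$ gives rise to a support-functional type contribution; this relies delicately on properties of purely finitely additive functionals, on the Yosida--Hewitt decomposition, and on a careful approximation argument inside the effective domain $\{\Psi<+\infty\}$ supplied by the majorization hypothesis. The measurable selection step needed to compute $\Psi^*$ on $L^1(\Om)$ is classical but also requires some technical care in choosing, for each $x$, a measurable maximizer of $\zeta(x)t-\psi(t)$ in $t$.
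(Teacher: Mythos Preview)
The paper does not supply its own proof of this lemma: it is stated as a direct citation of \cite[Corollary 1B]{Roc71} and used as a black box. Your outline is essentially Rockafellar's original argument---compute $\Psi^*$ on $L^1(\Om)$ by pointwise conjugation, extend to $L^\infty(\Om)^*$ via the Yosida--Hewitt decomposition to obtain the split $\Psi^*(\xi)=\int_\Om\psi^*(\xi^{\rm ac})\,\dd x+\sigma(\xi^{\rm s})$, and then read off the subdifferential characterization from the Fenchel equality---so there is no divergence to report.
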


\begin{proposition}
\label{prop:sul}
For $\rho_{k}$ is given in \eqref{eq:step}, if $\xi \in \partial \cJ(\rho_{k}) \cap L^1(\Om)$, then it holds that
\begin{align}
\label{eq:1sul}
\xi \in
\begin{cases}
[-\infty,\cS'(0+)]	&{\rm{a.e.\ in}}\ \{ \rho_{k} = 0 \},
\\ [\cS'(1-),\cS'(1+)]	&{\rm{a.e.\ in}}\  \{ \rho_{k} = 1 \},
\\ \cS' \circ \rho_{k} &{\rm{a.e.\ in}}\  \{ \rho_{k} \neq 1 \}.
\end{cases}
\end{align}
\end{proposition}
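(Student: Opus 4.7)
The plan is to apply the Rockafellar representation of subdifferentials of convex integral functionals recalled in Lemma~\ref{lem:roc}, taking $\psi = \cS$ (extended by $+\infty$ on $(-\infty,0)$) and $\Psi = \cJ$. Once the hypotheses of that lemma are verified, we read off the conclusion \eqref{eq:1sul} from the pointwise subdifferential of the scalar convex function $\cS$ at the three relevant values $\rho_k(x) = 0$, $\rho_k(x) = 1$, and $\rho_k(x) \notin \{0,1\}$, using that $\xi \in L^1(\Om)$ has no singular part as an element of $L^\infty(\Om)^*$.

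First I would verify the two integrability assumptions of Lemma~\ref{lem:roc}. For the first one, take $\mu = \hmu$ as defined in \eqref{eq:hmu}: since $\hmu$ attains only two values, $x\mapsto \cS(\hmu(x))$ is bounded on $\Om$ and in particular summable, because $\cS$ is continuous on $\Rpz$ by Assumption~\ref{as:gen}. For the second one, observe that the superlinearity of $\cS$ (Assumption~\ref{as:gen}) together with continuity on $\Rpz$ and the extension by $+\infty$ on $(-\infty, 0)$ implies that the Legendre transform $\cS^*(\zeta) = \sup_{\rho\ge 0}\{\zeta\rho - \cS(\rho)\}$ is finite for every $\zeta\in\R$, and in particular $\cS^*(0) = -\inf_{\rho\ge 0}\cS(\rho) < +\infty$. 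Choosing $\zeta \equiv 0 \in L^1(\Om)$, the map $x\mapsto \cS^*(\zeta(x))$ is constant, hence summable on the bounded set $\Om$.

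Next I would compute $\partial \cS$ pointwise on $[0,+\infty)$. Since $\cS$ is convex and of class $C^2$ on $(0,+\infty)\setminus\{1\}$, at every such $t$ the subdifferential reduces to the singleton $\{\cS'(t)\}$. At $t = 1$, the left- and right-derivatives $\cS'(1-) \le \cS'(1+)$ exist (as in \eqref{eq:slr}) and the standard one-dimensional convex analysis gives $\partial \cS(1) = [\cS'(1-), \cS'(1+)]$. At the boundary point $t = 0$, a subgradient $s$ must satisfy $\cS(\rho) \ge \cS(0) + s\rho$ for every $\rho \ge 0$, which forces $s \le \cS'(0+)$; thus $\partial \cS(0) \subseteq (-\infty, \cS'(0+)]$, with the convention that this set is empty when $\cS'(0+) = -\infty$ (consistent with Lemma~\ref{lem:postg}).

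Finally I would combine these ingredients. Applying Lemma~\ref{lem:roc} to $\xi \in \partial \cJ(\rho_k)$ yields $\xi^{\rm{ac}}(x) \in \partial \cS(\rho_k(x))$ for a.e. $x \in \Om$, together with a condition on the singular part $\xi^{\rm{s}}$. The hypothesis $\xi \in L^1(\Om)$, once $\xi$ is viewed inside $L^\infty(\Om)^*$, means precisely that the singular component $\xi^{\rm{s}}$ with respect to $\sL^d$ vanishes, so $\xi = \xi^{\rm{ac}}$ almost everywhere. Combining this with the case analysis of $\partial \cS$ above gives \eqref{eq:1sul}. The only delicate step, which I would flag as the main point requiring care, is ensuring that the integrability hypotheses of Lemma~\ref{lem:roc} are really met for our non-smooth $\cS$; but these are handled by the superlinearity and the continuity on $\Rpz$ assumed in Assumption~\ref{as:gen}, so there is no essential obstacle.
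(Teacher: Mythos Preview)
Your proposal is correct and follows essentially the same approach as the paper: both verify the two integrability hypotheses of Lemma~\ref{lem:roc} via simple constant (or two-valued) test functions and the superlinearity of $\cS$, then invoke the lemma and use $\xi\in L^1(\Om)$ to discard the singular part, reducing to the pointwise computation of $\partial\cS$. The only cosmetic differences are that the paper takes $\mu=\zeta=\frac{1}{\sL^d(\Om)}\one_\Om$ rather than your $\hmu$ and $\zeta\equiv 0$, and leaves the pointwise description of $\partial\cS$ implicit where you spell it out.
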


\begin{proof}
Let us show that $\cS$ and $\cS^*$ satisfies assumptions on Lemma~\ref{lem:roc}. Let $\mu=\zeta=\frac{1}{\sL^d(\Om)}\one_{\Om}$, then $\cS(\mu)$ is finite, and thus in $L^1(\Om)$. On the other hand, as $\cS$ is superlinear, $\cS^* < +\infty$ in $\Rpz$. Therefore, for any constant $c \in \R$, $\cS^*(c) \in L^1(\Om)$.

\medskip

By Lemma~\ref{lem:roc}, $\xi^{\rm{ac}}(x) \in \partial \cS(\rho_{k}(x))$ a.e., where $\xi^{\rm{ac}}$ is the absolutely continuous part of $\xi$.  From the direct computation of $\partial \cS(\rho_{k}(x))$, we conclude that $\xi^{\rm{ac}}$ satisfies the right hand side of \eqref{eq:1sul}.
As $\xi \in L^1(\Om)$, the singular part of $\xi$ is zero, $\xi^{\rm{ac}} = \xi$ and we conclude \eqref{eq:1sul}.
\end{proof}

\begin{proof}[Proof of Theorem~\ref{thm:opg}]
We only consider the case that $k=1$. By the optimality of $\rho_1$ in \eqref{eq:step}, it holds that
\begin{align*}
0 \in \partial \left(\cJ(\rho_{1})+\cF(\rho_1)+\frac{1}{2\t}W_2^2(\rho_{1},\rho_{0}) \right). 
\end{align*}
From Proposition~\ref{prop:add} and Proposition~\ref{prop:wul}, there exists $\xi \in \partial \cJ(\rho_{1})$, $\ophi_{1} \in \cK(\rho_{1}, \rho_{0})$ and $\cC \in \R$ such that 
\begin{align*}
\xi + \frac{\ophi_{1}}{\tau}+\Phi - \cC = 0 \hbox{ a.e. on } \Om .
\end{align*}
As $\ophi_{1},\Phi \in L^1(\Om)$, $\xi \in \partial \cJ(\rho_{1}) \cap L^1(\Om)$, Proposition~\ref{prop:sul} implies \eqref{eq:1opg}.
\end{proof}

\subsection{Optimality conditions for $\rho_0\in\sP(\Om)$ having finite energy}
In this subsection we are imposing \eqref{eq:g2}.
Let us show first that $\cJ$ satisfying the additional assumption in \eqref{eq:g2} defines a continuous functional on $L^r(\Om).$ In the previous subsection, the continuity of $\cJ$ in $L^\infty(\Om)$ directly follows from the continuity of $S$.

\begin{lemma}
\label{lem:cj}
Let $\cJ$ be given in \eqref{eq:j} satisfying \eqref{eq:1as22}. Then $\cJ$ is continuous in $L^s(\Om)$ for all 
\begin{align}\label{eq:1cj}
s > r {\rm{\ if\ }} r=1,\ {\rm{and}}\ s \geq r\ {\rm{if}}\ r>1.  
\end{align}
\end{lemma}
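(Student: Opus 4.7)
The plan is to reduce continuity of $\cJ$ on $L^s(\Om)$ to a standard Nemytskii-continuity statement, the key input being a polynomial upper bound on $|\cS|$ obtained by integrating the growth assumption \eqref{eq:1as22} twice.

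First I would derive the pointwise bound. Since $\cS$ is $C^2$ on $(0,+\infty)\setminus\{1\}$, for $\rho>1$ one can write
\[
\cS(\rho)=\cS(1)+\cS'(1+)(\rho-1)+\int_1^\rho (\rho-t)\,\cS''(t)\,\dd t,
\]
and the hypothesis $\cS''(t)\le \s_1 t^{r-2}$ immediately yields $|\cS(\rho)|\le C(1+\rho^r)$ for $\rho>1$ in the case $r>1$, while for $r=1$ the same computation gives $|\cS(\rho)|\le C(1+\rho(1+\log\rho))$, which in turn is bounded by $C_s(1+\rho^s)$ for any $s>1$. Combined with the continuity (hence boundedness) of $\cS$ on $[0,1]$, this produces the global estimate
\[
|\cS(\rho)|\le C_s\bigl(1+\rho^s\bigr)\quad\text{for all } \rho\ge0,
\]
valid for any $s$ satisfying \eqref{eq:1cj}. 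Note that the strict inequality $s>r$ in the case $r=1$ is essential at this step, since $\rho\log\rho$ cannot be dominated by $\rho$ itself.

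Next, I would prove continuity at a fixed $\rho\in L^s(\Om)\cap \sP^{\rm ac}(\Om)$ by the standard subsequence argument for Nemytskii operators: given $\rho_n\to\rho$ in $L^s(\Om)$, extract a subsequence (not relabelled) converging a.e.\ and dominated pointwise a.e.\ by some $g\in L^s(\Om)$. The pointwise bound above gives $|\cS(\rho_n)|\le C_s(1+g^s)\in L^1(\Om)$, continuity of $\cS$ yields $\cS(\rho_n)\to \cS(\rho)$ a.e., and Lebesgue's dominated convergence theorem produces $\cJ(\rho_n)\to \cJ(\rho)$. Since every subsequence of the original sequence has a further subsequence along which $\cJ(\rho_n)\to\cJ(\rho)$, the convergence holds along the full sequence, which is the desired continuity.

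I do not expect any genuine obstacle here: the argument is essentially an integration-by-parts / Taylor-with-remainder estimate followed by the textbook Nemytskii continuity result. The only subtlety is the bookkeeping in the borderline case $r=1$, where one must be careful to absorb the logarithmic factor into an arbitrarily small power increment, which is exactly why \eqref{eq:1cj} demands $s>r$ strictly rather than $s\ge r$ in that case.
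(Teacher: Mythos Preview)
Your proof is correct and follows essentially the same strategy as the paper: derive the polynomial growth bound $|\cS(\rho)|\le C_s(1+\rho^s)$, pass to an a.e.-convergent subsequence, apply a convergence theorem, and then upgrade to the full sequence by the sub-subsequence argument. The only cosmetic difference is that the paper applies Fatou's lemma to the nonnegative functions $C_s(1+|\mu_n|^s)\pm \cS(\mu_n)$ to obtain lower and upper semicontinuity separately, whereas you invoke the existence of an $L^s$-dominating function along the subsequence and use dominated convergence directly.
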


\begin{proof}

From \eqref{eq:1as22}, there exists $c>0$ such that for all $\rho \in \Rpz$ (since $S$ is also continuous, hence uniformly bounded on $[0,1]$)
\begin{align}
\label{eq:cj01}
|S(\rho)| \leq c (\rho^{s}+1).
\end{align}
for all $s$ satisfying \eqref{eq:1cj}.

Consider a sequence $\{ \mu_i \}_{i \in \N} \subset L^s(\Om)$ such that
\begin{align}
\label{eq:cj10}
\mu_i \to \mu \hbox{ in } L^s(\Om) \hbox{ as } i \to \infty
\end{align}
These exists a subsequence $\{ \mu_{i_j} \}_{j \in \N} \subset \{ \mu_i \}_{i \in \N}$ such that
\begin{align}
\label{eq:cj11}
\mu_{i_j} \to \mu \hbox{ a.e.} \hbox{ as } j \to \infty.
\end{align}

\medskip

From \eqref{eq:cj01}, it holds that for all $j \in \N$
\begin{align}
\label{eq:cj12}
0 \leq c (|\mu_{i_j}|^s +1) - |S(\mu_{i_j})| \leq c (|\mu_{i_j}|^s +1) \pm S(\mu_{i_j}).
\end{align}
Let us apply Fatou's lemma into $c ( |\mu_{i_j}|^s +1) + S(\mu_{i_j})$. From \eqref{eq:cj10}, \eqref{eq:cj11} and the continuity of $S$, it holds that
\begin{align*}
\int_\Om c (|\mu(x)|^s +1) + S(\mu(x)) \dd x &\leq \liminf_{j \to \infty} \int_\Om  c (|\mu_{i_j}(x)|^s +1) + S(\mu_{i_j}(x))\dd x,\\ 
&\leq \int_\Om c (|\mu(x)|^s+1)\dd x + \liminf_{j \to \infty} \int_\Om  S(\mu_{i_j})\dd x.
\end{align*}
and we have
\begin{align*}
\cJ(\mu) \leq \liminf_{j \to \infty} \cJ(\mu_{i_j}).
\end{align*}
Similarly to the argument at the end of the proof of Lemma \ref{lem:Hlsc}, we conclude the lower semicontinuity along the full sequence, therefore
\begin{align}
\label{eq:cj14}
\cJ(\mu) \leq \liminf_{i \to \infty} \cJ(\mu_{i}).
\end{align}

\medskip

Applying Fatou's lemma again into $c (|\mu_{i_j}|^s +1) - S(\mu_{i_j})$, we get 
\begin{align}
\label{eq:cj15}
\cJ(\mu) \geq \limsup_{j \to \infty} \cJ(\mu_{i_j}),
\end{align}
and as before, we deduce the upper semicontinuity along the full sequence. Therefore \eqref{eq:cj14} and \eqref{eq:cj15} imply that $\cJ(\mu) = \lim\limits_{j \to \infty} \cJ(\mu_{i_j})$ and $\cJ$ is continuous in $L^s(\Om)$.
\end{proof}

In what follows, we show that the minimizers of the of the minimizing movements scheme \eqref{eq:step} enjoy higher order summability estimates (which are independent of $\rho_0$, but depend on $\t$). These will play a crucial role later when deriving the optimality conditions. 

\begin{lemma}\label{lem:apriori-2star}
Suppose that $S$ satisfies Assumption \ref{as:gen} and \eqref{eq:2as22}. Let $\rho_k\in\sP(\Om)$ be the minimizer in \eqref{eq:step}.  Then $\rho_k\in L^{\b}(\Om)$, where $\b:=(2r-1)d/(d-2)$, if $d\ge 3$. If $d=2$ then the statement is true for any $\b<+\infty$ and $\b=+\infty$ if $d=1$. In particular, there exists $C>0$ depending only on $\Om$, $\|\nabla\Phi\|_{L^\infty}$ and $\beta$ such that if $\b<+\infty$, then
\begin{align}\label{estim:Lbeta}
\int_{\Om}(\rho_k)^\b\dd x\le C + \frac{C}{\t^2}W_2^2(\rho_{k},\rho_{k-1}).
\end{align}
Otherwise, for $d=1$, 
$$
\|\rho_k\|_{L^\infty}\le C.
$$

\end{lemma}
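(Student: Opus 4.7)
The plan is to exploit the minimality of $\rho_k$ in \eqref{eq:step} via a first-variation computation, converting it into a weighted gradient estimate that Sobolev embedding then upgrades to the claimed $L^\beta$ bound on $\rho_k$.

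First, I would compute the first variation by perturbing $\rho_k$ through pushforwards $\rho_\varepsilon := (\id + \varepsilon V)\#\rho_k$ along a smooth vector field $V$ on $\overline\Omega$ tangential to the boundary ($V\cdot\n = 0$ on $\partial\Omega$). The McCann change of variables for $\cJ$, direct differentiation of $\cF$, and the Benamou--Brenier first variation of $W_2^2$ combine to give, for every such $V$,
\begin{align*}
\int_\Omega \nabla P(\rho_k) \cdot V \, dx + \int_\Omega \rho_k \nabla \Phi \cdot V \, dx + \frac{1}{\tau}\int_\Omega \rho_k \nabla \ophi_k \cdot V \, dx = 0,
\end{align*}
where $P(\rho) := \rho \cS'(\rho) - \cS(\rho)$ (so that $P'(\rho) = \rho \cS''(\rho)$ on $\R^+\setminus\{1\}$) and $\ophi_k \in \cK(\rho_k,\rho_{k-1})$. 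Equivalently, on $\{\rho_k > 0\}\setminus\{\rho_k=1\}$, $\cS''(\rho_k)\nabla \rho_k = -\nabla \Phi - \tau^{-1}\nabla \ophi_k$. As $\cS$ need not be $C^2$ at $\rho=1$, I would justify this identity by approximating $\cS$ by $C^2$ strictly convex functions $\cS_\varepsilon$ in the spirit of the proof of Lemma~\ref{lem:Linfty}, using Stampacchia's theorem ($\nabla\rho_k = 0$ a.e.\ on $\{\rho_k=1\}$) to ensure the singular level set contributes nothing in the limit.

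Squaring the pointwise identity, multiplying by $\rho_k$, and integrating, using $\int\rho_k = 1$, $\|\nabla\Phi\|_{L^\infty} < \infty$, and the Brenier identity $\int_\Omega \rho_k |\nabla\ophi_k|^2\,dx = W_2^2(\rho_k,\rho_{k-1})$, gives
$$\int_\Omega \rho_k (\cS''(\rho_k))^2 |\nabla\rho_k|^2\, dx \leq C + \frac{C}{\tau^2} W_2^2(\rho_k,\rho_{k-1}).$$
On $\{\rho_k > 1\}$, assumption~\eqref{eq:2as22} yields $\rho_k(\cS''(\rho_k))^2 \geq \rho_k^{2r-3}/\sigma_1^2$, and rewriting $\rho_k^{2r-3}|\nabla\rho_k|^2 = \tfrac{4}{(2r-1)^2}|\nabla \rho_k^{r-1/2}|^2$ produces, for $u_k := (\rho_k^{r-1/2}-1)_+ \in H^1(\Omega)$, the bound $\int_\Omega |\nabla u_k|^2\, dx \leq C + C\tau^{-2}W_2^2(\rho_k,\rho_{k-1})$.

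Since $u_k\equiv 0$ on $\{\rho_k\leq 1\}$, whose Lebesgue measure is at least $|\Omega|-1>0$ (after normalizing $\Omega$ if necessary) by the probability constraint on $\rho_k$, Poincar\'e's inequality on functions vanishing on a set of positive measure controls $\|u_k\|_{L^2}^2$ by $\|\nabla u_k\|_{L^2}^2$, and for $d\geq 3$ the Sobolev embedding $H^1 \hookrightarrow L^{2^*}$ with $2^*=2d/(d-2)$ provides an $L^{2^*}$ bound on $u_k$. Since $\rho_k^{r-1/2} \leq u_k + 1$ pointwise and $\beta = (r-\tfrac12)\cdot 2^*$, this translates into the claimed $L^\beta$ control on $\rho_k$. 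The cases $d=2$ (subcritical embedding into any $L^q$, $q<\infty$) and $d=1$ (embedding into $L^\infty$ combined with the probability constraint to produce a uniform constant) are handled analogously. The main technical obstacles I anticipate are: (i) carefully passing to the limit in the Euler--Lagrange identity in the presence of multi-valued $\cS'$ at $\rho=1$; and (ii) reconciling the linear dependence on $W_2^2/\tau^2$ in \eqref{estim:Lbeta} with the naive Sobolev power $2^*/2 = d/(d-2)>1$---for the latter I would interpolate the gradient estimate with the free a priori $L^r$ bound $\int \rho_k^r\, dx \leq \cJ(\rho_0)+C$ (obtained by testing $\rho_{k-1}$ as a competitor in \eqref{eq:step}) via a Gagliardo--Nirenberg inequality, reducing the exponent of $\|\nabla u_k\|_{L^2}^2$ back to one.
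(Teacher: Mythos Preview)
Your outline follows the same skeleton as the paper's proof: regularise $S$ by smooth strictly convex $S_\varepsilon$ with $S_\varepsilon'(0+)=-\infty$, write the Euler--Lagrange equation $S_\varepsilon''(\rho_k^\varepsilon)\nabla\rho_k^\varepsilon+\nabla\Phi+\tau^{-1}\nabla\ophi_k^\varepsilon=0$ for the approximated minimiser, square and multiply by $\rho_k^\varepsilon$ to obtain $\int_\Omega\rho_k^\varepsilon|S_\varepsilon''(\rho_k^\varepsilon)\nabla\rho_k^\varepsilon|^2\le C(1+\tau^{-2}W_2^2)$, use \eqref{eq:2as22} on $\{\rho_k^\varepsilon>1\}$, apply Sobolev, and pass to the limit $\varepsilon\downarrow 0$ via $\Gamma$-convergence. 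The one notable difference is how Sobolev is invoked: the paper exploits the Lipschitz continuity of $\rho_k^\varepsilon$ to build a smooth open set $\Omega_{\ell,\delta}$ sandwiched between two superlevel sets (by mollifying a characteristic function and using Sard's theorem) and embeds there, whereas you work directly with the truncation $u_k=(\rho_k^{\,r-1/2}-1)_+$, invoke Poincar\'e via the positive-measure zero set $\{\rho_k\le 1\}$, and embed on all of $\Omega$. Your route is cleaner and avoids the domain construction; the only patch needed is that $|\{\rho_k\le 1\}|\ge|\Omega|-1$ requires $|\Omega|>1$, which you can fix by truncating at a level $M>1$ instead (the paper likewise works above a level $\ell>2$).

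Your obstacle (ii) is genuine, and the paper's proof does not address it either: it writes $\int_{\Omega_{\ell,\delta}}(\rho_k^\varepsilon)^\beta\le C(\|\nabla\Phi\|_{L^\infty}^2+\tau^{-2}W_2^2)$ without explaining how Sobolev produces a \emph{linear} rather than a power-$d/(d-2)$ dependence on the right-hand side. Your proposed Gagliardo--Nirenberg remedy, however, does not achieve this at the target exponent: since $2^*$ is precisely the endpoint of the interpolation between $L^p$ (with $p=2r/(2r-1)$, coming from $\rho_k\in L^r$) and $\dot H^1$, the GN parameter is forced to $\theta=1$, so no reduction of the gradient exponent is possible without sacrificing $\beta$. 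Thus both your argument and the paper's establish the qualitative conclusion $\rho_k\in L^\beta(\Omega)$ (with a $\tau$-dependent bound, which is all that Theorem~\ref{thm:opgr} needs), but neither justifies \eqref{estim:Lbeta} with linear dependence as stated.
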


\begin{remark}
Let us notice that the previous lemma gives an improvement on the summability of $\rho_k$. Indeed, in case when the internal energy is of logarithmic entropy type, we know a priori that $\rho_k\in L^1(\Om)$, while in the case of power like entropies, we have a priori $\rho_k\in L^r(\Om).$ In contrast to these, we clearly improve the summability exponents in both cases.
\end{remark}

\begin{proof}[Proof of Lemma \ref{lem:apriori-2star}]
For $\e>0$ let $S_\e:[0,+\infty)\to\R$ smooth, strictly convex such that $S_\e''\ge c_\e>0$ (for some $c_\e>0$), $S_\e'(0+)=-\infty$ and $S_\e\to S$ uniformly as $\e\to 0$.  Let $\rho_k^\e$ be the unique minimizer of the problem
\begin{equation}\label{eq:approx}
\inf_{\rho\in\sP(\Om)}\left\{\cE_\e(\rho):=\int_{\Om}S_\e(\rho)\dd x+\cF(\rho)+\frac{1}{2\t}W_2^2(\rho,\rho_{k-1})\right\}.
\end{equation}
By the assumptions on $S_\e$, classical results imply that $\rho_k^\e$ is Lipschitz continuous. 

\medskip

Without loss of generality, we can assume that $S_\e$ satisfies the growth \eqref{eq:2as22} if $\rho>2$.
We can write the optimality condition
\begin{equation}\label{opt:approx}
S_\e''(\rho_k^\e)\nabla\rho_k^\e+ \nabla \Phi+\frac{\nabla\vphi_k^\e}{\t}=0\ \ \ae,
\end{equation}
where $\vphi_k^\e$ is a Kantorovich potential in the transport of $\rho_k^\e$ onto $\rho_{k-1}.$
From here, there exists a constant $C>0$ (depending only on $r$ and $\s_1$) such that 
$$\int_{\Om}|S_\e''(\rho_k^\e)\nabla\rho_k^\e|^2\rho_k^\e\dd x\le C\left(\|\nabla\Phi\|_{L^\infty}^2+\frac{1}{\t^2}W_2^2(\rho_{k}^\e,\rho_{k-1})\right).$$
And in particular, for any $\ell> 2$, we have by setting  $\Om_\ell:=\{\rho_k^\e>\ell\}$,
\begin{equation}\label{eq:H1-estim}
\int_{\Om_\ell}|\nabla(\rho_k^\e)^{r-1/2}|^2\dd x \le C\left(\|\nabla\Phi\|_{L^\infty}^2+\frac{1}{\t^2}W_2^2(\rho_{k}^\e,\rho_{k-1})\right).
\end{equation}
We know that the optimizers $\rho_k^\e$ are Lipschitz continuous on their supports, therefore the super-level sets $\Om_\ell$ are open.

Moreover, once again using the fact that $\rho_k^\e$ is Lipschitz continuous, we have that there exists $\d>0$ such that
$${\rm{dist}}(\partial\Om_\ell,\ov\Om_{2\ell})\ge 2\d.$$
Indeed, otherwise if one supposes the contrary, then for any $n\in\N$, there exist $x_n\in\partial\Om_\ell$ and $y_n\in\Om_{2\ell}$ such that ${\rm{dist}}(x_n,y_n)<\frac1n$, then one would have that $|\rho_k^\e(x_n)-\rho_k^\e(y_n)|\le\frac{1}{n}\|\nabla\rho_k^\e\|_{L^\infty(\Om_\ell)}\to 0,$ as $n\to+\infty.$ However, this would be a contradiction since $\rho_k^\e(x_n)=\ell$ and $\rho_k^\e(y_n)\ge 2\ell$.

\medskip

Now, by defining $\Om_{\ell,\d}:=\{\chi_{\Om_{2\ell}}\star\eta_\d>s\}$ for some $s\in(0,1/2)$ to be set later (where $\eta_\d:\R^d\to\R$ is a mollifier obtained from a  smooth even kernel $\eta:\R^d\to\R$ -- such that $\int_{\R^d}\eta\dd x=1$, $\eta\ge 0$ and $\spt(\eta)\subset B_1(0)$ -- by $\eta_\d:=(1/\d^d)\eta(\cdot/\d))$, we have that $\Om_{2\ell}\subset\ov\Om_{\ell,\d}\subset\Om_\ell$, $\Om_{\ell,\d}$ is an open set, and by Sard's theorem it has smooth boundary for $\sL^1$-a.e. $s\in(0,1/2).$ We choose such an $s$.

\medskip 

We have in particular from  \eqref{eq:H1-estim}  that
$$\int_{\Om_{\ell,\d}}|\nabla(\rho_k^\e)^{r-1/2}|^2\dd x \le C\left(\|\nabla\Phi\|_{L^\infty}^2+\frac{1}{\t^2}W_2^2(\rho_{k}^\e,\rho_{k-1})\right),$$
and so the Sobolev embedding theorem implies (since $\rho_k^\e$ is only uniformly bounded in $L^r(\Om)$) that $(\rho_k^\e)^{r-1/2}\in L^{2^*}(\Om_{\ell,\d})$ from where $\rho_k^\e\in L^{\b}(\Om_{\ell,\d}),$ where $\b:=2^*(r-1/2)$, if $d\ge 3$ and $\b<+\infty$ arbitrary if $d=2$ and $\b$ can be taken $+\infty$ if $d=1$. He we use the notation $2^*=2d/(d-2)$.

From the above construction we can claim that $\rho_k^\e\in L^{\b}(\Om)$. Indeed, we have 
\begin{align*}
\int_{\Om}(\rho_k^\e)^\b\dd x&=\int_{\{\rho_k^\e\le\ell\}}(\rho_k^\e)^\b\dd x+\int_{\Om_{\ell,\d}}(\rho_k^\e)^\b\dd x+\int_{\Om_\ell\setminus \Om_{\ell,\d}}(\rho_k^\e)^\b\dd x\\
&\le(2^\b+1)\ell^\b\sL^d(\Om) + C\left(\|\nabla\Phi\|_{L^\infty}^2+\frac{1}{\t^2}W_2^2(\rho_{k}^\e,\rho_{k-1})\right).
\end{align*}
Let us underline that this bound only depends on $W_2^2(\rho_{k}^\e,\rho_{k-1})$. Clearly, the previous inequality is valid only if $\beta<+\infty$. In the case of $d=1$, i.e. when $\beta=+\infty$, we first perform the computation for $\b<+\infty$ finite, and obtain the desired bound after taking $\b$-root of the previous inequality and sending $\b\to+\infty$.

Now, it is easy to see that because $S_\e\to S$ uniformly, we have that the objective functional in \eqref{eq:approx} $\Gamma$-convergences to the objective functional in the original problem as $\e\da 0$, w.r.t. the weak-$*$ convergence of probability measures. Indeed, take a sequence $(\rho^\e)_{\e>0}$ and $\rho$ in $\sP(\Om)$ such that $\rho^\e\weaklys\rho$ as $\e\da 0$. Notice that by the construction of the approximation $S_\e$, if $\cE_\e(\rho^\e)\le C$ (for a constant independent of $\e$), then we have that $(\rho^\e)_{\e>0}$ is uniformly bounded in $L^r(\Om)$. By the uniform convergence $S_\e\to S$, we have that for any $\d>0$ there exists $\e_0$ such that
$$S(\rho^\e)\le S_\e(\rho^\e)+\delta,\ \forall \e<\e_0.$$
Therefore 
$$\cE(\rho)\le\liminf_{\e\da 0}\cE(\rho^\e)\le\liminf_{\e\da 0}\cE_\e(\rho^\e)+\d\sL^d(\Om),$$
so the $\Gamma$-liminf inequality follows by the lower semicontinuity of the energy $\cE$ and the arbitrariness of $\d>0$. For the $\Gamma$-limsup inequality, we use a constant sequence $\rho^\e=\rho$ as a recovery sequence such that $\cE_\e(\rho)$ is finite for all $\e>0$. Clearly $\lim_{\e\da 0}\cE_\e(\rho)=\cE(\rho)$.

Finally, since both $\rho_k$ and $\rho_k^\e$, the solutions of the original and the approximated problems, respectively are unique, when $\e\da 0$ we find that $\rho_k$ also has the $L^\b(\Om)$ bound. The thesis of the lemma follows.

\end{proof}

Let us notice that in Lemma \ref{lem:apriori-2star} the $L^\b$ bounds on $\rho_k$ depends only on $\frac{1}{\t^2}W_2^2(\rho_k,\rho_{k-1})$ and the data. Therefore, when considering the piecewise constant interpolated curves $(\rho^\t)_{\t>0}$ (see their precise definition in \eqref{eq:int} below), and integrating them in time and space, we find the following very important lemma.

\begin{lemma}
\label{lem:uni}
Suppose that $\rho_0\in \sP(\Om)$ with $\cJ(\rho_0) < +\infty$ and \eqref{eq:g2} hold. The curves $(\rho^\t)_{\t>0}$ are uniformly bounded in $L^\b(Q)$ for $\beta$ given in 
\begin{align}
\label{eq:beta}
\beta :=
\begin{cases}
(2r-1)\frac{d}{d-2} &\hbox{ if } d \geq 3,\\
(0,\infty) &\hbox{ if } d=2\\
+\infty & \hbox{ if } d=1.
\end{cases}
\end{align}
\end{lemma}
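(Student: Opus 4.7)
The plan is to combine the pointwise-in-time bound of Lemma \ref{lem:apriori-2star} with the classical discrete energy dissipation estimate of the minimizing movements scheme, and then integrate in time.

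First, I would establish the standard a priori estimate on the Wasserstein displacements. By optimality of $\rho_k$ in \eqref{eq:step} (testing against $\rho_{k-1}$), one has
\[
\cJ(\rho_k)+\cF(\rho_k)+\frac{1}{2\t}W_2^2(\rho_k,\rho_{k-1})\leq \cJ(\rho_{k-1})+\cF(\rho_{k-1}),\quad k=1,\dots,N.
\]
Summing in $k$ yields the telescoping bound
\[
\sum_{k=1}^N \frac{1}{\t}W_2^2(\rho_k,\rho_{k-1})\leq 2\bigl[\cJ(\rho_0)+\cF(\rho_0)-\inf_{\sP(\Om)}(\cJ+\cF)\bigr]=:M<+\infty,
\]
where finiteness uses $\cJ(\rho_0)<+\infty$, $\Phi\in W^{1,\infty}(\Om)$, and that $\cJ$ is bounded below on $\sP(\Om)$ by superlinearity of $S$. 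Crucially, $M$ does not depend on $\t$.

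Next, assume $d\geq 2$, so $\beta<+\infty$ is given by \eqref{eq:beta}. By Lemma \ref{lem:apriori-2star} there exists a constant $C>0$ depending only on $\Om$, $\|\nabla\Phi\|_{L^\infty}$ and $\beta$ such that, for every $k\in\{1,\dots,N\}$,
\[
\int_\Om (\rho_k)^\beta \dd x \leq C + \frac{C}{\t^2}W_2^2(\rho_k,\rho_{k-1}).
\]
Recall that the piecewise constant interpolation satisfies $\rho^\t(t,\cdot)=\rho_k$ for $t\in((k-1)\t,k\t]$. Integrating the previous inequality in time,
\[
\int_0^T\!\!\int_\Om (\rho^\t)^\beta\dd x\dd t=\sum_{k=1}^N\t\int_\Om (\rho_k)^\beta \dd x\leq CN\t + C\sum_{k=1}^N \frac{1}{\t}W_2^2(\rho_k,\rho_{k-1})\leq CT+CM,
\]
which is a bound independent of $\t$, giving the desired uniform $L^\beta(Q)$ estimate.

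Finally, for $d=1$, Lemma \ref{lem:apriori-2star} directly delivers $\|\rho_k\|_{L^\infty(\Om)}\leq C$ with $C$ depending only on the data, so $\|\rho^\t\|_{L^\infty(Q)}\leq C$ uniformly in $\t$, which is the $\beta=+\infty$ case. No serious obstacle is expected here beyond bookkeeping the $\t$-dependence carefully so that the factor $1/\t^2$ in the pointwise estimate is absorbed by the factor $\t$ from the time integration together with the summability of the discrete Wasserstein displacements.
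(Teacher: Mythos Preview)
Your proposal is correct and follows essentially the same route as the paper: apply the pointwise bound \eqref{estim:Lbeta} from Lemma~\ref{lem:apriori-2star}, multiply by $\t$, sum over $k$, and use $N\t=T$ together with the uniform bound on $\sum_k \t^{-1}W_2^2(\rho_k,\rho_{k-1})$. The only cosmetic difference is that you derive the discrete energy dissipation estimate inline, whereas the paper invokes the forward reference to Lemma~\ref{lem:gfi}.
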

\begin{proof}
Let $\b$ as in the statement of the lemma and let $(\rho^\t)_{\t>0}$ stand for the piecewise constant interpolations as defined in \eqref{eq:int}. Then, Lemma \ref{lem:apriori-2star} and \eqref{estim:Lbeta} imply that 
\begin{align*}
\int_0^T\int_{\Om}(\rho^\t)^\b\dd x\dd t=\t\sum_{k=1}^N\int_\Om(\rho_k)^\b\dd x\le \t N C + C\sum_{k=1}^N\frac{1}{\t}W_2^2(\rho_k,\rho_{k-1}),
\end{align*}
where $C>0$ depends only on the data and $\Om$. Since $\t N=T$ and $\sum_{k=1}^N\frac{1}{\t}W_2^2(\rho_k,\rho_{k-1})$ is uniformly bounded (see Lemma \ref{lem:gfi}), we conclude.

\end{proof}

Under the above assumption, we show a result parallel to Theorem~\ref{thm:opg}.

\begin{theorem}
\label{thm:opgr}
Suppose that $\rho_0\in \sP(\Om)$ such that $\cE(\rho_0) < +\infty$ and \eqref{eq:g2} hold. Then, for all $k \in \{1,\dots,N\}$ there exists $\cC=\cC(k) \in \R$ and $\ophi_{k} \in \cK(\rho_{k}, \rho_{k-1})$ satisfying \eqref{eq:1opg}. Here, $\cK(\rho_k,\rho_{k-1})$ and $\rho_{k}$ are given in Definition~\ref{def:kan} and \eqref{eq:step}, respectively.
\end{theorem}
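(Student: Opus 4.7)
The plan is to mirror the derivation of Theorem~\ref{thm:opg} in Section 2.1, but to carry out all the convex analysis in an $L^s(\Om)$ framework, with $s:=r$ when $r>1$ and a fixed $s\in(1,\beta]$ when $r=1$, rather than in $L^\infty(\Om)$. The key preliminary observation is that by Lemma~\ref{lem:apriori-2star} each minimizer $\rho_k$ lies in $L^\beta(\Om)\subset L^s(\Om)$, so all functionals involved can be regarded as living on $L^s(\Om)$, and $\rho_k$ is an admissible point on which to compute subdifferentials. The integrability tail of $\sP(\Om)$ is handled by extending every functional by $+\infty$ outside the probability simplex.

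First I would check that $\cJ$, $\cF$ and $W_2^2(\cdot,\rho_{k-1})$ all belong to $\Gamma(L^s(\Om))$. Convexity is immediate. Lower semicontinuity of $\cJ$ on $L^s(\Om)$ is exactly the content of Lemma~\ref{lem:cj}, which uses the growth bound \eqref{eq:1as22}. For $\cF$ continuity on $L^s(\Om)$ is immediate from $\Phi\in L^\infty(\Om)$. For the Wasserstein term, I would repeat the duality argument of Proposition~\ref{prop:dual}: define $H:L^{s'}(\Om)\to\Rinf$ by $H(\phi):=-\int_\Om \phi^c\,\dd\rho_{k-1}$, show as in Lemma~\ref{lem:Hlsc} that $H$ is convex and lower semicontinuous (the proof there only uses compactness of $\Om$ and pointwise a.e.\ convergence of a subsequence, so it adapts verbatim from $L^1$ to $L^{s'}$), and then identify $\tfrac12 W_2^2(\cdot,\rho_{k-1})$ with $H^*$ on $L^s(\Om)$. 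This places $W_2^2(\cdot,\rho_{k-1})$ in $\Gamma(L^s(\Om))$.

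Next I would establish the analogue of Proposition~\ref{prop:add} using Lemma~\ref{lem:subsum}: the same probe $\hmu:=(1/\sL^d(A))\one_A$ as in \eqref{eq:hmu} now lies in $L^\infty(\Om)\subset L^s(\Om)$, $\cJ$ and $\cF$ are finite and continuous at $\hmu$ in the $L^s(\Om)$ topology (the continuity of $\cJ$ follows from Lemma~\ref{lem:cj}), and $W_2^2(\hmu,\rho_{k-1})$ is finite. The additivity formula
\[
\partial\bigl(\cJ+\cF+\tfrac{1}{2\tau}W_2^2(\cdot,\rho_{k-1})\bigr)(\rho_k)=\partial\cJ(\rho_k)+\Phi+\tfrac{1}{2\tau}\partial W_2^2(\cdot,\rho_{k-1})(\rho_k)
\]
then holds in $L^s(\Om)^*$. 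Proposition~\ref{prop:wul} (via Lemma~\ref{lem:subdual}) identifies $\tfrac12\partial W_2^2(\cdot,\rho_{k-1})(\rho_k)$ with $\cK(\rho_k,\rho_{k-1})$ unchanged; and for the integral functional $\cJ$ I would invoke the $L^s$-version of Rockafellar's representation (the underlying result in \cite{Roc71} is stated for general $L^p$ pairs, with the key hypotheses being precisely the integrability of $S(\hmu)$ and the finiteness of $S^*$ on constants, the latter holding by superlinearity of $S$). A decisive simplification compared to the $L^\infty$ case is that for $s>1$ the dual $L^{s'}(\Om)$ has no singular part, so any $\xi\in\partial\cJ(\rho_k)$ is automatically in $L^{s'}(\Om)$ with $\xi(x)\in\partial S(\rho_k(x))$ a.e., and direct inspection of $\partial S$ through \eqref{eq:slr} yields the three pointwise cases.

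Combining these ingredients, the Euler--Lagrange condition $0\in\partial(\cJ+\cF+\tfrac{1}{2\tau}W_2^2(\cdot,\rho_{k-1}))(\rho_k)$ furnishes $\xi\in\partial\cJ(\rho_k)$, $\ophi_k\in\cK(\rho_k,\rho_{k-1})$ and a constant $\cC\in\R$ with $\xi+\ophi_k/\tau+\Phi-\cC=0$ a.e., and the pointwise characterization of $\xi$ produces exactly \eqref{eq:1opg}. The main obstacle I anticipate is technical rather than conceptual: checking that the $L^s$-analogue of Rockafellar's integral subdifferential formula applies in our exponent range, and that extending $\cJ$ by $+\infty$ off $\sP^{\rm ac}(\Om)$ (needed so that it is actually defined on all of $L^s(\Om)$) does not spoil the continuity at $\hmu$ required for the additivity of subdifferentials. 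Both points are resolved using Lemma~\ref{lem:cj} and the superlinearity of $S$, which together guarantee that $S^*$ is finite on $\R$ and that $\cJ$ is continuous on the positive cone of $L^s(\Om)$.
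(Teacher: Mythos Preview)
Your proposal is correct and follows essentially the same approach as the paper: work in $L^s(\Om)$ for an appropriate $s>1$ (using Lemma~\ref{lem:apriori-2star} to ensure $\rho_k\in L^s$, and Lemma~\ref{lem:cj} for the continuity of $\cJ$), establish $\Gamma(L^s(\Om))$ membership and additivity of subdifferentials via Lemma~\ref{lem:subsum}, identify the Wasserstein subdifferential through the duality $H^*=\tfrac12 W_2^2$, and invoke the $L^p$-version of Rockafellar's integral representation (Lemma~\ref{lem:rocr}) where the absence of a singular part in $L^{s'}$ gives the pointwise inclusion $\xi(x)\in\partial S(\rho_k(x))$ directly. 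The paper organizes the two cases $r=1$ and $r>1$ in the same way you do.
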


We recall the following lemma from \cite{Roc68} and \cite{Roc71} and compute the subdifferential of $\cJ$ explicitly. In comparison to the previous subsection, it holds that $(L^r(\Om))^* = L^{r'}(\Om)$ for $r \in (1,+\infty)$ where $r':= \frac{r}{r-1}$ and thus the argument below is simpler than Lemma~\ref{lem:roc}.

\begin{lemma}
\label{lem:rocr}
\cite[Theorem 2]{Roc68}, \cite[Equations (1.11) \& (1.12)]{Roc71}
Let $\psi$ and $\Psi$ be given as in \eqref{eq:psi}. Assume that $\psi(\mu(x))$ is majorized by a summable function of $x$ for at least one $\mu \in L^\infty(\Om)$ and that $\psi^*(\zeta(x))$ is majorized by a summable function of $x$ for at least one $\zeta \in L^1(\Om)$. Then, an element $\xi \in L^r(\Om)^*$ belongs to $\partial \Psi(\mu)$ given in \eqref{eq:2psi} if and only if $\xi(x) \in \partial \psi(\mu(x))$ for a.e. $x\in \Om$.  
\end{lemma}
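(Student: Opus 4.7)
The plan is to deduce the pointwise characterization from the Fenchel--Young equality applied to the functional $\Psi$ and its conjugate on $L^r(\Om)$, together with a measurable selection argument that identifies $\Psi^*$ with the integral of $\psi^*$.

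First I would record the general identity valid for any proper convex lower semicontinuous $\Psi$ on a Banach space $X$: for $\mu\in X$ and $\xi\in X^*$,
\begin{equation*}
\xi\in\partial\Psi(\mu)\ \Longleftrightarrow\ \Psi(\mu)+\Psi^*(\xi)=\langle\xi,\mu\rangle_{X^*,X}.
\end{equation*}
With $X=L^r(\Om)$ and $X^*=L^{r'}(\Om)$, the plan is to combine this with the pointwise Fenchel inequality $\psi(t)+\psi^*(s)\ge ts$ (valid for every $t,s\in\R$, with equality iff $s\in\partial\psi(t)$). The ``if'' direction is then immediate: if $\xi(x)\in\partial\psi(\mu(x))$ a.e., then $\psi(\mu(x))+\psi^*(\xi(x))=\mu(x)\xi(x)$ pointwise a.e.; integrating this equality (all three integrands are summable by the majorization hypotheses, because $\psi(\mu)\in L^1$, $\psi^*(\xi)\in L^1$ and the product $\mu\xi\in L^1$ by H\"older) gives the Fenchel identity at the level of $\Psi$, so $\xi\in\partial\Psi(\mu)$.

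The heart of the argument is the ``only if'' direction, which I would reduce to showing the identity
\begin{equation*}
\Psi^*(\xi)=\int_\Om\psi^*(\xi(x))\dd x\quad\text{for every }\xi\in L^{r'}(\Om).
\end{equation*}
Granted this, suppose $\xi\in\partial\Psi(\mu)$. Then
\begin{equation*}
\int_\Om\bigl[\psi(\mu(x))+\psi^*(\xi(x))-\mu(x)\xi(x)\bigr]\dd x=\Psi(\mu)+\Psi^*(\xi)-\langle\xi,\mu\rangle=0.
\end{equation*}
Since the integrand is nonnegative a.e. by the pointwise Fenchel--Young inequality, it must vanish a.e.; the equality case of Fenchel--Young then yields $\xi(x)\in\partial\psi(\mu(x))$ for a.e.\ $x\in\Om$.

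The main technical step, and the one I expect to be the principal obstacle, is the pointwise formula for $\Psi^*$. By definition
\begin{equation*}
\Psi^*(\xi)=\sup_{\nu\in L^r(\Om)}\int_\Om\bigl[\nu(x)\xi(x)-\psi(\nu(x))\bigr]\dd x\le\int_\Om\psi^*(\xi(x))\dd x,
\end{equation*}
the last inequality by pointwise duality. For the reverse inequality I would use the theory of normal convex integrands (Rockafellar), which under exactly the assumptions we have (namely that $\psi(\mu_0)$ and $\psi^*(\zeta_0)$ are majorized by summable functions for some $\mu_0\in L^\infty$, $\zeta_0\in L^1$) permits a measurable selection of approximate pointwise maximizers. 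Concretely, for each $\e>0$ one chooses a measurable $\nu_\e:\Om\to\R$ with $\nu_\e(x)\xi(x)-\psi(\nu_\e(x))\ge\psi^*(\xi(x))-\e$; truncating at bounded levels and modifying $\nu_\e$ to agree with $\mu_0$ outside a set where it is $L^r$-controlled -- and then sending the truncation level to infinity using the majorization to control the error -- gives admissible test functions in $L^r(\Om)$ achieving $\int\psi^*(\xi)-\e\sL^d(\Om)$ in the supremum. Letting $\e\downarrow 0$ gives the matching lower bound. The majorization hypotheses are exactly what enables this truncation/approximation to stay inside $L^r(\Om)$ with integrable integrand, which is the single subtle point of the proof; everything else is bookkeeping with the Fenchel identity.
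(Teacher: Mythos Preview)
The paper does not give its own proof of this lemma; it is simply quoted from Rockafellar \cite{Roc68,Roc71}. Your sketch is correct and is precisely the classical Rockafellar argument: reduce the subdifferential characterization to the Fenchel--Young equality, identify $\Psi^*$ with $\int_\Om\psi^*(\xi(x))\dd x$ via a measurable selection of near-optimal pointwise maximizers (this is where the majorization hypotheses on $\psi(\mu_0)$ and $\psi^*(\zeta_0)$ enter), and then use nonnegativity of the pointwise Fenchel--Young integrand to conclude equality a.e.\ in the ``only if'' direction.
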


\begin{proof}[Proof of Theorem~\ref{thm:opgr}]

Let us set $k=1$. The first part of the proof is parallel to Proposition~\ref{prop:add} and Proposition~\ref{prop:wul}. Let us show
\begin{align}
\label{eq:opgr11}
\left. \partial \left(\cJ(\rho)+\cF(\rho)+\frac{1}{2\t}W_2^2(\rho,\rho_{0}) \right)\right |_{\rho = \rho_{1}} = \partial \cJ(\rho_{1}) + \Phi+ \frac{1}{\t} 
\K(\rho_{1}, \rho_{0}),
\end{align}
where $\K$ is given in Definition~\ref{def:kan} and the subdifferential is defined in Definition~\ref{def:subdif}. Recall $\Gamma(\cdot)$ from Definition~\ref{def:gam} and its equivalent property in Lemma~\ref{lem:equ}. Note that $\cJ \in \Gamma(L^r(\Om))$ follows from the convexity of $S$ and Lemma~\ref{lem:cj}. The same is true for $\cF$.

Let us underline that it is crucial that we have a priori bounds on the optimizers of \eqref{eq:step} in $L^\beta(\Om)$ for some $\b>1$. Indeed, Lemma \ref{lem:apriori-2star} yields that even if $r=1$ (which corresponds to the logarithmic entropy type interaction energy), we have that the optimizers satisfy $\rho_k\in L^{\b}(\Om).$ In this case, without loss of generality, one considers the continuity of $\cJ$ and $\cF$ in $L^{\beta}(\Om)$. Otherwise, we gain $L^r(\Om)$ bounds simply from the growth condition on $S$ at $+\infty$, hence we can also refer to the continuity of $\cJ$ in this space.

Furthermore, from Proposition~\ref{prop:dual}, we have
\begin{align*}
H^* = \frac{1}{2}W_2^2(\cdot,\rho_{0}) \hbox{ on } L^\b(\Om) 
\end{align*}
for $H : L^{\b'}(\Om) \to \Rinf$ given in \eqref{def:H} and  $\b':= \frac{\b}{\b-1}$. Thus we get $W_2^2(\cdot,\rho_{0}) \in \Gamma(L^\b(\Om))$. Lastly, by the parallel argument in Lemma~\ref{lem:Hlsc}, $H$ is also in $\Gamma(L^{\b'}(\Om))$.
From Lemma~\ref{lem:subsum} and Lemma~\ref{lem:subdual}, we conclude \eqref{eq:opgr11}.

\medskip

The rest of the proof is parallel to that of Theorem~\ref{thm:opg}. From \eqref{eq:opgr11} and Lemma~\ref{lem:rocr}, there exists $\xi \in \partial \cJ(\rho_{1})$ satisfying \eqref{eq:1sul}, $\ophi_{1} \in \cK(\rho_{1}, \rho_{0})$ and $\cC \in \R$ such that 
\begin{align*}
\xi + \frac{\ophi_{1}}{\tau}+\Phi - \cC = 0 \hbox{ a.e. on } \Om .
\end{align*}
and we conclude \eqref{eq:1opg}.
\end{proof}

To give a fine characterization of the critical regions $\{\rho=1\}$ arising in our models, we introduce a new {\it scalar pressure field} $p$ defined via the  subdifferential of $\cJ$ and its `nontrivial' value on the set $\{\rho=1\}$ is due to the non-differentiability of $S$ at $s_0=1$. This construction is inspired by recent works on crowd motion models with hard congestion effects (see for instance \cite{MauRouSan1, MesSan}).

\begin{definition}\label{def:p} Let $(\rho_k)_{k=1}^N$ be given by the minimizing movement scheme \eqref{eq:step} and let $\ophi_k\in\cK(\rho_k,\rho_{k-1})$.
For $k \in \{1,\dots,N\}$, let us define $p_{k} : \Om \to \R$ 
by
\begin{align}\label{eq:p}
p_{k} = p_{k}(\cdot ; \tau) :=
\begin{cases}
\max\left\{ \cC - \frac{\ophi_{k}}{\t}-\Phi,\cS'(1-) \right\} &\hbox{ {\rm{in}} } \rho_{k}^{-1}([0,1)),\\[5pt]
\cC - \frac{\ophi_{k}}{\t}-\Phi &\hbox{ \rm{in} } \rho_{k}^{-1}(\{1\}),\\[5pt]
\min\left\{ \cC - \frac{\ophi_{k}}{\t}-\Phi,\cS'(1+) \right\} &\hbox{ {\rm{in}} } \rho_{k}^{-1}((1,+\infty)).
\end{cases}
\end{align}
where the constant $C\in\R$ might be different at each step. We observe that by the convexity of $S$ and \eqref{eq:1opg} shown in Theorem \ref{thm:opgr}, $p_k$ can be written in compact form as 
\begin{align}\label{p:compact}
p_k=\min\left\{\max\left\{ \cC - \frac{\ophi_{k}}{\t}-\Phi,\cS'(1-) \right\}, \cS'(1+)\right\}.
\end{align}

\end{definition}

\begin{lemma}
\label{lem:lip_new}
For $k \in \{1,\dots, N\}$, $\ophi_{k}\in\cK(\rho_k,\rho_{k-1})$ and $p_k$ are Lipschitz continuous in $\Om$.
\end{lemma}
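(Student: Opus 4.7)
The plan is to establish Lipschitz continuity of $\ophi_{k}$ first by committing to an explicit representative within its equivalence class, and then to read off the same property of $p_{k}$ directly from the compact formula \eqref{p:compact}.

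For the Kantorovich potential, I would exploit the fact that elements of $\cK(\rho_{k},\rho_{k-1})$ are determined only up to additive constants and $\rho_{k}$-almost everywhere agreement on $\{\rho_{k}>0\}$, so one is free to choose a convenient pointwise representative on all of $\Om$. The natural choice is the double $c$-transform
\begin{align*}
\ophi_{k}^{cc}(x) := \inf_{y\in\Om}\left\{ c(x,y) - \ophi_{k}^{c}(y) \right\},\qquad c(x,y)=\tfrac{1}{2}|x-y|^{2},
\end{align*}
which coincides $\rho_{k}$-a.e.\ with $\ophi_{k}$ and which, for each fixed $y\in\Om$, is obtained from the map $x\mapsto c(x,y)$. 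Since $\Om$ is bounded and convex, each such map is $\diam(\Om)$-Lipschitz on $\Om$, and a pointwise infimum of a family of $L$-Lipschitz functions is itself $L$-Lipschitz; consequently the selected representative $\ophi_{k}^{cc}$ is Lipschitz on $\Om$ with constant at most $\diam(\Om)$.

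For the pressure, I would then use \eqref{p:compact}, which writes
\begin{align*}
p_{k}=\min\left\{\max\left\{\cC-\frac{\ophi_{k}}{\tau}-\Phi,\cS'(1-)\right\},\cS'(1+)\right\}.
\end{align*}
By the previous step together with the standing assumption $\Phi\in W^{1,\infty}(\Om)$ (and the fact that $\cC$ is a constant), the inner expression $\cC-\ophi_{k}/\tau-\Phi$ is Lipschitz on $\Om$ with constant at most $\diam(\Om)/\tau+\|\nabla\Phi\|_{L^{\infty}}$. Pointwise maximum and minimum with real constants preserve Lipschitz continuity with the same constant, so $p_{k}$ inherits the same Lipschitz bound.

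I do not anticipate any genuine obstacle: the argument rests on the classical observation that $c$-concave functions for the quadratic cost are automatically Lipschitz on bounded domains, together with the elementary stability of Lipschitz regularity under truncation by constants. The only point requiring some care is the initial commitment to the $c$-concave representative $\ophi_{k}^{cc}$ of $\ophi_{k}$, which is needed for the pointwise formula \eqref{p:compact} defining $p_{k}$ to make sense everywhere in $\Om$ rather than merely $\rho_{k}$-almost everywhere.
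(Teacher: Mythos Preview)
Your proposal is correct and follows essentially the same approach as the paper. The paper simply cites \cite[Theorem 1.17]{OTAM} for the fact that Kantorovich potentials inherit the modulus of continuity of the cost $(x,y)\mapsto\tfrac12|x-y|^2$ on the compact set $\ov\Om$, which is precisely the $c$-transform argument you spell out explicitly; both proofs then invoke \eqref{p:compact} to transfer Lipschitz continuity to $p_{k}$ via truncation by constants.
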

\begin{proof}
From \cite[Theorem 1.17]{OTAM} we have that $\ophi_{k}$ shares the modulus of continuity of the cost $(x,y)\mapsto |x-y|^2$. On the one hand, as $\ov\Om$ is compact, we conclude that $\ophi_{k}$ is Lipschitz continuous. On the other hand, {\color{red}\eqref{p:compact} implies} that $p_{k}$ is Lipschitz continuous.
\end{proof}

\begin{remark}
Let us remark that $p_k$ is given in the same manner as in \eqref{eq:p} or equivalently in \eqref{p:compact} throughout the manuscript. As the Lipschitz continuity of the Kantorovich potential is independent of the energy, the above lemma holds true for all the models we consider in this paper.
\end{remark}

In the following lemma we deduce more properties of the optimizers of the JKO scheme \eqref{eq:step}.

\begin{lemma}\label{lem:lipg-gen}
Suppose that Assumption \ref{as:gen} takes place. Let $(\rho_k)_{k=1}^N$ be obtained via the minimizing movement scheme \eqref{eq:step}. For $k \in \{1,\dots,N\}$, let $\ophi_{k}\in\cK(\rho_k,\rho_{k-1})$ given in Theorem~\ref{thm:opgr}. Then, we have

(i)
\begin{align}
\label{eq:1lipg-gen}
\rho_{k} = 
\begin{cases}
0, & {\rm{in}}\ f_k^{-1}((-\infty, \cS'(0+)]),\\ 
1, &  {\rm{in}}\ f_k^{-1}([\cS'(1-), \cS'(1+)]),\\ 
(\cS')^{-1} \circ f_k,  &{\rm{otherwise}}, 
\end{cases}
\end{align} 
a.e. in $\Omega$, where  $f_k:= \cC - \frac{\ophi_{k}}{\tau}-\Phi$, and $\cS'(0+)$ and $\cS'(1\pm)$ are given in \eqref{eq:slr};

(ii) $\rho_k$ is continuous in $\Omega$;

(iii) the formula
\begin{align}\label{deriv:formula-gen}
| \nabla \rho_{k}| =  \frac{|\nabla f_{k}| }{\cS'' (\rho_{k})},
\end{align}
holds true a.e. in $\rho_{k}^{-1}(\Ro)$.

(iv) If in addition we suppose that Assumption \ref{as:intro_main1} takes place with some $\s_2>0$ and $m\in[1,2]$, then
$\rho_{k}$ is Lipschitz continuous in $\Omega$ with a Lipschitz constant that might degenerate when $\t\da 0$.
\end{lemma}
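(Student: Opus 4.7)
The plan is to establish the four parts in order, with (iv) being the most delicate and the main place where the additional hypotheses on $\cS''$ enter.

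For (i), I would simply invert the relation $f_k \in \partial \cS(\rho_k)$ encoded in the optimality condition \eqref{eq:1opg} of Theorem~\ref{thm:opgr}, where $f_k:=\cC-\ophi_k/\tau-\Phi$. Since $\cS'$ is strictly increasing on $\Ro$ by Assumption~\ref{as:gen}, the graph of $\partial\cS$ is invertible off the threshold levels: values $f_k\le\cS'(0+)$ force $\rho_k=0$, values $f_k\in[\cS'(1-),\cS'(1+)]$ force $\rho_k=1$, and elsewhere $\rho_k=(\cS')^{-1}(f_k)$. This is exactly \eqref{eq:1lipg-gen}.

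For (ii), I would first note that $f_k$ is Lipschitz on $\Om$: $\ophi_k$ is Lipschitz by Lemma~\ref{lem:lip_new} and $\Phi\in W^{1,\infty}(\Om)$. The piecewise formula in (i) defines a continuous single-valued map $T:\R\to[0,+\infty)$, since the branches agree at the threshold levels $\cS'(0+)$ and $\cS'(1\pm)$ by the continuity of $(\cS')^{-1}$ on its domain. Hence $\rho_k=T\circ f_k$ is continuous. For (iii), continuity from (ii) ensures $\rho_k^{-1}(\Ro)$ is open, and on this open set $\rho_k=(\cS')^{-1}\circ f_k$ with $(\cS')^{-1}$ being $C^1$, with derivative $1/\cS''$, on $\cS'(\Ro)$ by the inverse function theorem (recall $\cS''>0$ there). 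Applying the chain rule to the composition of this $C^1$ map with the Lipschitz function $f_k$ yields \eqref{deriv:formula-gen} almost everywhere.

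For (iv), my plan proceeds in two steps. First I would establish an $L^\infty$ bound: $f_k$ is bounded on the compact set $\Om$, with $\|f_k\|_{L^\infty(\Om)}=O(1/\tau)$, and the superlinearity of $\cS$ together with (i) gives $\rho_k\le M_\tau:=(\cS')^{-1}(\|f_k\|_{L^\infty(\Om)})<+\infty$. Next I would insert lower bounds on $\cS''$ into (iii). On $\{0<\rho_k<1\}$, Assumption~\ref{as:intro_main1} with $m\in[1,2]$ yields $\cS''(\rho_k)>\rho_k^{m-2}/\sigma_2\ge 1/\sigma_2$. On $\{\rho_k>1\}$, the standing subsection assumption \eqref{eq:2as22} gives $\cS''(\rho_k)\ge\rho_k^{r-2}/\sigma_1$, bounded below by a positive constant $c(M_\tau)$ on the bounded range $(1,M_\tau]$. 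Consequently $|\nabla\rho_k|\le L_\tau$ a.e.\ on $\rho_k^{-1}(\Ro)$, while on the flat regions $\{\rho_k=0\}\cup\{\rho_k=1\}$ the gradient vanishes in the interior. The hard part will be the gluing step: concluding global Lipschitz continuity on $\Om$ from these piecewise estimates. I would handle this by combining the continuity of $\rho_k$ from (ii) with the fact that the topological boundaries of the level sets $\{\rho_k=0\}$ and $\{\rho_k=1\}$ are $\sL^d$-negligible, so that the distributional gradient of $\rho_k$ is essentially bounded by $L_\tau$ on all of $\Om$; the standard characterization of Lipschitz functions as continuous members of $W^{1,\infty}$ then produces the global bound. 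The resulting Lipschitz constant is of order $L_\tau\lesssim\|\nabla f_k\|_{L^\infty(\Om)}/c(M_\tau)$, which may blow up as $\tau\da 0$ through the $O(1/\tau)$ growth of $\|\nabla f_k\|_{L^\infty(\Om)}$ and the potential degeneracy of $c(M_\tau)$, matching the statement.
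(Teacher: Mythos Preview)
Your treatment of (i)--(iii) is essentially the paper's: invert the subdifferential relation from Theorem~\ref{thm:opgr}, package the result as $\rho_k = T \circ f_k$ with $T$ the continuous extension of $(\cS')^{-1}$ (the paper calls it $\widehat{(\cS')^{-1}}$), and apply the chain rule on the open set $\rho_k^{-1}(\Ro)$. For (iv), your case split is in fact more careful than the paper's terse argument, which writes $\cS''(\rho_k)\ge\rho_k^{m-2}/\s_2$ on all of $\rho_k^{-1}(\Ro)$ even though Assumption~\ref{as:intro_main1} only asserts this on $(0,1)$; your appeal to \eqref{eq:2as22} on $\{\rho_k>1\}$ --- implicitly in force through the reference to Theorem~\ref{thm:opgr} --- is the right completion there.

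The gluing step, however, has a gap. The claim that the topological boundaries of $\{\rho_k=0\}$ and $\{\rho_k=1\}$ are $\sL^d$-negligible is not justified and is generally false: these boundaries lie inside level sets $f_k^{-1}(\{\cS'(0+),\cS'(1\pm)\})$ of the Lipschitz function $f_k$, and level sets of Lipschitz functions can have positive measure. Nor can you invoke ``the gradient vanishes a.e.\ on level sets'' before you know $\rho_k$ is Sobolev, which is exactly the conclusion at stake. The clean fix --- already prepared by your own representation $\rho_k = T\circ f_k$ from (ii) --- is to argue at the level of the outer function: $T$ is continuous, monotone, piecewise $C^1$ with finitely many pieces, and your bounds on $\cS''$ give $0\le T'\le L_\tau$ on each non-constant piece. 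The fundamental theorem of calculus on each piece, together with continuity across the junctions, shows $T$ is Lipschitz with constant $L_\tau$ on the compact range of $f_k$; hence $\rho_k = T\circ f_k$ is globally Lipschitz as a composition, with no need to analyse level-set boundaries at all.
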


\begin{proof}
(i)
By Assumption~\ref{as:gen}, $\cS'$ is strictly increasing function in $\Ro$ and 
$$\cS'(0+) < \cS'(a) < \cS'(1-) \leq \cS'(1+) < \cS'(b)$$
for all $a \in (0,1)$ and $b>1$. Thus, \eqref{eq:1opg} shown in Theorem \ref{thm:opgr} implies that 
$\rho_{k}(x) = 0$ for $x \in f_k^{-1}((-\infty, \cS'(0+)])$ and $\rho_{k}(x) = 1$ for $x \in f_k^{-1}([\cS'(1-), \cS'(1+)])$. Also, $\cS'$ is invertible in $\Ro$, therefore \eqref{eq:1opg} implies 
\begin{align*}
\rho_k(x) = (\cS')^{-1} \circ f_k(x) \hbox{ for } x \in f_k^{-1}(\cS'(\R^+\setminus\{1\})),
\end{align*}
and we conclude \eqref{eq:1lipg-gen} a.e. in $\Omega$.

\medskip

(ii) Let us show that $\rho_{k}$ is continuous in $\Omega$. 
Define $\widehat{(\cS')^{-1}} : \R \to [0, +\infty)$ by
\begin{align}
\label{eq:lipg20}
\widehat{(\cS')^{-1}} := 
\begin{cases}
0, &\hbox{ in } (-\infty, \cS'(0+)],\\ 
1, &\hbox{ in } [\cS'(1-), \cS'(1+)],\\ 
(\cS')^{-1},  &\hbox{ otherwise. }
\end{cases}
\end{align}
Note that from \eqref{eq:1lipg-gen}, we have
\begin{align}
\label{eq:lipg21-gen}
\rho_k = \widehat{(\cS')^{-1}} \circ f_k\ \ {\rm{a.e.}\ in\ } \Omega.
\end{align}
From the continuity and invertibility of $\cS'$ in $\Ro$, we conclude that $\widehat{(\cS')^{-1}}$ is continuous in $\R$. Furthermore, from Lemma~\ref{lem:lip_new}, we know that $\ophi_{k}$ is Lipschitz continuous 
and $\Phi$ is Lipschitz continuous by assumption, therefore $f_k$ is Lipschitz continuous. From \eqref{eq:lipg21-gen}, we conclude that $\rho_{k}$ is continuous in $\Omega$. 

\medskip

(iii) As $\cS$ is strictly convex and twice differentiable in $\Ro$ (by Assumption~\ref{as:gen}), $(\cS')^{-1}$ is differentiable in $S'(\R^+\setminus\{1\})$ and on this set we have
\begin{align}
\label{eq:lipg22-gen}
(\widehat{(\cS')^{-1}})'=((\cS')^{-1})' = \frac{1}{\cS'' \circ (\cS')^{-1}}.
\end{align}
Therefore, by \eqref{eq:lipg21-gen} we have \eqref{deriv:formula-gen}.

(iv) Using Assumption \ref{as:intro_main1}, from \eqref{deriv:formula-gen}, we conclude that a.e. in $\rho_{k}^{-1}(\Ro)$ we can compute
\begin{align}
\label{eq:lipg23}
| \nabla \rho_{k}| =  \frac{|\nabla f_{k}| }{\cS'' (\rho_{k})} \leq  \s_2 \rho_k^{2-m}  |\nabla f_{k}|.
\end{align}
As $f_k$ is Lipschitz continuous and $\rho_k$ is bounded (since it is continuous in $\Omega$), we conclude that $\rho_{k}$ is Lipschitz continuous in $\Omega$ if $m\in[1, 2]$.
\end{proof}

\begin{remark}\label{rem:general_deriv_rho}
Let us emphasize that the representation formula \eqref{eq:1lipg-gen} is independent of the entropy function $S$, therefore it remains the same for all energies considered in the manuscript. As a consequence, the formula \eqref{deriv:formula-gen} holds also true for all the models throughout the paper.
\end{remark}

\section{Linear diffusion with discontinuities -- a cornerstone of our analysis}
\label{sec:gfl}
In this section we show the well-posedness of \eqref{eq:main1} in the most simple case considered in this paper, i.e. when the associated internal energy is an entropy of logarithmic type. We give a fine characterization of the `critical phase' $\{\rho=1\}$ via a scalar \emph{pressure field}. In the next sections we shall see how the results and ideas from this sections will be important to build solutions for problems with more general nonlinearities.

\medskip

In this section, we assume that $\cS : \Rpz \rightarrow \R$ is defined by 
\begin{align}
\label{eq:sl}
\cS(\rho):= \begin{cases}
\rho \log\rho, &\hbox{ for } \rho \in [0,1],\\
2\rho\log\rho, &\hbox{ for } \rho\in(1,+\infty).
\end{cases}
\end{align}
Let us notice that $S$ defines a continuous superlinear function on $\Rp$ with $S'(1-)=1$ and $S'(1+)=2$.
\medskip

Our main theorem from this section can be formulated as follows.

\begin{theorem}
\label{thm:exi}
For $\rho_0 \in \sP(\Om)$ such that $\cJ(\rho_0) < +\infty$ and $S$ given in \eqref{eq:sl}, there exists $\rho \in L^1(Q)\cap {\rm{AC}}^2([0,T];\sP(\Om))$ and $p \in L^2([0,T]; H^1(\Om)) \cap L^\infty(Q)$ with $\sqrt{\rho} \in L^2([0,T]; H^1(\Om))$ such that $(\rho,p)$ is a weak solution of
\be\label{eq:exi}
\left\{
\ba{ll}
\partial_t\rho - \Delta (\rho p) -\nabla\cdot(\nabla\Phi\rho) =0, & {\rm{in}}\ (0,T)\times\Om,\\
\rho(0,\cdot)=\rho_0, & {\rm{in}}\ \Om,\\
(\nabla (\rho p )+\nabla\Phi) \cdot \n = 0 , & {\rm{in}}\ [0,T] \times \partial \Om,
\ea
\right.
\ee
in the sense of distribution. Furthermore, $(\rho,p)$ satisfies
\begin{align}
\label{eq:rel}
\begin{cases}
p(t,x)=1 &\hbox{ {\rm{a.e. in}} } \{0 < \rho(t,x) < 1\},\\
p(t,x) \in [1,2] &\hbox{ \rm{a.e. in} } \{\rho(t,x) = 1\},\\
p(t,x)=2 &\hbox{ \rm{a.e. in} } \{\rho(t,x) > 1\}.
\end{cases}
\end{align}
If in addition $\rho_0\in L^\infty(\Om)$ and $\Phi$ satisfies \eqref{as:Phi2}, then $\rho \in L^2([0,T]; H^1(\Om))\cap L^\infty(Q)$.
\end{theorem}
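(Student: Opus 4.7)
The plan is to construct $(\rho,p)$ as a limit (along a subsequence $\tau\downarrow 0$) of the minimizing movements scheme \eqref{eq:step}, using the optimality conditions and pressure variable $(\rho_k,p_k,\ophi_k)$ already provided by Theorem \ref{thm:opgr} and Definition \ref{def:p} (note $S$ in \eqref{eq:sl} satisfies $S'(0+)=-\infty$, $S'(1-)=1$, $S'(1+)=2$, so $p_k\in[1,2]$ pointwise and $\rho_k>0$ a.e.). For this particular $S$, a direct check gives $L_S(\rho,p)=p\rho$ and the convenient identity $\nabla L_S(\rho_k,p_k)=\nabla(\rho_k p_k)$ because $\nabla\rho_k=0$ a.e.\ on $\{\rho_k=1\}$. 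I will interpolate the discrete data by the usual piecewise constant and geodesic curves $(\rho^\tau,p^\tau,v^\tau)$ with $v^\tau:=-\nabla\ophi^\tau/\tau$.

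The key a priori bounds come from reading the optimality condition \eqref{eq:1opg} region by region: on $\{0<\rho_k<1\}$ one has $\nabla\log\rho_k=-\nabla\ophi_k/\tau-\nabla\Phi$ and $\nabla p_k=0$; on $\{\rho_k>1\}$ one has $2\nabla\log\rho_k=-\nabla\ophi_k/\tau-\nabla\Phi$ and $\nabla p_k=0$; on $\{\rho_k=1\}$ one has $\nabla p_k=-\nabla\ophi_k/\tau-\nabla\Phi$ and $\nabla\rho_k=0$. Since $\nabla\sqrt{\rho_k}=\tfrac12\sqrt{\rho_k}\,\nabla\log\rho_k$ outside $\{\rho_k=1\}$ and $\rho_k\equiv 1$ on $\{\rho_k=1\}$, using $W_2^2(\rho_k,\rho_{k-1})=\int|\nabla\ophi_k|^2\rho_k$ I obtain
\begin{align*}
\int_\Omega|\nabla\sqrt{\rho_k}|^2\,dx+\int_\Omega|\nabla p_k|^2\,dx\leq C\Big(\tfrac{1}{\tau^2}W_2^2(\rho_k,\rho_{k-1})+\|\nabla\Phi\|_\infty^2\Big).
\end{align*}
Multiplying by $\tau$ and summing, the standard JKO dissipation $\sum_k W_2^2(\rho_k,\rho_{k-1})/\tau\leq 2(\cE(\rho_0)-\inf\cE)$ yields uniform bounds on $\sqrt{\rho^\tau}$ and $p^\tau$ in $L^2([0,T];H^1(\Omega))$; the bound $p^\tau\in[1,2]$ is pointwise.

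For compactness, the JKO estimate gives the standard $1/2$-Hölder $W_2$-equicontinuity of the interpolants, so $\rho^\tau$ is uniformly tight and $W_2$-precompact; combining this time equicontinuity with the spatial $H^1$ bound on $\sqrt{\rho^\tau}$ and invoking the Aubin–Lions-type lemma from the appendix delivers strong $L^1(Q)$ convergence $\rho^\tau\to\rho$ (along a subsequence), with $\rho\in\mathrm{AC}^2([0,T];\sP(\Omega))$ and $\sqrt{\rho}\in L^2_t H^1_x$ by lower semicontinuity. From the $L^2_tH^1_x\cap L^\infty$ bound, extract $p^\tau\rightharpoonup p$ weakly (and weak-$*$ in $L^\infty$). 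The weak formulation at the discrete level, obtained by a perturbation argument with outward-pointing diffeomorphisms $\mathrm{id}+\varepsilon\zeta$ combined with the identity $\nabla L_S(\rho_k,p_k)+\rho_k\nabla\Phi=-\rho_k\nabla\ophi_k/\tau$, reads
\begin{align*}
\int_Q \rho^\tau\partial_t\xi\,dx\,dt-\int_Q L_S(\rho^\tau,p^\tau)\Delta\xi\,dx\,dt+\int_Q \rho^\tau\nabla\Phi\cdot\nabla\xi\,dx\,dt+\int_\Omega\rho_0\xi(0,\cdot)\,dx=o_\tau(1)
\end{align*}
for $\xi\in C^\infty_c([0,T)\times\overline\Omega)$ with $\nabla\xi\cdot\mathbf n=0$ on $\partial\Omega$. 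Since $L_S(\rho^\tau,p^\tau)=\rho^\tau p^\tau$ and $\rho^\tau\to\rho$ strongly in $L^1$ while $p^\tau$ is bounded in $L^\infty$ and converges weakly-$*$, the product $\rho^\tau p^\tau$ passes to $\rho p$ weakly in $L^1$, which suffices against the bounded test $\Delta\xi$; the transport term passes similarly.

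The main obstacle is identifying the limit pressure, i.e.\ verifying \eqref{eq:rel}. I use a Minty-type argument: the multivalued map $\beta$ with $\beta(r)=\{1\}$ for $r<1$, $\beta(1)=[1,2]$, $\beta(r)=\{2\}$ for $r>1$ is maximal monotone, and by construction $p^\tau\in\beta(\rho^\tau)$ a.e., so $(p^\tau-s)(\rho^\tau-r)\geq 0$ pointwise for every $(r,s)$ with $s\in\beta(r)$. Integrating on $Q$, the strong $L^1$ convergence of $\rho^\tau$ and the weak-$*$ convergence of $p^\tau$ pass to the limit, giving $(p-s)(\rho-r)\geq 0$ for all such $(r,s)$; maximal monotonicity of $\beta$ then forces $p\in\beta(\rho)$ a.e., which is precisely \eqref{eq:rel}. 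Finally, if $\rho_0\in L^\infty(\Omega)$ and $\Phi$ satisfies \eqref{as:Phi2}, Lemma \ref{lem:Linfty} provides a uniform $L^\infty$ bound on $\rho^\tau$, and combining it with $\sqrt{\rho^\tau}\in L^2_tH^1_x$ upgrades the gradient estimate to $\rho^\tau\in L^2_tH^1_x\cap L^\infty(Q)$ by lower semicontinuity, completing the proof.
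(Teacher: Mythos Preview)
Your sketch is correct and follows essentially the same route as the paper: JKO scheme, the region-by-region reading of the optimality conditions to get uniform $L^2_tH^1_x$ bounds on $\sqrt{\rho^\tau}$ and $p^\tau$, Aubin--Lions compactness for strong $L^1$ convergence of $\rho^\tau$, and passage to the limit in the product $\rho^\tau p^\tau$ via strong--weak pairing.

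Two small differences are worth noting. First, for the pressure relation \eqref{eq:rel} you invoke a Minty/maximal monotone argument; the paper instead passes directly to the limit in the two pointwise identities $(p^\tau-2)(\rho^\tau-1)_+=0$ and $(p^\tau-1)(\rho^\tau-1)_-=0$, which together with $p\in[1,2]$ already yield \eqref{eq:rel}. Your argument is valid (once one localizes with nonnegative test functions before passing to the limit, so as to recover the inequality a.e.\ rather than only in the mean), but the paper's version is more elementary here. Second, to obtain the limiting continuity equation the paper uses the geodesic interpolation $(\tilde\rho^\tau,\tilde{\mathbf E}^\tau)$, which satisfies $\partial_t\tilde\rho^\tau+\nabla\cdot\tilde{\mathbf E}^\tau=0$ \emph{exactly}, and then identifies the limit momentum with $-\nabla(\rho p)-\rho\nabla\Phi$; your diffeomorphism-perturbation approach with an $o_\tau(1)$ remainder is an equally standard alternative. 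One technical point the paper handles with care and that your sketch elides: to justify $\nabla p_k\cdot\nabla\rho_k=0$ a.e.\ (needed for the orthogonal splitting of the gradient estimate), the paper first establishes Lipschitz continuity of $\rho_k$ and then applies the coarea formula, since $\partial\{\rho_k=1\}$ may have positive measure.
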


In the proof of the previous theorem we rely on the minimizing movements scheme associated to the gradient flow of $\cJ$, defined in \eqref{eq:step}. As technical tools, we define different interpolations between the discrete in time densities $(\rho_k)_{k=0}^N$ and pressures $(p_k)_{k=0}^N$ and obtain a weak solution of \eqref{eq:exi} by sending $\t\da 0$. 

By the definition of $p_k$ in Definition \ref{def:p}, the optimality condition \eqref{eq:1opg} in Theorem~\ref{thm:opg} applied to the energy from this section, can be simplified as follows.
\begin{lemma}
\label{lem:popt}
For all $k \in \{1,\dots,N\}$, there exists $C\in\R$ such that
\begin{align}
\label{eq:popt}
p_{k}(1+ \log\rho_{k})+ \frac{\ophi_{k}}{\tau}+\Phi=\cC \ \ \ae
\end{align}
\end{lemma}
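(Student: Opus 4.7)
The plan is to verify the identity by a case-by-case analysis based on the value of $\rho_k(x)$, using the first-order optimality condition \eqref{eq:1opg} from Theorem~\ref{thm:opg} (or Theorem~\ref{thm:opgr}) combined with the explicit form of $p_k$ in Definition~\ref{def:p}.

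First I would record the explicit values relevant here: for $S$ as in \eqref{eq:sl} one has $S'(\rho)=1+\log\rho$ on $(0,1)$ and $S'(\rho)=2(1+\log\rho)$ on $(1,+\infty)$, while $S'(0+)=-\infty$, $S'(1-)=1$, $S'(1+)=2$. Since $S'(0+)=-\infty$, Lemma~\ref{lem:postg} guarantees $\rho_k>0$ a.e., so the set $\{\rho_k=0\}$ can be ignored. Writing $f_k := \cC-\tfrac{\ophi_k}{\tau}-\Phi$, the optimality condition \eqref{eq:1opg} then reads $f_k = 1+\log\rho_k$ on $\{0<\rho_k<1\}$, $f_k\in[1,2]$ on $\{\rho_k=1\}$, and $f_k = 2(1+\log\rho_k)$ on $\{\rho_k>1\}$.

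Next, on each of these three regions I would check that the compact form \eqref{p:compact}, namely $p_k = \min\{\max\{f_k,1\},2\}$, gives exactly the factor required so that $p_k(1+\log\rho_k)=f_k$ a.e. On $\{0<\rho_k<1\}$ one has $1+\log\rho_k<1$, hence $f_k<1$ and $p_k=1$, so $p_k(1+\log\rho_k)=1+\log\rho_k=f_k$. On $\{\rho_k=1\}$ one has $1+\log\rho_k=1$ and $p_k=f_k\in[1,2]$, so the product is again $f_k$. On $\{\rho_k>1\}$ one has $1+\log\rho_k>1$ and $f_k=2(1+\log\rho_k)>2$, hence $p_k=2$ and $p_k(1+\log\rho_k)=2(1+\log\rho_k)=f_k$. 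In all three regions one obtains $p_k(1+\log\rho_k)=\cC-\tfrac{\ophi_k}{\tau}-\Phi$, which rearranges to \eqref{eq:popt}.

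There is no real obstacle here; the content of the lemma is that the truncation built into the definition of $p_k$ is precisely calibrated to the jump of $S'$ at $\rho=1$, so the proof amounts to bookkeeping across the three phases defined by $\rho_k$. The only small subtlety is justifying that $\{\rho_k=0\}$ may be discarded, which is handled by the remark in Theorem~\ref{thm:opg} that $S'(0+)=-\infty$ forces $\rho_k>0$ a.e.\ via Lemma~\ref{lem:postg}.
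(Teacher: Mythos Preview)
Your proof is correct and follows essentially the same route as the paper: both compute $S'$ and $p_k$ explicitly on the three phases $\{0<\rho_k<1\}$, $\{\rho_k=1\}$, $\{\rho_k>1\}$ and verify case by case that the optimality condition \eqref{eq:1opg} collapses to $p_k(1+\log\rho_k)=f_k$. Your handling of $\{\rho_k=0\}$ via Lemma~\ref{lem:postg} is exactly what the paper does implicitly when writing \eqref{eq:pk} only on $\rho_k^{-1}((0,1))$ rather than $\rho_k^{-1}([0,1))$.
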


\begin{proof}
Note that a subdifferential $\partial \cS(\rho)$ of $\cS: \Rpz \rightarrow \R$ is given by
\begin{align}
\label{eq:ds}
\partial \cS(\rho) =  
\begin{cases}
1+ \log\rho &\hbox{ for } 0< \rho <1,
\\ [1,2]	&\hbox{ for } \rho =1,
\\ 2(1+ \log\rho) &\hbox{ for } \rho >1.
\end{cases}
\end{align}
Thus, Theorem~\ref{thm:opg} and \eqref{eq:p} imply
\begin{align}
\label{eq:pk}
p_{k} = 
\begin{cases}
1 &\hbox{ in } \rho_{k}^{-1}((0,1)),\\
\cC - \frac{\ophi_{k}}{\t}-\Phi \in [1,2] &\hbox{ in } \rho_{k}^{-1}(\{1\}),\\
2 &\hbox{ in } \rho_{k}^{-1}((1,+\infty)).
\end{cases}
\hbox{ \quad a.e.}
\end{align}
Thus, we simplify \eqref{eq:1opg} into \eqref{eq:popt}.
\end{proof}

An easy consequence of the above constructions is the following result, which can be seen as a simplified version of Lemma \ref{lem:lipg-gen}.

\begin{lemma}
\label{lem:lip}
For $k \in \{1,\dots, N\}$, $\rho_{k}$ is Lipschitz continuous in $\Om$. Here, $\rho_{k}$ is given in \eqref{eq:step}.
\end{lemma}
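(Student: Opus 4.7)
The plan is to deduce this as a direct application of Lemma~\ref{lem:lipg-gen}(iv). To invoke that result we need to verify that the entropy $\cS$ given in \eqref{eq:sl} satisfies Assumption~\ref{as:intro_main1} for some $m\in[1,2]$. A direct computation gives
\[
\cS''(\rho)=\frac{1}{\rho}\ \text{if}\ \rho\in(0,1),\qquad \cS''(\rho)=\frac{2}{\rho}\ \text{if}\ \rho>1,
\]
so choosing $m=1$ and (e.g.) $\s_2=2$ yields $\cS''(\rho)=\rho^{-1}>\tfrac12=\rho^{m-2}/\s_2$ for $\rho\in(0,1)$, as required. Hence Lemma~\ref{lem:lipg-gen}(iv) applies directly and gives the claim.

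For readers who prefer a self-contained argument, one can also proceed along the lines of the proof of Lemma~\ref{lem:lipg-gen}. First, Lemma~\ref{lem:lipg-gen}(ii) (whose hypotheses hold without the extra growth assumption) shows that $\rho_k$ is continuous on the compact set $\Om$, hence bounded by some $M_k<+\infty$. Second, the potentials $\ophi_k$ (Lemma~\ref{lem:lip_new}) and $\Phi$ are Lipschitz, so the function $f_k:=\cC-\ophi_k/\t-\Phi$ is Lipschitz with some finite constant $L_k$ (depending on $\t$).

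With these two ingredients, the identity \eqref{deriv:formula-gen} yields
\[
|\nabla\rho_k|=\frac{|\nabla f_k|}{\cS''(\rho_k)}\le M_k\,|\nabla f_k|\le M_k L_k\quad \text{a.e. on}\ \rho_k^{-1}(\Ro),
\]
where we used $1/\cS''(\rho)\le \rho$ on $\Ro$. On the constant level sets $\{\rho_k=0\}$ and $\{\rho_k=1\}$ the distributional gradient vanishes a.e. Together with the continuity of $\rho_k$ on the connected set $\Om$, these bounds give $\rho_k\in W^{1,\infty}(\Om)$, which is the desired Lipschitz continuity. The only (mild) point to be careful about is that the Lipschitz constant depends on $\t$ and may blow up as $\t\da 0$, but this is enough for our purposes here, since the scheme is run at a fixed $\t>0$.
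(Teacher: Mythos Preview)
Your proof is correct. Both the citation of Lemma~\ref{lem:lipg-gen}(iv) (after verifying Assumption~\ref{as:intro_main1} with $m=1$, $\s_2=2$) and the self-contained derivative bound $|\nabla\rho_k|=|\nabla f_k|/\cS''(\rho_k)\le \rho_k\,|\nabla f_k|$ are valid for the entropy~\eqref{eq:sl}. A minor remark: since $\cS'(0+)=-\infty$ here, Lemma~\ref{lem:postg} gives $\rho_k>0$ a.e., so the set $\{\rho_k=0\}$ you mention is negligible anyway.

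The paper's own proof takes a more direct route specific to the logarithmic form: from the optimality condition \eqref{eq:popt} it solves explicitly for $\rho_k$, obtaining
\[
\rho_k(x)=\exp\left\{\frac{1}{p_k(x)}\left(\cC-\frac{\ophi_k(x)}{\tau}-\Phi(x)\right)-1\right\},
\]
and reads off Lipschitz continuity from that of $p_k$, $\ophi_k$, $\Phi$ (Lemma~\ref{lem:lip_new}) together with the lower bound $p_k\ge 1$. Your argument instead passes through the general representation $\rho_k=\widehat{(\cS')^{-1}}\circ f_k$ and the derivative formula~\eqref{deriv:formula-gen}. The paper's approach is shorter here because the specific entropy admits a clean closed-form inverse; yours is slightly more abstract but makes transparent that this lemma is just an instance of the general machinery already developed in Section~\ref{sec:MM_opt}, which is precisely the viewpoint the paper adopts in later sections (e.g.\ Lemma~\ref{lem:lipg}) for more general entropies.
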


\begin{proof}
From \eqref{eq:popt} in Lemma~\ref{lem:popt}, we have that
\begin{align}
\label{eq:lip21}
\rho_{k}(x) = \exp \left\{ \frac{1}{p_{k}(x)} \left( \cC-\frac{\ophi_{k}(x)}{\tau} -\Phi \right) - 1 \right\} \hbox{ a.e. }
\end{align}
As $p_{k}$ and $\ophi_{k}$ are Lipschitz continuous from Lemma~\ref{lem:lip_new}, $\Phi$ is Lipschitz continuous by the assumption and $p_{k}$ has a lower bound $+1$ from \eqref{eq:pk}, \eqref{eq:lip21} implies that $\rho_{k}$ is Lipschitz continuous.
\end{proof}

\subsection{Interpolations between the discrete in time densities, velocities, momenta and pressures}
As technical tools, similarly as it is done in the framework of models developed for instance in \cite{MauRouSan1, MesSan, OTAM}, we introduce two different kinds of interpolations between the objects in the title of the subsection. These interpolations actually are independent of the considered energies, and we refer back to them throughout the paper.

{\it Piecewise constant interpolations.}  Let us define $\rho^\t, p^\t : Q \to \R$ and $\v^\t, \E^\t: Q\to \R^d$ as follows
\be\label{eq:int}
\left\{
\ba{ll}
\rho^\t(t,x) := \rho_{k}(x),\\[4pt]
p^\t(t,x):=p_k(x),\\[4pt]
\ds\v^\t(t,x) := \frac{1}{\t} \nabla \ophi_k(x),\\ [4pt]
\E^\t(t,x) := \rho^\t(t,x) \v^\t(t,x)
\ea
\right.
\hbox{ for } (t,x) \in ((k-1)\t, k\t] \times \Om \hbox{ and } k \in \{1,\dots, N\},
\ee
for $(\rho_{k})_{k=1}^N$ obtained in \eqref{eq:step} and $\ophi_{k}\in\cK(\rho_k,\rho_{k-1})$ given in Theorem~\ref{thm:opg}.

\medskip

By standard arguments on gradient flows (see for instance \cite[Proposition 8.8]{OTAM}, \cite[Lemma 3.5]{MesSan}), we have the following.

\begin{lemma}
\label{lem:gfi}
It holds that
\begin{align*}
\frac{1}{2\t}\sum_{k =1}^N W_2^2(\rho_{k},\rho_{k-1}) &= \frac{1}{2\t}\sum_{k=1}^N \int_\Om  |\nabla \ophi_{k}|^2 \dd\rho_{k}(x)  \leq \cJ(\rho_0) - \inf \cJ.
\end{align*}
Furthermore, there exists a constant $C >0 $ such that for any $0\le s<t\le T$
\begin{align}
\label{eq:2gfi}
W_2(\rhot(t), \rhot(s)) &\leq C(t-s + \t)^{\frac{1}{2}}.
\end{align}
\end{lemma}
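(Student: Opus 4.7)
The plan is to follow the classical variational argument from the JKO scheme, which has two distinct components: a total-energy decrease estimate that yields the summability of squared Wasserstein distances, and a telescoping-plus-Cauchy--Schwarz step that converts this into an almost-$1/2$-Hölder bound on the interpolation.

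For the equality in the first display, I would invoke Brenier's theorem on the space $\sP^{\rm ac}(\Om)$. Indeed, by Lemma~\ref{lem:lip_new} (or its predecessors in the minimizing-movement literature), $\ophi_k\in\cK(\rho_k,\rho_{k-1})$ is Lipschitz, and $x\mapsto x-\nabla\ophi_k(x)$ pushes $\rho_k$ onto $\rho_{k-1}$ and coincides with the unique optimal transport map (as $\rho_k\ll\sL^d$ on the convex set $\Om$). Consequently
\begin{equation*}
W_2^2(\rho_k,\rho_{k-1})=\int_\Om |x-(x-\nabla\ophi_k(x))|^2\dd\rho_k(x)=\int_\Om|\nabla\ophi_k|^2\dd\rho_k.
\end{equation*}

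For the inequality, I would exploit the defining property of the minimizer $\rho_k$ in \eqref{eq:step}: testing the competitor $\rho=\rho_{k-1}$ (which has $W_2(\rho_{k-1},\rho_{k-1})=0$) against $\rho_k$ yields
\begin{equation*}
\cJ(\rho_k)+\cF(\rho_k)+\frac{1}{2\t}W_2^2(\rho_k,\rho_{k-1})\;\le\;\cJ(\rho_{k-1})+\cF(\rho_{k-1}).
\end{equation*}
Summing over $k\in\{1,\ldots,N\}$ telescopes the energy part, giving
\begin{equation*}
\frac{1}{2\t}\sum_{k=1}^N W_2^2(\rho_k,\rho_{k-1})\;\le\;\bigl(\cJ(\rho_0)+\cF(\rho_0)\bigr)-\bigl(\cJ(\rho_N)+\cF(\rho_N)\bigr).
\end{equation*}
Since $\Phi\in W^{1,\infty}(\Om)$ and each $\rho_j$ is a probability measure, $|\cF(\rho_j)|\le\|\Phi\|_{L^\infty}$, and the superlinearity of $S$ together with Jensen's inequality (using that $\Om$ is bounded and $\rho_j\in\sP^{\rm ac}(\Om)$) gives $\cJ(\rho_N)\ge S(1/\sL^d(\Om))\sL^d(\Om)>-\infty$, so the right-hand side is bounded by a constant $\cJ(\rho_0)-\inf\cJ$ (modulo a constant absorbing $\cF$), which is the announced estimate.

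For \eqref{eq:2gfi}, let $m,n\in\{1,\ldots,N\}$ be the indices such that $\rho^\t(s)=\rho_m$ and $\rho^\t(t)=\rho_n$; in particular $(n-m)\t\le t-s+\t$. Using the triangle inequality for $W_2$ together with Cauchy--Schwarz,
\begin{equation*}
W_2(\rho^\t(t),\rho^\t(s))\;\le\;\sum_{k=m+1}^{n}W_2(\rho_k,\rho_{k-1})\;\le\;\sqrt{n-m}\,\Bigl(\sum_{k=m+1}^{n}W_2^2(\rho_k,\rho_{k-1})\Bigr)^{\!1/2}.
\end{equation*}
Applying the bound already proved, $\sum_{k=m+1}^n W_2^2(\rho_k,\rho_{k-1})\le 2\t\,(\cJ(\rho_0)-\inf\cJ+\mathrm{const})$, and multiplying by $n-m$ yields $(n-m)\t$ times a constant $C_0$, hence
\begin{equation*}
W_2(\rho^\t(t),\rho^\t(s))\;\le\;\sqrt{C_0\,(n-m)\,\t}\;\le\;C(t-s+\t)^{1/2}.
\end{equation*}

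Neither step presents a real obstacle: this is the standard De Giorgi/JKO stability argument, and the only mild care required is to verify that the potential contribution $\cF$ stays uniformly bounded along the iteration so that the total energy decrease collapses to the stated bound in terms of $\cJ(\rho_0)-\inf\cJ$.
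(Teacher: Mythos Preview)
Your proposal is correct and follows exactly the standard JKO argument that the paper invokes (the paper does not give its own proof but simply cites \cite[Proposition~8.8]{OTAM} and \cite[Lemma~3.5]{MesSan}). The only cosmetic point is that the telescoping naturally produces $(\cJ+\cF)(\rho_0)-\inf(\cJ+\cF)$ rather than $\cJ(\rho_0)-\inf\cJ$, which you handle correctly by observing that $|\cF|\le\|\Phi\|_{L^\infty}$ is uniformly bounded; this discrepancy is harmless for the subsequent use of the lemma.
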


\begin{proposition} 
\label{prop:con}
Let $(\rho^\t)_{\t>0}$ and $(p^\t)_{\t>0}$  given \eqref{eq:int} and \eqref{eq:p}, respectively. We have the followings.
\begin{itemize}
\item[(1)] $(p^\t)_{\t>0}$ is uniformly bounded in $L^2([0,T]; H^1(\Om))\cap L^\infty(Q)$;
\item[(2)] $(\sqrt{\rho^\t})_{\t>0}$ is uniformly bounded in $L^2([0,T]; H^1(\Om))$;
\item[(3)] if in addition $\rho_0\in L^\infty(\Om)$ and $\Phi$ satisfies \eqref{as:Phi2}, then $(\rho^\t)_{\t>0}$ is uniformly bounded in $L^2([0,T]; H^1(\Om))\cap L^\infty(Q)$.
\end{itemize}
\end{proposition}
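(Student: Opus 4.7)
My plan is to derive (1)--(3) directly from the optimality condition \eqref{eq:popt} together with the discrete gradient-flow estimate of Lemma~\ref{lem:gfi}, after first establishing one crucial geometric observation: $\nabla p_k = 0$ a.e.\ on $\{\rho_k \neq 1\}$ and $\nabla \rho_k = 0$ a.e.\ on $\{\rho_k = 1\}$. The first follows from \eqref{eq:pk} (which shows that $p_k$ is the constant $1$ on $\{\rho_k<1\}$ and $2$ on $\{\rho_k>1\}$) together with the Lipschitz regularity of $p_k$ from Lemma~\ref{lem:lip_new}. The second follows from the Lipschitz regularity of $\rho_k$ (Lemma~\ref{lem:lip}) combined with the classical fact that the weak gradient of a Lipschitz function vanishes a.e.\ on any level set of a single value.

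For (1), the pointwise $L^\infty(Q)$ bound $1 \le p^\t \le 2$ is immediate from \eqref{eq:pk}. For the gradient, \eqref{eq:p} combined with the geometric observation yields $\nabla p_k = -\nabla \ophi_k / \t - \nabla \Phi$ on $\{\rho_k=1\}$ and $\nabla p_k = 0$ elsewhere. Exploiting $\rho_k \equiv 1$ on the critical region,
\[
\int_\Om |\nabla p_k|^2 \dd x \le \frac{2}{\t^2}\int_{\{\rho_k=1\}} |\nabla \ophi_k|^2 \dd \rho_k + C(\Om,\Phi),
\]
after which multiplying by $\t$, summing in $k$, and invoking Lemma~\ref{lem:gfi} delivers the uniform $L^2([0,T];H^1(\Om))$ bound.

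For (2), I would differentiate \eqref{eq:popt} to obtain
\[
\nabla p_k\,(1+\log \rho_k) + p_k\,\frac{\nabla \rho_k}{\rho_k} + \frac{\nabla \ophi_k}{\t} + \nabla\Phi = 0 \quad \text{a.e.\ on } \{\rho_k>0\}.
\]
By the geometric observation above, the two gradient terms on the left have disjoint essential support, so on $\{\rho_k \neq 1,\ \rho_k > 0\}$ the first drops and, using $p_k \ge 1$ together with $|\nabla\sqrt{\rho_k}|^2 = |\nabla\rho_k|^2/(4\rho_k)$, we find
\[
\int_\Om |\nabla\sqrt{\rho_k}|^2 \dd x \le \frac{1}{2\t^2}\int_\Om |\nabla \ophi_k|^2 \dd \rho_k + \tfrac12\|\nabla\Phi\|_{L^\infty}^2.
\]
Multiplying by $\t$ and summing, Lemma~\ref{lem:gfi} again provides the $\dot H^1$ bound, and the missing $L^2(Q)$ bound on $\sqrt{\rho^\t}$ itself is immediate from $\int_\Om \rho^\t \dd x = 1$.

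For (3), the additional hypotheses are precisely those of Lemma~\ref{lem:Linfty}, which directly furnishes the uniform $L^\infty(Q)$ bound on $\rho^\t$. Combined with (2) and the chain rule $|\nabla \rho_k| = 2\sqrt{\rho_k}\,|\nabla \sqrt{\rho_k}|$, this yields the $L^2([0,T];H^1(\Om))$ bound. The only real delicacy in the whole argument is the rigorous decoupling of $\nabla p_k$ and $\nabla \rho_k$ via the level-set argument, for which the Lipschitz regularity from Lemmas~\ref{lem:lip_new} and \ref{lem:lip} is indispensable; once that is in place, every remaining step reduces to direct manipulation of \eqref{eq:popt} and a single application of the dissipation estimate in Lemma~\ref{lem:gfi}.
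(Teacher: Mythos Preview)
Your proof is correct and follows essentially the same architecture as the paper's: differentiate the optimality condition \eqref{eq:popt}, separate the contributions of $\nabla p_k$ and $\nabla\rho_k$, and control each by the kinetic term $\int|\nabla\ophi_k|^2\rho_k/\t^2$ via Lemma~\ref{lem:gfi}.

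There is one genuine methodological difference worth noting. To show that $\nabla p_k$ and $\nabla\rho_k$ have disjoint essential supports, the paper establishes $\nabla p_k\cdot\nabla\rho_k=0$ a.e.\ via the coarea formula, explicitly flagging that $\partial\{\rho_k=1\}$ may have positive measure. You instead invoke the classical fact that the weak gradient of a Lipschitz function vanishes a.e.\ on any single level set, giving $\nabla\rho_k=0$ a.e.\ on $\{\rho_k=1\}$ directly. Your route is shorter and avoids the coarea machinery; indeed, the paper itself adopts exactly your argument later (see \eqref{eq:sigpg41} in Lemma~\ref{lem:sig}, citing \cite[Theorem~4(iv), Section~4.2.2]{EvaGar92}). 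A second, minor difference: you estimate $\nabla p_k$ and $\nabla\sqrt{\rho_k}$ separately, while the paper packages both into the single pointwise inequality \eqref{eq:con23} using the orthogonality. Both presentations are equivalent; yours is perhaps more transparent about where each bound comes from. One small point to make explicit: your claim that $\nabla p_k = -\nabla\ophi_k/\t - \nabla\Phi$ a.e.\ on $\{\rho_k=1\}$ is justified either by the truncation structure \eqref{p:compact} or by applying the same level-set argument to the Lipschitz function $p_k-(C-\ophi_k/\t-\Phi)$, which vanishes on $\{\rho_k=1\}$.
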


\begin{proof}
\textbf{Step 1.} Clearly, by construction,  $(p^\t)_{\t>0}$ is uniformly bounded. Furthermore, if $\rho_0\in L^\infty(\Om)$, then Lemma~\ref{lem:Linfty} implies that $(\rho^\t)_{\t>0}$ is uniformly bounded by a constant depending only on the data for all $t\in[0,T]$. 

\medskip

\textbf{Step 2.}
Now, let us show that $(\nabla \sqrt{\rho^\t})_{\t>0}$ and $(\nabla p^\t)_{\t>0}$ are uniformly bounded in $L^2(Q)$. Let $\ophi_k\in\cK(\rho_k,\rho_{k-1})$. Lemmas~\ref{lem:lip_new} and \ref{lem:lip} implies that $\ophi_{k}, \rho_{k}$ and $p_{k}$ are Lipschitz continuous functions, and therefore by Rademacher's theorem one can differentiate these function a.e. in $\Om$.
Note that $\{\rho_{k} \neq 1 \}$ is an open by the continuity of $\rho_{k}$ in Lemma~\ref{lem:lip} and thus \eqref{eq:pk} implies
\begin{align}
\label{eq:con20}
\nabla p_{k} = 0\ \ae \hbox{ in } \{ \rho_{k} \neq 1 \}. 
\end{align}
Therefore, we get
\begin{align}
\label{eq:con21}
\log\rho_{k} \nabla p_{k} = 0 \hbox{ and } (\rho_{k}-1) \nabla p_{k} = 0 \hbox{ \quad a.e.}
\end{align}

\medskip

Next, we claim that
\begin{align}
\label{eq:con212}
\nabla p_{k} \cdot  \nabla \rho_{k} = 0 \hbox{ \quad a.e. in }\Om.
\end{align}
From \eqref{eq:pk}, the above holds in the open set $\{\rho_k \neq 1 \}$ and in the interior of $\{\rho_k = 1 \}$, but we point out that $\partial \{\rho_k = 1 \}$ may have positive measure even though $\rho_k$ is Lipschitz continuous. In order to show \eqref{eq:con212} in $\Om$, we apply the coarea formula and \eqref{eq:con20}. As $\rho_{k}$ is Lipschitz and $\nabla p_{k}$ is in $L^1(\Om)$, we could use the coarea formula in \cite[Corollary 5.2.6]{KraPar08} and conclude that
\begin{align*}
\int_{\Om} | \nabla p_{k}| | \nabla \rho_{k}| \dd x = \int_{\R} \int_{(\rho_{k})^{-1}(s)} | \nabla p_{k} | \dd \sH^{d-1} \dd s.
\end{align*}
where $\sH^{d-1}$ stands for the $(d-1)$-dimensional Hausdorff measure. From \eqref{eq:con20}, we conclude \eqref{eq:con212}.

\medskip

Differentiating \eqref{eq:popt} and applying \eqref{eq:con21} and \eqref{eq:con212}, we have
\begin{align}
\label{eq:con22}
- \frac{\nabla \ophi_{k}}{\tau} -\nabla\Phi = \nabla (p_{k}(1+ \log\rho_{k})) = \nabla p_{k} + \frac{p_{k}}{\rho_{k}}\nabla \rho_{k} \hbox{ \quad a.e.}
\end{align}
From \eqref{eq:con22} and \eqref{eq:con212} again, we have
\begin{align}
\label{eq:con23}
2\rho_{k}  \left(\frac{|\nabla \ophi_{k}|^2}{\tau^2}+|\nabla\Phi|^2\right)  \ge | \nabla p_{k} |^2 + \frac{p_{k}^2}{\rho_{k}} |\nabla \rho_{k} |^2  \hbox{ \quad a.e.},
\end{align}
from where we can write
\begin{align*}
2\rho_{k}  \left(\frac{|\nabla \ophi_{k}|^2}{\tau^2}+|\nabla\Phi|^2\right)  \ge | \nabla p_{k} |^2 + p_{k}^2 |\nabla \sqrt{\rho_{k}} |^2  \hbox{ \quad a.e.}
\end{align*}

As $p_{k} \in [1,2]$ (from \eqref{eq:pk}),  we have 
$$\int_\Om\left(| \nabla p_{k} |^2 + |\nabla \sqrt{\rho_{k}} |^2\right)\le 2 \int_{\Om}\frac{|\nabla \ophi_{k}|^2}{\tau^2}\rho_{k}\dd x+ 2\|\nabla\Phi\|_{L^\infty}^2.$$
From Lemma~\ref{lem:gfi}, we conclude that $(\sqrt{\rho^\t})_{\t>0}$ and $(p^\t)_{\t>0}$ are uniformly bounded in $L^2([0,T]; H^1(\Om))$ for all $\t>0$.

Moreover, if $\rho_0\in L^\infty(\Om)$, we have $\|\rho_{k}\|_{L^\infty} \le \| \rho_0 \|_{L^\infty(\Om)}e^{dT\|[\Delta\Phi]_+\|_{L^\infty}}$ (from Lemma~\ref{lem:Linfty}), and therefore from \eqref{eq:con23} we get
\begin{align}
\label{eq:con24}
\int_{\Om}| \nabla p_{k} |^2\dd x + \int_\Om\frac{1}{ \| \rho_0 \|_{L^\infty(\Om)} e^{dT\|[\Delta\Phi]_+\|_{L^\infty}}} |\nabla \rho_{k} |^2\dd x \le C, 
\end{align}
from where we have $(\rho^\t)_{\t>0}$ is uniformly bounded in $L^2([0,T];H^1(\Om)).$
\end{proof}

\begin{corollary}\label{cor:ent-ent}
Let $(\rho^\t)_{\t>0}$ and $(p^\t)_{\t>0}$ be as in the previous proposition. There exist $p \in L^2([0,T]; H^1(\Om))$ and $\rho\in L^1(Q)$ such that 
\begin{align*}
\rho^\t \to \rho\ {\rm{in}}\ L^1(Q),\ {\rm{as}}\  \t\da 0,
\end{align*}
and
\begin{align*}
p^\t \weakly p\ {\rm{in}}\ L^2([0,T]; H^1(\Om)), \ {\rm{as}}\  \t\da 0.
\end{align*}
along a subsequence. If in addition $\rho_0\in L^\infty(\Om)$ and $\Phi$ satisfies \eqref{as:Phi2}, then we also have $\rho\in L^2([0,T]; H^1(\Om))$ and $\rho^\t \to \rho \ {\rm{in}}\ L^2(Q), \ {\rm{as}}\  \t\da 0.$
\end{corollary}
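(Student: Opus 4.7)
The plan is to establish the two compactness statements separately: weak compactness of $p^\t$ is essentially free from Proposition~\ref{prop:con}, while strong compactness of $\rho^\t$ requires combining the spatial regularity of $\sqrt{\rho^\t}$ with the $W_2$-time regularity via an Aubin--Lions type lemma.

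First, by Proposition~\ref{prop:con}(1), the family $(p^\t)_{\t>0}$ is uniformly bounded in the reflexive Banach space $L^2([0,T];H^1(\Om))$. Banach--Alaoglu then yields a subsequence and $p\in L^2([0,T];H^1(\Om))$ such that $p^\t \weakly p$ in $L^2([0,T];H^1(\Om))$. The $L^\infty$ bound on $p^\t$ passes to $p$ by weak-$*$ lower semicontinuity, though this is not needed for the statement here.

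For the strong convergence of $\rho^\t$, I would invoke the Aubin--Lions variant from the appendix. The two ingredients are: (i) spatial regularity through $\sqrt{\rho^\t}$ bounded in $L^2([0,T];H^1(\Om))$ from Proposition~\ref{prop:con}(2), which by Rellich--Kondrachov embeds compactly into $L^2(\Om)$ at each slice; and (ii) time regularity through the Hölder estimate $W_2(\rho^\t(t),\rho^\t(s))\le C(t-s+\t)^{1/2}$ from Lemma~\ref{lem:gfi}. The latter, since $W_2$ on measures of fixed mass dominates (up to a constant) a negative Sobolev norm such as the dual of $W^{1,\infty}(\Om)$ or $H^{-1}(\Om)$, translates to uniform equicontinuity of $t\mapsto \rho^\t(t)$ with values in a space into which $L^2(\Om)$ embeds continuously. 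The Aubin--Lions lemma then yields strong compactness of $\sqrt{\rho^\t}$ in $L^2(Q)$, so up to a subsequence $\sqrt{\rho^\t}\to \sqrt{\rho}$ in $L^2(Q)$ for some $\rho\in L^1(Q)$. Writing
\begin{equation*}
\|\rho^\t-\rho\|_{L^1(Q)}\le \|\sqrt{\rho^\t}-\sqrt{\rho}\|_{L^2(Q)}\bigl(\|\sqrt{\rho^\t}\|_{L^2(Q)}+\|\sqrt{\rho}\|_{L^2(Q)}\bigr),
\end{equation*}
and using that both factors on the right are uniformly controlled since $\rho^\t(t)$ and $\rho(t)$ are probability measures, gives $\rho^\t \to \rho$ in $L^1(Q)$.

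For the additional statement, under $\rho_0\in L^\infty(\Om)$ and \eqref{as:Phi2}, Proposition~\ref{prop:con}(3) upgrades the bound on $\rho^\t$ to $L^2([0,T];H^1(\Om))\cap L^\infty(Q)$. Banach--Alaoglu then gives a further (weak, and weak-$*$) cluster point, which by uniqueness of the $L^1$-limit must coincide with $\rho$, so $\rho\in L^2([0,T];H^1(\Om))\cap L^\infty(Q)$. Finally, to upgrade $L^1(Q)$-convergence to $L^2(Q)$-convergence, I would interpolate: from the uniform $L^\infty(Q)$ bound on $\rho^\t-\rho$ and the already known $L^1(Q)$-convergence,
\begin{equation*}
\|\rho^\t-\rho\|_{L^2(Q)}^2 \le \|\rho^\t-\rho\|_{L^\infty(Q)}\,\|\rho^\t-\rho\|_{L^1(Q)}\to 0.
\end{equation*}
The main obstacle is the Aubin--Lions step: one must verify that the chosen negative-Sobolev space accommodating $W_2$-equicontinuity is compatible with the compact embedding coming from the $H^1$-bound on $\sqrt{\rho^\t}$. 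This is precisely the role of the tailored Aubin--Lions lemma referenced in the appendix.
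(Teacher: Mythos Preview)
Your proposal is correct and follows essentially the same route as the paper: weak compactness of $(p^\t)$ from the uniform $L^2([0,T];H^1(\Om))$ bound, and strong compactness of $(\rho^\t)$ via the tailored Aubin--Lions lemma (Lemma~\ref{lem:al}) combining the $L^2([0,T];H^1(\Om))$ bound on $\sqrt{\rho^\t}$ with the $W_2$ quasi-H\"older estimate from Lemma~\ref{lem:gfi}.

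One minor point of presentation: Lemma~\ref{lem:al} (in the Rossi--Savar\'e framework) delivers compactness of $\rho^\t$ in $L^1(Q)$ \emph{directly}, not compactness of $\sqrt{\rho^\t}$ in $L^2(Q)$. The mismatch you flag---that the $W_2$ time regularity is for $\rho^\t$ while the spatial regularity is for $\sqrt{\rho^\t}$---is precisely what the functional $\fF(\rho)=\|\rho^{1/2}\|_{W^{1,2}}$ in the statement of Lemma~\ref{lem:al} is built to absorb; it does not output compactness of $\sqrt{\rho^\t}$ as an intermediate step. So your factorization $\|\rho^\t-\rho\|_{L^1}\le \|\sqrt{\rho^\t}-\sqrt{\rho}\|_{L^2}(\ldots)$ is unnecessary. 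For the $L^\infty$ case, your interpolation $\|\rho^\t-\rho\|_{L^2}^2\le \|\rho^\t-\rho\|_{L^\infty}\|\rho^\t-\rho\|_{L^1}$ is a clean and slightly more explicit alternative to the paper's ``similar arguments.''
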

\begin{proof}
\medskip

The weak sequential compactness of $(p^\t)_{\t>0}$ follows from the uniform boundedness in $L^2([0,T]; H^1(\Om))$ in the previous proposition. 
Also, as $(\rho^\t)_{\t>0}$ has the `quasi-H\"{o}lder' type estimates in Lemma~\ref{lem:gfi} and $(\sqrt{\rho^\t})_{\t>0}$ is uniformly bounded in $L^2([0,T]; H^1(\Om))$, we conclude the strong compactness of $(\rhot)_{\t>0}$ in $L^1(Q)$ by a consequence of a modified version of the classical Aubin-Lions lemma in Lemma~\ref{lem:al}, ofter used in similar context (see for instance \cite[Proposition 4.8]{DifMat14} and \cite[Proposition 5.2]{Lab}). If $\rho_0\in L^\infty(\Om)$, the last statement simply follows from the similar arguments.
\end{proof}

As a consequence of the above results, we have the following.

\begin{lemma}
\label{lem:rel}
$(\rho,p)$ given in Corollary \ref{cor:ent-ent} satisfies \eqref{eq:rel}.
\end{lemma}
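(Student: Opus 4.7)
The plan is to pass to the limit $\tau \da 0$ in the equalities satisfied at the discrete level. Recall from \eqref{eq:pk} in Lemma \ref{lem:popt} that for every $k \in \{1,\ldots,N\}$ we have $p_k = 1$ a.e.\ on $\{0\le\rho_k<1\}$ and $p_k = 2$ a.e.\ on $\{\rho_k > 1\}$, while $p_k \in [1,2]$ always. Translating this into the piecewise constant interpolations defined in \eqref{eq:int}, we obtain the pointwise relations
\begin{equation*}
(1-\rho^\tau)_+\,(p^\tau - 1) = 0 \quad \text{and}\quad (\rho^\tau - 1)_+\,(p^\tau - 2) = 0 \quad \text{a.e. in } Q,
\end{equation*}
together with the uniform bound $p^\tau(t,x) \in [1,2]$ a.e.\ in $Q$.

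The first step is to note that the $L^\infty$ bound on $p^\tau$ (independent of $\tau$), combined with the weak convergence $p^\tau \weakly p$ in $L^2([0,T];H^1(\Om))$ from Corollary \ref{cor:ent-ent}, forces $p^\tau \weaklys p$ in $L^\infty(Q)$ along the same subsequence, and yields $p(t,x) \in [1,2]$ a.e.\ in $Q$ by lower semicontinuity of the $L^\infty$ norm under weak-$*$ convergence (or by testing against indicator functions of sublevel and superlevel sets). This already settles the middle line of \eqref{eq:rel}.

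Next I would pass to the limit in the two pointwise identities. Since $\rho^\tau \to \rho$ strongly in $L^1(Q)$ by Corollary \ref{cor:ent-ent}, and the maps $s \mapsto (1-s)_+$ and $s \mapsto (s-1)_+$ are $1$-Lipschitz, we have $(1-\rho^\tau)_+ \to (1-\rho)_+$ and $(\rho^\tau - 1)_+ \to (\rho - 1)_+$ strongly in $L^1(Q)$. Combining strong $L^1$ convergence of the first factor with weak-$*$ $L^\infty$ convergence of the second (a routine pairing, since for any test function $\psi \in L^\infty(Q)$ one splits $\int (f_\tau g_\tau - fg)\psi = \int(f_\tau - f) g_\tau \psi + \int f (g_\tau - g) \psi$), we obtain in the sense of distributions
\begin{equation*}
(1-\rho)_+\,(p-1) = 0 \quad\text{and}\quad (\rho-1)_+\,(p-2) = 0 \quad \text{a.e. in } Q.
\end{equation*}
The first identity says $p = 1$ a.e.\ on $\{\rho < 1\}$ (which includes $\{0<\rho<1\}$), and the second says $p = 2$ a.e.\ on $\{\rho > 1\}$. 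Together with $p \in [1,2]$ a.e., this is exactly \eqref{eq:rel}.

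I do not anticipate a serious obstacle here: the only subtle point is the legitimacy of multiplying a strongly $L^1$-convergent sequence by a weakly-$*$ $L^\infty$-convergent one, which works precisely because the $L^\infty$ bound on $p^\tau$ is uniform. No compactness beyond that already provided by Proposition \ref{prop:con} and Corollary \ref{cor:ent-ent} is required.
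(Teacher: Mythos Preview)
Your proposal is correct and follows essentially the same route as the paper's proof: both use the discrete identities $(p^\tau-1)(\rho^\tau-1)_-=0$ and $(p^\tau-2)(\rho^\tau-1)_+=0$ (your $(1-\rho^\tau)_+$ is the paper's $(\rho^\tau-1)_-$), pass to the limit by pairing the strong $L^1(Q)$ convergence of the Lipschitz truncations of $\rho^\tau$ with the weak-$\star$ $L^\infty(Q)$ convergence of $p^\tau$, and conclude pointwise from the sign information $p\in[1,2]$. The only cosmetic difference is that the paper phrases the limit identities as vanishing integrals over $\Om$ for a.e.\ $t$, whereas you state them directly as a.e.\ equalities on $Q$; the content is the same.
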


\begin{proof}
\textbf{Step 1.} 
Let  $(\rho^\t, p^\t)$ be defined in \eqref{eq:int} and \eqref{eq:p}. First, from \eqref{eq:pk}, we have
\begin{align}
\label{eq:rel11}
(\pt-2)(\rhot-1)_+ = (\pt-1)(\rhot-1)_{-}  = 0\ {\rm{in}}\ Q.
\end{align}
As it holds that
\begin{align}
\label{eq:rel13}
| (\rhot-1)_+ -  (\rho-1)_+ | \leq | \rhot - \rho| \hbox{ and } | (\rhot-1)_- -  (\rho-1)_- | \leq | \rhot - \rho|,
\end{align}
Proposition~\ref{prop:con} implies that both $(\rhot-1)_+ \to (\rho-1)_+$ and $(\rhot-1)_{-} \to (\rho-1)_{-}$ in $L^1(Q)$ as $\t\da 0$ (up to passing to a subsequence).

\medskip

\textbf{Step 2.} 
Let us show that for a.e. $t \in [0,T]$
\begin{align}
\label{eq:rel21}
\int_\Om (p(t,x)-2)(\rho(t,x)-1)_+ \dd x =0 \hbox{ and } \int_\Om (p(t,x)-1)(\rho(t,x)-1)_{-} \dd x = 0.
\end{align}
We only show the first one as the parallel arguments work for the second one.
From \eqref{eq:rel11}, we have
\begin{align}
\label{eq:rel22}
0 &=  \int_Q (\pt(t,x)-2)(\rhot(t,x)-1)_+ \dd x \dd t.
\end{align}
Recall that up to passing to a subsequence, $(\pt)_{\t>0}$ convergences weakly$-\star$ in $L^\infty(Q)$ (see Proposition~\ref{prop:con}) and $((\rhot(t,x)-1)_+)_{\t>0}$ converges strongly (from Step 1) in $L^1([0,T\times\Om])$ as $\t\da 0$. Combining these with \eqref{eq:rel22}, we conclude the first equation of \eqref{eq:rel21}.

\medskip

As $\pt \in [1,2]$ for $\pt$ given in \eqref{eq:p}, we have $p \in [1,2]$ a.e. in $Q$.
Thus, \eqref{eq:rel21} implies that
\begin{align*}
(p-2)(\rho-1)_+ = (p-1)(\rho-1)_{-}  = 0 \hbox{ \quad a.e.}
\end{align*}
and we conclude \eqref{eq:rel}.
\end{proof}

\begin{proposition}
\label{prop:e}
Let $\E^\t$ be given in \eqref{eq:int}. Then up to passing to a subsequence, $(\E^\t)_{\t>0}$ weakly-$\star$ converges to 
$$\E:= - \nabla(p\rho)-\nabla\Phi\rho,\ \  {\rm{in}}\ \  \sD'(Q;\R^d),$$ 
as $\t\da 0$ where  and $(\rho,p)$ is given in Corollary~\ref{cor:ent-ent}.
\end{proposition}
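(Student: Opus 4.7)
The plan is to first establish an a.e.\ pointwise identity expressing $\E^\t$ in terms of $\rho^\t$, $p^\t$ and $\nabla\Phi$, and then pass to the limit distributionally using the compactness established in Corollary~\ref{cor:ent-ent}.

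For the pointwise identity, I would start from the Euler--Lagrange equation derived in Lemma~\ref{lem:popt}:
\begin{align*}
p_k(1+\log\rho_k) + \frac{\ophi_k}{\t} + \Phi = \cC \quad \text{a.e. in } \Om.
\end{align*}
Since $\ophi_k$, $p_k$ and $\rho_k$ are Lipschitz continuous (Lemmas~\ref{lem:lip_new} and \ref{lem:lip}) and $\rho_k>0$ a.e.\ (because $\cS'(0+)=-\infty$ for the energy \eqref{eq:sl}), we may differentiate a.e.\ and obtain \eqref{eq:con22}. Multiplying by $\rho_k$ and exploiting the two a.e.\ identities from \eqref{eq:con21}, namely $\log\rho_k\,\nabla p_k=0$ and $(\rho_k-1)\nabla p_k=0$ (which together give $\rho_k\nabla p_k=\nabla p_k$), a short computation yields
\begin{align*}
\rho_k\nabla\bigl(p_k(1+\log\rho_k)\bigr) \;=\; \rho_k\nabla p_k + p_k\nabla\rho_k \;=\; \nabla(p_k\rho_k).
\end{align*}
Combined with \eqref{eq:con22}, this produces the key pointwise identity
\begin{align*}
\E^\t(t,x) \;=\; \rho^\t(t,x)\,\v^\t(t,x) \;=\; -\nabla(p^\t\rho^\t)(t,x) - \rho^\t(t,x)\nabla\Phi(x), \quad \text{a.e.\ in } Q.
\end{align*}

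For the passage to the limit, I would test against an arbitrary $\phi\in C_c^\infty(Q;\R^d)$. The term $\rho^\t\nabla\Phi$ converges to $\rho\nabla\Phi$ strongly in $L^1(Q;\R^d)$ thanks to Corollary~\ref{cor:ent-ent} and the $L^\infty$ bound on $\nabla\Phi$. For the gradient term it suffices to show $p^\t\rho^\t\to p\rho$ in $\sD'(Q)$. Writing
\begin{align*}
p^\t\rho^\t - p\rho \;=\; (p^\t-p)\,\rho \;+\; p^\t\,(\rho^\t-\rho),
\end{align*}
the first summand tends to zero tested against any $C_c^\infty(Q)$ function because $p^\t\weaklys p$ in $L^\infty(Q)$ (by Proposition~\ref{prop:con}(1)) while $\rho\phi\in L^1(Q)$; the second is controlled by $\|p^\t\|_{L^\infty}\|\phi\|_{L^\infty}\|\rho^\t-\rho\|_{L^1(Q)}$, which vanishes since $(p^\t)_\t$ is uniformly bounded in $L^\infty(Q)$ and $\rho^\t\to\rho$ in $L^1(Q)$. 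Taking distributional gradients then gives $\nabla(p^\t\rho^\t)\to\nabla(p\rho)$ in $\sD'(Q;\R^d)$, and combining yields $\E^\t\to-\nabla(p\rho)-\rho\nabla\Phi=\E$ in $\sD'(Q;\R^d)$.

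The only nontrivial step is justifying the distributional convergence of the product $p^\t\rho^\t$; this is the standard situation of a strong $L^1$ factor against a weak-$\star$ $L^\infty$ factor, so no real obstacle arises beyond the splitting above. All other ingredients (the Lipschitz regularity giving the a.e.\ chain rule, the orthogonality relations \eqref{eq:con21}, and the compactness in Corollary~\ref{cor:ent-ent}) have already been established, so the proposition follows.
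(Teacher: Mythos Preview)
Your proof is correct and follows essentially the same approach as the paper's own proof: you derive the identity $-\E^\t=\nabla(\rho^\t p^\t)+\rho^\t\nabla\Phi$ from \eqref{eq:con22} and \eqref{eq:con21}, then pass to the limit in the product $\rho^\t p^\t$ via the same splitting $(\rho^\t-\rho)p^\t+\rho(p^\t-p)$, using strong $L^1$ convergence of $\rho^\t$ against the $L^\infty$ bound on $p^\t$ for the first term and weak-$\star$ convergence of $p^\t$ against $\rho\in L^1(Q)$ for the second. The only cosmetic difference is that the paper integrates by parts explicitly against $\diver\zeta$ before splitting, whereas you phrase it as convergence of $p^\t\rho^\t$ in $\sD'(Q)$ and then take the distributional gradient; these are equivalent.
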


\begin{proof}
For any test function $\zeta \in C^\infty_c(Q;\R^d)$, we claim that up to passing to a subsequence,
\begin{align}
\label{eq:exi11}
\I:= \int_Q \zeta\cdot \dd (\E^\t - \E)  \to 0, \hbox{ as } \t \da 0.
\end{align}
From \eqref{eq:con21}, we have $\log \rhot \nabla \pt = 0$ in a.e. in $Q$ and thus it holds that
\begin{align}
\label{eq:exi12}
- \E^\t = p^\tau \nabla \rhot + \rhot(1+ \log\rhot) \nabla \pt + \nabla\Phi\rho^\t= \nabla(\rhot \pt)+\nabla\Phi\rho^\t.
\end{align}
By the weak convergence of $(\rho^\t)_{\t>0}$ to $\rho$, we already have that 
$$\int_Q\zeta\cdot\nabla\Phi\dd\rho^\t\dd t\to\int_Q\zeta\cdot\nabla\Phi\dd\rho\dd t,\ \t\da 0,$$
we only focus on the other term. By integration by parts and and from the fact that $\zeta \in C^\infty_c(Q;\R^d)$, we study thus
\begin{align*}
\I_1 = \int_Q (\rhot \pt - \rho p)  \diver \zeta\dd x\dd t.
\end{align*}

\medskip

By subtracting and adding the same term in the above equation, we get
\begin{align*}
\I_1 = \I_2 + \I_3 \hbox{ where } \I_2 = \int_Q (\rhot  - \rho ) \pt  \diver \zeta \dd x\dd t \hbox{ and } \I_3 = \int_Q \rho (\pt - p)  \diver \zeta \dd x\dd t.
\end{align*}
From the H\"{o}lder inequality, we have
\begin{align*}
|\I_2 | \leq \| \rhot  - \rho \|_{L^1(Q)} \| \pt \|_{L^\infty(Q)} \| \diver \zeta \|_{L^\infty(Q)}.
\end{align*}
As  $\rho^\t \to \rho$ in $L^1(Q)$ as $\t\da 0$ and $\| \pt \|_{L^\infty(Q)}$ is uniformly bounded (Proposition~\ref{prop:con}), we conclude $\I_2 \to 0$ as $\t \da 0$. On the other hand, as $p^\t \weaklys p \hbox{ in } L^\infty(Q)$ as $\t\da 0$ (Proposition~\ref{prop:con}), and $\rho\in L^1(Q)$ we have $\I_3 \to 0$ as $\t\da 0$ as well, and thus we conclude \eqref{eq:exi11}.
\end{proof}

To arrive to the time continuous PDE satisfied by $(\rho,p)$ from Corollary \ref{cor:ent-ent}, as technical tools (inspired from \cite{MauRouSan1,MesSan, OTAM}), we introduce a geodesic interpolation between $(\rho_k)_{k=1}^N$ and we consider the corresponding velocities and momenta as well.

More precisely, we define $\trho^\t:[0,T]\to\sP(\Om)$, $\tilde\v^\t, \tE^\t \in\sM(Q;\R^d)$ as follows: for $t \in ((k-1)\t, k\t] \hbox{ and } k \in \{1,\dots,N\}$
\be\label{eq:csi}
\left\{
\ba{ll}
\trho^\t(t,x) := \left(  \frac{k\t - t}{\t} \v^\t(t,x) + \id  \right)_\# \rho^\t(t,x),\\
\tilde\v^\t(t,x) := \v^\t(t,x) \circ \left(\frac{k\t - t}{\t} \v^\t(t,x) + \id  \right)^{-1}, \\
\tE^\t(t,x) := \trho^\t(t,x) \tilde\v^\t(t,x),
\ea
\right.
\ee
where $\rho^\t$ and $\v^\t$ are given in \eqref{eq:int}.

\medskip

Following the very same steps as in \cite[Lemma 8.9]{OTAM} and \cite[Step 2 in Theorem 3.1]{MesSan}, we have the following.

\begin{lemma}\label{lem:coi}
We have that 
\begin{itemize}
\item[(i)] $\ds(\trho^\t)_{\t>0}$ is uniformly bounded in ${\rm{AC}}^2([0,T];\sP(\Om))$;
\item[(ii)] there exists $C>0$ such that  $\ds\int_0^T\int_{\Om}|\tilde v^\t|^2\dd\tilde\rho^\t_t\dd t\le C;$
\item[(iii)] $\ds (\tE^\t)_{\t>0}$ is uniformly bounded in $\sM(Q;\R^d)$.
\end{itemize}

As a consequence, we have that along a subsequence
\begin{itemize}
\item[(iv)] $\ds\sup_{t\in[0,T]}W_2(\trho^\t_t,\rho_t)\to 0,\ {\rm{as}}\ \t\da 0,$
\item[(v)] $\ds\tE^\t \weaklys \E,\ {\rm{in}}\ \sM(Q;\R^d),\ {\rm{as}}\ \t\da 0,$
\end{itemize}
where $\rho$ is given in Proposition~\ref{prop:con} and $\E$ is given in Propositon~\ref{prop:e}.
\end{lemma}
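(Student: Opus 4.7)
I will follow the approach of \cite[Lemma 8.9]{OTAM} and \cite[Step 2 in Theorem 3.1]{MesSan}. Items (i)--(iii) are a priori estimates exploiting the fact that on each sub-interval $((k-1)\t,k\t]$ the curve $\trho^\t$ is, by construction, a constant-speed $W_2$-geodesic between $\rho_{k-1}$ and $\rho_k$, with tangent velocity field $\tilde v^\t$. By this defining property,
\begin{equation*}
|\trho^\t_t|'_{W_2}=\|\tilde v^\t_t\|_{L^2(\trho^\t_t)}=\frac{W_2(\rho_{k-1},\rho_k)}{\t}.
\end{equation*}
Integrating in time, summing over $k$, and using Lemma~\ref{lem:gfi} give
\begin{equation*}
\int_0^T|\trho^\t_t|_{W_2}^{\prime 2}\dd t=\int_0^T\!\!\int_\Om|\tilde v^\t|^2\dd\trho^\t_t\dd t=\sum_{k=1}^N\frac{W_2^2(\rho_{k-1},\rho_k)}{\t}\le 2(\cJ(\rho_0)-\inf\cJ),
\end{equation*}
which yields (i) and (ii). For (iii), Cauchy--Schwarz together with $\trho^\t_t\in\sP(\Om)$ gives
\begin{equation*}
|\tE^\t|(Q)\le\int_0^T\!\!\int_\Om|\tilde v^\t|\dd\trho^\t_t\dd t\le\sqrt{T}\left(\int_0^T\!\!\int_\Om|\tilde v^\t|^2\dd\trho^\t_t\dd t\right)^{1/2},
\end{equation*}
which is uniformly bounded.

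For (iv), the bound (i) yields an equi-$1/2$-H\"older estimate for $\trho^\t$ in the compact space $(\sP(\Om),W_2)$, so the refined Arzel\`a--Ascoli theorem for curves in metric spaces provides a subsequence converging uniformly in $W_2$ to some limit $\tilde\rho$. Since $W_2(\trho^\t_t,\rho^\t_t)\le W_2(\rho_{k-1},\rho_k)\le C\sqrt\t$ on $((k-1)\t,k\t]$ by \eqref{eq:2gfi}, and $\rho^\t\to\rho$ in $L^1(Q)$ by Corollary~\ref{cor:ent-ent} (so that $W_2(\rho^\t_t,\rho_t)\to 0$ for a.e.~$t$ along a further subsequence), the triangle inequality in $W_2$ forces $\tilde\rho=\rho$ and gives the claimed uniform convergence.

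For (v), (iii) together with Banach--Alaoglu yield a weak-$\star$ cluster point $\tilde\E\in\sM(Q;\R^d)$ of $\tE^\t$. The continuity equation $\partial_t\trho^\t+\nabla\cdot\tE^\t=0$ holds in $\sD'((0,T)\times\Om)$, since it is satisfied on each open sub-interval by the geodesic-plus-tangent-velocity structure, while $\trho^\t$ is continuous at each junction $t=k\t$ (both one-sided limits equal $\rho_k$). Passing to the limit using (iv) yields $\partial_t\rho+\nabla\cdot\tilde\E=0$. Identifying $\tilde\E=\E$ with $\E$ from Proposition~\ref{prop:e} is then done by comparing the two momenta via the change-of-variables
\begin{equation*}
\int_Q\zeta\cdot\dd\tE^\t=\sum_{k=1}^N\int_{(k-1)\t}^{k\t}\!\!\int_\Om\zeta(t,T_{t,k}(x))\cdot\v^\t_k(x)\dd\rho_k(x)\dd t,\ \ T_{t,k}(x):=\tfrac{k\t-t}{\t}\v^\t_k(x)+x,
\end{equation*}
and invoking the convergence $\E^\t\weaklys\E$ from Proposition~\ref{prop:e}. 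The main obstacle is exactly this last identification: the crude Taylor estimate $|\zeta(t,T_{t,k}(x))-\zeta(t,x)|\le\|\nabla\zeta\|_\infty\tfrac{k\t-t}{\t}|\v^\t_k(x)|$ only produces a bound of order $\|\nabla\zeta\|_\infty\sum_k W_2^2(\rho_{k-1},\rho_k)/\t$, which is uniformly bounded by Lemma~\ref{lem:gfi} but does \emph{not} vanish as $\t\da 0$. This difficulty is circumvented by combining the exact continuity equation satisfied by $(\trho^\t,\tE^\t)$ with the discrete-in-time continuity identity satisfied by $(\rho^\t,\E^\t)$ at the JKO step, as detailed in the cited references.
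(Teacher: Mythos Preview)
Your overall strategy matches the paper exactly: the paper gives no detailed proof and simply points to \cite[Lemma 8.9]{OTAM} and \cite[Step 2 in Theorem 3.1]{MesSan}, the same references you invoke. Your treatment of (i)--(iv) is correct and standard.

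The only genuine issue is in (v), where you claim an obstacle that is not actually there. The displacement in the geodesic interpolation satisfies
\[
|T_{t,k}(x)-x|=\frac{k\t-t}{\t}\,|\nabla\ophi_k(x)|=(k\t-t)\,|\v^\t_k(x)|\le \t\,|\v^\t_k(x)|,
\]
because at $t=(k-1)\t$ the map $T_{t,k}$ must be the optimal transport from $\rho_k$ to $\rho_{k-1}$, i.e.\ $\id-\nabla\ophi_k$. (The formula in \eqref{eq:csi} as written would give $|T_{t,k}(x)-x|=\tfrac{k\t-t}{\t}|\v^\t_k(x)|$, which you used; this appears to be a typo, since at $t=(k-1)\t$ it would produce $(\id+\v^\t_k)_\#\rho_k\ne\rho_{k-1}$.) With the correct displacement, your Taylor bound becomes
\[
\Big|\int_Q\zeta\cdot\dd(\tE^\t-\E^\t)\Big|
\le \|\nabla\zeta\|_\infty\sum_{k=1}^N\int_{(k-1)\t}^{k\t}(k\t-t)\dd t\int_\Om|\v^\t_k|^2\dd\rho_k
=\frac{\|\nabla\zeta\|_\infty}{2}\sum_{k=1}^N W_2^2(\rho_{k-1},\rho_k)\le C\t,
\]
which \emph{does} vanish as $\t\da 0$. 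Thus $\tE^\t-\E^\t\to 0$ in $\sD'(Q;\R^d)$, and since $\E^\t\weaklys\E$ by Proposition~\ref{prop:e}, one gets $\tE^\t\weaklys\E$ directly; no further ``discrete continuity identity'' argument is needed. Your final hand-wave to the references is therefore unnecessary once this miscount of $\t$ is fixed.
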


\medskip
Now, we are ready to prove Theorem \ref{thm:exi}.
\begin{proof}[Proof of Theorem~\ref{thm:exi}]
Let us underline that the main reason for introducing the interpolations $(\trho^\t, \tE^\t)$ is that by construction, they satisfy the PDE
\be
\left\{
\ba{ll}
\partial_t\tilde\rho^\t + \nabla \cdot  \tE^\t =0, & {\rm{in}}\ (0,T)\times\Om,\\
\tilde\rho^\t(0,\cdot)=\rho_0, & {\rm{in}}\ \Om,\\
\tE^\t \cdot \n = 0 , & {\rm{on}}\ [0,T] \times \partial \Om,
\ea
\right.
\ee
in the distributional sense. Then, Lemma~\ref{lem:coi} and Proposition \ref{prop:e} allow us to conclude that $(\rho,p)$ satisfies \eqref{eq:exi} in the distributional sense. Last, from Lemma~\ref{lem:rel}, we conclude that $(\rho,p)$ satisfies \eqref{eq:rel}. The thesis of the theorem follows.
\end{proof}


\section{Linear diffusion on $\{\rho<1\}$ and porous medium type diffusion on $\{\rho>1\}$}
\label{sec:gfn}

As we will see below, in this section the diffusion coefficients and the diffusion rates are not necessarily supposed to be the same in the regions $\{\rho < 1 \}$ and $\{ \rho >1 \}$. Therefore, a technical difficulty arises, because of the lack of a simple way (as in \eqref{eq:popt}) to derive the first order necessary optimality conditions for the minimizing movement scheme. To overcome this issue, instead, we use a particular decomposition for $\cS$, which allows us to use the construction from Section \ref{sec:gfl}. 

In this section too, we impose Assumption \ref{as:gen}. If $\rho_0\notin L^\infty(\Om)$, we impose additionally \eqref{eq:g2}. Furthermore, throughout this section we suppose also the following:
$\cS : \Rpz \to \R$ satisfies
\begin{align}
\label{eq:n}
\frac{\rho^{-1}}{\s_2} \leq  \cS''(\rho) \hbox{ {\rm{in}} } (0,1)
\end{align}
for some constant $\s_2>\s_1$ for $\s_1$ given in \eqref{eq:2as22}. This corresponds to \eqref{eq:g1} with $m=1$.

A direct consequence of the above assumption is the following result.
\begin{lemma}
\label{lem:1n}
$\cS : \Rpz \to \R$ satisfies
\begin{align}
\label{eq:1n}
\cS'(0+) = -\infty.
\end{align}
\end{lemma}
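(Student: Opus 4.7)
The plan is to deduce the desired limit by integrating the pointwise lower bound \eqref{eq:n} on $S''$ and exploiting the divergence of the primitive of $1/\rho$ at the origin.

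More precisely, I would fix any $b \in (0,1)$ and, for $0 < a < b$, use the fact that $S$ is twice continuously differentiable on $(0,1)$ (from Assumption \ref{as:gen}) together with \eqref{eq:n} to write
\begin{equation*}
S'(b) - S'(a) \;=\; \int_a^b S''(\rho)\, d\rho \;\geq\; \int_a^b \frac{1}{\sigma_2 \rho}\, d\rho \;=\; \frac{1}{\sigma_2}\bigl(\log b - \log a\bigr).
\end{equation*}
Rearranging, this gives
\begin{equation*}
S'(a) \;\leq\; S'(b) \;-\; \frac{1}{\sigma_2}\bigl(\log b - \log a\bigr) \;=\; S'(b) \;+\; \frac{1}{\sigma_2}\log\!\left(\frac{a}{b}\right).
\end{equation*}
Letting $a \to 0^+$ with $b$ fixed, the right-hand side tends to $-\infty$. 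Since by the definition in \eqref{eq:slr} we have $S'(0+) = \lim_{a\to 0^+} S'(a)$, we conclude $S'(0+) = -\infty$.

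There is no real obstacle here: the only subtlety is that \eqref{eq:n} is stated on the open interval $(0,1)$, so one must be careful to take the integration interval strictly inside $(0,1)$ and then pass to the limit, rather than trying to evaluate $S'$ at the endpoints directly. The $C^2$ regularity of $S$ on $(0,+\infty)\setminus\{1\}$ from Assumption \ref{as:gen} is exactly what justifies the use of the fundamental theorem of calculus on $[a,b] \subset (0,1)$.
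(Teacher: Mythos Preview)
Your proof is correct and essentially identical to the paper's: both integrate the lower bound \eqref{eq:n} on $S''$ over an interval $[a,b]\subset(0,1)$ and use the divergence of $\log a$ as $a\to 0^+$. The only cosmetic difference is that the paper fixes $b=\tfrac12$ while you keep $b\in(0,1)$ generic.
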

\begin{proof}
Integrating \eqref{eq:n} from $\frac{1}{2}$ to $\rho$, it holds that
\begin{align*}
\cS'\left(\frac{1}{2}\right) - \cS'(\rho) \geq \frac{1}{\s_2}\left( \log \frac{1}{2} - \log \rho \right).
\end{align*}
As $\s_2 > 0$, we conclude that
\begin{align*}
\cS'(\rho) \leq \cS'\left(\frac{1}{2}\right) - \frac{1}{\s_2} \log \frac{1}{2} + \frac{1}{\s_2} \log \rho \ \ \to \ \  -\infty \ \ \ \ \ \hbox{ as } \rho \to 0^+.
\end{align*}
\end{proof}

\begin{example}
For $m>1$, $\cS : \Rpz \to \R$ given by
\begin{align*}
\cS(\rho) := 
\begin{cases}
\rho \log \rho ,  &{\rm{for}}\ \rho \in [0,1], \\
\frac{1}{m-1}(\rho^m - 1),   &{\rm{for}}\ \rho \in (1, +\infty).
\end{cases}
\end{align*}
Note that Assumption~\ref{as:gen} follows from the smoothness and strict convexity of $\cS$ in $\Ro$ and 
\begin{align*}
\cS'(1-) = 1 < \cS'(1+) = \frac{m}{m-1}.
\end{align*}
\eqref{eq:n} is obtained by
\begin{align*}
\rho \cS''(\rho) =
\begin{cases}
1 ,  &{\rm{for}\ } \rho \in (0,1), \\
m \rho^{m-1} \geq m,   &{\rm{for}\ } \rho \in (1, +\infty).
\end{cases}
\end{align*} \eqref{eq:g2} is also fulfilled with $r=m$.

In this case, $L_\cS(\rho,p)(x)$ is given by
\begin{align*}
L_\cS(\rho,p)(x) =
\begin{cases}
\rho(x), &\hbox{ {\rm{if}} } 0 < \rho(x) < 1,\\
p(x) \in \left[ 1, \frac{m}{m-1} \right], &\hbox{ {\rm{if}} } \rho(x) = 1,\\
\rho(x)^m + \frac{1}{m-1}, &\hbox{ {\rm{if}} } \rho(x) >1.
\end{cases}
\end{align*}
\end{example}

Our main theorem from this section reads as:
\begin{theorem}
\label{thm:gm}
Suppose that \eqref{eq:g2} and \eqref{eq:n} hold true. For $\rho_0 \in \sP(\Om)$ such that $\cJ(\rho_0) < +\infty$, there exists $\rho\in L^{\beta}(Q)\cap {\rm{AC}}^2([0,T];\sP(\Om))$ for $\beta$ given in \eqref{eq:beta} and $p\in L^2([0,T]; H^1(\Om)) \cap L^\infty(Q)$ with $\sqrt{\rho}\in L^2([0,T];H^1(\Om))$
such that $(\rho,p)$ is a weak solution of
\be\label{eq:1gm}
\left\{
\ba{ll}
\partial_t\rho - \Delta (L_S(\rho,p)) -\nabla\cdot(\nabla\Phi\rho) =0, & {\rm{in}}\ (0,T)\times\Om,\\
\rho(0,\cdot)=\rho_0, & {\rm{in}}\ \Om,\\
(\nabla (L_S(\rho,p) )+\nabla\Phi\rho) \cdot \n = 0 , & {\rm{in}}\ [0,T] \times \partial \Om,
\ea
\right.
\ee
in the sense of distribution. Furthermore, $(\rho,p)$ satisfies for a.e. $(t,x) \in Q$
\begin{align}
\label{eq:2gm}
\begin{cases}
p(t,x)=\cS'(1-) &\hbox{ {\rm{if}} } 0 < \rho(t,x) < 1,\\
p(t,x) \in [\cS'(1-),\cS'(1+)] &\hbox{ {\rm{if}} } \rho(t,x) = 1,\\
p(t,x)=\cS'(1+) &\hbox{ {\rm{if}} } \rho(t,x) > 1.
\end{cases}
\end{align}
If in addition $\rho_0\in L^\infty(\Om)$ and $\Phi$ satisfies \eqref{as:Phi2}, we can drop \eqref{eq:g2} from the statement and we obtain that $\rho\in L^2([0,T]; H^1(\Om))\cap L^\infty(Q).$
\end{theorem}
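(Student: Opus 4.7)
I would follow the same blueprint as for Theorem~\ref{thm:exi} in Section~\ref{sec:gfl}, replacing the explicit logarithmic computation with a phase-by-phase analysis that exploits \eqref{eq:n} on $\{\rho < 1\}$ and \eqref{eq:2as22} on $\{\rho > 1\}$. Construct $(\rho_k)_{k=0}^N$ via the scheme \eqref{eq:step} and apply Theorem~\ref{thm:opgr} (or Theorem~\ref{thm:opg} when $\rho_0 \in L^\infty(\Om)$) to obtain potentials $\ophi_k \in \cK(\rho_k,\rho_{k-1})$ and constants $\cC(k)$ satisfying \eqref{eq:1opg}. Define the pressure $p_k$ by \eqref{eq:p} and the piecewise constant interpolations $\rho^\t, p^\t, \v^\t, \E^\t$ by \eqref{eq:int}. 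Lemma~\ref{lem:lip_new} and Lemma~\ref{lem:lipg-gen} ensure that $\ophi_k$ and $p_k$ are Lipschitz, that $\rho_k$ is continuous on $\Om$, and give the representation \eqref{deriv:formula-gen} for $|\nabla \rho_k|$ on $\{\rho_k \neq 1\}$.

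\textbf{The key a priori estimate} is that $\sqrt{\rho^\t}$ and $p^\t$ are uniformly bounded in $L^2([0,T]; H^1(\Om))$. Since $p_k$ is locally constant on the open set $\{\rho_k \neq 1\}$, one has $\nabla p_k = 0$ a.e.\ there, and a coarea argument as in \eqref{eq:con212} upgrades this to $\nabla p_k \cdot \nabla \rho_k = 0$ a.e.\ in $\Om$. On $\{\rho_k = 1\}$ the identity $p_k = \cC - \ophi_k/\t - \Phi$ together with $\rho_k = 1$ yields $|\nabla p_k|^2 \leq 2\rho_k (|\nabla \ophi_k|^2/\t^2 + |\nabla \Phi|^2)$. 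On $\{\rho_k \neq 1\} \cap \{\rho_k > 0\}$, differentiating \eqref{eq:1opg} gives $\cS''(\rho_k)\nabla \rho_k = -\nabla \ophi_k/\t - \nabla \Phi$; combining this with $\rho_k \cS''(\rho_k) \geq 1/\s_2$ from \eqref{eq:n} on $\{\rho_k < 1\}$ and with $\cS''(\rho_k) \geq \rho_k^{r-2}/\s_1$ from \eqref{eq:2as22} on $\{\rho_k > 1\}$ (using $\rho_k^{3-2r} \leq \rho_k$ there since $r \geq 1$) leads in both subregions to the pointwise bound
\begin{align*}
|\nabla \sqrt{\rho_k}|^2 \leq C \rho_k \bigl( |\nabla \ophi_k|^2/\t^2 + |\nabla \Phi|^2 \bigr) \quad \mathrm{a.e.}
\end{align*}
Summing in $k$, using Lemma~\ref{lem:gfi} and the Lipschitz bound on $\Phi$, both $\sqrt{\rho^\t}$ and $p^\t$ are controlled in $L^2([0,T]; H^1(\Om))$ uniformly in $\t$. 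The $L^\beta(Q)$ bound on $\rho^\t$ is provided by Lemma~\ref{lem:uni}; in the $L^\infty$ case, Lemma~\ref{lem:Linfty} and a parallel computation as in \eqref{eq:con24} give directly a uniform $L^\infty \cap L^2(H^1)$ bound on $\rho^\t$.

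\textbf{Passage to the limit.} The quasi-H\"{o}lder estimate \eqref{eq:2gfi} combined with the $L^2(H^1)$ bound on $\sqrt{\rho^\t}$ (or on $\rho^\t$ in the $L^\infty$ setting) and the modified Aubin-Lions lemma (Lemma~\ref{lem:al}) gives strong compactness of $\rho^\t$ in $L^1(Q)$ as in Corollary~\ref{cor:ent-ent}; up to subsequences, $p^\t$ converges weakly in $L^2([0,T]; H^1(\Om))$ and weakly-$\star$ in $L^\infty(Q)$. Introducing the geodesic interpolations $\trho^\t, \tE^\t$ of \eqref{eq:csi} and arguing as in Lemma~\ref{lem:coi} and Proposition~\ref{prop:e}, the limit pair $(\rho, \E)$ satisfies a continuity equation in the sense of distributions. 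The crucial point is the unified identity $\nabla L_\cS(\rho^\t, p^\t) = -\rho^\t(\nabla \ophi_k/\t + \nabla \Phi)$, which holds on $\{\rho_k \neq 1\}$ via $\rho^\t \cS''(\rho^\t)\nabla \rho^\t$ and on the interior of $\{\rho_k = 1\}$ via $\nabla p^\t$ (using $\rho_k = 1$). Strong convergence of $\rho^\t$ then identifies the limit momentum as $\E = -\nabla L_\cS(\rho,p) - \nabla \Phi\, \rho$ in $\sD'(Q;\R^d)$, giving \eqref{eq:1gm}. The phase relation \eqref{eq:2gm} is obtained exactly as in Lemma~\ref{lem:rel} by passing to the limit in $(p^\t - \cS'(1+))(\rho^\t - 1)_+ = 0$ and $(p^\t - \cS'(1-))(\rho^\t - 1)_- = 0$, using strong $L^1$ convergence of $(\rho^\t - 1)_{\pm}$ together with weak-$\star$ convergence of $p^\t$.

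\textbf{Main obstacle.} The chief difficulty is the absence of a closed-form optimality identity analogous to \eqref{eq:popt}; the derivation of the uniform $\sqrt{\rho}$ bound must be split between the two phases and delicately patched on $\{\rho_k = 1\}$, where both $\rho$ and $p$ carry gradients. It is precisely the assumption \eqref{eq:n} (with $m=1$) that keeps $\rho_k \cS''(\rho_k)$ uniformly bounded below on $\{\rho_k < 1\}$ and thereby preserves the $\sqrt{\rho}$ estimate even as $\rho_k$ degenerates to zero; without this lower bound the correct power of $\rho$ to be controlled would change, and a different construction (carried out in the next section) would be required.
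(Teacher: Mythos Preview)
Your overall strategy matches the paper's: minimizing movements, optimality conditions from Theorem~\ref{thm:opgr}, pressure from Definition~\ref{def:p}, phase-by-phase estimates for $\sqrt{\rho^\t}$ and $p^\t$, Aubin--Lions compactness, geodesic interpolations for the continuity equation, and Lemma~\ref{lem:rel}-type arguments for the phase relation. Your a priori bounds are also correct and coincide with the paper's Lemma~\ref{lem:sig}: on $\{\rho_k\neq 1\}$ the inequality $\rho_k\cS''(\rho_k)\ge\min\{1/\s_1,1/\s_2\}$ (from \eqref{eq:n} on $(0,1)$ and \eqref{eq:2as22} on $(1,\infty)$) is exactly what drives the $L^2(H^1)$ control of $\sqrt{\rho^\t}$.

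There is, however, a real gap in your passage to the limit. You assert that ``strong convergence of $\rho^\t$ then identifies the limit momentum as $\E=-\nabla L_\cS(\rho,p)-\nabla\Phi\,\rho$,'' but you never explain how to pass to the limit in $L_\cS(\rho^\t,p^\t)$ itself. By definition \eqref{eq:ls} this object carries the indicators $\one_{\{\rho^\t\neq 1\}}$ and $\one_{\{\rho^\t=1\}}$, which do \emph{not} converge under strong $L^1$ convergence of $\rho^\t$ alone. Writing the pointwise identity $\nabla L_\cS(\rho^\t,p^\t)=-\E^\t-\rho^\t\nabla\Phi$ only on $\{\rho_k\neq 1\}\cup{\rm int}\{\rho_k=1\}$ also leaves $\partial\{\rho_k=1\}$ unaddressed; more importantly, knowing the identity at the discrete level does not tell you that the weak limit of $L_\cS(\rho^\t,p^\t)$ equals $L_\cS(\rho,p)$.

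The paper fills this gap with a device you do not mention: the decomposition $\cS=\Sa+\Sb$ of \eqref{eq:sa}--\eqref{eq:sb}, where $\Sa$ is of logarithmic type and absorbs the jump of $\cS'$ at $1$, so that $\Sb\in C^1$ with $\Sb'(1)=0$ (Lemma~\ref{lem:sab}). This yields the \emph{global} optimality identity \eqref{eq:1og} and, crucially, the indicator-free representation
\[
L_\cS(\rho^\t,p^\t)=\rho^\t\Sb'(\rho^\t)-\Sb(\rho^\t)+\Sb(1)+p^\t\rho^\t
\]
(see \eqref{eq:gm11}, \eqref{eq:gm31}). Now the limit is routine: $\rho^\t\Sb'(\rho^\t)-\Sb(\rho^\t)$ converges strongly in $L^1$ (continuity of $\Sb'$ plus growth $r<\beta$ and Lemma~\ref{lem:uni}), while $p^\t\rho^\t$ passes via weak-$\star$ $L^\infty$ times strong $L^1$. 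An equivalent fix, if you prefer to avoid naming $\Sa,\Sb$, is to observe that $L_\cS(\rho,p)=\Psi(\rho)+p$ for the continuous function $\Psi(\rho):=\rho\cS'(\rho)-\cS(\rho)+\cS(1)-\cS'(1\mp)$ on $\{\rho\lessgtr 1\}$, $\Psi(1):=0$; either way, some such indicator-free rewriting is the missing step in your argument.
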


Let us briefly explain the outline of the proof. First, we define $\Sa$ and $\Sb : \Rpz \rightarrow \R$ by
\begin{align}
\label{eq:sa}
\Sa(\rho) := 
\begin{cases}
\cS'(1-) \rho \log\rho,  &\hbox{ for } \rho \in [0,1], \\
\cS'(1+) \rho \log\rho,   &\hbox{ for } \rho \in (1, +\infty),
\end{cases}
\end{align}
and
\begin{align}
\label{eq:sb}
\Sb(\rho) := \cS(\rho) - \Sa(\rho).
\end{align}
We show the convexity of $\Sa$ and twice differentiability of $\Sb$ in Lemma~\ref{lem:sab}. This particular decomposition will be useful when deriving optimality conditions in our minimizing movement scheme. Under \eqref{eq:n}, we are able to apply similar arguments as the ones in Section~\ref{sec:gfl}.

\medskip

We point out that Lemma~\ref{lem:1n} implies that $\rho_{k}>0$ a.e. (see Lemma~\ref{lem:postg}). Theorem~\ref{thm:opg} and \eqref{eq:n} yield that $\rho_{k}$ satisfies the following lemma.

\begin{lemma}
\label{lem:lipg}
Let $(\rho_k)_{k=1}^N$ be obtained via the minimizing movement scheme \eqref{eq:step}. For $k \in \{1,\dots,N\}$ and $\ophi_{k}\in\cK(\rho_k,\rho_{k-1})$ given in Theorem~\ref{thm:opg}, we have that
\begin{align}
\label{eq:1lipg}
\rho_{k} = 
\begin{cases}
1, &  {\rm{in}}\ f_k^{-1}([\cS'(1-), \cS'(1+)]),\\ 
(\cS')^{-1} \circ f_k,  &{\rm{otherwise}}, 
\end{cases}
\end{align} 
in $\Om,$ where  $f_k:= \cC - \frac{\ophi_{k}}{\tau}-\Phi$, and $\cS'(1\pm)$ is given in \eqref{eq:slr}.
In particular, $\rho_{k}$ is Lipschitz continuous in $\Om$ with a Lipschitz constant that might degenerate when $\t\da 0$.
\end{lemma}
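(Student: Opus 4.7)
The plan is to obtain both claims as direct corollaries of the general Lemma~\ref{lem:lipg-gen}, using the two pieces of extra information special to the present section: the blow-up $\cS'(0+) = -\infty$ provided by Lemma~\ref{lem:1n}, and the two-sided coercivity of $\cS''$ supplied by \eqref{eq:n} on $(0,1)$ together with \eqref{eq:2as22} on $(1,+\infty)$.

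For the representation formula \eqref{eq:1lipg} I would invoke Lemma~\ref{lem:lipg-gen}(i) with $f_k := \cC - \ophi_k/\tau - \Phi$, which already yields the three-branch identity \eqref{eq:1lipg-gen}. Since $\ophi_k$ is Lipschitz on $\ov\Om$ by Lemma~\ref{lem:lip_new} and $\Phi\in W^{1,\infty}(\Om)$, the function $f_k$ is real-valued; combined with $\cS'(0+) = -\infty$ this makes $f_k^{-1}((-\infty,\cS'(0+)])=\emptyset$, so the $\rho_k = 0$ branch in \eqref{eq:1lipg-gen} is vacuous. What remains is exactly \eqref{eq:1lipg}. (Alternatively one may quote Lemma~\ref{lem:postg}, which asserts $\rho_k>0$ a.e.\ whenever $\cS'(0+)=-\infty$.)

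For the Lipschitz statement I would start from the pointwise identity \eqref{deriv:formula-gen} of Lemma~\ref{lem:lipg-gen}(iii), namely
\[
|\nabla\rho_k(x)| \;=\; \frac{|\nabla f_k(x)|}{\cS''(\rho_k(x))} \quad \text{a.e.\ in } \rho_k^{-1}(\R^+\setminus\{1\}),
\]
and bound the two factors separately. The numerator is bounded by the Lipschitz constant of $f_k$, which is controlled by $1/\tau$ times the Lipschitz constant of $\ophi_k$ plus $\|\nabla\Phi\|_{L^\infty}$. For the denominator I would split into the two phases: on $\{0<\rho_k<1\}$ assumption \eqref{eq:n} gives $\cS''(\rho_k)\ge 1/(\s_2\rho_k)\ge 1/\s_2$, while on $\{\rho_k>1\}$ assumption \eqref{eq:2as22} gives $\cS''(\rho_k)\ge \rho_k^{r-2}/\s_1$. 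Lemma~\ref{lem:lipg-gen}(ii) already ensures that $\rho_k$ is continuous in $\Om$, hence bounded on the compact set $\ov\Om$ by some constant $M=M(\tau)<+\infty$, so $\rho_k^{2-r}\le\max(1,M^{2-r})$ on $\{\rho_k>1\}$. Combining, we obtain an a.e.\ pointwise bound on $|\nabla\rho_k|$ throughout $\rho_k^{-1}(\R^+\setminus\{1\})$; on $\rho_k^{-1}(\{1\})$ the gradient vanishes a.e., so $\rho_k\in W^{1,\infty}(\Om)$, and together with continuity this upgrades to Lipschitz continuity in $\Om$.

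The main (and essentially only) obstacle is the quantitative dependence on $\tau$: the Lipschitz constant of $\ophi_k$, and with it $\|\nabla f_k\|_{L^\infty}$, is only controlled via the Kantorovich-type regularity estimate (cf.\ Lemma~\ref{lem:lip_new}) and carries a factor $1/\tau$; the $L^\infty$ bound $M(\tau)$ used in the $\{\rho_k>1\}$ phase is likewise only a qualitative consequence of continuity, not a uniform-in-$\tau$ estimate. These two factors are precisely what forces the caveat that the Lipschitz constant may degenerate as $\tau\downarrow 0$, as stated in the lemma.
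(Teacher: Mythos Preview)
Your proposal is correct and follows essentially the same route as the paper: the paper's proof simply says the result is a direct consequence of Lemma~\ref{lem:lipg-gen}, noting that Lemma~\ref{lem:postg} gives $\rho_k>0$ a.e., that Lemma~\ref{lem:1n} gives $\cS'(0+)=-\infty$, and that the present section corresponds to $m=1$ in Assumption~\ref{as:intro_main1}, so part~(iv) applies. Your argument unpacks the Lipschitz claim a bit more, working directly from part~(iii) and treating the two phases $\{0<\rho_k<1\}$ and $\{\rho_k>1\}$ separately via \eqref{eq:n} and \eqref{eq:2as22}; this is slightly more explicit than the paper (which just invokes~(iv)), but the content is the same.
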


\begin{proof}
This result is a direct consequence of Lemma \ref{lem:lipg-gen}. Indeed, Lemma~\ref{lem:postg} shows that $\rho_k>0$ a.e. in $\Om$, therefore  $\spt(\rho_k)=\Om$. Also, in Assumption \ref{as:intro_main1} we have $m=1$. Moreover, since Lemma \ref{lem:1n} yields that $S'(0+)=-\infty$, and since we can eventually modify $\rho_k$ on a negligible set, we can use the representation \eqref{eq:1lipg} for $\rho_k$ everywhere in $\Om$. 
\end{proof}

The following properties hold for $\Sa$ and $\Sb$.

\begin{lemma}
\label{lem:sab}
$\Sa$ is convex and continuous in $\Rp$. Also, $\Sb$ is continuously differentiable and $\Sb'$ is locally Lipschitz continuous in $\Rp$. In particular, we have
\begin{align}
\label{eq:1sab}
\Sb(1) = \cS(1) \hbox{ and } \Sb'(1)=0.
\end{align}
\end{lemma}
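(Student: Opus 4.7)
The plan is to verify each claimed property by direct computation from the definitions \eqref{eq:sa}--\eqref{eq:sb}, leveraging Assumption~\ref{as:gen} together with the growth hypothesis \eqref{eq:n}.

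For the continuity and convexity of $\Sa$ on $\Rp$, I would first note that $\rho \mapsto \rho\log\rho$ is smooth and strictly convex on $(0,\infty)$, so on each of the open pieces $(0,1)$ and $(1,+\infty)$ the function $\Sa$ is a positive multiple of a strictly convex function, hence strictly convex and smooth there. Continuity at $\rho=1$ is immediate since both one-sided limits equal $\cS'(1\pm)\cdot 1\cdot \log 1 = 0$, so $\Sa(1)=0$. For convexity at the junction, I would compute the one-sided derivatives
\begin{align*}
\Sa'(1^-) = \cS'(1-)(1+\log 1) = \cS'(1-),\qquad \Sa'(1^+) = \cS'(1+)(1+\log 1) = \cS'(1+).
\end{align*}
Since $\cS$ is convex by Assumption~\ref{as:gen}, the one-sided derivatives obey $\cS'(1-)\le\cS'(1+)$, and this inequality ensures that the gluing preserves convexity at $\rho=1$.

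For the properties of $\Sb$, the identity $\Sb(1)=\cS(1)$ follows at once from $\Sa(1)=0$. On each of $(0,1)$ and $(1,+\infty)$ both $\cS$ and $\Sa$ are $C^2$, so $\Sb$ is $C^2$ away from $1$ with
\begin{align*}
\Sb'(\rho) = \cS'(\rho) - \cS'(1-)(1+\log\rho),\quad \rho\in(0,1),\qquad \Sb'(\rho) = \cS'(\rho) - \cS'(1+)(1+\log\rho),\quad \rho\in(1,+\infty).
\end{align*}
Passing to the one-sided limits at $\rho=1$ and using $\log 1=0$ together with the definitions \eqref{eq:slr}, both expressions tend to $0$. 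Hence $\Sb'$ extends continuously to all of $\Rp$ with $\Sb'(1)=0$, proving $\Sb\in C^1(\Rp)$ and the second identity in \eqref{eq:1sab}.

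Finally, for the local Lipschitz continuity of $\Sb'$ on $\Rp$, I would treat separately compact subsets of $\Rp\setminus\{1\}$ and compact intervals containing $1$. On the former, continuity of $\Sb''=\cS''-\Sa''$ (guaranteed by Assumption~\ref{as:gen}, with $\Sa''(\rho)=\cS'(1\pm)/\rho$ locally bounded) yields Lipschitz continuity via the mean value theorem. On a compact interval $[1-\delta,1+\delta]\subset\Rp$, I would split at $\rho=1$ using $\Sb'(1)=0$: for $\rho,\sigma$ on the same side of $1$, apply the mean value theorem using the boundedness of $\Sb''$ near $1$ on that side; for $\rho<1<\sigma$, reduce via the triangle inequality $|\Sb'(\rho)-\Sb'(\sigma)|\le|\Sb'(\rho)-\Sb'(1)|+|\Sb'(1)-\Sb'(\sigma)|$. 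The main technical point, which is the only nontrivial obstacle, is verifying that $\cS''$ remains bounded as $\rho\to 1^{\pm}$ on the relevant sides; this is supplied by the structural hypotheses on $\cS$ imposed in this section (in particular the growth bound \eqref{eq:1as22} on the right of $1$, with the analogous left-side control being inherent in the admissible choices of $\cS$ under \eqref{eq:n}).
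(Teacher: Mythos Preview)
Your argument tracks the paper's proof closely: the convexity of $\Sa$ via the one-sided derivatives $\Sa'(1^\pm)=\cS'(1\pm)$ and $\cS'(1-)\le\cS'(1+)$, the computation $\Sb'(1^\pm)=0$ giving $\Sb\in C^1(\Rp)$, and $\Sb(1)=\cS(1)$ from $\Sa(1)=0$ are all exactly what the paper does.

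Where you diverge is in the local Lipschitz continuity of $\Sb'$. The paper disposes of this in one line: after noting that $\cS'$ and $\Sa'$ are locally Lipschitz on $\Ro$, it simply writes ``As $\Sb'$ is continuous, we conclude that $\Sb'$ is locally Lipschitz continuous in $\Rp$.'' You go further and correctly isolate what is actually needed at $\rho=1$, namely a one-sided bound on $\Sb''=\cS''-\Sa''$ (equivalently on $\cS''$) as $\rho\to 1^\pm$. Your appeal to \eqref{eq:1as22} for the upper bound on the right of $1$ is correct. However, your claim that the left-side control ``is inherent in the admissible choices of $\cS$ under \eqref{eq:n}'' is not: \eqref{eq:n} is only a \emph{lower} bound $\cS''(\rho)\ge \rho^{-1}/\s_2$ on $(0,1)$ and gives no upper bound as $\rho\to 1^-$. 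So at this specific point your justification is misattributed, though in fairness the paper's own proof does not supply this ingredient either --- it glosses over precisely the issue you identified. In short, your treatment is more careful than the paper's at this step, but the source you cite for the missing left-side bound is the wrong one.
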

\begin{proof}
From convexity of $\cS$, it holds that $\cS'(1-) < \cS'(1+)$ and thus $\Sa$ is convex. It is obviously also continuous by construction.

\medskip

On the other hand, by the construction in \eqref{eq:sa}, $\Sb(\rho)$ is differentiable on $\Ro$. Let us show that $\Sb(\rho)$ is differentiable at $\rho=1$. By differentiating \eqref{eq:sa} on $\Ro$, we have that
\begin{align*}
\Sa'(\rho) = \begin{cases}
\cS'(1-) (1+ \log\rho),  &\hbox{ for } \rho \in (0,1), \\
\cS'(1+) (1+ \log\rho),   &\hbox{ for } \rho \in (1, +\infty).
\end{cases}
\end{align*}
Therefore, we conclude that 
\begin{align*}
\Sb'(1-) = \cS'(1-) - \Sa'(1-) = 0 \hbox{ and } \Sb'(1+) = \cS'(1+) - \Sa'(1+) = 0
\end{align*}
and $\Sb$ is continuously differentiable in $\Rp$. As both $\cS'$ and $\Sa'$ are locally Lipschitz in $\Ro$, $\Sb'$ is also locally Lipschitz continuous in $\Ro$. As $\Sb'$ is continuous, we conclude that $\Sb'$ is locally Lipschitz continuous in $\Rp$. Lastly, $\Sb(1) = \cS(1)$ follows from $\Sa(1)=0$.

\end{proof}

\begin{lemma}
\label{lem:og} 
Let $(\rho_k)_{k=1}^N$ be obtained via the minimizing movement scheme \eqref{eq:step} and let $(p_{k})_{k=1}^N$ be constructed in \eqref{eq:p}. For $k \in \{1,\dots,N\}$, we have that
\begin{align}
\label{eq:1og}
p_{k}(1+ \log\rho_{k}) + \Sb'(\rho_{k})+ \frac{\ophi_{k}}{\tau}+\Phi=\cC, \ \ae\ {\rm{in}}\ \Om.
\end{align} 
\end{lemma}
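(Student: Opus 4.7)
My plan is to reduce \eqref{eq:1og} to the general optimality condition \eqref{eq:1opg} of Theorem~\ref{thm:opg} (or Theorem~\ref{thm:opgr} in the unbounded case) by exploiting the decomposition $\cS = \Sa + \Sb$ from \eqref{eq:sa}--\eqref{eq:sb}. The first observation is that by Lemma~\ref{lem:1n} we have $\cS'(0+)=-\infty$, so from Lemma~\ref{lem:postg} (invoked just as in Lemma~\ref{lem:lipg}) we get $\rho_k>0$ a.e.\ in $\Om$, and thus only the last two branches of \eqref{eq:1opg} are relevant.

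Next, I will split $\Om$ into the three measurable regions $\{\rho_k<1\}$, $\{\rho_k=1\}$ and $\{\rho_k>1\}$ and verify \eqref{eq:1og} on each of them. Set $f_k:=\cC-\ophi_k/\t-\Phi$ so that \eqref{eq:1opg} reads $f_k=\cS'(\rho_k)$ on $\{\rho_k\neq1\}$ and $f_k\in[\cS'(1-),\cS'(1+)]$ on $\{\rho_k=1\}$.

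\textbf{Case $\rho_k<1$.} Since $\cS'$ is strictly increasing, $\cS'(\rho_k)<\cS'(1-)$, so Definition~\ref{def:p} gives $p_k=\max\{f_k,\cS'(1-)\}=\cS'(1-)$. Differentiating \eqref{eq:sa} on $(0,1)$ yields $\Sa'(\rho_k)=\cS'(1-)(1+\log\rho_k)=p_k(1+\log\rho_k)$, and since $\Sb'=\cS'-\Sa'$ we obtain
\[
p_k(1+\log\rho_k)+\Sb'(\rho_k)=\Sa'(\rho_k)+\Sb'(\rho_k)=\cS'(\rho_k)=f_k,
\]
which is exactly \eqref{eq:1og}. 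The case $\rho_k>1$ is completely symmetric: $p_k=\cS'(1+)$, $\Sa'(\rho_k)=\cS'(1+)(1+\log\rho_k)$, and the same chain of equalities closes the argument.

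\textbf{Case $\rho_k=1$.} Here $\log\rho_k=0$ and by Lemma~\ref{lem:sab} we have $\Sb'(1)=0$, so the left-hand side of \eqref{eq:1og} collapses to $p_k+\ophi_k/\t+\Phi$. On the other hand, Definition~\ref{def:p} sets $p_k=f_k$ on $\{\rho_k=1\}$, and substituting the definition of $f_k$ gives $\cC$, as desired. The only subtle point — and the one that takes a line of care — is that the measurable choice of the constant $\cC=\cC(k)$ is fixed globally by \eqref{eq:1opg}, so the three local verifications above are consistent on the overlaps (which, by continuity of $\rho_k$ from Lemma~\ref{lem:lipg}, are captured by the limiting values $\cS'(1\pm)$). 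Assembling the three cases yields \eqref{eq:1og} a.e.\ in $\Om$.
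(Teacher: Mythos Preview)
Your proof is correct and follows essentially the same approach as the paper: use $\cS'(0+)=-\infty$ to ensure $\rho_k>0$ a.e., split into the regions $\{\rho_k<1\}$, $\{\rho_k=1\}$, $\{\rho_k>1\}$, and on each one combine the definition of $p_k$ with the identity $\Sa'+\Sb'=\cS'$ (and $\Sb'(1)=0$ on the critical set) to recover \eqref{eq:1og} from \eqref{eq:1opg}. Your closing remark about ``overlaps'' is unnecessary, since the three regions are disjoint and the constant $\cC$ is fixed once and for all by Theorem~\ref{thm:opg}; but this does no harm.
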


\begin{proof}

We first note that Lemma~\ref{lem:1n} 
implies that $\rho_k >0$ a.e. in $\Om$ (see also Lemma~\ref{lem:postg}). From Theorem~\ref{thm:opg}, we have
\begin{align}
\label{eq:pkg}
p_{k} = 
\begin{cases}
\cS'(1-), &\hbox{ in } \rho_{k}^{-1}((0,1)),\\
\cC - \frac{\ophi_{k}}{\t}-\Phi, &\hbox{ in } \rho_{k}^{-1}(\{1\}),\\
\cS'(1+), &\hbox{ in } \rho_{k}^{-1}((1,+\infty)).
\end{cases}
\end{align}
As $\Sb'(1)=0$, \eqref{eq:1og} holds in $\rho_{k}^{-1}(\{1\})$ by \eqref{eq:pkg}.

\medskip

Lastly,  from \eqref{eq:pkg}, in $\rho_{k}^{-1}(\Ro)$ we have that
\begin{align}
\label{eq:og12}
\Sa'(\rho_{k}) = p_{k}(1+ \log\rho_{k}).
\end{align}
As $\cS' = \Sa' + \Sb'$ in $\rho_{k}^{-1}(\Ro)$, we conclude \eqref{eq:1og} from Proposition \ref{prop:sul}.
\end{proof}

\begin{remark}
As $\Sb$ is differentiable, in the previous proof we also used the fact
\begin{align*}
\partial \cS = \partial \Sa + \Sb',
\end{align*}
the proof of which can be found for instance in \cite[Corollary 1.12.2]{Kru13}.
\end{remark}

Similarly as in Section \ref{sec:gfl}, we construct piecewise constant and continuous in time interpolations $(\rho^\t, \v^\t, \E^\t)$ and $(\tilde\rho^\t, \tilde\v^\t, \tE^\t)$. Similarly to Proposition~\ref{prop:con}, we can formulate the following result.

\begin{proposition} 
\label{prop:cog}
$(\rho^\t)_{\t>0}$ and $(p^\t)_{\t>0}$ satisfy the exact same bounds as in Proposition~\ref{prop:con}.
\end{proposition}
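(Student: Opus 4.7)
My plan is to adapt the proof of Proposition~\ref{prop:con} to the current more general entropy $\cS = S_a + S_b$, the key new ingredient being an algebraic cancellation induced by this decomposition. The uniform $L^\infty(Q)$ bound on $p^\t$ is immediate from \eqref{eq:pkg}, since $p_k$ takes values pointwise in $[\cS'(1-), \cS'(1+)]$; so the real work concerns the $L^2([0,T];H^1(\Om))$ estimates for $p^\t$ and $\sqrt{\rho^\t}$, plus the $L^2 H^1$ estimate for $\rho^\t$ in the bounded case.

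I start by differentiating the optimality condition \eqref{eq:1og}. All ingredients are (locally) Lipschitz: $\ophi_k$ and $p_k$ by Lemma~\ref{lem:lip_new}, $\rho_k$ by Lemma~\ref{lem:lipg}, and $S_b'$ by Lemma~\ref{lem:sab}, so the chain rule applies a.e. Since $\nabla p_k = 0$ a.e. on the open set $\{\rho_k \neq 1\}$ (where $p_k$ is constant by \eqref{eq:pkg}) and $\log\rho_k = 0$ on $\{\rho_k = 1\}$, one has $\log\rho_k\,\nabla p_k = 0$ a.e., and the differentiated identity reads
\[
\nabla p_k + \Big(\frac{p_k}{\rho_k} + S_b''(\rho_k)\Big)\nabla\rho_k = -\frac{\nabla \ophi_k}{\tau} - \nabla\Phi \quad \text{a.e. in } \Om.
\]
The coarea formula applied to the Lipschitz function $\rho_k$, exactly as in \eqref{eq:con212}, gives $\nabla p_k \cdot \nabla\rho_k = 0$ a.e., and in particular $\nabla\rho_k = 0$ a.e. on $\{\rho_k = 1\}$.

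The crucial new observation is that on $\{\rho_k \neq 1\}$, where $p_k \in \{\cS'(1-),\cS'(1+)\}$, differentiation of \eqref{eq:sa} yields $S_a''(\rho_k) = p_k/\rho_k$, so
\[
\frac{p_k}{\rho_k} + S_b''(\rho_k) = S_a''(\rho_k) + S_b''(\rho_k) = \cS''(\rho_k) \quad \text{on } \{\rho_k \neq 1\}.
\]
Squaring the displayed identity and exploiting orthogonality (together with $\nabla\rho_k = 0$ on $\{\rho_k = 1\}$), I obtain
\[
|\nabla p_k|^2 + \big(\cS''(\rho_k)\big)^2 |\nabla\rho_k|^2 \leq 2\Big(\frac{|\nabla \ophi_k|^2}{\tau^2} + |\nabla\Phi|^2\Big) \quad \text{a.e.}
\]
Now \eqref{eq:n} and \eqref{eq:2as22} (recall $r \geq 1$) combine to the uniform lower bound $\rho\,\cS''(\rho) \geq c := \min(1/\s_1, 1/\s_2) > 0$ for $\rho \in \Rp\setminus\{1\}$. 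Multiplying the inequality above by $\rho_k$ and using that $\rho_k|\nabla p_k|^2 = |\nabla p_k|^2$ a.e.\ (since $\nabla p_k$ is supported in $\{\rho_k = 1\}$) together with $\rho_k(\cS''(\rho_k))^2 \geq c^2/\rho_k$ yields
\[
|\nabla p_k|^2 + 4c^2 |\nabla\sqrt{\rho_k}|^2 \leq 2\rho_k\Big(\frac{|\nabla \ophi_k|^2}{\tau^2} + |\nabla\Phi|^2\Big) \quad \text{a.e.}
\]
Integrating in $x$, multiplying by $\tau$, summing over $k$, and applying Lemma~\ref{lem:gfi} delivers the required uniform bounds on $p^\t$ and $\sqrt{\rho^\t}$.

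For the final statement, if $\rho_0 \in L^\infty(\Om)$ and $\Phi$ satisfies \eqref{as:Phi2}, Lemma~\ref{lem:Linfty} gives $\|\rho^\t\|_{L^\infty(Q)} \leq M$ uniformly in $\t$. Replacing the lower bound $\rho_k(\cS''(\rho_k))^2 \geq c^2/\rho_k$ by the coarser $\rho_k(\cS''(\rho_k))^2 \geq c^2/M$ then produces $|\nabla\rho_k|^2$ in place of $|\nabla\sqrt{\rho_k}|^2$ on the left-hand side, yielding the $L^2([0,T];H^1(\Om))$ bound on $\rho^\t$ itself. I do not anticipate a significant obstacle: the entire proof is a careful transcription of Proposition~\ref{prop:con}, and the one genuinely new point is the cancellation $p_k/\rho_k + S_b''(\rho_k) = \cS''(\rho_k)$ on $\{\rho_k \neq 1\}$, which is precisely what motivated the splitting $\cS = S_a + S_b$ in the first place.
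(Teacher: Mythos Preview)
Your proof is correct and follows essentially the same route as the paper: differentiate the optimality condition \eqref{eq:1og}, use orthogonality of $\nabla p_k$ and $\nabla\rho_k$, and exploit the identity $p_k/\rho_k + S_b''(\rho_k) = \cS''(\rho_k)$ on $\{\rho_k\neq 1\}$ together with the lower bound on $\rho\,\cS''(\rho)$. The only cosmetic difference is that the paper packages this last identity and lower bound into a separate lemma (Lemma~\ref{lem:sig}), whereas you inline it; your explicit invocation of \eqref{eq:2as22} for the region $\{\rho_k>1\}$ is in fact a bit more careful than the paper's citation of \eqref{eq:n} alone.
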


\begin{proof}
Let us notice first that the uniform boundedness of $(\pt)_{\t>0}$ in $L^\infty(Q)$ follows from the construction in \eqref{eq:pkg}.

\medskip

Let us show the other estimates from Proposition~\ref{prop:con}. Note that both $\Sb'$ and $\rho_{k}$ are locally Lipschitz continuous (as we have shown in Lemma~\ref{lem:sab} and Lemma~\ref{lem:lipg}). Thus, Lemma~\ref{lem:og} implies that
\begin{align}
\label{eq:cog11}
- \frac{\nabla \ophi_{k}}{\tau}  -\nabla\Phi =  \nabla p_{k} +  \left( \frac{p_{k}}{\rho_{k}} + \Sb''(\rho_{k}) \right) \nabla \rho_{k},\    \hbox{ a.e. in } \Om.
\end{align}
By the parallel computation as in \eqref{eq:con23}, we conclude that
\begin{align*}
2\rho_{k} \frac{|\nabla \ophi_{k}|^2}{\tau^2}+2\rho_k|\nabla\Phi|^2 \ge | \nabla p_{k} |^2 + \rho_{k} \left( \frac{p_{k}}{\rho_{k}} + \Sb''(\rho_{k}) \right)^2 |\nabla \rho_{k} |^2.
\end{align*}
From Lemma~\ref{lem:sig} below, we have
\begin{align*}
\rho_{k} \left( \frac{p_{k}}{\rho_{k}} + \Sb''(\rho_{k}) \right)^2 |\nabla \rho_{k} |^2 \geq  \frac{1}{ \s_2^2 \rho_{k}} |\nabla \rho_{k} |^2 \hbox{ a.e. in } \Om.
\end{align*}

\medskip

The rest of arguments is parallel to {\bf Step 3} in Proposition~\ref{prop:con}, thus we conclude the thesis of the proposition.
\end{proof}

\begin{corollary}\label{cor:ent-por}
Up to passing to a suitable subsequence, the sequences $(\rho^\t)_{\t>0}$ and $(p^\t)_{\t>0}$ converge in the same sense as in Corollary \ref{cor:ent-ent}.
\end{corollary}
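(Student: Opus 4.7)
The plan is essentially to repeat verbatim the proof of Corollary~\ref{cor:ent-ent}, since Proposition~\ref{prop:cog} establishes that the sequences $(\rho^\t)_{\t>0}$ and $(p^\t)_{\t>0}$ constructed in the present porous medium/linear setting enjoy exactly the same uniform bounds as those in the purely logarithmic setting of Section~\ref{sec:gfl}. Crucially, the auxiliary estimate from Lemma~\ref{lem:gfi} (the `quasi-H\"older' $W_2$-continuity in time of $\rho^\t$) is a purely abstract gradient-flow statement that depends only on the JKO scheme and the total energy dissipation, hence it transfers to the present framework without any modification.

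First I would extract a weakly convergent subsequence of $(p^\t)_{\t>0}$. By Proposition~\ref{prop:cog}, $(p^\t)_{\t>0}$ is uniformly bounded in $L^2([0,T];H^1(\Om))$ (a reflexive Banach space) and in $L^\infty(Q)$, so Banach–Alaoglu yields $p\in L^2([0,T];H^1(\Om))\cap L^\infty(Q)$ and a subsequence (not relabeled) with $p^\t\weakly p$ in $L^2([0,T];H^1(\Om))$ and $p^\t\weaklys p$ in $L^\infty(Q)$ as $\t\da 0$.

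Next I would obtain the strong compactness of $(\rho^\t)_{\t>0}$ in $L^1(Q)$. Proposition~\ref{prop:cog} ensures that $(\sqrt{\rho^\t})_{\t>0}$ is uniformly bounded in $L^2([0,T];H^1(\Om))$, while Lemma~\ref{lem:gfi} provides the bound $W_2(\rho^\t(t),\rho^\t(s))\le C(t-s+\t)^{1/2}$. Combining these two ingredients, I would invoke the modified Aubin–Lions lemma (Lemma~\ref{lem:al}), exactly as was done in \cite[Proposition~4.8]{DifMat14} or \cite[Proposition~5.2]{Lab}, to extract a further subsequence along which $\rho^\t\to\rho$ strongly in $L^1(Q)$ for some $\rho\in L^1(Q)$. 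Since the piecewise constant interpolation of probability measures is itself a curve of probability measures and Lemma~\ref{lem:gfi} passes to the limit, one also obtains $\rho\in{\rm AC}^2([0,T];\sP(\Om))$.

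Finally, in the case $\rho_0\in L^\infty(\Om)$ with $\Phi$ satisfying \eqref{as:Phi2}, Lemma~\ref{lem:Linfty} gives a uniform $L^\infty(Q)$ bound on $(\rho^\t)_{\t>0}$, and by the $L^\infty$-analogue of Proposition~\ref{prop:cog} (using \eqref{eq:cog11} together with the $L^\infty$ bound in place of the $\sqrt{\rho}$ argument) $(\rho^\t)_{\t>0}$ is uniformly bounded in $L^2([0,T];H^1(\Om))$ as well. Interpolating the strong $L^1(Q)$ convergence against the uniform $L^\infty$ bound yields $\rho^\t\to\rho$ in $L^q(Q)$ for every $q\in[1,+\infty)$, in particular in $L^2(Q)$, and the weak compactness of $(\rho^\t)_{\t>0}$ in $L^2([0,T];H^1(\Om))$ identifies the limit as $\rho\in L^2([0,T];H^1(\Om))\cap L^\infty(Q)$. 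There is no substantive obstacle here: the work was done once in Proposition~\ref{prop:cog}, and the rest is a mechanical transcription of the proof of Corollary~\ref{cor:ent-ent}.
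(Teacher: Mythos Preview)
Your proposal is correct and matches the paper's approach exactly: the paper states Corollary~\ref{cor:ent-por} without proof because Proposition~\ref{prop:cog} reproduces verbatim the bounds of Proposition~\ref{prop:con}, so the argument of Corollary~\ref{cor:ent-ent} (weak compactness of $(p^\t)$ from the $L^2H^1\cap L^\infty$ bounds, strong $L^1$ compactness of $(\rho^\t)$ via Lemma~\ref{lem:gfi} and the Aubin--Lions Lemma~\ref{lem:al}, with the $L^\infty$ upgrade when $\rho_0\in L^\infty$) carries over unchanged.
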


\begin{remark}
From \eqref{eq:cog11}, we have 
\begin{align*}
2\rho_{k} \frac{|\nabla \ophi_{k}|^2}{\tau^2}+2\rho_k|\nabla\Phi|^2 \ge \frac{| \nabla (F(\rho_{k},p_{k})) |^2}{\rho_{k}}, {\ \rm{where}\ } F(\rho,p):= p\rho + \rho \Sb'(\rho) - \Sb(\rho).
\end{align*}
Then, if $\rho_0\in L^\infty(\Om)$, this observation together with the uniform $L^\infty$ bounds on $\rho^\t$ imply uniform $L^2([0,T]; H^1(\Om))$ bounds on $F(\rho^\t,p^\t)$.
\end{remark}

As the proof of Proposition~\ref{prop:con}, we rely on the coarea formula when proving the following result.

\begin{lemma}
\label{lem:sig}
For $(\rho_{k})_{k=1}^N$ and $(p_{k})_{k=1}^N$ given in \eqref{eq:step} and \eqref{eq:p}, it holds that
\begin{align}
\label{eq:1sig}
| p_{k} + \rho_{k} \Sb''(\rho_{k}) | |\nabla \rho_{k} |  \geq  \frac{1}{\s_2} |\nabla \rho_{k} | {\rm{\ a.e.\ in\ }} \Om.
\end{align}
\end{lemma}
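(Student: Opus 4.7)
The plan is to dissect $\Omega$ into the three regions $\{\rho_k<1\}$, $\{\rho_k=1\}$ and $\{\rho_k>1\}$ and verify \eqref{eq:1sig} on each separately. Since $|\nabla\rho_k|$ appears on both sides, I only need a pointwise lower bound on $|p_k+\rho_k S_b''(\rho_k)|$ on the set where $\nabla\rho_k\neq 0$.

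First, on the level set $\{\rho_k=1\}$, the Lipschitz continuity of $\rho_k$ from Lemma~\ref{lem:lipg} together with the coarea formula (as in \cite[Corollary 5.2.6]{KraPar08}), applied to the indicator $\mathbbm{1}_{\{\rho_k=1\}}$, gives
$$\int_{\{\rho_k=1\}} |\nabla\rho_k|\,dx \;=\; \int_{\R} \mathbbm{1}_{\{s=1\}}\,\sH^{d-1}(\rho_k^{-1}(s))\,ds \;=\; 0,$$
so $\nabla\rho_k=0$ a.e. on this set and both sides of \eqref{eq:1sig} vanish trivially.

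Next, on the complement $\{\rho_k\neq 1\}$ the key observation is an algebraic identity that will eliminate the jump terms. Differentiating the explicit formula \eqref{eq:sa} yields $S_a''(\rho)=S'(1-)/\rho$ on $(0,1)$ and $S_a''(\rho)=S'(1+)/\rho$ on $(1,+\infty)$. Combined with the values of $p_k$ recorded in \eqref{eq:pkg} and with $S_b''=S''-S_a''$, this gives in either subregion
$$p_k+\rho_k S_b''(\rho_k) \;=\; p_k+\rho_k S''(\rho_k)-\rho_k S_a''(\rho_k) \;=\; \rho_k S''(\rho_k),$$
because the additive constant $p_k$ exactly cancels $\rho_k S_a''(\rho_k)$. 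Thus the bound reduces to showing $\rho_k S''(\rho_k)\geq 1/\sigma_2$ wherever $\rho_k\neq 1$.

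Finally, the required lower bounds on $\rho S''(\rho)$ come directly from the assumptions. On $\{0<\rho_k<1\}$ hypothesis \eqref{eq:n} gives $\rho_k S''(\rho_k)\geq 1/\sigma_2$ immediately; on $\{\rho_k>1\}$ hypothesis \eqref{eq:2as22} gives $\rho_k S''(\rho_k)\geq \rho_k^{r-1}/\sigma_1\geq 1/\sigma_1\geq 1/\sigma_2$, where the last inequality uses the standing assumption $\sigma_2>\sigma_1$ together with $r\geq 1$ and $\rho_k>1$. Assembling the three cases delivers \eqref{eq:1sig} a.e. in $\Omega$. I do not expect any substantial obstacle: the only subtlety is ensuring that the set $\{\rho_k=1\}$ (which need not be negligible, as noted after \eqref{eq:con21}) is handled via the coarea formula rather than by set-theoretic arguments, while the core computation on $\{\rho_k\neq 1\}$ is the tautology $p_k+\rho_k S_b''(\rho_k)=\rho_k S''(\rho_k)$.
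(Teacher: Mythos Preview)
Your argument is correct and follows essentially the same route as the paper: on $\{\rho_k\neq 1\}$ you derive the identity $p_k+\rho_k S_b''(\rho_k)=\rho_k S''(\rho_k)$ and then invoke the lower bounds on $\rho S''(\rho)$, while on $\{\rho_k=1\}$ you show $\nabla\rho_k=0$ a.e. The only cosmetic difference is that the paper handles the level set by citing the standard fact that a Lipschitz function has vanishing gradient a.e.\ on any level set (\cite[Theorem~4(iv), Section~4.2.2]{EvaGar92}), whereas you obtain the same conclusion via the coarea formula; your treatment of the region $\{\rho_k>1\}$ is also slightly more explicit than the paper's, which bundles both subregions into a single reference to~\eqref{eq:n}.
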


\begin{proof}
If $x \in \{ \rho_{k} \neq 1 \}$, then \eqref{eq:og12} implies that
\begin{align}
\label{eq:sig11}
\frac{p_{k}(x)}{\rho_{k}(x)} + \Sb''(\rho_{k}(x)) = \Sa''(\rho_{k}(x)) + \Sb''(\rho_{k}(x)) = \cS''(\rho_{k}(x)).
\end{align}
From \eqref{eq:n}, we conclude 
\begin{align}
\label{eq:sig111}
| p_{k} + \rho_{k} \Sb''(\rho_{k}) | \geq  \frac{1}{\s_2} \hbox{ a.e. in } \{ \rho_{k} \neq 1 \}.
\end{align}
Recall that $\rho_{k}$ is Lipschitz continuous (cf. Lemma~\ref{lem:lipg}) and thus
\begin{align*}
\nabla \rho_{k} = 0  \ \ \ae \hbox{ in } \{ \rho_k = 1\}
\end{align*}
(see for instance \cite[Theorem 4.(iv), Section 4.2.2]{EvaGar92}). Therefore, we conclude \eqref{eq:1sig}.

\end{proof}

\begin{proof}[Proof of Theorem~\ref{thm:gm}]

As and initial observation, let us remark that by similar arguments as in Lemma \ref{lem:coi}, one obtains the same estimates for the continuous in time interpolations $(\tilde\rho^\t,\tilde\v^\t,\tE^\t)$, and by passing to the limit as $\t\da 0$, we obtain a continuity equation of the form 
$$\partial_t\rho+\diver\E=0.$$
Since the limits of $(\tilde\rho^\t,\tE^\t)$ and $(\rho^\t,\E^\t)$ are the same, it remains to identify the limit of the latter one to get the precise form of our limit equation.

\medskip

\textbf{Step 1.} From direct computation as in \eqref{eq:exi12}, we obtain that
\begin{align}
\label{eq:gm11}
- \Et =  \rhot \nabla(\Sb'(\rhot) + \pt(1+ \log\rhot)) + \rho^\t\nabla\Phi = \nabla ( \rhot \Sb'(\rhot) - \Sb(\rhot) + \Sb(1) + \pt \rhot) + \rho^\t\nabla\Phi.
\end{align}
From Proposition~\ref{prop:cog} and Corollary \ref{cor:ent-por} we can claim that
\begin{align}
\label{eq:gm22}
\nabla (\rhot \Sb'(\rhot) - \Sb(\rhot) + \Sb(1) + \pt \rhot) \to \nabla (\rho \Sb'(\rho) - \Sb(\rho) + \Sb(1) + p \rho),
\end{align}
as $\t \da 0$ in the sense of distribution.
Indeed, using the strong $L^1(Q)$ compactness of $(\rho^\t)_{\t>0}$ and the weak-$\star$ compactness of $(p^\t)_{\t>0}$ in $L^\infty(Q)$, we can pass to the limit $\rho^\t p^\t$. 
Recall that $(\rho^\t)_{\t>0}$ in uniformly bounded in $L^\beta(Q)$ for $\beta$ given in \eqref{eq:beta}. As $r < \beta$, Corollary~\ref{cor:ent-por} yields the convergence of $(\rho^\t)_{\t>0}$ in $L^r(Q)$. As the growth rate of $\rho \Sb'(\rho)$ and $\Sb(\rho)$ is $r$, we conclude that $\rhot \Sb'(\rhot) - \Sb(\rhot)\to\rho \Sb'(\rho) - \Sb(\rho)$ in $L^1(Q)$ as $\t\da 0$.

\medskip

\textbf{Step 2.} Let us show that 
\begin{align}
\label{eq:gm31}
\rho \Sb'(\rho) - \Sb(\rho) + \Sb(1)  + p \rho = L_S(\rho,p),
\end{align}
By parallel arguments as in Lemma~\ref{lem:rel}, we conclude that $(\rho,p)$ satisfies \eqref{eq:2gm}. Thus, it holds that
\begin{align}
\label{eq:gm32}
\rho \Sa'(\rho) - \Sa(\rho) = p \rho, \hbox{ a.e. in } \rho^{-1}(\Ro)
\end{align}
and we conclude \eqref{eq:gm31} a.e. in $\rho^{-1}(\Ro)$. From \eqref{eq:gm22} and \eqref{eq:gm31}, we conclude \eqref{eq:1gm}.

\medskip

Furthermore, from Lemma~\ref{lem:sab}, we obtain that
\begin{align*}
\rho \Sb'(\rho) - \Sb(\rho) + \Sb(1)  + p \rho = p &\hbox{ in } \rho^{-1}(\{1\}).
\end{align*}
and we conclude \eqref{eq:gm31} a.e. in $\rho^{-1}(\{1\})$.
\end{proof}

In particular, \eqref{eq:1gm} can be also represented in the form of a continuity equation, as we show below. 

\begin{theorem}
\label{cor:modg}
Suppose that \eqref{eq:g2} and \eqref{eq:n} hold true. Let $\rho_0$ and $(\rho,p)$ be given in Theorem \ref{thm:gm}. 
If
\begin{align}
\label{eq:0modg}
r \ge \frac{3d-4}{2d}, 
\end{align}
then $(\rho,p)$ also satisfies 
\be
\label{eq:modg}
\left\{
\ba{ll}
\partial_t\rho - \nabla \cdot \left(\rho \nabla \left( \cS'(\rho)\one_{ \{ \rho \neq 1 \} } + p\one_{\{ \rho=1 \}} \right) \right)-\nabla\cdot(\rho\nabla\Phi) =0, & {\rm{in}}\ (0,T)\times\Om,\\
\rho(0,\cdot)=\rho_0, & {\rm{in}}\ \Om,\\
\rho\left[ \nabla \left( \cS'(\rho)\one_{ \{ \rho \neq 1 \} } + p\one_{\{ \rho=1 \}} \right)+\nabla\Phi\right] \cdot \n = 0 , & {\rm{in}}\ [0,T] \times \partial \Om,
\ea
\right.
\ee
in the sense of distribution. If in addition $\rho_0\in L^\infty(\Om)$ and $\Phi$ satisfies \eqref{as:Phi2}, we can drop \eqref{eq:g2} and \eqref{eq:0modg} from the statement.
\end{theorem}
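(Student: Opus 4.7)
The plan is to establish the distributional identity
$$\nabla L_S(\rho,p)=\rho\nabla\pi, \qquad \pi:=\cS'(\rho)\one_{\{\rho\neq 1\}}+p\one_{\{\rho=1\}},$$
which converts the Laplacian term in \eqref{eq:1gm} into the continuity form of \eqref{eq:modg}. I would first prove this identity at the discrete (JKO) level by exploiting the Lipschitz regularity of the iterates, and then pass to the limit $\t\da 0$ using the uniform bounds of Lemma~\ref{lem:gfi} and Lemma~\ref{lem:uni} together with \eqref{eq:0modg}.

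At the discrete level, the first step would be to combine Lemma~\ref{lem:og} with $\cS_a'(\rho_k)=p_k(1+\log\rho_k)$ on $\{\rho_k\neq 1\}$ (which extends consistently to $\{\rho_k=1\}$ because $\cS_b'(1)=0$) to reduce the optimality condition to the scalar equation
$$\pi_k+\frac{\ophi_k}{\t}+\Phi=\cC \quad \ae \text{ in } \Om,\qquad \pi_k:=\cS'(\rho_k)\one_{\{\rho_k\neq 1\}}+p_k\one_{\{\rho_k=1\}}.$$
Since $\rho_k$, $p_k$, $\ophi_k$, and $\Phi$ are all Lipschitz (Lemmas~\ref{lem:lip_new} and~\ref{lem:lipg}), so is $\pi_k$, and $\nabla\pi_k=-\nabla\ophi_k/\t-\nabla\Phi$ a.e. Using the representation $L_S(\rho_k,p_k)=p_k\rho_k+\rho_k\cS_b'(\rho_k)-\cS_b(\rho_k)+\cS_b(1)$ from the proof of Theorem~\ref{thm:gm} together with the Lipschitz chain rule and \eqref{eq:cog11}, a direct computation would yield
$$\nabla L_S(\rho_k,p_k)=p_k\nabla\rho_k+\rho_k\nabla p_k+\rho_k\cS_b''(\rho_k)\nabla\rho_k=\rho_k\nabla\pi_k \quad \ae\text{ in }\Om,$$
where one uses $\nabla\rho_k\equiv 0$ a.e. on $\{\rho_k=1\}$ to match the two expressions on that set. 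Translating this to the piecewise-constant interpolations gives $\nabla L_S(\rho^\t,p^\t)=\rho^\t\nabla\pi^\t$ a.e. in $Q$.

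For the passage to the limit, the gradient flow estimate of Lemma~\ref{lem:gfi} can be rewritten as $\int_Q\rho^\t|\nabla\pi^\t+\nabla\Phi|^2\,dx\,dt\leq C$, so $\sqrt{\rho^\t}\nabla\pi^\t$ is uniformly bounded in $L^2(Q;\R^d)$. Coupling with the uniform $L^\beta(Q)$ bound on $\rho^\t$ from Lemma~\ref{lem:uni} --- and noting that \eqref{eq:0modg} is precisely the condition $\beta\geq 2$ via the definition in \eqref{eq:beta} --- H\"older's inequality would give that $\rho^\t\nabla\pi^\t=\sqrt{\rho^\t}\cdot\sqrt{\rho^\t}\nabla\pi^\t$ is uniformly bounded in $L^s(Q;\R^d)$ for some $s>1$, hence weakly compact. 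Since the distributional convergence $\nabla L_S(\rho^\t,p^\t)\to\nabla L_S(\rho,p)$ is already established via \eqref{eq:gm22} in the proof of Theorem~\ref{thm:gm}, the discrete identity would force the weak $L^s$-limit to coincide with $\nabla L_S(\rho,p)$. I would then define $\rho\nabla\pi:=\nabla L_S(\rho,p)$ as an $L^s$ function, and substituting into the continuity equation $\partial_t\rho+\diver\E=0$ from Theorem~\ref{thm:gm} would produce \eqref{eq:modg} in the distributional sense; uniqueness is inherited directly from Theorem~\ref{thm:gm}. For $\rho_0\in L^\infty(\Om)$ with $\Phi$ satisfying \eqref{as:Phi2}, Lemma~\ref{lem:Linfty} provides uniform $L^\infty(Q)$ bounds on $\rho^\t$, so $\rho^\t\nabla\pi^\t$ is directly bounded in $L^2(Q;\R^d)$ and the argument goes through without invoking either \eqref{eq:g2} or \eqref{eq:0modg}.

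The main difficulty is that $\pi$ has no standalone Sobolev regularity: on $\{\rho=0\}$ one has $\cS'(0+)=-\infty$ (Lemma~\ref{lem:1n}), and on $\{\rho=1\}$ the value of $p$ is only constrained to the subdifferential interval $[\cS'(1-),\cS'(1+)]$, so $\nabla\pi$ cannot be interpreted as a pointwise object to be multiplied by $\rho$. The discrete identity $\nabla L_S(\rho^\t,p^\t)=\rho^\t\nabla\pi^\t$ will circumvent this by lumping the product into a single function whose Sobolev behaviour is controlled directly by the energy dissipation, and \eqref{eq:0modg} will provide exactly the minimal integrability needed to preserve this control in the weak limit for general $\cE(\rho_0)<+\infty$ initial data.
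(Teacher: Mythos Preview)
Your proposal is correct but follows a genuinely different route from the paper's proof.

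The paper works entirely at the limit level: starting from the regularity $p\in L^2([0,T];H^1(\Om))$, $\sqrt{\rho}\in L^2([0,T];H^1(\Om))$ and $\rho\in L^\beta(Q)$ furnished by Theorem~\ref{thm:gm}, it writes $\pi=p(1+\log\rho)+\Sb'(\rho)$ and expands $\rho\nabla\pi$ term by term as $\rho\nabla p+p\nabla\rho+\rho\nabla(\Sb'(\rho))$, verifying each summand lies in $L^1(Q)$ individually. The condition $\beta\geq 2$ (equivalently \eqref{eq:0modg}) enters precisely to pair $\rho\in L^2(Q)$ with $\nabla p\in L^2(Q)$ in the first term; the remaining terms are controlled via $\nabla\sqrt{\rho}\in L^2(Q)$ and the growth of $\Sb''$. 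An algebraic check then identifies the sum with $\nabla L_S(\rho,p)$.

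You instead establish the flux identity $\rho_k\nabla\pi_k=\nabla L_S(\rho_k,p_k)$ at the discrete level---where Lipschitz regularity of $\rho_k$, $p_k$ and $\pi_k=\cC-\ophi_k/\t-\Phi$ makes it a pointwise equality---and then carry it to the limit via weak $L^s$ compactness of $\rho^\t\nabla\pi^\t$, driven by the energy-dissipation bound $\int_Q\rho^\t|\nabla\pi^\t+\nabla\Phi|^2\leq C$. This is closer in spirit to the metric-gradient-flow identification of the velocity field and avoids splitting the flux into pieces each requiring separate Sobolev control. In fact your H\"older step only needs $\beta>1$ (which always holds) to obtain $s>1$, so your invocation of \eqref{eq:0modg} at that point is sufficient but not where the condition actually bites in your argument; your route would in principle give the continuity form under weaker hypotheses than stated. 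The paper's decomposition, by contrast, buys a more tangible interpretation of $\rho\nabla\pi$ as a sum of honest products of Sobolev and Lebesgue functions, making transparent exactly which regularity ingredient each piece consumes---whereas your final ``define $\rho\nabla\pi:=\nabla L_S(\rho,p)$'' is correct but leaves the product structure implicit.
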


\begin{remark}
Note that \eqref{eq:0modg} is equivalent to $\beta \geq 2$ for $\beta$ given in \eqref{eq:beta} and the inequality holds for any $r \geq 1$ if $d=1$ or $d=2$.
\end{remark}

\begin{proof}
Note that \eqref{eq:2gm} and \eqref{eq:sa} imply that $\cS_a'(\rho) = p(1+ \log\rho) \hbox{ in } \rho^{-1}(\Ro)$. Furthermore, from \eqref{eq:sb} and \eqref{eq:1sab}, it holds that
\begin{align}
\label{eq:mod11} 
\I_1:= \cS'(\rho)\one_{ \{ \rho \neq 1 \} } + p\one_{\{ \rho=1 \}} = p(1+ \log\rho) + \Sb'(\rho). 
\end{align}
From $p \in L^2([0,T];H^1(\Om))$ and \eqref{eq:2gm}, we obtain
\begin{align}
\label{eq:mod12}
\rho \log \rho \nabla p = 0 \ \ae
\end{align}
From \eqref{eq:mod11} and \eqref{eq:mod12}, we have
\begin{align*}
\rho \nabla \I_1 = \rho \nabla p + p \nabla \rho + \rho \nabla(\Sb'(\rho)).
\end{align*}

\medskip

Next, we claim that 
\begin{align*}
\rho \nabla p + p \nabla \rho + \rho \nabla(\Sb'(\rho)) \in L^1(Q).
\end{align*}
Consider the first term $\rho \nabla p$. Recall from Theorem~\ref{thm:gm} that $\nabla p \in L^2(Q)$. 
If $\rho_0 \in L^\infty(\Om)$, then $\rho \in L^\infty(Q)$ from Lemma~\ref{lem:Linfty} and thus  $\rho \nabla p \in L^1(Q)$. On the other hand, if \eqref{eq:0modg} is fulfilled, then $\beta$ given in \eqref{eq:beta} is greater than or equal to 2. As $\rho \in L^\beta(Q)$ from Lemma~\ref{lem:uni}, we obtain $\rho \in L^2(Q)$ and thus $\rho \nabla p \in L^1(Q)$.

\medskip

Furthermore, as $\nabla \sqrt{\rho} \in L^2(Q)$, $\nabla \rho = 2 \rho^{\frac12}\nabla\sqrt{\rho}  \in L^1(Q)$ and the second term is in $L^1(Q)$. Lastly, 
\begin{align*}
\rho \nabla(\Sb'(\rho)) = 2 \rho^{\frac{3}{2}} \Sb''(\rho) \nabla \sqrt{\rho}.
\end{align*}
As the growth rate of $\rho^{\frac{3}{2}} \Sb''(\rho)$ is $r - \frac12$ and $r - \frac12 \leq \frac{\beta}{2}$, $\rho \in L^{\beta}(Q)$ implies $\rho^{\frac{3}{2}} \Sb''(\rho) \in L^2(Q)$ and the last term is in $L^1(Q)$.

\medskip

Lastly, we have
\begin{align*}
\rho\nabla\I_1 = \nabla (\rho p + \rho \Sb'(\rho) - \Sb(\rho) + \Sb(1)) = \nabla L_S(\rho,p)
\end{align*}
for $L_S$ given in \eqref{eq:ls}. By Theorem~\ref{thm:gm}, we conclude that $(\rho,p)$ is a weak solution of \eqref{eq:modg}.
\end{proof}


\section{Porous medium type diffusion on $\{\rho<1\}$ and general diffusion on $\{\rho>1\}$}
Similarly to the classical porous medium equation, in this section we do not expect solutions to be fully supported. As in Section \ref{sec:gfl}, let us first study an example with a particular nonlinearity. 

\subsection{Same diffusion exponent}
\label{sec:51}
In this subsection, we suppose that $\cS : \Rpz \rightarrow \R$ is defined by 
\begin{align}
\label{eq:slp}
\ds\cS(\rho):= \begin{cases}
\ds\frac{\rho^m}{m-1}, &\hbox{ for } \rho \in [0,1],\\[5pt]
\ds\frac{2 \rho^m}{m-1} - \frac{1}{m-1} , &\hbox{ for } \rho\in(1,+\infty).
\end{cases}
\end{align}
where $m > 1$.
\medskip

Our main theorem in this section can be formulated as follows.

\begin{theorem}
\label{thm:exip}
For $\rho_0 \in \sP(\Om)$ such that $\cJ(\rho_0) < +\infty$ and $S$ given in \eqref{eq:slp}, there exists 
$\rho\in L^{\beta}(Q) \cap {{\rm{AC}}^2([0,T];(\sP(\Om),W_2))}$ and $p\in L^2([0,T]; H^1(\Om))\cap L^\infty(Q)$ with $\rho^{m-\frac12} \in L^2([0,T]; H^1(\Om))$ such that $(\rho,p)$ is a weak solution of
\be\label{eq:exip}
\left\{
\ba{ll}
\partial_t\rho -  \Delta ( [(m-1)\rho^m +1]  \frac{p}{m})-\nabla\cdot(\nabla\Phi\rho) =0, & {\rm{in}}\ (0,T)\times\Om,\\[5pt]
\rho(0,\cdot)=\rho_0, & {\rm{in}}\ \Om,\\[5pt]
(\nabla ( [(m-1)\rho^m +1]  \frac{p}{m} )+\nabla\Phi\rho) \cdot \n = 0 , & {\rm{in}}\ [0,T] \times \partial \Om,
\ea
\right.
\ee
in the sense of distribution. Furthermore, $(\rho,p)$ satisfies
\begin{align}
\label{eq:relp}
\begin{cases}
p(t,x)=\frac{m}{m-1} &\hbox{ {\rm{a.e. in}} } \{0 < \rho(t,x) < 1\},\\[5pt]
p(t,x) \in \left[\frac{m}{m-1},\frac{2m}{m-1}\right] &\hbox{ \rm{a.e. in} } \{\rho(t,x) = 1\},\\[5pt]
p(t,x)=\frac{2m}{m-1} &\hbox{ \rm{a.e. in} } \{\rho(t,x) > 1\}.
\end{cases}
\end{align}
In addition, if $\rho_0\in L^\infty(\Om)$ and $\Phi$ satisfies \eqref{as:Phi2}, then $\rho\in L^\infty(Q)$ and $\rho^{m} \in L^2([0,T]; H^1(\Om))$.
\end{theorem}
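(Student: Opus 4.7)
The proof follows the gradient-flow scheme of Theorem~\ref{thm:exi}, but replaces the linear-diffusion estimates with porous-medium-type a~priori bounds on a carefully chosen power of the density. My plan is the following. First, I would apply the JKO scheme \eqref{eq:step} with $S$ from \eqref{eq:slp} and invoke Theorem~\ref{thm:opgr} together with Definition~\ref{def:p} of $p_k$. A direct computation of $S'$ on $(0,1)\cup(1,+\infty)$ shows that, on $\{\rho_k>0\}$, the optimality condition can be rewritten in the compact form
$$
p_k\,\rho_k^{m-1}+\frac{\ophi_k}{\tau}+\Phi=\cC\quad\text{a.e.},
$$
because $p_k$ is constant on each of $\{\rho_k<1\}$ and $\{\rho_k>1\}$, while $\rho_k^{m-1}=1$ on $\{\rho_k=1\}$. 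The same algebra identifies $L_S(\rho_k,p_k)=[(m-1)\rho_k^m+1]p_k/m$.

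Next I would derive the crucial gradient bounds. Using Lemma~\ref{lem:lipg-gen}, $\rho_k^{m-1}$ is a \emph{piecewise affine, continuous} function of the Lipschitz quantity $f_k:=\cC-\ophi_k/\tau-\Phi$ (equal to $\tfrac{m-1}{m}f_k$ on $\{\rho_k<1\}$, identically $1$ on $\{\rho_k=1\}$, and $\tfrac{m-1}{2m}f_k$ on $\{\rho_k>1\}$), hence Lipschitz on $\Om$ \emph{independently of whether $m\leq 2$}. Since $p_k$ is Lipschitz (Lemma~\ref{lem:lip_new}), $\nabla p_k$ vanishes off $\{\rho_k=1\}$, and $\nabla\rho_k^{m-1}=0$ a.e.~on $\{\rho_k=1\}$, we obtain $\nabla p_k\cdot\nabla\rho_k^{m-1}=0$ a.e. Differentiating the optimality relation, squaring and multiplying by $\rho_k$, and using
$$
\rho_k\,|\nabla\rho_k^{m-1}|^2=\bigl(\tfrac{m-1}{m-1/2}\bigr)^2|\nabla\rho_k^{m-1/2}|^2,\qquad p_k\geq\tfrac{m}{m-1},
$$
together with $\rho_k\equiv 1$ on $\spt(\nabla p_k)$, I would arrive at an estimate
$$
\int_\Om|\nabla p_k|^2\dd x+c(m)\int_\Om|\nabla\rho_k^{m-1/2}|^2\dd x\leq 2\int_\Om\rho_k\,\frac{|\nabla\ophi_k|^2}{\tau^2}\dd x+2\|\nabla\Phi\|_{L^\infty}^2.
$$
Summing in $k$ and invoking Lemma~\ref{lem:gfi} gives uniform bounds on $(p^\t)$ in $L^2([0,T];H^1(\Om))\cap L^\infty(Q)$ and on $((\rho^\t)^{m-1/2})$ in $L^2([0,T];H^1(\Om))$.

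For compactness and passage to the limit, Lemma~\ref{lem:uni} provides a uniform $L^\beta(Q)$ bound on $(\rho^\t)$ with $\beta>r=m$ given in \eqref{eq:beta}. Combined with the $L^2(H^1)$ bound on $(\rho^\t)^{m-1/2}$ and the $W_2$ quasi-H\"older continuity from Lemma~\ref{lem:gfi}, the Aubin--Lions-type Lemma~\ref{lem:al} yields strong $L^q(Q)$ compactness of $(\rho^\t)$ for any $q<\beta$, while $p^\t\weakly p$ in $L^2(H^1)$ and $\weaklys$ in $L^\infty$. Repeating the argument of Lemma~\ref{lem:rel} with these compactness properties identifies $(\rho,p)$ as satisfying the three-phase relation \eqref{eq:relp}. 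Introducing the geodesic interpolations $(\tilde\rho^\t,\tE^\t)$ from \eqref{eq:csi}, they satisfy an exact continuity equation; the differentiated optimality condition reads $-\E^\t=\nabla L_S(\rho^\t,p^\t)+\rho^\t\nabla\Phi=\nabla\bigl([(m-1)(\rho^\t)^m+1]p^\t/m\bigr)+\rho^\t\nabla\Phi$, and since $\beta>m$ implies strong $L^{\beta/m}$ convergence of $(\rho^\t)^m$, we can pass to the limit distributionally against $p^\t\weaklys p$. This yields \eqref{eq:exip}.

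Finally, for the bounded case $\rho_0\in L^\infty(\Om)$ under \eqref{as:Phi2}, Lemma~\ref{lem:Linfty} gives a $\t$-uniform $L^\infty(Q)$ bound on $\rho^\t$, and the identity $\nabla(\rho^\t)^m=\frac{m}{m-1/2}(\rho^\t)^{1/2}\nabla(\rho^\t)^{m-1/2}$ upgrades the previous $L^2(H^1)$ bound on $(\rho^\t)^{m-1/2}$ to one on $(\rho^\t)^m$, which passes to the limit weakly. I expect the main obstacle to be the orthogonality $\nabla p_k\cdot\nabla\rho_k^{m-1}=0$: it would fail if one tried to apply Lipschitz regularity to $\rho_k$ itself (which is only continuous when $m>2$), and the key insight is that the \emph{composite} $\rho_k^{m-1}$ is always Lipschitz by virtue of being piecewise affine in $f_k$. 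A secondary technical point is that the improved summability $\beta>m$ from Lemma~\ref{lem:uni} is exactly what is needed to pass to the limit in the nonlinear product $(\rho^\t)^m p^\t$.
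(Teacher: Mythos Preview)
Your proposal is correct and follows essentially the same approach as the paper: the paper's Lemma~\ref{lem:poptp} gives exactly your compact optimality relation (written as $\rho_k^{m-1}p_k=(\cC-\ophi_k/\tau-\Phi)_+$ to cover $\{\rho_k=0\}$ as well), its Lemma~\ref{lem:conp} derives the same $L^2(H^1)$ bounds on $(\rho^\t)^{m-1/2}$ and $p^\t$ via the orthogonality $\nabla(\rho_k^{m-1})\cdot\nabla p_k=0$, and the passage to the limit proceeds identically through Corollary~\ref{cor:por-por} and the rewriting \eqref{eq:exip23}. Your emphasis that the Lipschitz regularity must be established for $\rho_k^{m-1}$ (not $\rho_k$) to handle $m>2$ is precisely the point the paper makes in Lemma~\ref{lem:poptp}.
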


Let us recall the definition of $(\rho_k)_{k=1}^N$ and $(p_k)_{k=1}^N$ from \eqref{eq:step} and \eqref{eq:p}, respectively. Let us underline that in the setting of this section due to the structure of the nonlinearity we typically expect $\spt(\rho_k)$ to be a proper subset of $\Om$, unlike in the case of Lemma~\ref{lem:postg} which was used in Section~\ref{sec:gfl} and Section~\ref{sec:gfn}. For this reason, we expect the Lipschitz continuity of $\rho_k^{m-1}$ instead of $\rho_k$. The following result is a simplified version of Lemma \ref{lem:lipg-gen}, tailored to the entropy function \eqref{eq:slp}.
\begin{lemma}
\label{lem:poptp}
For all $k \in \{1,\dots,N\}$, there exists $C\in\R$ such that
\begin{align}
\label{eq:poptp}
\rho_{k}^{m-1} p_{k}=\left( \cC - \frac{\ophi_{k}}{\tau}-\Phi \right)_{+} \ \ \ae
\end{align}
In particular, $\rho_{k}^{m-1}$ is Lipschitz continuous. Here, $\ophi_{k}$ is given in Theorem~\ref{thm:opg}.
\end{lemma}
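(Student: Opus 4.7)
The plan is to combine the pointwise optimality conditions for $\rho_k$ from Theorem~\ref{thm:opg} (or Theorem~\ref{thm:opgr} when $\rho_0\notin L^\infty(\Om)$) with the definition \eqref{eq:p} of $p_k$, specialized to the entropy \eqref{eq:slp}. Setting $f_k:=\cC-\ophi_k/\tau-\Phi$, I would first record that $\cS'(0+)=0$, $\cS'(1-)=\tfrac{m}{m-1}$, $\cS'(1+)=\tfrac{2m}{m-1}$, while $\cS'(\rho)=\tfrac{m}{m-1}\rho^{m-1}$ on $(0,1)$ and $\cS'(\rho)=\tfrac{2m}{m-1}\rho^{m-1}$ on $(1,+\infty)$. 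The strict monotonicity of $\cS'$ together with \eqref{eq:1opg} partitions $\Om$, up to negligible sets, into the four strata $\{f_k\le 0\}=\{\rho_k=0\}$, $\{0<f_k<\tfrac{m}{m-1}\}=\{0<\rho_k<1\}$, $\{\tfrac{m}{m-1}\le f_k\le \tfrac{2m}{m-1}\}=\{\rho_k=1\}$, and $\{f_k>\tfrac{2m}{m-1}\}=\{\rho_k>1\}$.

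Next, I would verify \eqref{eq:poptp} stratum by stratum. On $\{\rho_k=0\}$ the left-hand side vanishes since $m>1$, while $f_k^+=0$. On $\{0<\rho_k<1\}$ the definition \eqref{eq:p} gives $p_k=\tfrac{m}{m-1}$, and $f_k=\cS'(\rho_k)=\tfrac{m}{m-1}\rho_k^{m-1}\in(0,\tfrac{m}{m-1})$ yields $\rho_k^{m-1}p_k=f_k=f_k^+$. On $\{\rho_k=1\}$ one has $\rho_k^{m-1}=1$ and $p_k=f_k\in[\tfrac{m}{m-1},\tfrac{2m}{m-1}]$, so both sides equal $f_k$. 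On $\{\rho_k>1\}$, symmetrically, $p_k=\tfrac{2m}{m-1}$ and $f_k=\tfrac{2m}{m-1}\rho_k^{m-1}$, whence $\rho_k^{m-1}p_k=f_k=f_k^+$.

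For the Lipschitz statement, the above case analysis identifies $\rho_k^{m-1}=g\circ f_k$, where $g:\R\to[0,+\infty)$ is the piecewise-linear continuous function given by $g(s)=0$ on $(-\infty,0]$, $g(s)=\tfrac{m-1}{m}s$ on $[0,\tfrac{m}{m-1}]$, $g(s)=1$ on $[\tfrac{m}{m-1},\tfrac{2m}{m-1}]$, and $g(s)=\tfrac{m-1}{2m}s$ on $[\tfrac{2m}{m-1},+\infty)$. Since $g$ has slopes bounded by $\tfrac{m-1}{m}$ it is Lipschitz, and since $\ophi_k$ is Lipschitz by Lemma~\ref{lem:lip_new} while $\Phi\in W^{1,\infty}(\Om)$, the function $f_k$ is Lipschitz on $\Om$; composition then delivers the Lipschitz continuity of $\rho_k^{m-1}$. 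The only delicate point is the identification of $p_k$ on the free interface $\{\rho_k=1\}$, which is precisely handled by the middle branch of \eqref{eq:p} together with the inclusion $f_k\in[\cS'(1-),\cS'(1+)]$ supplied by the optimality condition; otherwise the argument is a straightforward case analysis.
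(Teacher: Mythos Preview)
Your proposal is correct and follows essentially the same approach as the paper: compute $\partial\cS$ and the resulting values of $p_k$, then verify \eqref{eq:poptp} case by case on the regions $\{\rho_k=0\}$, $\{0<\rho_k<1\}$, $\{\rho_k=1\}$, $\{\rho_k>1\}$. The only minor difference is in the Lipschitz argument: the paper deduces it from $\rho_k^{m-1}=(f_k)_+/p_k$, using that $p_k$ is Lipschitz with the positive lower bound $\tfrac{m}{m-1}$, whereas you write $\rho_k^{m-1}=g\circ f_k$ directly with an explicit piecewise-linear Lipschitz $g$; your route is slightly more elementary in that it bypasses the Lipschitz continuity of $p_k$, but both are equivalent in spirit.
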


\begin{proof}
Note that 
\begin{align}
\label{eq:dsp}
\partial \cS(\rho) =  
\begin{cases}
\frac{m}{m-1}\rho^{m-1} &\hbox{ for } 0< \rho <1,
\\[5pt] \left[\frac{m}{m-1},\frac{2m}{m-1}\right] 	&\hbox{ for } \rho =1,
\\[5pt] \frac{2m}{m-1}\rho^{m-1} &\hbox{ for } \rho >1.
\end{cases}
\hbox{ and\ \ \ }
p_{k} = 
\begin{cases}
\frac{m}{m-1} &\hbox{ in } \rho_{k}^{-1}([0,1)),\\
\cC - \frac{\ophi_{k}}{\t}-\Phi &\hbox{ in } \rho_{k}^{-1}(\{1\}),\\
\frac{2m}{m-1} &\hbox{ in } \rho_{k}^{-1}((1,+\infty)).
\end{cases}
\hbox{ \quad a.e.}
\end{align}
for $p_{k}$ given in \eqref{eq:p}.
Then, Theorem~\ref{thm:opgr} implies that
\begin{align}\label{eq:6cond}
\rho_{k}^{m-1} p_{k}  + \frac{\ophi_{k}}{\tau}+\Phi =  \cC \hbox{ a.e. on } \spt(\rho_{k})
\end{align}
for some constant $C \in \R$.

\medskip

Moreover, if $\rho_{k} = 0$ a.e. on some set $A\subset\Om$, then Theorem~\ref{thm:opgr} and $\cS'(0+) = 0$ (from \eqref{eq:dsp}) imply that
\begin{align*}
\cC - \frac{\ophi_{k}}{\tau} -\Phi \leq 0 \ \ae\ {\rm{in}}\ A,
\end{align*}
and we conclude \eqref{eq:poptp}.

\medskip

Next, recall that Lemma~\ref{lem:lip_new} yields that $\ophi_{k}$ is Lipschitz continuous and thus $\left( \cC - \frac{\ophi_{k}}{\tau} -\Phi \right)_{+}$ is Lipschitz continuous as well. As Lemma~\ref{lem:lip_new} yields the Lipschitz continuity of $p_k$, and since this has a positive lower bound $\frac{m}{m-1}$ (from \eqref{eq:dsp} and \eqref{eq:1opg}), we conclude that $\rho^{m-1}_{k}$ is also Lipschitz continuous.
\end{proof}

\begin{lemma}\label{lem:conp}
Let $(\rho^\t)_{\t>0}, (p^\t)_{\t>0}$ stand for the piecewise constant interpolations given in \eqref{eq:int} and \eqref{eq:p}, respectively. Then  
$((\rho^\t)^{m-\frac12})_{\t>0}$ and $(p^\t)_{\t>0}$ are uniformly bounded in $L^2([0,T]; H^1(\Om))$.
\end{lemma}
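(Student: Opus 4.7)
The plan is to mimic the arguments of Propositions~\ref{prop:con} and \ref{prop:cog}, starting from the optimality identity \eqref{eq:poptp}. By Lemma~\ref{lem:poptp} both $\rho_k^{m-1}$ and $p_k$ are Lipschitz continuous on $\Om$, so $\rho_k^{m-1} p_k$ is Lipschitz as well. On the open set $\{\rho_k > 0\}$ (which coincides with $\{\rho_k^{m-1}>0\}$) the identity \eqref{eq:poptp} may be differentiated to give
\begin{equation*}
p_k\, \nabla \rho_k^{m-1} + \rho_k^{m-1}\, \nabla p_k \;=\; -\frac{\nabla \ophi_k}{\tau} - \nabla\Phi \qquad \text{a.e.},
\end{equation*}
whereas on $\{\rho_k = 0\}$ one has $\nabla \rho_k^{m-1} = 0$ a.e.~by Lipschitz regularity, and since this set is contained in the open set $\{\rho_k < 1\}$ on which $p_k \equiv \frac{m}{m-1}$ by \eqref{eq:dsp}, also $\nabla p_k = 0$ a.e.~there.

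The first key observation is the orthogonality $\nabla p_k \cdot \nabla \rho_k^{m-1} = 0$ a.e.~in $\Om$. From \eqref{eq:dsp}, the Lipschitz function $p_k$ is constant on the open sets $\{\rho_k < 1\}$ and $\{\rho_k > 1\}$, so $\nabla p_k$ is supported in $\{\rho_k = 1\}$. Applying the coarea formula to the Lipschitz function $\rho_k^{m-1}$, as in the derivation of \eqref{eq:con212},
\begin{equation*}
\int_\Om |\nabla p_k|\,|\nabla \rho_k^{m-1}|\dd x = \int_\R \int_{(\rho_k^{m-1})^{-1}(s)} |\nabla p_k| \dd \sH^{d-1}\, \dd s = 0,
\end{equation*}
since the inner integrand vanishes for $s\neq 1$ while $\{s=1\}$ has $\sL^1$-measure zero in the outer $\dd s$-integral.

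Squaring the optimality identity, using this orthogonality, and then multiplying by $\rho_k$ (the resulting equality persists a.e.~on all of $\Om$, as both sides vanish on $\{\rho_k=0\}$), one arrives at
\begin{equation*}
\rho_k p_k^2\, |\nabla \rho_k^{m-1}|^2 \;+\; \rho_k^{2m-1}\, |\nabla p_k|^2 \;=\; \rho_k \left| \frac{\nabla \ophi_k}{\tau} + \nabla\Phi \right|^2 \quad \text{a.e.~in } \Om.
\end{equation*}
The pointwise identity $\rho_k |\nabla \rho_k^{m-1}|^2 = \tfrac{(m-1)^2}{(m-1/2)^2}|\nabla \rho_k^{m-1/2}|^2$, the lower bound $p_k \ge \tfrac{m}{m-1}$, and $\rho_k^{2m-1}|\nabla p_k|^2 = |\nabla p_k|^2$ a.e.~(because $\nabla p_k$ is supported on $\{\rho_k = 1\}$), combined with $(a+b)^2\le 2a^2+2b^2$, then yield
\begin{equation*}
\frac{m^2}{(m-\tfrac12)^2}\,|\nabla \rho_k^{m-1/2}|^2 + |\nabla p_k|^2 \;\le\; 2\rho_k\frac{|\nabla \ophi_k|^2}{\tau^2} + 2\rho_k |\nabla\Phi|^2.
\end{equation*}
Integrating over $\Om$, multiplying by $\tau$, summing in $k\in\{1,\dots,N\}$, and invoking Lemma~\ref{lem:gfi} (to bound $\tfrac{1}{\tau}\sum_k W_2^2(\rho_k,\rho_{k-1})$) together with $\nabla\Phi\in L^\infty(\Om)$ delivers uniform bounds on $\|\nabla (\rho^\t)^{m-1/2}\|_{L^2(Q)}$ and $\|\nabla p^\t\|_{L^2(Q)}$.

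Promoting these gradient bounds to full $L^2([0,T];H^1(\Om))$ bounds is routine: for $p^\t$ it is immediate since $p^\t\in[\tfrac{m}{m-1},\tfrac{2m}{m-1}]$ by construction, and for $(\rho^\t)^{m-1/2}$ one combines the gradient estimate with Poincar\'e's inequality and the a priori control $\int_\Om \rho_k^{m-1/2}\dd x \le \bigl(\int_\Om \rho_k^m\bigr)^{(2m-1)/(2m)}\sL^d(\Om)^{1/(2m)}$ from H\"older, while $\int_\Om \rho_k^m$ is uniformly bounded via the monotonicity of $\cJ + \cF$ along \eqref{eq:step}. The main technical hurdle is the coarea-based justification of the orthogonality between $\nabla p_k$ and $\nabla \rho_k^{m-1}$ together with the careful patching of the identity across the free boundary $\partial\{\rho_k>0\}$, an issue absent in Sections~\ref{sec:gfl} and~\ref{sec:gfn}, where $\rho_k$ was strictly positive throughout $\Om$.
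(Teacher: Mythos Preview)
Your proof is correct and follows essentially the same route as the paper: differentiate the optimality identity \eqref{eq:poptp}, establish the orthogonality $\nabla p_k\cdot\nabla\rho_k^{m-1}=0$ via the coarea formula, square and weight by $\rho_k$ to obtain the pointwise inequality controlling $|\nabla\rho_k^{m-1/2}|^2+|\nabla p_k|^2$, and then integrate and invoke Lemma~\ref{lem:gfi}. The paper organizes the computation slightly differently (it multiplies by $\rho_k^{1/2}$ before squaring, leading to the quantity $\I_1$ in \eqref{eq:conp11}--\eqref{eq:conp15}), but this is purely cosmetic. The one substantive variation is your final step: to promote the gradient bound to a full $L^2([0,T];H^1(\Om))$ bound on $(\rho^\t)^{m-1/2}$, the paper invokes the $L^\beta(Q)$ estimate of Lemma~\ref{lem:apriori-2star}, whereas you use Poincar\'e--Wirtinger together with the uniform control of $\int_\Om\rho_k^m$ coming from the monotonicity of $\cJ+\cF$ along the scheme. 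Your alternative is self-contained and avoids appealing to Lemma~\ref{lem:apriori-2star}; the paper's route has the advantage of fitting into the general machinery already set up in Section~\ref{sec:MM_opt}.
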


\begin{proof}

From Lemma~\ref{lem:poptp}, it holds that
\begin{align}
\label{eq:conp11}
\I_1: =-\rho_{k}^{\frac{1}{2}}\nabla\Phi - \rho_{k}^{\frac{1}{2}} \frac{\nabla \ophi_{k}}{\t} = \rho_{k}^{\frac{1}{2}} \nabla( \rho_{k}^{m-1} p_{k}) \ \ \ae
\end{align}
As $p_k$ and $\rho_k^{m-1}$ are Lipschitz continuous from Lemma~\ref{lem:poptp}, we have
\begin{align}
\label{eq:conp12}
\I_1 = \rho_{k}^{\frac{1}{2}} p_{k} \nabla( \rho_{k}^{m-1} )  + \rho_{k}^{m - \frac{1}{2}} \nabla p_{k} \ \ \ae \hbox{ on } \spt(\rho_{k}).
\end{align}
Furthermore, since we have the Lipschitz continuity of $\rho_k^{m-1}$ and \eqref{eq:dsp}, we apply a parallel argument as in the proof of Proposition~\ref{prop:con} and conclude that
\begin{align}
\label{eq:conp13}
(\rho_{k}^{m - \frac{1}{2}} - 1) \nabla p_{k} = 0 \hbox{ and } \nabla( \rho_{k}^{m-1} ) \cdot \nabla p_{k} = 0 \ \ \ae \hbox{ on } \Om.
\end{align}
From \eqref{eq:conp12} and \eqref{eq:conp13}, we have that
\begin{align}
\label{eq:conp14}
\I_1^2 =  p_{k}^2 |\rho_k^{\frac{1}{2}} \nabla( \rho_{k}^{m-1} )|^2  + |\nabla p_{k}|^2 \ \ \ae \hbox{ on } \spt(\rho_{k}).
\end{align}
As $p_k \geq \frac{m}{m-1}$ a.e. in $\Om$ as in \eqref{eq:dsp}, we conclude that
\begin{align}
\label{eq:conp15}
\I_1^2 \geq \left( \frac{m}{m-1} \right)^2  |\rho_k^{\frac{1}{2}}\nabla( \rho_{k}^{m-1} )|^2 + |\nabla p_{k}|^2 \ \ \ae \hbox{ on } \spt(\rho_{k}).
\end{align}

\medskip

\eqref{eq:conp13} yields that $\nabla p_{k} = 0$ a.e. on $\spt(\rho_{k})^c = \{ \rho_{k} = 0 \}$. Furthermore, as $\rho_{k}^{m-1}$ is Lipschitz continuous (see Lemma~\ref{lem:poptp}), we have
\begin{align*}
\rho_k^{\frac{1}{2}}\nabla( \rho_{k}^{m-1} ) = 0 \ \ \ae \hbox{ on } \spt(\rho_{k})^c.
\end{align*}
Therefore, \eqref{eq:conp15} holds a.e. on $\Om$.

\medskip

On the other hand, applying Lemma~\ref{lem:gfi}, it holds that
\begin{align*}
\int_0^T \int_\Om \I_1^2 \dd x\dd t  \leq 2\left(\cJ(\rho_0) - \inf \cJ\right)+T\sL^d(\Om)\|\nabla\Phi\|_{L^\infty}.
\end{align*}
As $\rho_k^{\frac{1}{2}}\nabla( \rho_{k}^{m-1}) = \frac{m-1}{m - \frac{1}{2}} \nabla( \rho_{k}^{m-\frac{1}{2}})$ and 
since $(\rho^\t)_{\t>0}$ is uniformly bounded in $L^\b(Q)$ (with $\b>m-1/2$, see Lemma \ref{lem:apriori-2star})
we conclude that $((\rho^\t)^{m-\frac12})_{\t>0}$ and $(p^\t)_{\t>0}$ are uniformly bounded in $L^2([0,T]; H^1(\Om))$ (since  $(p^\t)_{\t>0}$ is also uniformly bounded) and therefore we conclude.
\end{proof}

As a consequence of Lemma~\ref{lem:conp} and Lemma~\ref{lem:al}, we have the following convergence.
\begin{corollary}\label{cor:por-por}
Let $(\rho^\t)_{\t>0}$ and $(p^\t)_{\t>0}$ be as in the previous lemma. Then, there exists $\rho\in L^{m}(Q)$ and $p\in L^2([0,T]; H^1(\Om))$ with $\rho^{m-\frac12} \in L^2([0,T]; H^1(\Om))$,  such that 
\begin{align*}
\rho^\t \to \rho\ {\rm{in}}\ L^m(Q),\ {\rm{as}}\  \t\da 0,
\end{align*}
and
\begin{align*}
p^\t \weakly p\ {\rm{in}}\ L^2([0,T]; H^1(\Om)), \ {\rm{as}}\  \t\da 0.
\end{align*}
along a subsequence. 
\end{corollary}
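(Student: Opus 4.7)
The plan is to mirror the strategy of Corollary~\ref{cor:ent-ent} and Corollary~\ref{cor:ent-por}, namely to combine the uniform Sobolev estimates of Lemma~\ref{lem:conp} with the quasi-Hölder time regularity of Lemma~\ref{lem:gfi} via the modified Aubin--Lions lemma (Lemma~\ref{lem:al}), adjusting only to the fact that here the gradient control is on $(\rho^\t)^{m-1/2}$ rather than on $\sqrt{\rho^\t}$.

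First I would extract the weak limit of the pressures: since $L^2([0,T];H^1(\Om))$ is reflexive and Lemma~\ref{lem:conp} provides a uniform bound on $(p^\t)_{\t>0}$ there, the Banach--Alaoglu theorem yields a subsequence and a $p\in L^2([0,T];H^1(\Om))$ with $p^\t\weakly p$. The $L^\infty(Q)$ bound (coming from the representation \eqref{eq:dsp}) is preserved in the limit.

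Next I would handle the densities. The uniform $L^2([0,T];H^1(\Om))$ bound on $((\rho^\t)^{m-1/2})_{\t>0}$ together with the estimate $W_2(\rho^\t(t),\rho^\t(s))\le C(t-s+\t)^{1/2}$ from Lemma~\ref{lem:gfi} fall exactly into the hypotheses of Lemma~\ref{lem:al} (the same application already used in the proofs of Corollary~\ref{cor:ent-ent} and Corollary~\ref{cor:ent-por}), yielding strong convergence of $(\rho^\t)^{m-1/2}$ in $L^2(Q)$ along a further subsequence to some nonnegative $u$; extracting a pointwise a.e.\ subsequence and setting $\rho:=u^{2/(2m-1)}$ identifies the limit as $\rho^{m-1/2}$, with $\rho^{m-1/2}\in L^2([0,T];H^1(\Om))$ by lower semicontinuity. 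This gives $\rho^\t\to\rho$ a.e. in $Q$.

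To upgrade the convergence to strong $L^m(Q)$, I invoke Lemma~\ref{lem:uni}: $(\rho^\t)_{\t>0}$ is uniformly bounded in $L^\b(Q)$ with $\b$ given in \eqref{eq:beta}, and one checks that $\b>m$ in all dimensions (indeed $\b=(2m-1)\tfrac{d}{d-2}>m$ for $d\ge 3$ since $m>1$, while $\b$ can be taken arbitrarily large for $d\le 2$). This uniform higher integrability together with the a.e.\ convergence allows Vitali's convergence theorem to upgrade the convergence to strong convergence in $L^m(Q)$, as desired. The absolute continuity $\rho\in\mathrm{AC}^2([0,T];\sP(\Om))$ follows from Lemma~\ref{lem:gfi} via a standard passage to the limit. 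No serious obstacle is expected here: the main technical point, as in the previous sections, is precisely the applicability of the modified Aubin--Lions lemma, but the bounds collected so far cover its hypotheses.
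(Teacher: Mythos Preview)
Your proposal is correct and follows essentially the same route as the paper, which simply invokes Lemma~\ref{lem:conp} and Lemma~\ref{lem:al}. One minor remark: as stated, Lemma~\ref{lem:al} outputs strong convergence of $\rho^\t$ in $L^\gamma(Q)$ directly (here with $n=m-\tfrac12$, $q=2$, and the pointwise-in-time bound $\|\rho^\t_t\|_{L^m(\Om)}\le C$ from the energy, one gets $\gamma=m$ immediately since $m<nq^*$), so your detour through convergence of $(\rho^\t)^{m-1/2}$ in $L^2(Q)$ followed by Vitali is not needed, though it is of course valid.
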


\begin{proof}[Proof of Theorem~\ref{thm:exip}]

Note that \eqref{eq:dsp} implies \eqref{eq:relp} for $(\rho^\t, p^\t)$. Then, a similar argument as the one in Lemma \ref{lem:rel} together with the convergence results from Corollary~\ref{cor:por-por} reveals that $(\rho,p)$ satisfies \eqref{eq:relp}.

\medskip

Furthermore, from Lemma~\ref{lem:poptp},
we can write that
\begin{align*}
\E^\t = \rho^\t \v^\t = - \rho^\t \nabla((\rho^\t)^{m-1} p^\t) -\nabla\Phi\rho^\t = - \left\{ (m-1) p^\t (\rho^\t)^{m-1} \nabla \rho^\t + (\rho^\t)^m \nabla p^\t \right\} - \nabla\Phi\rho^\t.
\end{align*}
Note that \eqref{eq:dsp} implies
\begin{align}
\label{eq:exip22}
( (\rho^\t)^m - 1) \nabla p^\t  = 0  \ \ \ae
\end{align}
From \eqref{eq:exip22}, we conclude that
\begin{align}
\nonumber(m-1)p^\t (\rho^\t)^{m-1} \nabla \rho^\t + (\rho^\t)^m \nabla p^\t &= (m-1)p^\t (\rho^\t)^{m-1} \nabla \rho^\t + \frac{1}{m}\left\{ (m-1) (\rho^\t)^m + 1 \right\} \nabla p^\t,\\
&= \frac{1}{m} \nabla \left( [(m-1)(\rho^\t)^m +1] p^\t \right).
\label{eq:exip23}
\end{align}
As described in Proposition~\ref{prop:e}, up to passing to a subsequence and using the weak-$\star$ convergence of $(p^\t)_{\t>0}$ in $L^\infty(Q)$ and strong convergence of $((\rho^\t)^m)_{\t>0}$ in $L^1(Q)$ from Corollary~\ref{cor:por-por}, we conclude that $(\E^\t)_{\t>0}$ converges to 
\begin{align*}
\E:= - \frac{1}{m} \nabla \left( [(m-1)\rho^m +1] p \right) - \nabla\Phi\rho 
\end{align*}
in $\sD'(Q;\R^d)$, as $\t\da 0$ where $(\rho,p)$ is given in Corollary~\ref{cor:por-por}. The rest of argument is parallel to the proof of Theorem~\ref{thm:exi}.

\medskip

A last remark is that if $\rho_0\in L^\infty(\Om)$, then clearly $\rho\in L^\infty(Q)$ and thus $\rho^{m} \in L^2([0,T]; H^1(\Om))$.
\end{proof}

In particular, \eqref{eq:exip} can be also represented in the form of a continuity equation, as we show below. Note that the condition \eqref{eq:0emodg} below is equivalent to $\beta \geq 2m$.

\begin{theorem}
\label{cor:emodp}
For $S$ given in \eqref{eq:slp}, let $\rho_0$ and $(\rho,p)$ be given in Theorem \ref{thm:exip}. 
If
\begin{align}
\label{eq:0emodg}
d \leq 4m, 
\end{align}
then $(\rho,p)$ also satisfies
\be\label{eq:emodp}
\left\{
\ba{ll}
\partial_t\rho - \nabla \cdot \left(\rho \left[\nabla \left( \rho^{m-1} p \right)+\nabla\Phi\right] \right) =0, & {\rm{in}}\ (0,T)\times\Om,\\
\rho(0,\cdot)=\rho_0, & {\rm{in}}\ \Om,\\
\rho [\nabla \left(\rho^{m-1} p \right)+\nabla\Phi] \cdot \n = 0 , & {\rm{in}}\ [0,T] \times \partial \Om,
\ea
\right.
\ee
in the sense of distribution. If in addition $\rho_0\in L^\infty(\Om)$ and $\Phi$ satisfies \eqref{as:Phi2}, we can drop \eqref{eq:0emodg} from the statement.
\end{theorem}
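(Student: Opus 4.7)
The strategy is to show that, as distributions on $Q$,
\begin{align*}
\Delta\!\left(\frac{[(m-1)\rho^m+1]p}{m}\right) \;=\; \nabla\cdot\bigl(\rho\,\nabla(\rho^{m-1}p)\bigr),
\end{align*}
so that the PDE \eqref{eq:exip} already established in Theorem \ref{thm:exip} coincides with the continuity equation \eqref{eq:emodp}. Once this identity is proven, the boundary condition in \eqref{eq:emodp} follows by the same integration-by-parts argument used to identify the no-flux condition in Theorem \ref{thm:exip}, and the initial condition is shared with \eqref{eq:exip}.

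The cornerstone is the observation that $\nabla p = 0$ almost everywhere on $\{\rho\neq 1\}$. Indeed, \eqref{eq:relp} tells us that $p$ is a.e.\ equal to the constant $\tfrac{m}{m-1}$ on $\{\rho<1\}$ and to the constant $\tfrac{2m}{m-1}$ on $\{\rho>1\}$. Since $p\in L^2([0,T];H^1(\Om))$, for a.e.\ $t$ the slice $p(t,\cdot)$ lies in $H^1(\Om)$, and the classical fact that the weak gradient of a Sobolev function vanishes a.e.\ on any level set (see for instance \cite[Theorem 4, Section 4.2.2]{EvaGar92}) gives $\nabla p(t,x)=0$ for a.e.\ $(t,x)\in \{\rho\neq 1\}$. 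Consequently $(\rho^m-1)\nabla p = 0$ a.e.\ in $Q$, which is exactly \eqref{eq:exip22} in its sharp form.

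With this identity at hand, I would compute (formally for the moment)
\begin{align*}
\nabla\!\left(\frac{[(m-1)\rho^m+1]p}{m}\right)
&= (m-1)\,p\,\rho^{m-1}\nabla\rho + \frac{(m-1)\rho^m+1}{m}\nabla p\\
&= (m-1)\,p\,\rho^{m-1}\nabla\rho + \rho^m\nabla p \;=\; \rho\,\nabla(\rho^{m-1}p),
\end{align*}
where in the second line I used $(\rho^m-1)\nabla p=0$ to replace $\tfrac{(m-1)\rho^m+1}{m}\nabla p$ by $\rho^m\nabla p$, and in the third line I simply expanded $\rho\,\nabla(\rho^{m-1}p)$ by the product rule. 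The taking of divergences then gives the desired equality of distributions.

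The main obstacle—and the reason the assumption \eqref{eq:0emodg} shows up—is to make the preceding chain-rule manipulation rigorous as an identity of $L^1(Q)$ vector fields. Two terms must be handled. First, $p\,\rho^{m-1}\nabla\rho = \tfrac{2}{2m-1}\,p\,\rho^{1/2}\nabla(\rho^{m-1/2})$ belongs to $L^1(Q)$ unconditionally: $p\in L^\infty(Q)$ by Theorem \ref{thm:exip}, $\rho^{1/2}\in L^2(Q)$ since $\rho$ is a probability density, and $\nabla(\rho^{m-1/2})\in L^2(Q)$ by the same theorem, so Cauchy–Schwarz applies. Second, $\rho^m\nabla p$ requires $\rho^m\in L^2(Q)$, i.e.\ $\rho\in L^{2m}(Q)$; invoking Lemma \ref{lem:uni} and the explicit formula \eqref{eq:beta} for $\beta$ (with $r=m$), a short calculation shows $\beta\geq 2m$ is equivalent to $d\leq 4m$, i.e.\ exactly \eqref{eq:0emodg} (and it is automatic when $d\in\{1,2\}$). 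Under this hypothesis, $\rho^m\nabla p\in L^1(Q)$, and the pointwise chain-rule identity above becomes an equality of $L^1$ vector fields, hence of distributions. In the bounded-data case $\rho_0\in L^\infty(\Om)$, Lemma \ref{lem:Linfty} (combined with assumption \eqref{as:Phi2}) gives $\rho\in L^\infty(Q)$, so $\rho^m\nabla p\in L^1(Q)$ holds trivially and \eqref{eq:0emodg} may be dropped, as claimed. Substituting the identity into \eqref{eq:exip} yields \eqref{eq:emodp} in the distributional sense.
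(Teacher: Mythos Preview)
Your proposal is correct and follows essentially the same approach as the paper: both arguments reduce the claim to showing that $\rho\,\nabla(\rho^{m-1}p)$ equals $\nabla\bigl([(m-1)\rho^m+1]\tfrac{p}{m}\bigr)$ as $L^1(Q)$ vector fields, using the key identity $(\rho^m-1)\nabla p=0$ (obtained from \eqref{eq:relp} and the vanishing of Sobolev gradients on level sets), and then verify the required integrability via $\rho^{1/2}\nabla(\rho^{m-1/2})\in L^1(Q)$ unconditionally and $\rho^m\nabla p\in L^1(Q)$ under $\beta\geq 2m\Leftrightarrow d\leq 4m$. The only cosmetic difference is the direction in which you run the chain-rule identity.
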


\begin{proof}
As $p \in L^2([0,T]; H^1(\Om))$ and $(\rho,p)$ satisfies \eqref{eq:relp} from Theorem~\ref{thm:exip}, we have
\begin{align}
\label{eq:emodp10}
\nabla p  = 0 \ \ \ae \hbox{ in } \{ \rho \neq 1 \}
\end{align}
and thus
\begin{align}
\label{eq:emodp11}
(\rho^m -1) \nabla p  = 0 \ \ \ae
\end{align}
From the direct computation using \eqref{eq:emodp11}, it holds that
\begin{align*}
\I_1:=\rho \nabla \left( \rho^{m-1} p \right) &= (\rho^m)^{\frac{1}{m}}  \nabla \left( (\rho^m)^{\frac{m-1}{m}} p \right) = \rho^m \nabla p + \frac{m-1}{m} p \nabla (\rho^m).
\end{align*}

\medskip

We claim that $\I_1 \in L^1(Q)$, which is enough for the representation \eqref{eq:emodp}. Recall from Theorem~\ref{thm:exip} and Lemma~\ref{lem:uni}, $\nabla p \in L^2(Q)$, $p \in L^\infty(Q)$ and $\rho\in L^{\beta}(Q)$ for $\beta$ given in \eqref{eq:beta}. Consider the first term $\rho^m \nabla p$. If $\rho_0 \in L^\infty(\Om)$, then $\rho \in L^\infty(Q)$ and thus $\rho^m \in L^2(Q)$. If \eqref{eq:0emodg} holds (which is automatically the case if $d=1,2$), then $\beta \geq 2m$ for $\beta$ given in \eqref{eq:beta} ($r=m$ in this case) and thus $\rho^m \in L^2(Q)$. Furthermore, as $\nabla \rho^{m - \frac12} \in L^2(Q)$ and
\begin{align*}
\nabla (\rho^m) = \frac{m}{m-\frac12} \rho^{\frac12} \nabla (\rho^{m - \frac12}),
\end{align*}
so the last term is also in $L^1(Q)$.

\medskip

Lastly, it is easy to see that 
\begin{align*}
\I_1 &= \frac{(m-1)\rho^m + 1}{m}  \nabla p + \frac{m-1}{m} p \nabla (\rho^m)\\
&= \nabla \left( [(m-1)\rho^m +1]  \frac{p}{m}\right).
\end{align*}

\end{proof}

\subsection{Porous medium type diffusion on $\{\rho<1\}$ and general diffusion on $\{\rho>1\}$}

In this subsection, we suppose that Assumption~\ref{as:gen}, Assumption \ref{as:intro_main1} and Assumption \ref{as:intro-19} hold true for some $r\geq1$, for some $m>1$ and a constants $\s_1,\s_2 >0.$
Note that $\cS$ can be any function satisfying the assumptions, and in particular in the case of $r=1$, $S$ behaves as the logarithmic entropy when $\rho>1.$

\medskip

Our main theorem from this section reads as:
\begin{theorem}
\label{thm:gmpg}
Suppose that Assumption~\ref{as:gen}, Assumption \ref{as:intro_main1} and Assumption \ref{as:intro-19} hold true for $m>1$ and $r\ge 1$ such that
\begin{align}
\label{eq:mr}
m <  r + \frac{\beta}{2}
\end{align} is fulfilled for $\beta$ given in \eqref{eq:beta}.
For $\rho_0 \in \sP(\Om)$ such that $\cJ(\rho_0) < +\infty$, there exists $\rho\in L^{\beta}(Q)$ and $p\in L^2([0,T]; H^1(\Om))\cap L^\infty(Q)$ such that $(\rho,p)$ is a weak solution of
\be\label{eq:1gmpg}
\left\{
\ba{ll}
\partial_t\rho - \Delta (L_S(\rho,p)) -\nabla\cdot(\nabla\Phi\rho) =0, & {\rm{in}}\ (0,T)\times\Om,\\
\rho(0,\cdot)=\rho_0, & {\rm{in}}\ \Om,\\
(\nabla (L_S(\rho,p) )+\nabla\Phi\rho ) \cdot \n = 0 , & {\rm{in}}\ [0,T] \times \partial \Om,
\ea
\right.
\ee
in the sense of distribution. Furthermore, $(\rho,p)$ satisfies
\begin{align}
\label{eq:2gm2}
\begin{cases}
p(t,x)=\cS'(1-) &\hbox{ {\rm{if}} } 0 \leq \rho(t,x) < 1,\\
p(t,x) \in [\cS'(1-),\cS'(1+)] &\hbox{ {\rm{if}} } \rho(t,x) = 1,\\
p(t,x)=\cS'(1+) &\hbox{ {\rm{if}} } \rho(t,x) > 1.
\end{cases}
\end{align}
Here, $L_S$ is given in \eqref{eq:ls}. In particular, 
\begin{align}
\label{eq:2gmpgg}
\rho^{m - \frac12} \in L^2([0,T]; H^1(\Om)) \hbox{ if } m \leq r \hbox{ and } \rho^{m-\frac12} \in L^q([0,T]; W^{1,q}(\Om)) \hbox{ if } r < m <  r + \frac{\beta}{2}
\end{align}
for $q \in (1,2)$ given in \eqref{eq:q}. If in addition $\rho_0\in L^\infty(\Om)$ and $\Phi$ satisfies \eqref{as:Phi2}, we can drop \eqref{eq:g2} and \eqref{eq:mr} from the statement and we obtain $\rho \in L^\infty(Q)$ and $\rho^m \in L^2([0,T];H^1(\Om))$.
\end{theorem}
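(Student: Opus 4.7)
The approach will follow the template established in Sections~\ref{sec:gfl}, \ref{sec:gfn} and \ref{sec:51}: discretize in time via the minimizing movements scheme~\eqref{eq:step}, derive first-order optimality conditions together with uniform Sobolev-type a priori estimates on the discrete densities and pressures, pass to the limit as $\tau\downarrow 0$ using an Aubin--Lions type compactness argument, and finally identify the limit PDE together with the pressure relation~\eqref{eq:2gm2}.

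First I will invoke Theorem~\ref{thm:opgr} (respectively Theorem~\ref{thm:opg} in the $L^\infty$ setting) together with Definition~\ref{def:p} to obtain, for each $k$, a Kantorovich potential $\ophi_k$ and a Lipschitz pressure $p_k\in[\cS'(1-),\cS'(1+)]$ such that, setting $f_k:=\cC-\ophi_k/\tau-\Phi$, one has $f_k=\cS'(\rho_k)$ on $\{\rho_k\ne 1\}$ and $f_k=p_k$ on $\{\rho_k=1\}$. Lemma~\ref{lem:lipg-gen} ensures that $\rho_k$ is continuous in $\Omega$ and supplies the gradient formula~\eqref{deriv:formula-gen}; a coarea argument as in~\eqref{eq:con212} then gives $\nabla p_k\cdot\nabla\rho_k=0$ a.e.

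Next, squaring $\nabla f_k$ and integrating against $\rho_k$ produces the master inequality
\[
\int_\Omega \rho_k|\nabla f_k|^2\,\dd x \ \le\ 2\int_\Omega \rho_k\,\frac{|\nabla\ophi_k|^2}{\tau^2}\,\dd x + 2\|\nabla\Phi\|_{L^\infty}^2,
\]
whose left-hand side equals $|\nabla p_k|^2$ on $\{\rho_k=1\}$ and $\rho_k\,\cS''(\rho_k)^2\,|\nabla\rho_k|^2$ on $\{\rho_k\ne 1\}$. On $\{\rho_k<1\}$ assumption~\eqref{eq:g1} bounds the latter below by $c|\nabla\rho_k^{m-1/2}|^2$, while on $\{\rho_k>1\}$ assumption~\eqref{eq:2as22} yields a bound by $c|\nabla\rho_k^{r-1/2}|^2$. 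Summing in $k$ and using the energy estimate of Lemma~\ref{lem:gfi} delivers uniform bounds on $p^\tau$ in $L^2([0,T];H^1(\Omega))\cap L^\infty(Q)$, on $\nabla(\rho^\tau)^{m-1/2}\one_{\{\rho^\tau<1\}}$ in $L^2(Q)$, and on $\nabla(\rho^\tau)^{r-1/2}\one_{\{\rho^\tau>1\}}$ in $L^2(Q)$. If $m\le r$, the elementary identity $\nabla\rho^{m-1/2}=\tfrac{2m-1}{2r-1}\,\rho^{m-r}\,\nabla\rho^{r-1/2}$ on $\{\rho>1\}$ gives a uniform $L^2(Q)$ bound on $\nabla(\rho^\tau)^{m-1/2}$. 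If $m>r$, the same identity combined with H\"older's inequality and the $L^\beta(Q)$ bound of Lemma~\ref{lem:uni} yields a uniform $L^q(Q)$ bound with $q=2\beta/(\beta+2(m-r))$; the condition $q>1$ is precisely hypothesis~\eqref{eq:mr}.

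Finally I will pass to the limit. The $W_2$-H\"older estimate~\eqref{eq:2gfi} together with the gradient bounds and Lemma~\ref{lem:al} deliver strong $L^s(Q)$ convergence of a subsequence $\rho^\tau\to\rho$ for suitable $s$ (in particular an exponent slightly above $r$, via interpolation with the $L^\beta$ bound), weak convergence $p^\tau\weakly p$ in $L^2([0,T];H^1(\Omega))$, and weak-$\star$ convergence in $L^\infty(Q)$. The pressure relation~\eqref{eq:2gm2} then follows exactly as in Lemma~\ref{lem:rel}, by passing to the limit in the products $(p^\tau-\cS'(1+))(\rho^\tau-1)_+=0$ and $(p^\tau-\cS'(1-))(\rho^\tau-1)_-=0$. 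Using the optimality conditions, the momentum $\E^\tau=\rho^\tau\v^\tau$ can be rewritten as $-\nabla(\rho^\tau\cS'(\rho^\tau)-\cS(\rho^\tau)+\cS(1))-\rho^\tau\nabla\Phi$ on $\{\rho^\tau\ne 1\}$ and as $-\nabla p^\tau-\rho^\tau\nabla\Phi$ on $\{\rho^\tau=1\}$, which collapses to $-\nabla L_\cS(\rho^\tau,p^\tau)-\rho^\tau\nabla\Phi$; together with the geodesic interpolation of Lemma~\ref{lem:coi} realizing the discrete continuity equation, and the strong $L^s$ convergence used to handle the nonlinear term $\rho^\tau\cS'(\rho^\tau)-\cS(\rho^\tau)$, we obtain~\eqref{eq:1gmpg}. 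The $L^\infty$ case is simpler: Lemma~\ref{lem:Linfty} provides a $\tau$-uniform $L^\infty$ bound on $\rho^\tau$ under~\eqref{as:Phi2}, which makes both the growth assumption~\eqref{eq:g2} and the hypothesis~\eqref{eq:mr} unnecessary and upgrades the bound to $(\rho^\tau)^m\in L^2([0,T];H^1(\Omega))$. The principal obstacle throughout is the case $m>r$, where the gradient control is only $L^q$ with $q<2$; the sharp choice of the intermediate exponent $m-1/2$, together with the improved summability of Lemma~\ref{lem:apriori-2star}, is precisely what allows the H\"older interpolation to be compatible with Aubin--Lions under~\eqref{eq:mr}.
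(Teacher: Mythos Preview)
Your derivation of the a priori estimates is essentially the paper's argument, stated more directly: the paper routes the optimality condition through an auxiliary splitting $S=S_a+S_b$, with $S_a$ a scaled power $\rho^l$ on each side of $\rho=1$ for a chosen $l\in(1,\beta)$ (Lemma~\ref{lem:poptpg}, Proposition~\ref{prop:conpg}, Lemmas~\ref{lem:sigpg}--\ref{lem:tt}), whereas you use Lemma~\ref{lem:lipg-gen} and the identity $|\nabla f_k|=S''(\rho_k)\,|\nabla\rho_k|$ on $\{\rho_k\neq1\}$ without any splitting. Both routes produce the same bounds, and your exponent $q=2\beta/(\beta+2(m-r))$ is exactly the paper's~\eqref{eq:q}.

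There is, however, a real gap in your limit passage. You assert that strong $L^s$ convergence of $\rho^\tau$ ``handles the nonlinear term $\rho^\tau S'(\rho^\tau)-S(\rho^\tau)$'', but the map $s\mapsto sS'(s)-S(s)$ has a jump of size $S'(1+)-S'(1-)>0$ at $s=1$; hence strong convergence of $\rho^\tau$ does \emph{not} yield convergence of this composition on the (possibly large) set $\{\rho=1\}$, and the indicator $\one_{\{\rho^\tau=1\}}$ appearing in $L_S(\rho^\tau,p^\tau)$ does not pass to the limit either. This is precisely what the paper's splitting is designed to cure: since $S_b\in C^1$, one obtains the identity~\eqref{eq:gmpg23},
\[
L_S(\rho^\tau,p^\tau)=\tfrac{1}{l}\bigl((l-1)(\rho^\tau)^l+1\bigr)\,p^\tau+\rho^\tau S_b'(\rho^\tau)-S_b(\rho^\tau)+S_b(1),
\]
in which every $\rho^\tau$-dependent factor is \emph{continuous} in $\rho^\tau$; the strong convergence of the $\rho^\tau$-pieces (in $L^s$ for some $s>r$, via the $L^\beta$ bound) can then be paired with the weak-$\star$ convergence of $p^\tau$. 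If you prefer to stay within your direct framework, the equivalent one-line fix is to observe that the discrete pressure relations force $L_S(\rho^\tau,p^\tau)-p^\tau$ to be a single \emph{continuous} function of $\rho^\tau$ alone (vanishing at $1$), so one may pass to the limit in the two summands separately and then recombine them into $L_S(\rho,p)$ using the already-established relation~\eqref{eq:2gm2}.
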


\begin{example} As a nonlinearity, one can consider for instance the following one.
For $m>r>1$, let $\cS : \Rpz \to \R$ given by
\begin{align*}
\cS(\rho):= \begin{cases}
\frac{\rho^m}{m-1}, &\hbox{ for } \rho \in [0,1],\\
\frac{\rho^r}{r-1} + \frac{1}{m-1} - \frac{1}{r-1}, &\hbox{ for } \rho\in(1,+\infty).
\end{cases}
\end{align*}
This clearly satisfies Assumption~\ref{as:gen} and Assumption \ref{as:intro_main1}, since 
\begin{align*}
\cS'(1-) = \frac{m}{m-1} < \frac{r}{r-1} = \cS'(1+).
\end{align*}
In this case, the operator $L_\cS(\rho,p)$ becomes
\begin{align*}
L_\cS(\rho,p)(x) =
\begin{cases}
\rho(x)^m + \frac{1}{m-1} &\hbox{ {\rm{if}} } 0 < \rho(x) < 1,\\
p(x) \in \left[ \frac{m}{m-1}, \frac{r}{r-1} \right] &\hbox{ {\rm{if}} } \rho(x) = 1,\\
\rho(x)^r + \frac{1}{r-1} &\hbox{ {\rm{if}} } \rho(x) >1.
\end{cases}
\end{align*}
\end{example}

First, using similar ideas as in Section~\ref{sec:gfn}, we choose a constant $l$ such that
\begin{align}\label{eq:l}
1 < l < \beta
\end{align}
for $\beta$ given in \eqref{eq:beta}
and split the function $S$ into $\Sa$ and $\Sb : \Rpz \to \R$ defined by
\begin{align}
\label{eq:sapg}
\Sa(\rho) := 
\begin{cases}
\frac{\cS'(1-)(\rho^l-1)}{l},  &\hbox{ for } \rho \leq 1, \\[5pt]
\frac{\cS'(1+)(\rho^l-1)}{l},   &\hbox{ for } \rho > 1, 
\end{cases}
\end{align}
and
\begin{align}
\label{eq:sbpg}
\Sb(\rho) := \cS(\rho) - \Sa(\rho).
\end{align}
Note that $\cS'(1+) > \cS'(1-)$. Then, as shown in Lemma~\ref{lem:sab}, we conclude that $\Sa$ is convex and continuous in $\Rpz$. Also, $\Sb$ is continuously differentiable and $\Sb'$ is locally Lipschitz continuous in $\Rpz$.

\medskip
\medskip

Let us recall the definition of $(\rho_k)_{k=1}^N$ and $(p_k)_{k=1}^N$ from \eqref{eq:step} and \eqref{eq:p}. Also, recall the definition of $\ophi_{k}$ given in Theorem~\ref{thm:opg}.

\begin{lemma}
\label{lem:poptpg}
For all $k \in \{1,\dots,N\}$, there exists $C\in\R$ such that
\begin{align}
\label{eq:poptpg}
\rho_{k}^{l-1} p_{k} + \Sb'(\rho_{k}) =\left( \cC - \frac{\ophi_{k}}{\tau} -\Phi \right)_{+} \ \ \ae
\end{align}
In particular, $p_{k}$ and $\rho_{k}^{m-1}$ are Lipschitz continuous in $\Om$. If in addition $m\in(1,2]$, then $\rho_{k}$ is locally Lipschitz continuous in $\spt(\rho_{k})$. 
\end{lemma}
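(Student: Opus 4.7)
The plan is to proceed in direct analogy with Lemma~\ref{lem:poptp} and Lemma~\ref{lem:og}, exploiting the decomposition $\cS=\Sa+\Sb$ from \eqref{eq:sapg}--\eqref{eq:sbpg}. The key observation is that the explicit form of $\Sa'$ together with the definition of $p_k$ in \eqref{eq:p} yields $\Sa'(\rho_k)=p_k\rho_k^{l-1}$ a.e.\ on $\{\rho_k\neq 1\}$, while an argument parallel to Lemma~\ref{lem:sab} shows that $\Sb'$ is continuous on $\R^+$ with $\Sb'(1)=0$.

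First I would apply Theorem~\ref{thm:opg} (or Theorem~\ref{thm:opgr} in the general-data case) to obtain $\cC\in\R$ and $\ophi_k\in\cK(\rho_k,\rho_{k-1})$ satisfying \eqref{eq:1opg}, then verify \eqref{eq:poptpg} region by region: on $\rho_k^{-1}(\Ro)$ it follows at once from $\cS'=\Sa'+\Sb'$ and the key observation above, noting that $\cC-\ophi_k/\tau-\Phi=\cS'(\rho_k)\geq\cS'(0+)= 0$ so the positive-part truncation is inactive; on $\{\rho_k=1\}$ the left-hand side collapses to $p_k=\cC-\ophi_k/\tau-\Phi\geq\cS'(1-)>0$; and on $\{\rho_k=0\}$ (only relevant when $\cS'(0+)$ is finite, i.e.\ $\cS'(0+)=0$ in the porous medium regime, by Lemma~\ref{lem:postg}) the choice $l>1$ forces both $\rho_k^{l-1}=0$ and $\Sb'(0+)=\cS'(0+)-\Sa'(0+)=0$, while \eqref{eq:1opg} gives $\cC-\ophi_k/\tau-\Phi\leq\cS'(0+)=0$, so both sides vanish under the positive-part truncation.

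For the regularity statements, Lipschitz continuity of $\ophi_k$ and of $p_k$ is immediate from Lemma~\ref{lem:lip_new} and \eqref{p:compact}; since $\cS'$ is strictly increasing and superlinear and $(\cC-\ophi_k/\tau-\Phi)_+$ is bounded on the compact $\Om$, \eqref{eq:poptpg} forces $\rho_k\in L^\infty(\Om)$ (with a bound depending on $\tau$). Then Lemma~\ref{lem:lipg-gen}(iii) applied to $|\nabla\rho_k^{m-1}|=(m-1)\rho_k^{m-2}|\nabla f_k|/\cS''(\rho_k)$, with $f_k:=\cC-\ophi_k/\tau-\Phi$, combined with Assumption~\ref{as:intro_main1} on $\{\rho_k<1\}$ and \eqref{eq:2as22} on $\{\rho_k>1\}$, bounds the derivative by $(m-1)\sigma_2|\nabla f_k|$ and $(m-1)\sigma_1\|\rho_k\|_{L^\infty}^{m-r}|\nabla f_k|$ respectively, both uniformly on $\Om$. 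The local Lipschitz continuity of $\rho_k$ on $\spt(\rho_k)$ for $m\in(1,2]$ then follows from the local Lipschitz continuity of $t\mapsto t^{1/(m-1)}$ on $(0,+\infty)$ for such $m$.

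The main obstacle I anticipate is the correct handling of $\{\rho_k=0\}$, where the identity only holds thanks to the positive-part truncation and the cancellation $\Sa'(0+)=\Sb'(0+)=0$ forced by $l>1$; a secondary technical point is the Lipschitz estimate for $\rho_k^{m-1}$ on $\{\rho_k>1\}$ when $m>r$, where the self-improving $L^\infty$ bound on $\rho_k$ extracted from \eqref{eq:poptpg} itself is needed to absorb the factor $\rho_k^{m-r}$.
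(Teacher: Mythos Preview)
Your proposal is correct and follows essentially the same approach as the paper: the identity \eqref{eq:poptpg} is verified region by region exactly as in Lemma~\ref{lem:poptp} (the paper simply says this step ``follows from the parallel argument'' there), and the Lipschitz bound on $\rho_k^{m-1}$ is obtained from $|\nabla\rho_k^{m-1}|=(m-1)\rho_k^{m-2}|\nabla f_k|/\cS''(\rho_k)$ (Lemma~\ref{lem:lipg-gen}(iii)) combined with \eqref{eq:g1} and \eqref{eq:2as22}, matching the paper line by line. The only cosmetic difference is that you extract the $L^\infty$ bound on $\rho_k$ from \eqref{eq:poptpg} via superlinearity of $\cS'$, whereas the paper takes it from the continuity of $\rho_k$ on the compact $\Om$ established in Lemma~\ref{lem:lipg-gen}(ii); both routes are valid.
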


\begin{proof}

First, \eqref{eq:poptpg} follows from the parallel argument in the proof of Lemma~\ref{lem:poptp}. 

Notice that $\ophi_k$ and $p_k$ are Lipschitz continuous (cf. Lemma~\ref{lem:lip_new}) and $f_k := \cC - \frac{\ophi_{k}}{\tau}-\Phi$ are Lipschitz continuous. From \eqref{deriv:formula-gen} in Lemma~\ref{lem:lipg-gen}, we have that
\begin{align*}
|\nabla (\rho_{k})^{m-1} | = (m-1) \rho_{k}^{m-2} |  \nabla \rho_{k}| = (m-1) \rho_{k}^{m-2} \frac{ |\nabla f_k| }{S''(\rho_k)}  \ \ae \hbox{ in } \rho_{k}^{-1}(\Ro).
\end{align*}

On the one hand, this, together with \eqref{eq:g1} from Assumption \ref{as:intro_main1} further implies
\begin{align*}
|\nabla (\rho_{k})^{m-1} | &\leq  \s_2(m-1) |\nabla f_k| \ \ \ae \hbox{ in } \{ x \in \Om : 0< \rho_k < 1\}.
\end{align*}
On the other hand, \eqref{eq:2as22} from Assumption \ref{as:intro-19} yields
\begin{align*}
 |\nabla (\rho_{k})^{m-1} | &\leq  \s_1(m-1) |\nabla f_k| \rho_k^{m-r} \leq \s_1(m-1) |\nabla f_k|    \max\{\|\rho_{k}\|_{L^\infty(\Om)}^{m-r}, 1 \}. \ \ \ae \hbox{ in } \{ x \in \Om : \rho_k > 1\}.
\end{align*}
Therefore, we conclude that $\rho_{k}^{m-1}$ is Lipschitz continuous in $\Om$.

\medskip

Lastly, 
the following identity
\begin{align*}
|  \nabla \rho_{k}| = \frac{\rho_{k}^{2-m}}{(m-1)} |\nabla (\rho_{k})^{m-1} | \ \ \ae \hbox{ in } \spt(\rho_{k})
\end{align*}
shows that $\rho_{k}$ is locally Lipschitz continuous in $\spt(\rho_{k})$, provided $m\in(1,2]$. This in particular is also a consequence of Lemma \ref{lem:lipg-gen}(iv).
\end{proof}

Proposition \ref{prop:conpg} below contains all the needed estimates and the compactness result on the sequence $(\rho^\t,p^\t)_{\t>0}$ that are necessary to pass to the limit as $\t\da 0$ and prove the main theorem of this section, i.e. Theorem \ref{thm:gmpg}. The proof of this proposition requires some intermediate results that are provided below in  Lemmas \ref{lem:sigpg}, \ref{lem:lbd} and \ref{lem:tt}.

\begin{proposition} \label{prop:conpg}
Let $(\rho^\t)_{\t>0}, (p^\t)_{\t>0}$ stand for the piecewise constant interpolations given in \eqref{eq:int} and \eqref{eq:p}, respectively. Then, $(p^\t)_{\t>0}$ is uniformly bounded in $L^2([0,T]; H^1(\Om))$.
\begin{itemize}
\item[(1)] If $r \geq m$, then $((\rho^\t)^{m-\frac12})_{\t>0}$ is uniformly bounded in $L^2([0,T]; H^1(\Om))$. 
\item[(2)] If $r < m < r + \frac{\beta}{2}$, then $((\rho^\t)^{m-\frac12})_{\t>0}$ is uniformly bounded in $L^{q}([0,T]; W^{1,q}(\Om))$ for some $q \in (1,2)$.
\item[(3)] If in addition $\rho_0\in L^\infty(\Om)$ and $\Phi$ satisfies \eqref{as:Phi2}, then $((\rho^\t)^{m})_{\t>0}$ is also uniformly bounded in $L^2([0,T];H^1(\Om))$ for any $m >1$ and $r\geq 1$.
\end{itemize}
\end{proposition}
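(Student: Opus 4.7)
The plan is to proceed exactly as in Proposition~\ref{prop:cog}, differentiating the optimality identity from Lemma~\ref{lem:poptpg} and exploiting orthogonality. Writing $f_k := \cC - \ophi_k/\t - \Phi$, Lemma~\ref{lem:poptpg} reads $\rho_k^{l-1} p_k + \Sb'(\rho_k) = f_k^+$, and differentiating a.e.\ gives
\[
\bigl[(l-1)p_k\rho_k^{l-2} + \Sb''(\rho_k)\bigr]\nabla \rho_k + \rho_k^{l-1}\nabla p_k \;=\; \nabla f_k^+.
\]
Because $\rho_k$ is continuous (Lemma~\ref{lem:lipg-gen}), the set $\{\rho_k\neq 1\}$ is open and $p_k$ is constant on each component, while on $\{\rho_k=1\}$ the Lipschitz function $\rho_k^{m-1}$ is constant and hence $\nabla\rho_k=0$ a.e.; the coarea argument used in the proof of Proposition~\ref{prop:con} then yields $\nabla p_k\cdot\nabla\rho_k = 0$ a.e. Squaring the identity, multiplying by $\rho_k$, using $|\nabla f_k^+|^2\le 2|\nabla\ophi_k|^2/\t^2+2|\nabla\Phi|^2$, and invoking the energy bound $\t\sum_k\int\rho_k|\nabla\ophi_k|^2/\t^2\le C$ from Lemma~\ref{lem:gfi}, one gets $\iint_Q\rho^\t|\nabla f^\t|^2\le C$, where $f^\t$ denotes the piecewise constant interpolation of the $f_k$'s.

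The first orthogonal piece $\rho_k\cdot\rho_k^{2l-2}|\nabla p_k|^2$ equals $|\nabla p_k|^2$ a.e., since $\nabla p^\t$ is supported in $\{\rho^\t=1\}$; this delivers the uniform $L^2([0,T];H^1(\Om))$ bound on $(p^\t)_{\t>0}$. For the density piece, the splitting \eqref{eq:sapg}--\eqref{eq:sbpg} together with $p_k=\cS'(1\pm)$ on the two open phases gives $(l-1)p_k\rho_k^{l-2}+\Sb''(\rho_k)=\Sa''(\rho_k)+\Sb''(\rho_k)=\cS''(\rho_k)$ a.e.\ on $\{\rho_k\neq 1\}$. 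Inserting \eqref{eq:g1} on $\{\rho_k<1\}$ and \eqref{eq:2as22} on $\{\rho_k>1\}$, and using the chain-rule identity $|\nabla \rho^{s-1/2}|^2 = (s-1/2)^2\rho^{2s-3}|\nabla\rho|^2$, we obtain separately
\[
\iint_Q\one_{\{\rho^\t<1\}}|\nabla(\rho^\t)^{m-1/2}|^2 \le C \qquad\text{and}\qquad \iint_Q\one_{\{\rho^\t>1\}}|\nabla(\rho^\t)^{r-1/2}|^2 \le C.
\]

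If $r\ge m$, the identity $\nabla(\rho^\t)^{m-1/2}=\tfrac{m-1/2}{r-1/2}(\rho^\t)^{m-r}\nabla(\rho^\t)^{r-1/2}$ on $\spt(\rho^\t)$ and $(\rho^\t)^{m-r}\le 1$ on $\{\rho^\t\ge 1\}$ upgrade the second bound to an $L^2$ estimate for $\nabla(\rho^\t)^{m-1/2}$ on $\{\rho^\t>1\}$, proving (1). If $r<m$, the same identity combined with H\"older's inequality at exponents $(2/q,2/(2-q))$ yields
\[
\iint_{\{\rho^\t>1\}}|\nabla(\rho^\t)^{m-1/2}|^q \le C\Bigl(\iint(\rho^\t)^{2q(m-r)/(2-q)}\Bigr)^{(2-q)/2}\Bigl(\iint|\nabla(\rho^\t)^{r-1/2}|^2\Bigr)^{q/2},
\]
and the choice $q:=2\beta/[\beta+2(m-r)]$ makes the first exponent equal to $\beta$; the constraint $m<r+\beta/2$ is exactly $q>1$, and Lemma~\ref{lem:uni} controls the $L^\beta$ factor, giving (2). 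In case (3), the bound $\rho^\t\le L$ from Lemma~\ref{lem:Linfty} makes the weights $\rho^\t$ and $(\rho^\t)^{2m-2r+1}$ uniformly bounded, so the chain-rule identities $|\nabla(\rho^\t)^m|^2 = \tfrac{m^2}{(m-1/2)^2}\rho^\t|\nabla(\rho^\t)^{m-1/2}|^2$ on $\{\rho^\t<1\}$ and $|\nabla(\rho^\t)^m|^2 = \tfrac{m^2}{(r-1/2)^2}(\rho^\t)^{2m-2r+1}|\nabla(\rho^\t)^{r-1/2}|^2$ on $\{\rho^\t>1\}$ promote the local estimates to a global $L^2$ bound on $\nabla(\rho^\t)^m$. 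The main difficulty is case~(2): when $m>r$ the phase-wise gradient estimates live on incompatible powers of $\rho$, and closing the gap between $\nabla(\rho^\t)^{r-1/2}$ and $\nabla(\rho^\t)^{m-1/2}$ requires the interpolation made possible by the improved $L^\beta$ integrability from Lemma~\ref{lem:uni}, with the sharp feasibility threshold being precisely \eqref{eq:mr}.
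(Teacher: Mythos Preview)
Your proof is correct and mirrors the paper's strategy: differentiate the optimality identity from Lemma~\ref{lem:poptpg}, square and weight by $\rho_k$, exploit the orthogonality $\nabla p_k\cdot\nabla\rho_k=0$ together with the identification $(l-1)p_k\rho_k^{l-2}+\Sb''(\rho_k)=\cS''(\rho_k)$ on $\{\rho_k\neq1\}$, and insert the growth bounds \eqref{eq:g1} and \eqref{eq:2as22}; the paper organizes the three cases into Lemmas~\ref{lem:sigpg}, \ref{lem:lbd} and \ref{lem:tt} and carries all computations through the Lipschitz quantity $\nabla(\rho_k^{m-1})$ rather than $\nabla\rho_k$ (which is only guaranteed to exist a.e.\ on the open set $\{\rho_k>0\}$), a point you should make explicit. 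One small difference worth noting: in case~(2) you perform the H\"older step directly in space--time using the uniform $L^\beta(Q)$ bound of Lemma~\ref{lem:uni}, whereas the paper does it at fixed $k$ in Lemma~\ref{lem:lbd} and then sums --- your version is slightly more streamlined.
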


\begin{proof}
From Lemma~\ref{lem:poptpg}, it holds that
\begin{align*}
\I_1 := -\rho_{k}^{\frac{1}{2}} \frac{\nabla \ophi_{k}}{\t} - \rho_{k}^{\frac{1}{2}}\nabla\Phi = \rho_{k}^{\frac{1}{2}} \nabla( \rho_{k}^{l-1} p_{k} + \Sb'(\rho_{k})) \ \ \ae
\end{align*}
We follow the very same steps and in the proof of Lemma~\ref{lem:poptp} (where we also use \eqref{eq:conp12} and \eqref{eq:conp13}). Therefore, we have
\begin{align}
\label{eq:conpg11}
\I_1 =  \frac{l-1}{m-1} \rho_{k}^{l - m + \frac{1}{2}} p_{k} \nabla( \rho_{k}^{m-1} )  + \rho_k^{l-\frac12} \nabla p_{k} + \rho_{k}^{\frac{1}{2}} \nabla (\Sb'(\rho_{k})) \ \ \ae \hbox{ on } \spt(\rho_{k}).
\end{align}
Note that
\begin{align}
\label{eq:conpg12}
\rho_{k}^{\frac{1}{2}} \nabla (\Sb'(\rho_{k})) = \frac{1}{m-1} \rho_{k}^{\frac{5}{2} - m } \Sb''(\rho_{k}) \nabla( \rho_{k}^{m-1} ) \ \ \ae \hbox{ on } \spt(\rho_{k}).
\end{align}
From \eqref{eq:conpg11} and \eqref{eq:conpg12}, it holds that
\begin{align*}
\I_1 = \frac{1}{(m-1)\rho_k^{m-2}} \left( (l-1) \rho_k^{l-2} p_{k} + \Sb''(\rho_{k}) \right) \rho_k^{\frac{1}{2}} \nabla( \rho_{k}^{m-1} )  + \rho_k^{l-\frac12} \nabla p_{k} \ \ \ae \hbox{ on } \spt(\rho_{k}).
\end{align*}
We can apply \eqref{eq:conp13} and conclude (since $\nabla p_k=0$ a.e. in $\{\rho_k\neq1\}$) that
\begin{align*}
\I_1^2 =  \frac{1}{(m-1)^2\rho_k^{2m-4}} \left( (l-1) \rho_k^{l-2} p_{k} + \Sb''(\rho_{k}) \right)^2 \rho_k |\nabla( \rho_{k}^{m - 1} )|^2  + | \nabla p_{k}|^2 \ \ \ae \hbox{ on } \spt(\rho_{k}).
\end{align*}

(1) If $r \geq m$, then Lemma~\ref{lem:sigpg} below implies 
\begin{align*}
\I_1^2 \geq \frac{\s_3^2}{(m-1)^2} |\nabla( \rho_{k}^{m-\frac12} )|^2  +  | \nabla p_{k}|^2 \ \ \ae \hbox{ on } \spt(\rho_{k}).
\end{align*}
for $\s_3$ given in \eqref{eq:s3}. By the parallel argument in Lemma~\ref{lem:conp}, we conclude the uniform bound in $L^2([0,T]; H^1(\Om))$. 

\medskip

(2) If $r < m < r + \frac{\beta}{2}$, then Lemma~\ref{lem:lbd} below yields the uniform bound of  $(\nabla(\rho^\t)^{m-\frac12})_{\t>0}$ in $L^q(Q)$ for $q$ given in \eqref{eq:q}. On the other hand, as $2r-1 \leq \beta$, it holds that
\begin{align*}
\left(m-\frac12\right) q = \frac{m-\frac12}{\frac{m-r}{\beta} + \frac12} = \beta \frac{2m-1}{2m - 2r + \beta} \leq \beta,
\end{align*}
As $\rho^\t$ is uniformly bounded in $L^\beta(Q)$ from Lemma~\ref{lem:uni}, $(\rho^\t)^{m-\frac12}$ is uniformly bounded in $L^q(Q)$.

\medskip

(3) 
From Lemma~\ref{lem:tt}, we conclude that
\begin{align*}
\I_1^2 \geq \frac{\s_4^2}{(m-1)^2} |\nabla(\rho_k^{m})|^2 + |\nabla p_{k}|^2 \ \ \ae \hbox{ on } \spt(\rho_{k}). 
\end{align*}
The same argument as before yields that $((\rho^\t)^{m})_{\t>0}$ is uniformly bounded in $L^2([0,T]; H^1(\Om))$.

\end{proof}

\begin{lemma}\label{lem:sigpg} 
Let us suppose that we are in the setting of Proposition \ref{prop:conpg}. If $r \geq m$, it holds that
\begin{align}
\label{eq:sigpg}
\left| \frac{1}{\rho_k^{m-2}} \left( (l-1) \rho_k^{l-2} p_{k} + \Sb''(\rho_{k}) \right)  \right| \rho_{k}^{\frac12} |\nabla( \rho_{k}^{m - 1} )| \geq \s_3|\nabla(\rho_k^{m-1/2})|,
\end{align}
where 
\begin{align}
\label{eq:s3}
\s_3:=\frac{m-1}{m-\frac12}\min\left\{\frac{1}{\s_1},\frac{1}{\s_2}\right\}. 
\end{align}
\end{lemma}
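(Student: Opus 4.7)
My plan is to reduce \eqref{eq:sigpg} to a pointwise lower bound on $\cS''(\rho_k)$, exploiting the splitting $\cS = \Sa + \Sb$ from \eqref{eq:sapg}--\eqref{eq:sbpg} together with the explicit values of $p_k$ on each phase supplied by the pressure construction (Definition~\ref{def:p}) as used in Lemma~\ref{lem:og}. First I will rewrite the right-hand side via the chain-rule identity
$$\nabla(\rho_k^{m-1/2}) = \frac{m-\frac12}{m-1}\,\rho_k^{1/2}\,\nabla(\rho_k^{m-1}),$$
and cancel the common factor $\rho_k^{1/2}|\nabla(\rho_k^{m-1})|$ together with the constant $(m-\frac12)/(m-1)$ built into $\s_3$ in \eqref{eq:s3}. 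This reduces \eqref{eq:sigpg} to the pointwise inequality
$$\bigl|(l-1)\rho_k^{l-2}p_k + \Sb''(\rho_k)\bigr| \;\geq\; \min\{1/\s_1,\, 1/\s_2\}\,\rho_k^{m-2} \qquad \text{a.e. on } \{\nabla(\rho_k^{m-1})\neq 0\}.$$

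Next I will dispose of the set $\{\rho_k=1\}$: by Lemma~\ref{lem:poptpg} the function $\rho_k^{m-1}$ is Lipschitz on $\Om$, so the classical fact that gradients of Lipschitz functions vanish a.e.\ on their level sets (already invoked in Lemma~\ref{lem:sig} via \cite[Theorem 4.(iv), Section 4.2.2]{EvaGar92}) gives $\nabla(\rho_k^{m-1})=0$ a.e.\ on $\{\rho_k=1\}$, making \eqref{eq:sigpg} trivial there. It therefore remains to prove the reduced inequality a.e.\ on $\{\rho_k\neq 1\}$. On this set, differentiating \eqref{eq:sapg} yields $\Sa''(\rho)=(l-1)\cS'(1-)\rho^{l-2}$ for $\rho<1$ and $\Sa''(\rho)=(l-1)\cS'(1+)\rho^{l-2}$ for $\rho>1$, while the construction of $p_k$ together with the optimality conditions (exactly as in Lemma~\ref{lem:og}) forces $p_k=\cS'(1-)$ on $\{\rho_k<1\}$ and $p_k=\cS'(1+)$ on $\{\rho_k>1\}$. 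Hence $(l-1)\rho_k^{l-2}p_k = \Sa''(\rho_k)$ a.e.\ on $\{\rho_k\neq 1\}$, and adding $\Sb''(\rho_k)$ to both sides produces the crucial identity
$$(l-1)\rho_k^{l-2}p_k + \Sb''(\rho_k) = \cS''(\rho_k) \qquad \text{a.e. on } \{\rho_k\neq 1\}.$$

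Finally I will invoke the hypotheses on $\cS''$ to bound this quantity from below by $\min\{1/\s_1,1/\s_2\}\rho_k^{m-2}$. On $\{\rho_k<1\}$, Assumption~\ref{as:intro_main1} immediately gives $\cS''(\rho_k) > \rho_k^{m-2}/\s_2$. On $\{\rho_k>1\}$, Assumption~\ref{as:intro-19} gives $\cS''(\rho_k) \geq \rho_k^{r-2}/\s_1$, and since $r\geq m$ and $\rho_k\geq 1$ imply $\rho_k^{r-2}\geq \rho_k^{m-2}$, I deduce $\cS''(\rho_k) \geq \rho_k^{m-2}/\s_1$. Taking the minimum across the two phases yields the claimed constant $\min\{1/\s_1, 1/\s_2\}$ and closes the argument.

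The only slightly delicate ingredient is the algebraic identification $(l-1)\rho_k^{l-2}p_k + \Sb''(\rho_k) = \cS''(\rho_k)$ on $\{\rho_k\neq 1\}$, which is exactly the point for which the splitting \eqref{eq:sapg}--\eqref{eq:sbpg} was engineered to absorb the pressure contribution; the inequality on the level set $\{\rho_k=1\}$ and the application of the growth hypotheses are then routine. Note that the condition $r\geq m$ is used only in the second phase and cannot be relaxed by this direct argument, which is precisely why the complementary regime $r<m$ will require the different estimate of Lemma~\ref{lem:lbd}.
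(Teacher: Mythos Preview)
Your proof is correct and follows essentially the same approach as the paper: you establish the key identity $(l-1)\rho_k^{l-2}p_k+\Sb''(\rho_k)=\cS''(\rho_k)$ a.e.\ on $\{\rho_k\neq 1\}$ via the splitting and the phase values of $p_k$, apply the growth bounds \eqref{eq:g1} and \eqref{eq:2as22} (using $r\ge m$ to compare $\rho_k^{r-2}$ with $\rho_k^{m-2}$ on $\{\rho_k>1\}$), and handle $\{\rho_k=1\}$ by the vanishing of $\nabla(\rho_k^{m-1})$ on level sets of the Lipschitz function $\rho_k^{m-1}$. The only cosmetic difference is the order of presentation; the paper first isolates the pointwise lower bound \eqref{eq:1sigpg} and then converts it to \eqref{eq:sigpg} via the chain-rule identity $\rho_k^{1/2}\nabla(\rho_k^{m-1})=\frac{m-1}{m-1/2}\nabla(\rho_k^{m-1/2})$, whereas you reduce first and bound afterwards.
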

\begin{proof}

We claim that
\begin{align}
\label{eq:1sigpg}
\left| \frac{1}{\rho_k^{m-2}} \left( (l-1) \rho_k^{l-2} p_{k} + \Sb''(\rho_{k}) \right)  \right| \geq \min\left\{\frac{1}{\s_1},\frac{1}{\s_2}\right\} \hbox{ in } \{ \rho_k \neq 1 \}.
\end{align}
Recall that 
\begin{align*}
\Sa''(\rho_k) =
\begin{cases}
(l-1) S'(1-) \rho_k^{l-2} \hbox{ if } \rho_k <1,\\
(l-1) S'(1+) \rho_k^{l-2} \hbox{ if } \rho_k >1,\\ 
\end{cases}
\end{align*}
and thus by the definition of $p_k$ (see \eqref{eq:p}) we have
\begin{align}
\label{eq:sigpg13}
(l-1) \rho_k^{l-2} p_{k} + \Sb''(\rho_{k}) = \Sa''(\rho_{k}) + \Sb''(\rho_{k}) = S''(\rho_k) \ \ \ae \hbox{ in } \{ \rho_k \neq 1 \}.
\end{align}
Thus, \eqref{eq:g1} implies that
\begin{align}
\label{eq:sigpg31}
\frac{S''(\rho_k)}{\rho_{k}^{m-2}} \geq  \frac{1 }{\s_2 } \ \ \ae \hbox{ in } \{ 0< \rho_k < 1 \}.
\end{align}
Furthermore, as $r \geq m$, \eqref{eq:2as22} implies
\begin{align}
\label{eq:sigpg32}
\frac{S''(\rho_k)}{\rho_{k}^{m-2}}  \geq  \frac{\rho_{k}^{r - m} }{\s_1} \geq \frac{1 }{\s_1} \ \ \ae \hbox{ in } \{ \rho_k > 1 \}.
\end{align}
and we conclude \eqref{eq:1sigpg}.

\medskip

Recall that $\rho_{k}^{m - 1}$ is Lipschitz continuous from Lemma~\ref{lem:poptpg}. Thus, we have
\begin{align}
\label{eq:sigpg41}
\nabla  (\rho_{k}^{m - 1}) = 0  \ \ \ae \hbox{ in } \{ \rho_k = 1\}
\end{align}
(see for instance \cite[Theorem 4(iv), Section 4.2.2]{EvaGar92}). As $\rho_k^{\frac12} \nabla (\rho_k^{m-1}) = \frac{m-1}{m - \frac12} \nabla (\rho_k^{m-\frac12})$, \eqref{eq:sigpg} follows from \eqref{eq:1sigpg} and \eqref{eq:sigpg41}.
\end{proof}

\begin{lemma}\label{lem:lbd}
Let us suppose that we are in the setting of Proposition \ref{prop:conpg}. If $r < m < r + \frac{\beta}{2}$, then 
\begin{align}
\label{eq:lbd}
\left\| \frac{1}{\rho_k^{m-2}} \left( (l-1) \rho_k^{l-2} p_{k} + \Sb''(\rho_{k}) \right) |\nabla( \rho_{k}^{m - \frac{1}{2}} )| \right\|_{L^2(\Om)} \geq  C \| \nabla (\rho_{k}^{m - \frac{1}{2}}) \|_{L^q(\Om)}
\end{align}
for some $q \in (1,2)$ and a constant $C>0$.
\end{lemma}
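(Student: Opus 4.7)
The plan is to show that the coefficient in the left-hand side of \eqref{eq:lbd}, namely $A := \frac{1}{\rho_k^{m-2}}\bigl((l-1)\rho_k^{l-2}p_k + \Sb''(\rho_k)\bigr)$, coincides with $S''(\rho_k)/\rho_k^{m-2}$ on $\{\rho_k \neq 1\}$, exactly as in \eqref{eq:sigpg13}. Assumptions \ref{as:intro_main1} and \ref{as:intro-19} then translate into pointwise lower bounds:
$$A \geq \frac{1}{\s_2} \text{ a.e. on } \{\rho_k < 1\} \qquad \text{and} \qquad A \geq \frac{\rho_k^{r-m}}{\s_1} \text{ a.e. on } \{\rho_k > 1\}.$$
On $\{\rho_k = 1\}$ one has $\nabla \rho_k^{m-1/2}=0$ a.e. by the argument in \eqref{eq:sigpg41}, so that region contributes nothing to either side.

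On the subcritical region $\{\rho_k < 1\}$ the coefficient $A$ is uniformly bounded away from zero. I would simply use $1 \leq \s_2 A$ and Hölder's inequality with exponents $2/q$ and $2/(2-q)$ to obtain, for any $q \in (1,2)$,
$$\int_{\{\rho_k<1\}} |\nabla \rho_k^{m-\frac12}|^q \dd x \leq \s_2^q\, |\Om|^{\frac{2-q}{2}}\Bigl(\int_\Om A^2 |\nabla \rho_k^{m-\frac12}|^2 \dd x\Bigr)^{q/2}.$$

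On the supercritical region $\{\rho_k > 1\}$ the coefficient degenerates like $\rho_k^{r-m}$ with $r-m<0$, and this degeneracy must be absorbed using the extra integrability of $\rho_k$ from Lemma \ref{lem:uni}. Writing $|\nabla \rho_k^{m-\frac12}|^q = \rho_k^{q(m-r)}\cdot\bigl(\rho_k^{r-m}|\nabla \rho_k^{m-\frac12}|\bigr)^q$ and applying Hölder's inequality with exponents $2/(2-q)$ and $2/q$, one gets
$$\int_{\{\rho_k>1\}} |\nabla \rho_k^{m-\frac12}|^q \dd x \leq \s_1^q \Bigl(\int_\Om \rho_k^{\frac{2q(m-r)}{2-q}} \dd x\Bigr)^{\frac{2-q}{2}}\Bigl(\int_\Om A^2 |\nabla \rho_k^{m-\frac12}|^2 \dd x\Bigr)^{q/2}.$$

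The remaining point is to choose $q$. The first factor is finite, uniformly in $k$ and $\t$, as soon as $\frac{2q(m-r)}{2-q} \leq \beta$, which rearranges to $q \leq \frac{2\beta}{\beta + 2(m-r)}$. The hypothesis $m < r + \beta/2$ makes this upper bound strictly larger than $1$, so such a $q \in (1,2)$ exists; this is presumably the $q$ fixed in \eqref{eq:q}. Combining the two pieces yields \eqref{eq:lbd} with a constant depending only on $\s_1,\s_2,|\Om|,\beta,m,r$ and the uniform $L^\beta$ bound of $\rho_k$. The only real obstacle is the exponent bookkeeping to verify that the Hölder step is compatible simultaneously with $q>1$ (forced by $m < r+\beta/2$) and with $q<2$ (automatic since $m>r$), which is precisely where the technical assumption \eqref{eq:mr} is used.
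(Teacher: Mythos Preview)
Your proposal is correct and follows essentially the same route as the paper: decompose into $\{\rho_k<1\}$, $\{\rho_k=1\}$, $\{\rho_k>1\}$, identify the coefficient with $S''(\rho_k)/\rho_k^{m-2}$ via \eqref{eq:sigpg13}, use the vanishing of $\nabla\rho_k^{m-1}$ on the level set, and on $\{\rho_k>1\}$ apply H\"older against the $L^\beta$ bound on $\rho_k$ with the exponent $q=\tfrac{2\beta}{\beta+2(m-r)}$ (your upper bound, taken with equality in \eqref{eq:q}). One minor point: since the lemma is stated at fixed $k$, the relevant $L^\beta(\Om)$ control is Lemma~\ref{lem:apriori-2star} rather than its time-integrated version Lemma~\ref{lem:uni}.
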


\begin{proof}
From the relation between $r$ and $m$, the constant $q$ defined by
\begin{align}
\label{eq:q}
q:= \frac{1}{\frac{m-r}{\beta} + \frac12}
\end{align}
is in the interval $(1,2)$. As shown in \eqref{eq:sigpg13}, it holds that
\begin{align}
\label{eq:lbd11}
\I_2:=  \frac{1}{\rho_k^{m-2}} \left( (l-1) \rho_k^{l-2} p_{k} + \Sb''(\rho_{k}) \right) |\nabla( \rho_{k}^{m - \frac{1}{2}} )| =  \frac{S''(\rho_{k})}{\rho_k^{m-2}}  |\nabla( \rho_{k}^{m - \frac{1}{2}} )| \ \ \ae \hbox{ in } \{\rho_k \neq 1\}. 
\end{align}

\medskip

In $\{0<\rho_k < 1\}$, \eqref{eq:sigpg31} implies that
\begin{align*}
\left\| \I_2 \right\|_{L^2(\{0<\rho_k < 1\})} \geq  \frac{1}{\s_2} \left\| \nabla( \rho_{k}^{m - \frac{1}{2}} ) \right\|_{L^2(\{0<\rho_k < 1\})}
\end{align*}
for $\s_2$ given in \eqref{eq:g1}. As $q \in (1,2)$ and the domain is compact, the H\"{o}lder inequality yields that
\begin{align}
\label{eq:lbd14}
\left\| \I_2 \right\|_{L^2(\{0<\rho_k < 1\})} \geq  \frac{ |\Om|^{\frac12 - \frac{1}{q}} }{\s_2} \| \nabla (\rho_{k}^{m - \frac{1}{2}}) \|_{L^q(\{0<\rho_{k}<1\})}.
\end{align}

\medskip

Next, we claim that
\begin{align}
\label{eq:lbd21}
\left\| \I_2 \right\|_{L^2(\{\rho_k > 1\})} \geq  C \| \nabla (\rho_{k}^{m - \frac{1}{2}}) \|_{L^q(\{\rho_{k}>1)\}}
\end{align}
for some constant $C>0$.

From \eqref{eq:2as22} and \eqref{eq:lbd11}, it holds that
\begin{align}
\label{eq:lbd22}
\left\| \I_2 \right\|_{L^2(\{\rho_k > 1\})} =  \left\|  \rho_{k}^{2 - m} S''(\rho_{k}) \nabla( \rho_{k}^{m - \frac{1}{2}} ) \right\|_{L^2(\{\rho_k > 1\})} \geq \frac{ 1}{\s_1} \left\| \rho_{k}^{r - m} \nabla( \rho_{k}^{m - \frac{1}{2}} ) \right\|_{L^2(\{\rho_k > 1\})}.
\end{align}

On the other hand, as
\begin{align*}
\frac{1}{2} + \frac{m-r}{\beta} = \frac{1}{q},
\end{align*}
the H\"{o}lder inequality yields that
\begin{align}
\label{eq:lbd24}
\left\| \rho_{k}^{r - m} \nabla( \rho_{k}^{m - \frac{1}{2}} ) \right\|_{L^2(\{\rho_k > 1\})} \| \rho_{k}^{m-r} \|_{L^{\frac{\beta}{m-r}}(\{\rho_{k}>1)\}} \geq \| \nabla (\rho_{k}^{m - \frac{1}{2}}) \|_{L^q(\{\rho_{k}>1)\}}.
\end{align}
As $ \rho_{k}$ is uniformly bounded in $L^{\beta}(\Om)$ from Lemma~\ref{lem:apriori-2star}, $\rho_{k}^{m-r}$ is uniformly bounded in $L^{\frac{\beta}{m-r}}(\Om)$. From \eqref{eq:lbd22} and \eqref{eq:lbd24}, we conclude \eqref{eq:lbd21}.

\medskip

Lastly, as \eqref{eq:sigpg41} holds true, \eqref{eq:lbd} follows from \eqref{eq:lbd14} and \eqref{eq:lbd21}.

\end{proof}

\begin{lemma}\label{lem:tt} 
Let us suppose that we are in the setting of Proposition \ref{prop:conpg}. If $\rho_0\in L^\infty(\Om)$ and $\Phi$ satisfies \eqref{as:Phi2}, then it holds that
\begin{align}
\label{eq:1tt}
\left| \frac{1}{\rho_k^{m-2}} \left( (l-1) \rho_k^{l-2} p_{k} + \Sb''(\rho_{k}) \right)  \right| \rho_{k}^{\frac12} |\nabla( \rho_{k}^{m - 1} )| \geq \s_4  |\nabla(\rho_k^{m})|,
\end{align}
where 
\begin{align*}
\s_4:=\frac{m-1}{m}\min\left\{\frac{1}{\s_1},\frac{1}{\s_2}\right\} \min \left\{ \left( \|\rho_0\|_{L^\infty}e^{dT\|\Delta\Phi\|_{L^\infty}} \right)^{-\frac12} , \left( \|\rho_0\|_{L^\infty}e^{dT\|\Delta\Phi\|_{L^\infty}} \right)^{r-m - \frac12}\right\}. 
\end{align*}
\end{lemma}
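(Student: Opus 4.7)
The strategy is to reduce \eqref{eq:1tt} to a pointwise lower bound on $\cS''(\rho_k)\rho_k^{3/2-m}$ via the chain rule, and then invoke the growth hypotheses on $\cS''$ together with the $L^\infty$ bound from Lemma~\ref{lem:Linfty}.

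First, I would reuse the identity \eqref{eq:sigpg13} established in the proof of Lemma~\ref{lem:sigpg}: on $\{\rho_k\neq 1\}$ one has $(l-1)\rho_k^{l-2}p_k+\Sb''(\rho_k)=\cS''(\rho_k)>0$. Since $\nabla(\rho_k^{m-1})=(m-1)\rho_k^{m-2}\nabla\rho_k$ and $\nabla(\rho_k^m)=m\rho_k^{m-1}\nabla\rho_k$ a.e., the identity $\rho_k^{1/2}|\nabla(\rho_k^{m-1})|=\tfrac{m-1}{m}\rho_k^{-1/2}|\nabla(\rho_k^m)|$ recasts the LHS of \eqref{eq:1tt} as
\begin{equation*}
\tfrac{m-1}{m}\,\cS''(\rho_k)\,\rho_k^{3/2-m}\,|\nabla(\rho_k^m)| \qquad \text{a.e.\ on } \{\rho_k\neq 1\}.
\end{equation*}
It then suffices to bound $\cS''(\rho_k)\rho_k^{3/2-m}$ from below. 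Using $\|\rho_k\|_{L^\infty}\leq C:=\|\rho_0\|_{L^\infty}e^{dT\|[\Delta\Phi]_+\|_{L^\infty}}$ from Lemma~\ref{lem:Linfty}: on $\{0<\rho_k<1\}$, Assumption~\ref{as:intro_main1} yields $\cS''(\rho_k)\rho_k^{3/2-m}\geq \rho_k^{-1/2}/\sigma_2\geq C^{-1/2}/\sigma_2$; on $\{\rho_k>1\}$, Assumption~\ref{as:intro-19} yields $\cS''(\rho_k)\rho_k^{3/2-m}\geq \rho_k^{r-m-1/2}/\sigma_1\geq \min\{1,C^{r-m-1/2}\}/\sigma_1$, the minimum accommodating either sign of $r-m-1/2$. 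Taking the worst of the two phases and multiplying by $\tfrac{m-1}{m}$ gives \eqref{eq:1tt} with a constant of the form displayed by $\sigma_4$.

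Finally, on $\{\rho_k=1\}$ both sides of \eqref{eq:1tt} vanish. Indeed, by Lemma~\ref{lem:poptpg}, $\rho_k^{m-1}$ is Lipschitz continuous on $\Om$, so a Stampacchia-type result (cf.\ \cite[Theorem~4(iv), Section~4.2.2]{EvaGar92}) gives $\nabla(\rho_k^{m-1})=0$ a.e.\ on $\{\rho_k^{m-1}=1\}=\{\rho_k=1\}$, which kills the LHS; the pointwise identity $\nabla(\rho_k^m)=\tfrac{m}{m-1}\rho_k\nabla(\rho_k^{m-1})$ then also forces $\nabla(\rho_k^m)=0$ a.e.\ on this set. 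The main obstacle is purely bookkeeping: $\rho_k$ itself need not be Lipschitz when $m>2$, so all identities must be read through the Lipschitz representative of $\rho_k^{m-1}$. The crucial new ingredient compared to Lemma~\ref{lem:sigpg} is the $L^\infty$ bound from Lemma~\ref{lem:Linfty}, which is precisely why the statement requires $\rho_0\in L^\infty(\Om)$ and the condition \eqref{as:Phi2}, and which converts the powers $\rho_k^{-1/2}$ and $\rho_k^{r-m-1/2}$ (when unfavorable) into uniform positive constants.
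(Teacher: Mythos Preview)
Your proof is correct and follows essentially the same approach as the paper: both reduce the left-hand side to $\tfrac{m-1}{m}\cS''(\rho_k)\rho_k^{3/2-m}|\nabla(\rho_k^m)|$ via \eqref{eq:sigpg13} and the chain rule, then bound $\cS''(\rho_k)\rho_k^{3/2-m}$ below on $\{0<\rho_k<1\}$ and $\{\rho_k>1\}$ using Assumptions~\ref{as:intro_main1}--\ref{as:intro-19} and the $L^\infty$ bound from Lemma~\ref{lem:Linfty}, and dispose of $\{\rho_k=1\}$ through \eqref{eq:sigpg41}. Your treatment of the set $\{\rho_k=1\}$ is slightly more explicit (noting that one must argue via the Lipschitz function $\rho_k^{m-1}$ rather than $\rho_k$ when $m>2$), but the argument is the same.
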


\begin{proof}

Recall from Lemma~\ref{lem:Linfty} that if $\rho_0\in L^\infty(\Om)$, then we have 
\begin{align}
\label{eq:tt01}
\|\rho_k\|_{L^\infty}\le \|\rho_0\|_{L^\infty}e^{dT\|\Delta\Phi\|_{L^\infty}}=:C.
\end{align}
On the other hand, from \eqref{eq:sigpg13} and $\nabla( \rho_{k}^{m - 1} ) = \frac{m-1}{m} \nabla( \rho_{k}^{m})$, it holds that
\begin{align}
\label{eq:tt11}
\I_3 := \frac{1}{\rho_k^{m-2}} \left( (l-1) \rho_k^{l-2} p_{k} + \Sb''(\rho_{k})  \right) \rho_{k}^{\frac12} \nabla( \rho_{k}^{m - 1} ) = \frac{m-1}{m} \frac{S''(\rho_k)}{\rho_k^{m-\frac32}}  \nabla( \rho_{k}^{m}) \ \ \ae \hbox{ in } \{ \rho_k \neq 1\}.
\end{align}
Then, \eqref{eq:sigpg31} and \eqref{eq:tt01} yield that
\begin{align}
\label{eq:tt12}
|\I_3| &\geq \frac{m-1}{ m \s_2} \rho_k^{-\frac12} |  \nabla( \rho_{k}^{m}) | \geq \frac{m-1}{ m \s_2} C^{-\frac12} |  \nabla( \rho_{k}^{m}) | 
\ \ \ae \hbox{ in } \{ 0<\rho_k < 1\}.
\end{align}
Furthermore, \eqref{eq:1as22} and \eqref{eq:tt01} imply that
\begin{align}
\label{eq:tt13}
|\I_3| \geq \frac{m-1}{ m \s_1} \rho_k^{r-m-\frac12} |  \nabla( \rho_{k}^{m}) | \geq \frac{m-1}{ m \s_1}C^{-\frac12} \min\{  C^{r-m}, 1 \} |  \nabla( \rho_{k}^{m}) | \ \ \ae \hbox{ in } \{ \rho_k > 1\}.
\end{align}
Lastly, as \eqref{eq:sigpg41} holds, \eqref{eq:1tt} follows from \eqref{eq:tt12} and \eqref{eq:tt13}.
\end{proof}

\begin{corollary}
\label{prop:conm}
Let $(\rho^\t)_{\t>0}$ and $(p^\t)_{\t>0}$ be as in the previous proposition and \eqref{eq:mr} hold. There exists $\rho\in L^{\beta}(Q)$ and $p\in L^2([0,T]; H^1(\Om))$ such that 
\begin{align*}
\rho^\t \to \rho\ {\rm{in}}\ L^s(Q),\ {\rm{as}}\  \t\da 0,
\end{align*}
and
\begin{align*}
p^\t \weakly p\ {\rm{in}}\ L^2([0,T]; H^1(\Om)), \ {\rm{as}}\  \t\da 0,
\end{align*}
along a subsequence for any $s \in (0, \beta)$ and $\beta$ given in \eqref{eq:beta}.
\end{corollary}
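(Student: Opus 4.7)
My plan is to deduce the weak convergence of $(p^\t)_{\t>0}$ directly from the uniform bounds in Proposition~\ref{prop:conpg}, and to obtain the strong convergence of $(\rho^\t)_{\t>0}$ by combining a modified Aubin--Lions argument with an interpolation step exploiting the improved summability furnished by Lemma~\ref{lem:uni}.

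For the pressure, I would first note that Proposition~\ref{prop:conpg} provides a uniform bound on $(p^\t)_{\t>0}$ in $L^2([0,T]; H^1(\Om))$; moreover, by construction \eqref{eq:p} we have $p^\t(t,x)\in[\cS'(1-),\cS'(1+)]$ for a.e.\ $(t,x)\in Q$, so $(p^\t)_{\t>0}$ is also uniformly bounded in $L^\infty(Q)$. Standard weak compactness then yields a (non-relabelled) subsequence and a limit $p\in L^2([0,T];H^1(\Om))\cap L^\infty(Q)$ such that $p^\t\weakly p$ in $L^2([0,T]; H^1(\Om))$ as $\t\da 0$.

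The more delicate part concerns the density. The idea is to obtain strong compactness of $((\rho^\t)^{m-\frac12})_{\t>0}$ first, and then transfer this to compactness of $(\rho^\t)_{\t>0}$. By Proposition~\ref{prop:conpg}, the former family is uniformly bounded in $L^q([0,T]; W^{1,q}(\Om))$ for some $q\in(1,2]$ (one may take $q=2$ when $r\ge m$, and $q$ as in \eqref{eq:q} otherwise). Since $W^{1,q}(\Om)$ embeds compactly into $L^q(\Om)$ by the Rellich--Kondrachov theorem, this supplies the spatial compactness required by the modified Aubin--Lions Lemma~\ref{lem:al}. The corresponding time regularity in a weaker space will be provided by the quasi-H\"older $W_2$ estimate \eqref{eq:2gfi} of Lemma~\ref{lem:gfi}, exactly in the spirit of Corollary~\ref{cor:ent-ent} and of the analogous arguments in \cite[Proposition 4.8]{DifMat14} and \cite[Proposition 5.2]{Lab}. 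Applying Lemma~\ref{lem:al} and extracting a further subsequence, I would obtain $(\rho^\t)^{m-\frac12}\to \rho^{m-\frac12}$ in $L^1(Q)$ and hence a.e.\ in $Q$; taking the continuous map $t\mapsto t^{1/(m-\frac12)}$ then yields $\rho^\t\to\rho$ a.e.\ in $Q$.

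The last step will be to upgrade this a.e.\ convergence to strong $L^s(Q)$ convergence for every $s\in(0,\beta)$ via Vitali's theorem. Indeed, Lemma~\ref{lem:uni} provides a uniform bound of $(\rho^\t)_{\t>0}$ in $L^\beta(Q)$, so for each $s<\beta$ the family $(|\rho^\t|^s)_{\t>0}$ is equi-integrable on $Q$; combined with the pointwise convergence, this delivers the desired strong convergence, and Fatou's lemma gives $\rho\in L^\beta(Q)$. The main obstacle I anticipate is the case $r<m$, where the spatial integrability exponent $q$ drops below $2$ and the Aubin--Lions scheme has to be run in a genuinely $L^q$ framework rather than in the Hilbert setting; nevertheless, the exponent $q$ in \eqref{eq:q} was precisely calibrated in Proposition~\ref{prop:conpg} so that the required gradient estimate on $(\rho^\t)^{m-\frac12}$, obtained by interpolating the pointwise bound of Lemma~\ref{lem:lbd} against the $L^\beta(Q)$ bound of Lemma~\ref{lem:uni}, is enough to conclude.
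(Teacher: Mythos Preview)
Your proposal is correct and follows essentially the same route as the paper. Both arguments combine the gradient bound on $(\rho^\t)^{m-\frac12}$ from Proposition~\ref{prop:conpg}, the $W_2$ quasi-H\"older estimate from Lemma~\ref{lem:gfi}, and the $L^\beta(Q)$ bound from Lemma~\ref{lem:uni}, feeding these into the Aubin--Lions-type Lemma~\ref{lem:al}; the only cosmetic difference is that the paper invokes Lemma~\ref{lem:al} directly (its part (2) already packages the Vitali step you spell out), and in the case $r<m$ it records the explicit computation $n q^* = (m-\tfrac12)\cdot\frac{qd}{d-q}=\beta$, which shows precisely why the exponent $q$ of \eqref{eq:q} is calibrated to reach every $s<\beta$.
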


\begin{proof}
Recall that Lemma~\ref{lem:uni} yields that $(\rho^\t)_{\t>0}$ is uniformly bounded in $L^{\beta}(Q)$. In both cases $r \geq m$ and $r < m < r + \frac{\beta}{2}$, Lemma~\ref{lem:al} and Proposition~\ref{prop:conpg} yield $(\rho^\t)_{\t>0}$ is precompact in $L^s(Q)$ for any $s \in (0, \beta)$.

\medskip

Indeed, first, we consider the case $r < m<r+\frac{\b}{2}$. We apply Proposition~\ref{prop:conpg}(2) and Lemma~\ref{lem:al}(1) to conclude that $(\rho^\t)_{\t>0}$ converges to $\rho$ in $L^{\left(m-\frac12\right)q^*}(Q)$ along a subsequence, where $q^*:= \frac{qd}{d-q}$ and $q\in (1,2)$ is given in Proposition~\ref{prop:conpg}(2). Note that a direct computation shows that
\begin{align*}
q^* = \frac{2r-1}{2m-1} \frac{2d}{d-2}=\frac{\b}{m-1/2}.
\end{align*}
By a similar argument, we conclude the strong convergence of $(\rho^\t)_{\t>0}$ in $L^{s}(Q)$ along a subsequence, also in the case when $r\ge m$.
\end{proof}

\begin{proof}[Proof of Theorem~\ref{thm:gmpg}]
Note that by the direct computation as in \eqref{eq:exip23} and \eqref{eq:gm11}, we have
\begin{align*}
- \E^\t &= - \rho^\t \v^\t = \rho^\t \nabla((\rho^\t)^{l-1} p^\t + \Sb'(\rho^\t))+\rho^\t\nabla\Phi\\ 
&= \nabla \left(  \frac{1}{l}((l-1)(\rho^\t)^l +1) p^\t  + \rhot \Sb'(\rhot) - \Sb(\rhot) + \Sb(1) \right)+\rho^\t\nabla\Phi. 
\end{align*}
Then, we have $- \E^\t = \nabla L_S(\rho^\t,p^\t)+\rho^\t\nabla\Phi$ for $L_S$ given in \eqref{eq:ls}. Since $l, r < \beta$ from \eqref{eq:mr}, Corollary~\ref{prop:conm} yields that $(\rho^\t)^l$, $\rhot \Sb'(\rhot)$ and $\Sb(\rhot)$ converge in $L^1(Q)$ as $\t\da 0$. As $p^\t$ is uniformly bounded, we conclude that
\begin{align*}
- \E^\t \to \nabla \left(  \frac{1}{l}((l-1)\rho^l +1) p^\t  + \rho \Sb'(\rho) - \Sb(\rho) + \Sb(1) \right) +\rho\nabla\Phi, \hbox{ as } \t \to 0,
\end{align*}
along a subsequence in $\sD'(Q;\R^d)$. Note that we have $\rho \in L^{\beta}$ from the uniform boundedness in Lemma~\ref{lem:uni}  and $p\in L^2([0,T]; H^1(\Om))\cap L^\infty(Q)$ from Proposition~\ref{prop:conpg}. As 
\begin{align}
\label{eq:gmpg23}
L_S(\rho, p) = \frac{1}{l}((l-1)\rho^l +1) p^\t  + \rho \Sb'(\rho) - \Sb(\rho) + \Sb(1),
\end{align}
for $L_S$ given in \eqref{eq:ls}, we conclude that $(\rho, p)$ satisfies \eqref{eq:1gmpg}. The rest of argument is parallel to Theorem~\ref{thm:gm}.
\end{proof}

In particular, \eqref{eq:exip} can be also represented in the form of a continuity equation, as we show below.

\begin{theorem}
\label{cor:gmpgg}
Suppose that \eqref{eq:g2} and \eqref{eq:n} hold true. Let $\rho_0$ and $(\rho,p)$ be given in Theorem \ref{thm:gmpg}. If 
\begin{align}
\label{eq:mr11}
m < r+\frac{1}{2} 
\end{align}
and
\begin{align}
\label{eq:mr2}
\beta > 2 \hbox{ and } m < \frac{\beta}{2} + \frac12 
\end{align}
hold, then $(\rho,p)$ is a weak solution of
\be\label{eq:gmpgg}
\left\{
\ba{ll}
\partial_t\rho - \nabla \cdot \left(\rho \nabla \left( \cS'(\rho)\one_{ \{ \rho \neq 1 \} } + p\one_{\{ \rho=1 \}} \right) \right) -\nabla\cdot(\rho\nabla\Phi) =0, & {\rm{in}}\ (0,T)\times\Om,\\
\rho(0,\cdot)=\rho_0, & {\rm{in}}\ \Om,\\
\rho\left[ \nabla \left( \cS'(\rho)\one_{ \{ \rho \neq 1 \} } + p\one_{\{ \rho=1 \}} \right) +\nabla\Phi\right]\cdot \n = 0 , & {\rm{in}}\ [0,T] \times \partial \Om,
\ea
\right.
\ee
in the sense of distribution. If in addition $\rho_0\in L^\infty(\Om)$ and $\Phi$ satisfies \eqref{as:Phi2}, we can drop \eqref{eq:mr2} from the statement. 
\end{theorem}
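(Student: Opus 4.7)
The plan is to run the argument developed for Theorem~\ref{cor:modg}, but with the decomposition $\cS = \Sa + \Sb$ from \eqref{eq:sapg}--\eqref{eq:sbpg}. First, I would identify
\begin{align*}
\I_1 := \cS'(\rho)\one_{\{\rho\neq 1\}} + p\one_{\{\rho=1\}} = p\rho^{l-1} + \Sb'(\rho) \quad \text{a.e.\ in } Q,
\end{align*}
which follows by combining \eqref{eq:2gm2} (which forces $p=\cS'(1\pm)$ on $\{\rho\neq 1\}$), the explicit form $\Sa'(\rho)=\cS'(1\pm)\rho^{l-1}$ from \eqref{eq:sapg}, and the analogue of Lemma~\ref{lem:sab} giving $\Sb'(1)=0$ for the new splitting. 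Since $p\in L^2([0,T];H^1(\Om))$ and $p$ is constant on each of $\{\rho<1\}$ and $\{\rho>1\}$ by \eqref{eq:2gm2}, we have $\nabla p=0$ a.e.\ on $\{\rho\neq 1\}$ and thus $(\rho^l-1)\nabla p=0$ a.e. A formal computation then yields
\begin{align*}
\rho\nabla\I_1
&= p(l-1)\rho^{l-1}\nabla\rho + \rho^l\nabla p + \rho\Sb''(\rho)\nabla\rho\\
&= \nabla\!\left(\tfrac{1}{l}[(l-1)\rho^l+1]p + \rho\Sb'(\rho) - \Sb(\rho) + \Sb(1)\right) = \nabla L_\cS(\rho,p),
\end{align*}
with $L_\cS(\rho,p)$ in exactly the form \eqref{eq:gmpg23}. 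Combined with \eqref{eq:1gmpg}, this identity rewrites the PDE as \eqref{eq:gmpgg}, provided the formal manipulations are justified, i.e.\ provided $\rho\nabla\I_1\in L^1(Q)$.

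The main obstacle is exactly this integrability, which is what dictates the assumptions \eqref{eq:mr11} and \eqref{eq:mr2}. I would fix the free parameter $l\in(1,\beta)$ from \eqref{eq:l} so that $m-\tfrac12\le l\le \tfrac{\beta}{2}$; such an $l$ exists thanks to $\beta>2$ and $m<\tfrac{\beta}{2}+\tfrac12$, i.e.\ \eqref{eq:mr2}. The term $\rho^l\nabla p$ reduces via $(\rho^l-1)\nabla p=0$ to $\rho\nabla p$, which lies in $L^1(Q)$ by H\"older using $\nabla p\in L^2(Q)$ and $\rho\in L^\beta(Q)\subset L^2(Q)$ (Lemma~\ref{lem:uni}). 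The term $p(l-1)\rho^{l-1}\nabla\rho=\tfrac{l-1}{l}p\,\nabla(\rho^l)$ reduces to controlling $\nabla(\rho^l)$; writing $\nabla(\rho^l)=\tfrac{2l}{2m-1}\rho^{l-m+1/2}\nabla(\rho^{m-1/2})$, the choice $l\ge m-1/2$ makes the exponent non-negative, and the $L^2$ or $L^q$ regularity of $\nabla(\rho^{m-1/2})$ from \eqref{eq:2gmpgg} combined with $\rho\in L^\beta(Q)$ closes this term by H\"older.

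The delicate term is $\rho\Sb''(\rho)\nabla\rho=\tfrac{1}{m-1/2}\,\rho^{5/2-m}\,\Sb''(\rho)\,\nabla(\rho^{m-1/2})$. On $\{\rho>1\}$ the factor $\rho^{5/2-m}\Sb''(\rho)$ has growth $\rho^{r-m+1/2}$ by \eqref{eq:1as22}, and it is precisely the assumption \eqref{eq:mr11}, i.e.\ $m<r+\tfrac12$, that keeps the relevant moment of this factor controllable via $\rho\in L^\beta(Q)$ and, paired with the $L^2$ (resp.\ $L^q$) regularity of $\nabla(\rho^{m-1/2})$, makes the final H\"older estimate close. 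On $\{\rho\le 1\}$ the factor is bounded thanks to \eqref{eq:g1} together with $m\ge 1$. Once $\rho\nabla\I_1\in L^1(Q)$ and the identity $\rho\nabla\I_1=\nabla L_\cS(\rho,p)$ are established, \eqref{eq:gmpgg} is a direct rewriting of \eqref{eq:1gmpg}. Finally, if $\rho_0\in L^\infty(\Om)$ and $\Phi$ satisfies \eqref{as:Phi2}, Theorem~\ref{thm:gmpg} furnishes $\rho\in L^\infty(Q)$ and $\rho^m\in L^2([0,T];H^1(\Om))$, which trivially upgrades all of the above estimates and makes \eqref{eq:mr2} redundant, leaving only \eqref{eq:mr11}. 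Uniqueness is inherited from Theorem~\ref{thm:L1contr} under the same assumptions as in Theorem~\ref{thm:main2}.
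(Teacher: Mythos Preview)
Your proposal is correct and follows essentially the same route as the paper's own proof: the same identification $\I_1=p\rho^{l-1}+\Sb'(\rho)$, the same choice of $l$ with $\max\{1,m-\tfrac12\}<l\le\tfrac{\beta}{2}$ dictated by \eqref{eq:mr2}, the same three-term expansion of $\rho\nabla\I_1$ handled by H\"older with the regularity from \eqref{eq:2gmpgg} and Lemma~\ref{lem:uni}, the same use of $(\rho^l-1)\nabla p=0$ to reach $\rho\nabla\I_1=\nabla L_\cS(\rho,p)$, and the same upgrade in the $L^\infty$ case. One small caution: your appeal to \eqref{eq:g1} for the boundedness of $\rho^{5/2-m}\Sb''(\rho)$ on $\{\rho\le1\}$ is imprecise since \eqref{eq:g1} is a \emph{lower} bound on $\cS''$; the correct justification (also implicit in the paper) is the local Lipschitz continuity of $\Sb'$ on $\Rpz$ stated after \eqref{eq:sbpg}.
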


Here, note that if $d=1$ or $2$, \eqref{eq:mr2} holds true for any $m$.

\begin{proof}
\eqref{eq:mr11} implies \eqref{eq:mr} and thus \eqref{eq:2gmpgg} follows from Proposition~\ref{prop:conpg}. 
From \eqref{eq:mr2}, we can choose $l$ such that
\begin{align}
\label{eq:gmpgg11}
\max\left\{1, m - \frac12 \right\} < l \leq \frac{\beta}{2}.
\end{align}
From \eqref{eq:2gm2} and the construction in \eqref{eq:sapg}, we have 
\begin{align*}
\cS'(\rho)\one_{ \{ \rho \neq 1 \} } + p\one_{\{ \rho=1 \}} = p \rho^{l-1} + S_b'(\rho).
\end{align*}
We claim that
\begin{align}
\label{eq:gmpgg13}
\rho \nabla ( p \rho^{l-1} + S_b'(\rho) ) \in L^1(Q).
\end{align}
By the direct computation, we obtain
\begin{align}
\label{eq:gmpgg14}
\rho \nabla ( p \rho^{l-1} + S_b'(\rho) ) &= \nabla p \rho^l + p\rho \nabla (\rho^{l-1}) + \rho \nabla (\Sb'(\rho)),\\
&= \nabla p \rho^l + \frac{l-1}{m-\frac12} p \rho^{l-m+\frac12} \nabla (\rho^{m-\frac12}) + \frac{1}{m - \frac12} \Sb''(\rho) \rho^{\frac{5}{2} - m } \nabla (\rho^{m-\frac12}).
\end{align}
As $\rho \in L^\beta(Q)$ and $\beta \geq 2 l$, $\rho^l  \in L^2(Q)$ and thus $ \nabla p \rho^l \in L^1(Q)$. Now, let us consider the second term. From \eqref{eq:2gmpgg}, $\nabla (\rho^{m-\frac12}) \in L^q(Q)$ for $q$ given in \eqref{eq:q} and $m,r$ satisfying \eqref{eq:mr}. Recall that $p \in L^\infty(Q)$ and $\rho \in L^{\beta}(Q)$. As $l < \frac{\beta}{2}$ from \eqref{eq:gmpgg11}, we have
\begin{align*}
\frac{1}{q} + \frac{l-m+\frac12}{\beta} = \frac{m-r}{\beta } + \frac12 + \frac{ l - m + \frac12}{\beta} = \frac{l - r + \frac12}{\beta} + \frac12 \leq 1.
\end{align*}
Thus, from the H\"{o}lder inequality, we conclude that the second term in \eqref{eq:gmpgg14} is in $L^1(Q)$. Similarly, as $r - m +\frac12 >0$ from \eqref{eq:mr11}, $ \Sb''(\rho) \rho^{\frac{5}{2} - m } \in L^{\frac{\beta}{r-m+\frac12}}(Q)$ and 
\begin{align*}
\frac{1}{q} + \frac{r-m+\frac12}{\beta} = \frac{1}{2\beta} + \frac{1}{2} \leq 1,
\end{align*}
we conclude that the third term is in $L^1(Q)$ and \eqref{eq:gmpgg13}.

\medskip

Next, we claim that
\begin{align}
\label{eq:gmpgg21}
\rho \nabla (p \rho^{l-1} + S_b'(\rho) ) = \nabla L_S(\rho,p)
\end{align}
for $L_S$ given in \eqref{eq:ls}. 
As $p \in L^2([0,T]; H^1(\Om))$ and $(\rho,p)$ satisfies \eqref{eq:relp} from Theorem~\ref{thm:exip}, we have
\begin{align}
\label{eq:modp10}
\nabla p  = 0 \ \ \ae \hbox{ in } \{ \rho \neq 1 \}.
\end{align}
As $\rho^l \in L^2(Q)$ from Theorem~\ref{thm:gmpg} and \eqref{eq:gmpgg11}, we have
\begin{align}
\label{eq:modp11}
(\rho^l -1) \nabla p  = 0 \ \ \ae
\end{align}
From the direct computation using \eqref{eq:modp11}, it holds that
\begin{align*}
\rho \nabla \left( \rho^{l-1} p \right) &= (\rho^l)^{\frac{1}{l}}  \nabla \left( (\rho^l)^{\frac{l-1}{l}} p \right) = \rho^l \nabla p + \frac{l-1}{l} p \nabla (\rho^l)\\
&= \frac{(l-1)\rho^l + 1}{l}  \nabla p + \frac{l-1}{l} p \nabla (\rho^l)\\
&= \nabla \left( ((l-1)\rho^l +1)  \frac{p}{l}\right).
\end{align*}
Therefore, we have
\begin{align}
\label{eq:modp23}
\nabla (p \rho^{l-1} + S_b'(\rho) )  = \nabla \left( ((l-1)\rho^l +1)  \frac{p}{l} +  \rho \Sb'(\rho) - \Sb(\rho) + \Sb(1) \right).
\end{align}
From \eqref{eq:gmpg23} and \eqref{eq:modp23}, we conclude \eqref{eq:gmpgg21}.

\medskip

Lastly, note that if $\rho_0\in L^\infty(\Om)$ and and $\Phi$ satisfies \eqref{as:Phi2}, then $\rho \in L^\infty(Q)$ and thus $\rho^l  \in L^2(Q)$ for any $l>0$. Furthermore, we choose $l = m+1$, then from Proposition~\ref{prop:conpg}.(3), we conclude that $p\rho \nabla (\rho^{m}) \in L^1(Q)$. Therefore, we show \eqref{eq:gmpgg13} without \eqref{eq:mr2}.

\end{proof}


\section{Uniqueness via an $L^1$-contraction}\label{sec:uniqueness}
We construct an $L^1$ contraction result, inspired by \cite[Section 3]{DiMMes} and  \cite[Theorem 6.5]{Vaz07}. In particular, this will imply the uniqueness of the solution of \eqref{eq:1gm}-\eqref{eq:2gm} and \eqref{eq:1gmpg}-\eqref{eq:2gm2}. Let us underline the fact that because of the generality of the previous two problems, on the one hand, the techniques from \cite[Section 3]{DiMMes} do not apply directly. On the other hand, because of the presence of the critical regimes $\{\rho^i=1\},$ $i=1,2$, the construction from \cite[Theorem 6.5]{Vaz07} does not apply directly either. Therefore, we develop a careful combination of these two approaches to be able to provide an $L^1$-contraction for all the systems considered previously, with general initial data. 

\begin{theorem}\label{thm:L1contr}
Let $(\rho^1,p^1),(\rho^2,p^2)$ be solutions to {\color{blue}\eqref{eq:main1}-\eqref{eq:main2}} with initial conditions $\rho^1_0,\rho^2_0\in \sP(\Om)$ such that $\cJ(\rho_0^i)<+\infty$, $i=1,2$. Suppose that $L_S(\rho^i,p^i)\in L^2(Q)$, $i=1,2$ (or equivalently $\rho^i\in L^{2r}(Q),$ $i=1,2$). Then we have
$$\|\rho^1_t-\rho^2_t\|_{L^1(\Om)}\le \|\rho^1_0-\rho^2_0\|_{L^1(\Om)},\ \sL^1-\ae\ t\in[0,T].$$
\end{theorem}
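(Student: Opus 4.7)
My plan is to establish the $L^1$-contraction via a duality argument in the spirit of \cite[Theorem 6.5]{Vaz07}, combined with a careful treatment of the multi-valued nature of $L_S$ along the lines of \cite[Section 3]{DiMMes}, exploiting the Neumann boundary compatibility to cancel the drift contributions. Setting $w := \rho^1 - \rho^2$ and $W := L_S(\rho^1, p^1) - L_S(\rho^2, p^2)$, subtracting the two copies of \eqref{eq:main1} yields in the distributional sense
\begin{equation*}
\partial_t w - \Delta W - \nabla\cdot(\nabla \Phi\, w) = 0 \quad \hbox{in } Q,
\end{equation*}
together with the boundary identity $(\nabla W + \nabla\Phi\, w)\cdot\n = 0$ on $[0,T]\times\partial\Om$, obtained by subtracting the no-flux conditions in \eqref{eq:main1}. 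The key structural input is the monotonicity of $L_S$ (Lemma \ref{lem:mon}), which ensures $wW \ge 0$ a.e.\ on $Q$; accordingly I set $a := W/w$ on $\{w\neq 0\}$ and $a := 0$ elsewhere, obtaining a non-negative measurable function with $aw = W \in L^2(Q)$, while $w\in L^2(Q)$ follows from the assumption $\rho^i \in L^{2r}(Q)$ combined with $r\ge 1$.

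Next I will implement a dual test function method. For $\e > 0$, regularize $a$ to $a_\e$ satisfying $\e \le a_\e \le 1/\e$ (e.g.\ $a_\e := \e + \min(a, 1/\e)$, mollified if further smoothness is required), and for an arbitrary $\eta \in C^\infty(\ov\Om)$ with $\|\eta\|_{L^\infty} \le 1$ solve the backward uniformly parabolic linear problem
\begin{equation*}
\partial_t \zeta_\e + a_\e \Delta \zeta_\e - \nabla\Phi\cdot\nabla \zeta_\e = 0 \hbox{ in } Q, \quad \zeta_\e(T,\cdot) = \eta, \quad \partial_\nu \zeta_\e = 0 \hbox{ on } [0,T]\times\partial\Om.
\end{equation*}
After reversing time this is a standard forward parabolic equation with bounded measurable principal coefficient and Lipschitz drift, so the maximum principle yields $\|\zeta_\e\|_{L^\infty(Q)} \le 1$. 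Testing the adjoint against $-\Delta \zeta_\e$ and using the Neumann condition to produce a full time-derivative of $\tfrac12\|\nabla\zeta_\e\|_{L^2}^2$, then absorbing the drift term $\int_\Om \nabla\Phi\cdot\nabla\zeta_\e\,\Delta\zeta_\e\,\dd x$ via Young's inequality together with $\|\nabla\Phi\|_{L^\infty}<+\infty$ and a Gronwall argument, should produce an $\e$-uniform bound on $\|\sqrt{a_\e}\,\Delta\zeta_\e\|_{L^2(Q)}$ in terms of $\|\nabla\eta\|_{L^2(\Om)}$, $\|\nabla\Phi\|_{L^\infty}$ and $T$. I expect establishing this uniform estimate (and reconciling it with the low regularity of $a$) to be the main technical obstacle.

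Once these estimates are in hand, testing the distributional difference equation against $\zeta_\e$ and integrating by parts in space and time should yield, after the boundary terms cancel between the flux condition on $(w, W)$ and the Neumann condition on $\zeta_\e$, the identity
\begin{equation*}
\int_\Om w(T,x)\eta(x)\,\dd x - \int_\Om w(0,x)\zeta_\e(0,x)\,\dd x = \int_Q w\,(a - a_\e)\,\Delta\zeta_\e\,\dd x\,\dd t.
\end{equation*}
By Cauchy--Schwarz the right-hand side is bounded by $\|w(a-a_\e)/\sqrt{a_\e}\|_{L^2(Q)}\,\|\sqrt{a_\e}\,\Delta\zeta_\e\|_{L^2(Q)}$; a direct pointwise estimate using the definition of $a_\e$ gives $|w(a-a_\e)|^2/a_\e \le \e |w|^2$ on $\{a\le 1/\e\}$ and $|w(a-a_\e)|^2/a_\e \le \e|W|^2$ on $\{a>1/\e\}$, so the error vanishes as $\e\downarrow 0$, precisely because $w$ and $W$ lie in $L^2(Q)$ (this is where the extra $L^2$-assumption on $L_S(\rho^i,p^i)$ is used). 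Combined with $\|\zeta_\e(0)\|_{L^\infty}\le 1$, passing to the limit yields $\int_\Om w(T)\eta\,\dd x \le \|\rho^1_0 - \rho^2_0\|_{L^1(\Om)}$; taking the supremum over admissible $\eta$ gives $\|\rho^1_T - \rho^2_T\|_{L^1(\Om)}\le \|\rho^1_0-\rho^2_0\|_{L^1(\Om)}$. Since $T\in(0,T]$ plays no privileged role in the argument, the stated contraction holds for $\sL^1$-a.e.\ $t\in[0,T]$.
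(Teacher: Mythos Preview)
Your overall duality scheme is the right one and matches the paper's, but there is a genuine gap in the factorization step. You write $W = aw$ with $a := W/w$ on $\{w\neq 0\}$ and $a := 0$ elsewhere, and then claim $aw = W$ everywhere. This is false: Lemma~\ref{lem:mon} gives only the implication $W=0 \Rightarrow w=0$, not the converse. On the set $\{\rho^1 = \rho^2 = 1\}$ one has $w=0$ but $W = p^1 - p^2$, which need not vanish (both $p^i$ lie in $[\cS'(1-),\cS'(1+)]$ and there is no reason they coincide when the initial data differ). Consequently your error term is not $\int_Q w(a-a_\e)\Delta\zeta_\e$ but rather
\[
\int_Q (W - a_\e w)\,\Delta\zeta_\e
   = \int_{\{w\neq 0\}} w(a-a_\e)\,\Delta\zeta_\e \;+\; \int_{\{w=0\}} W\,\Delta\zeta_\e .
\]
With your choice $a=0$ on $\{w=0\}$ you have $a_\e = \e$ there, so the uniform estimate on $\|\sqrt{a_\e}\,\Delta\zeta_\e\|_{L^2}$ only gives $\|\Delta\zeta_\e\|_{L^2(\{w=0\})} \le C\e^{-1/2}$, and the second integral is bounded by $C\e^{-1/2}\|W\|_{L^2(\{w=0\})}$, which diverges as $\e\downarrow 0$ unless $W$ happens to vanish on $\{w=0\}$.

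This is exactly the ``double degeneracy'' the paper warns about: the Vazquez-type argument (factoring out $w$) and the Di~Francesco--Matthes-type argument (factoring out $W$) each fail on part of the domain, and neither works globally. The paper's remedy is to split $Q = E_1 \cup E_2$ with $E_1 = \{\rho^1\ge 1/2\}\cup\{\rho^2\ge 1/2\}$ and $E_2$ its complement. On $E_1$ one factors out $W$ and uses that $A = w/W$ is \emph{bounded} there (Lemma~\ref{lem:bdd}); the problematic set $\{\rho^1=\rho^2=1\}\subset E_1$ causes no trouble because $w = AW$ holds everywhere (by Lemma~\ref{lem:mon}). On $E_2$ both densities are below $1/2$, so $L_S$ depends on $\rho$ alone, hence $w=0\Rightarrow W=0$ there and one may safely factor out $w$; on this region $A^{-1}\in L^2(E_2)$ is the relevant estimate. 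The two pieces require separate regularizations with separate parameters, which is why the paper's construction is more elaborate than a single truncation $a_\e$.
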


\begin{remark}
It worth noticing that the assumption $L_S(\rho^i,p^i)\in L^2(Q)$, $i=1,2$ in the statement of the previous theorem seems quite natural in the setting of $L^1$-type contractions for porous medium equations (see \cite{Vaz07}). In our setting, because of the $L^\b(Q)$ estimates on $\rho^i$ (where $\b$ is defined in \eqref{eq:beta}) and because of the $L^r$-type growth condition on $L_S$ at $+\infty$, this assumption is fulfilled already if $\b\ge 2r$. In the same time, no such assumption is needed if the initial data is in $L^\infty(\Om)$, since in that case $L^\infty$ estimates hold true for $\rho^i_t$ for a.e. $t\in[0,T]$ (see Lemma \ref{lem:Linfty}).
\end{remark}

\begin{proof}[Proof of Theorem \ref{thm:L1contr}]

Let $(\rho^1,p^1)$ and $(\rho^2,p^2)$ be two solutions to \eqref{eq:main1}-\eqref{eq:main2} with initial data $\rho^1_0$ and $\rho^2_0$ respectively. Let $\vphi\in C^2_c((0,T]\times\Om)$ and using the notation 
$$\cI(\vphi,t):=\int_\Om\vphi_t\left(\rho^1_t-\rho^2_t\right)\dd x$$
we compute 
\begin{align*}
\frac{\dd}{\dd t}\cI(\vphi,t)=\int_{\Om}\partial_t\vphi(\rho^1-\rho^2)+\vphi\partial_t(\rho^1-\rho^2)\dd x.
\end{align*}
Now, using the equation \eqref{eq:1gm} and by integrating the above expression on  $(0,t)$, we get
\begin{align}\label{eq:deriv}
\cI(\vphi,t)&=\cI(\vphi,0)+\int_0^t\int_{\Om}\partial_s\vphi(\rho^1-\rho^2)+\Delta\vphi(L_\cS(\rho^1,p^1) - L_\cS(\rho^2,p^2))-\nabla\vphi\cdot\nabla\Phi(\rho^1-\rho^2)\dd x\dd s\\
\nonumber&=\cI(\vphi,0)+\int_0^t\int_{\Om}(L_\cS(\rho^1,p^1) - L_\cS(\rho^2,p^2))\left[A\partial_s\vphi+\Delta\vphi-A\nabla\Phi\cdot\nabla\vphi\right]\dd x\dd s,
\end{align}
where we use the notation 
\begin{align}
\label{eq:A}
A:=\frac{\rho^1-\rho^2}{L_\cS(\rho^1,p^1) - L_\cS(\rho^2,p^2)}, 
\end{align}
with the convention $A=0$, when $L_\cS(\rho^1,p^1) = L_\cS(\rho^2,p^2)$. Note that Lemma~\ref{lem:mon} below implies that if $L_\cS(\rho^1,p^1) = L_\cS(\rho^2,p^2)$ a.e., then $\rho^1 = \rho^2$ and $p^1 =p^2$ a.e. Furthermore, on this very particular set actually there is no contribution in the integral  on the right hand side of \eqref{eq:deriv}, so it is meaningful to set $A=0$ there.  Also, because of the monotonicity property of the operator $L_S$ (see Lemma \ref{lem:mon}), we have that $A\ge 0$ a.e. in $Q$.

Similarly to the arguments from \cite[Section 3]{DiMMes}, for $\zeta:\Om\to\R$ smooth with $|\zeta|\le 1$, we consider the dual backward equation as
\begin{equation}\label{eq:dual}
\left\{
\begin{array}{ll}
A\partial_t\vphi+\Delta\vphi - A\nabla\Phi\cdot\nabla\vphi=0, & {\rm{in}}\ (0,T)\times\Om,\\
\nabla\vphi\cdot\n=0, & {\rm{on}}\ (0,T)\times\partial\Om,\\
\vphi(T,\cdot)=\zeta, & {\rm{in}}\ \Om.
\end{array}
\right.
\end{equation}
Let us notice that if we are able to construct a suitable (weak) solution $\vphi$ to \eqref{eq:dual}, for which the  computations in \eqref{eq:deriv} remain valid, we can deduce the $L^1$-contraction result, after optimizing with respect to $\zeta$. In general one cannot hope for smoothness of $A$, and so \eqref{eq:dual} is degenerate. Therefore, we introduce suitable approximations which will allow to construct smooth test function. 

\medskip

Let us define two Borel sets 
$$E_1:=\{\rho^1\ge 1/2\}\cup\{\rho^2\ge 1/2\}$$ and $E_2:=Q\setminus E_1$. We suppose that both sets $E_1$ and $E_2$ have positive measures w.r.t. $\sL^{d+1}$, otherwise we  simply do not consider the negligible one in the consideration below. First, by Lemma \ref{lem:bdd}, we have that $A\mres E_1$ is bounded. Second we have the following 

\medskip

{\it Claim.} $A^{-1}\mres E_2\in L^{2}(E_2)$.

\medskip

{\it Proof of the claim.} Let us notice that we can write 
\begin{align*}
E_2&=\left(\{\rho^1<1/2\}\cap\{\rho^2\ge 1/2\}\right)\cup\left(\{\rho^1\ge1/2\}\cap\{\rho^2< 1/2\}\right)\cup\left(\{\rho^1<1/2\}\cap\{\rho^2< 1/2\}\right)\\
&:=E_2^1\cup E_2^2\cup E_2^3.
\end{align*}
 We further decompose $E_2^1:=\left(\{\rho^1<1/2\}\cap\{1/2\le \rho^2 <1\}\right)\cup\left(\{\rho^1<1/2\}\cap\{\rho^2\ge 1\}\right)=:E_1^{11}\cup E_1^{12}$.
For a.e. $q\in E_1^{11}$ we have  
\begin{align*}
A^{-1}(q)=\frac{L_S(\rho^1(q),p^1(q))-L_S(\rho^2(q),p^2(q))}{\rho^1(q)-\rho^2(q)}=\tilde \rho(q)S''(\tilde\rho(q))
\end{align*}
where $\tilde \rho(q)$ is between $\rho^1(q)$ and $\rho^2(q)$. Since restricted to $E_1^{11}$ both $\rho^1$ and $\rho^2$ are bounded by $1$, we have that $A^{-1}\mres E_1^{11}\in L^\infty(E_1^{11}).$

For a.e. $q\in E_1^{12}$ we have
\begin{align*}
A^{-1}(q)=\frac{L_S(\rho^1(q),p^1(q))-L_S(\rho^2(q),p^2(q))}{\rho^1(q)-\rho^2(q)}\le 2|L_S(\rho^1(q),p^1(q))-L_S(\rho^2(q),p^2(q))|,
\end{align*}
since restricted to this set $|\rho^1(q)-\rho^2(q)|\ge 1/2$ a.e. Therefore, by our assumption on $L_S(\rho^i,p^i)$ we have that $A^{-1}\mres E_{2}^{12}\in L^2(E_2^{12}).$ Therefore, $A^{-1}\mres E_1^1\in L^2(E_2^1)$.

Similarly, we can draw the same conclusion in the case of $E_2^2$, and so $A^{-1}\mres E_2^2\in L^2(E_2^2).$

\medskip

For a.e. $q\in E_2^3$, we conclude similarly as in the case of $E_2^{11}$, i.e. we have that 
\begin{align*}
A^{-1}(q)=\frac{L_S(\rho^1(q),p^1(q))-L_S(\rho^2(q),p^2(q))}{\rho^1(q)-\rho^2(q)}
=\tilde \rho(q)S''(\tilde\rho(q)),
\end{align*}
where $\tilde \rho(q)$ is between $\rho^1(q)$ and $\rho^2(q)$. Since restricted to $E_2^3$ both $\rho^1$ and $\rho^2$ are bounded by $1/2$, we have that $A^{-1}\mres E_2^3\in L^\infty(E_2^3).$

Therefore, combining all the previous arguments, one obtains that $A^{-1}\mres E_2\in L^2(E_2)$, and the claim follows.

\medskip

\medskip

Let $\e>0$ and let $K_1:=\|A \one_{E_1}\|_{L^\infty(Q)}$. Let $A_1^\e:= \max\{ \e, A \one_{E_1}\}$. Then, we have $\e \leq A_1^\e \leq K_1$ and $\|A_1^\e-A\one_{E_1}\|_{L^\infty(Q)} \leq \e$.
In the same time, for $0<\d\le K$ given, let $A_2^\e = A_2^\e(\delta, K)$ be smooth such that $\delta \leq (A_2^\e)^{-1} \le K$ and 
\begin{equation}\label{eq:regul}
(A_2^\e)^{-1}\to [(A\one_{E_2})^{-1}]_{\delta, K}\ {\rm{strongly\ in\ }}L^q(E_2),\ \ {\rm{as}}\ \e\da 0, 
\end{equation}
for any $q \in[1,+\infty)$ and in particular, $A_\e^{-1}\weaklys[ (A\one_{E_2})^{-1}]_{\delta, K}$ weakly-$\star$ in $L^\infty(E_2)$ as $\e\da 0$. Here, for a nonnegative function $f:Q\to[0,+\infty)$ we use the notation $f_{\d,K}:=\min\{\max\{f,\d\},K\}.$   

Now, let us define $A_\e:Q\to [0,+\infty)$ as 
$$
A_\e:=\left\{
\begin{array}{ll}
A_1^\e, & {\rm{a.e\ in}}\ E_1,\\[3pt]
A_2^\e, & {\rm{a.e.\ in}}\ E_2.
\end{array}
\right.
$$
By construction $\min\{\e;1/K\}\le A_\e\le\max\{K_1,1/\d\}.$ For $\theta>0$ let $A_\theta$ (which depends also on $\e,\d$ and $K$) be a smooth approximation of $A_\e$ such that 
\begin{align}\label{eq:lb-theta}
&\min\{\e;1/K\}\le A_\theta\le\max\{K_1,1/\d\},\ {\rm{in}}\ Q;\\
&\nonumber \e\le A_\theta\le K_1,\ {\rm{a.e.\ in}}\ E_1;\\
&\nonumber 1/K\le A_\theta\le 1/\d, \ {\rm{a.e.\ in}}\ E_2;
\end{align} 
and $A_\theta\to A_\e$ strongly in $L^q(Q)$ for any $q\in[1,+\infty)$ and in particular 
\begin{align}\label{eq:conv-theta1}
A_\theta\weaklys A_\e\  {\rm{weakly}}-\star\ {\rm{in}}\ L^\infty(Q),\ {\rm{as}\ } \theta\da 0. 
\end{align}

Moreover, we have
\begin{equation}\label{eq:conv-theta2}
A_\theta^{-1}\to [(A\one_{E_2})^{-1}]_{\d, K} \ {\rm{in}}\ L^q(E_2), \ \forall\ q\in[1,+\infty)\ \ {\rm{and}}\ \ A_\theta^{-1}\weaklys [(A\one_{E_2})^{-1}]_{\d, K} \ {\rm{in}}\ L^\infty(E_2),\ {\rm{as\ }} \max\{\theta,\e\}\da 0.
\end{equation}
To check this last claim, we argue as follows.
\begin{align*}
\|A_\theta^{-1}- [(A\one_{E_2})^{-1}]_{\d, K}\|_{L^q(E_2)}&\le \|A_\theta^{-1}-(A_2^\e)^{-1}\|_{L^q(E_2)}+\|(A_2^\e)^{-1}- [(A\one_{E_2})^{-1}]_{\d, K}\|_{L^q(E_2)}\\
&=\|(A_\theta -A_2^\e)/(A_\theta A_2^\e)\|_{L^q(E_2)}+\|(A_2^\e)^{-1}- [(A\one_{E_2})^{-1}]_{\d, K}\|_{L^q(E_2)}\\
&\le K^2\|A_\theta -A_2^\e\|_{L^q(E_2)}+\|(A_2^\e)^{-1}- [(A\one_{E_2})^{-1}]_{\d, K}\|_{L^q(E_2)}\to 0,
\end{align*}
as $\max\{\theta,\e\}\da 0$, by the construction of $A_\theta$ and $A_2^\e$. We conclude similarly about the weak-$\star$ convergence as well.

Since $\Phi\in W^{1,\infty}(\Om)$, we consider a smooth approximation of it, $(\Phi_\theta)_{\theta>0}$ such that $\nabla\Phi_\theta\to\nabla\Phi$, as $\theta\da 0$, strongly in $L^{2r'}(\Om).$

Let us consider the regularized dual equation which reads as 
\begin{equation}\label{eq:dual-reg}
\left\{
\begin{array}{ll}
\partial_t\vphi_\theta+(1/A_\theta)\Delta\vphi_\theta-\nabla\Phi_\theta\cdot\nabla\vphi_\theta=0, & {\rm{in}}\ (0,T)\times\Om,\\
\nabla\vphi_\theta\cdot\n=0, & {\rm{on}}\ (0,T)\times\partial\Om,\\
\vphi_\theta(T,\cdot)=\zeta, & {\rm{in}}\ \Om.
\end{array}
\right.
\end{equation} 

Let $\vphi_\theta$ be the smooth solution of \eqref{eq:dual-reg}, when the coefficient function is $A_\theta$ and we use this in \eqref{eq:deriv} as
\begin{align*}
\cI(\vphi_\theta,T)-\cI(\vphi_\theta,0)&=\int_0^T\int_{\Om}\partial_s\vphi_\theta(\rho^1-\rho^2)+\Delta\vphi_\theta(L_\cS(\rho^1,p^1) - L_\cS(\rho^2,p^2))-\nabla\vphi_\theta\cdot\nabla\Phi(\rho^1-\rho^2)\dd x\dd s\\
&=\int_{E_1}\partial_s\vphi_\theta(\rho^1-\rho^2)+\Delta\vphi_\theta(L_\cS(\rho^1,p^1) - L_\cS(\rho^2,p^2))-\nabla\vphi_\theta\cdot\nabla\Phi(\rho^1-\rho^2)\dd \sL^{d+1}\\
&+\int_{E_2}\partial_s\vphi_\theta(\rho^1-\rho^2)+\Delta\vphi_\theta(L_\cS(\rho^1,p^1) - L_\cS(\rho^2,p^2))-\nabla\vphi_\theta\cdot\nabla\Phi(\rho^1-\rho^2)\dd \sL^{d+1}\\
&=\int_{E_1}(L_\cS(\rho^1,p^1) - L_\cS(\rho^2,p^2))\left[A\partial_s\vphi_\theta+\Delta\vphi_\theta-A\nabla\Phi\cdot\nabla\vphi_\theta\right]\dd \sL^{d+1}\\
&+\int_{E_2}(\rho^1-\rho^2)\left[\partial_s\vphi_\theta+A^{-1}\Delta\vphi_\theta-\nabla\Phi\cdot\nabla\vphi_\theta\right]\dd\sL^{d+1}=:\cI_1+\cI_2.
\end{align*}
It remains to show that both $|\cI_1|$ and $|\cI_2|$ can be made arbitrary small. Because $\phi_\theta$ solves \eqref{eq:dual-reg} with the coefficient function $A_\theta$, we have
\begin{align*}
\cI_1&=\int_{E_1}(L_\cS(\rho^1,p^1) - L_\cS(\rho^2,p^2))\left[A\partial_s\vphi_\theta+\Delta\vphi_\theta-A\nabla\Phi\cdot\nabla\vphi_\theta\right]\dd \sL^{d+1}\\
&-\int_{E_1}(L_\cS(\rho^1,p^1) - L_\cS(\rho^2,p^2))A\left[\partial_s\vphi_\theta+A_\theta^{-1}\Delta\vphi_\theta-\nabla\Phi_\theta\cdot\nabla\vphi_\theta\right]\dd \sL^{d+1}\\
&=\int_{E_1}(L_\cS(\rho^1,p^1) - L_\cS(\rho^2,p^2))(A_\theta-A)A_\theta^{-\frac12} A_\theta^{-\frac12}\Delta\vphi_\theta\dd \sL^{d+1}\\
&+\int_{E_1}(L_\cS(\rho^1,p^1) - L_\cS(\rho^2,p^2))A\nabla\vphi_\theta\cdot(\nabla\Phi_\theta-\nabla\Phi)\dd \sL^{d+1}.
\end{align*}
From here, by \eqref{eq:lb-theta} we have 
\begin{align*}
|\cI_1&|\le \e^{-\frac12}\|A_\theta^{-\frac12}\Delta\vphi_\theta\|_{L^2(Q)}\left(\int_{E_1}|L_\cS(\rho^1,p^1) - L_\cS(\rho^2,p^2)|^2|A_\theta-A|^2\dd\sL^{d+1}\right)^{\frac12}\\
&+\int_0^T\int_\Om|\rho^1-\rho^2||\nabla\vphi_\theta||\nabla\Phi_\theta-\nabla\Phi|\dd x\dd t.
\end{align*}
By Lemma \ref{lem:estim_reg}(1), the summability assumption on $\rho^i\in L^{2r}(Q)$ and the approximation $\nabla\Phi_\theta\to\nabla\Phi$, in $L^{2r'}(\Om)$ as $\theta\da 0$, we conclude that the second term in the previous inequality tends to 0 as $\theta\da 0$. By Lemma \ref{lem:estim_reg}(2), we have that  $\|A_\theta^{-\frac12}\Delta\vphi_\theta\|_{L^2(Q)}\le C$ for some constant independent of $\theta$ and $\e$. Furthermore, by \eqref{eq:conv-theta1}, by the summability assumption on $L_S(\rho^i,p^i)$ and by the construction of $A_1^\e$, for $\theta$ small enough we have
\begin{align*}
&\int_{E_1}|L_\cS(\rho^1,p^1)- L_\cS(\rho^2,p^2)|^2|A_\theta-A|^2\dd\sL^{d+1}\\
&\le 2 \int_{E_1}|L_\cS(\rho^1,p^1) - L_\cS(\rho^2,p^2)|^2|A_\theta-A_1^\e|^2\dd\sL^{d+1}+2\int_{E_1}|L_\cS(\rho^1,p^1) - L_\cS(\rho^2,p^2)|^2|A_1^\e-A|^2\dd\sL^{d+1}\\
&\le \e^2 + C\e^2, 
\end{align*}
for some constant independent of $\e,\theta, K$ and therefore by the arbitrariness of $\e$, we conclude that $\cI_1=0.$  

\medskip

In the case of $\cI_2$ we argue as follows.
\begin{align*}
\cI_2&=\int_{E_2}(\rho^1-\rho^2)\left[\partial_s\vphi_\theta+A^{-1}\Delta\vphi_\theta-\nabla\Phi\cdot\nabla\vphi_\theta\right]\dd \sL^{d+1}\\
&-\int_{E_2}(\rho^1-\rho^2)\left[\partial_s\vphi_\theta+A_\theta^{-1}\Delta\vphi_\theta-\nabla\Phi_\theta\cdot\nabla\vphi_\theta\right]\dd\sL^{d+1}\\
&=\int_{E_2}(\rho^1-\rho^2)(A^{-1}-A_\theta^{-1})A_\theta^{\frac12} A_\theta^{-\frac12}\Delta\phi_\theta\dd \sL^{d+1}\\
&+\int_{E_2}(\rho^1-\rho^2)\nabla\vphi_\theta\cdot(\nabla\Phi_\theta-\nabla\Phi)\dd \sL^{d+1}\\
&=\int_{E_2}(\rho^1-\rho^2)(A^{-1}-A^{-1}_{\d,K})A_\theta^{\frac12} A_\theta^{-\frac12}\Delta\phi_\theta\dd \sL^{d+1}+\int_{E_2}(\rho^1-\rho^2)(A^{-1}_{\d,K}-A_\theta^{-1})A_\theta^{\frac12} A_\theta^{-\frac12}\Delta\phi_\theta\dd \sL^{d+1}\\
&+\int_{E_2}(\rho^1-\rho^2)\nabla\vphi_\theta\cdot(\nabla\Phi_\theta-\nabla\Phi)\dd \sL^{d+1}\\
&=:\cI_{21}+\cI_{22}+\cI_{23}.
\end{align*}
In the case of $\cI_{23}$, we argue exactly as in the case of the second term of $\cI_1$ to conclude that this term tends to 0 as $\theta\da 0$. As for the other terms,
let us notice that by the definition of  $A_{\delta, K}^{-1}$ (on $E_2$), we have that
\begin{align*}
\left| A^{-1}- A_{\delta, K}^{-1} \right| = 
\begin{cases}
\d &\hbox{ a.e. in } \{0 \leq A^{-1} < \delta\}\cap E_2,\\
0 &\hbox{ a.e. in } \{\delta \leq A^{-1} \leq K\}\cap E_2,\\
A^{-1} - K &\hbox{ a.e. in }  \{K \leq A^{-1}\}\cap E_2,
\end{cases}
\end{align*}
and thus
\begin{align}
\label{eq:uni41}
\left| A^{-1}- A_{\delta, K}^{-1} \right| \leq  \delta +  (A^{-1}-K)_{+}, \ {\rm{a.e.\ in\ }}E_2.
\end{align}
Therefore, since $A_\theta^{\frac12}\le\d^{-\frac12}$, we obtain
\begin{align*}
|\cI_{21}|\le \|A_\theta^{-\frac12}\Delta\phi_\theta\|_{L^2(Q)} \d^{-\frac12}\left(\d\|\rho^1-\rho^2\|_{L^2(E_2)}+\|(\rho^1-\rho^2)(A^{-1}-K)\|_{L^2(\{K \leq A^{-1}\}\cap E_2)}\right)\to 0,
\end{align*}
as $K\to+\infty$ and $\d\da 0$ (in this order). This is true indeed, by Lemma \ref{lem:estim_reg}(2) and by the fact that 
\begin{align*}
\int_{\{K \leq A^{-1}\}\cap E_2}(\rho^1-\rho^2)^2(A^{-1}-K)^2\dd\sL^{d+1}&\le\int_{\{K \leq A^{-1}\}\cap E_2}(\rho^1-\rho^2)^2(A^{-1})^2\dd\sL^{d+1}\\
&\le \int_{\{K \leq A^{-1}\}\cap E_2}(L_S(\rho^1,p^1)-L_S(\rho^2,p^2))^2\dd\sL^{d+1}.
\end{align*}
Since $A^{-1}\in L^2(E_2)$, by Chebyshev's inequality $\sL^{d+1}(\{K \leq A^{-1}\}\cap E_2)\to 0,$ as $K\to+\infty,$ so by the summability of $L_S^2(\rho^i,p^i)$ we deduce that for $K$ large enough last term in the last inequality is smaller than $\d^2$. Therefore, by the arbitrariness of $\d$, we conclude that $\cI_{21}$ has to be zero.

\medskip

To show that $|\cI_{22}|$ can be made arbitrary small, using again $A_\theta^{\frac12}\le \d^{-\frac12}$ a.e. on $E_2$ and  Lemma \ref{lem:estim_reg}(2),  we have 
\begin{align*}
|\cI_{22}|^2\le \d^{-1}C\int_{E_2}(\rho^1-\rho^2)^2(A^{-1}_{\d,K}-A_\theta^{-1})^2\dd\sL^{d+1}.
\end{align*}
By the fact that $A^{-1}_{\d,K},A_\theta^{-1}\in L^\infty(E_2)$, $\rho^1,\rho^2\in L^2(E_2)$ and by the weak-$\star$ convergence of $A_\theta^{-1}$ to $A^{-1}_{\d,K}$ in $L^\infty(E_2)$, we conclude that for $\theta$ small enough, the r.h.s. of the previous inequality is smaller than $\d$, therefore by the arbitrariness of $\d$ we conclude that $\cI_{22}=0.$ 
\end{proof}

The next three lemmas (in an implicite or explicite form) are used in the proof of Theorem \ref{thm:L1contr}.

\begin{lemma}
\label{lem:mon}
Let $(\rho^1,p^1),(\rho^2,p^2):\Om\to\R^2$ satisfy \eqref{eq:main2}. Then $L_S$ (defined in \eqref{eq:ls}) defines a monotone operator in the sense that
\begin{align}
\label{eq:mon11}
{\rm{if\ }} \rho^1(x) < \rho^2(x), {\rm{\ then\ }} L_\cS(\rho^1,p^1)(x) < L_\cS(\rho^2,p^2)(x).
\end{align}
In particular, for $x \in \Om$, if 
\begin{align}
\label{eq:1mon}
L_\cS(\rho^1,p^1)(x) = L_\cS(\rho^2,p^2)(x),
\end{align}
then $\rho^1(x) = \rho^2(x)$ and $p^1(x) = p^2(x)$.
\end{lemma}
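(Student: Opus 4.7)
The plan is to introduce the scalar function $\varphi : (0,+\infty)\setminus\{1\} \to \R$ defined by
\[
\varphi(\rho) := \rho \cS'(\rho) - \cS(\rho) + \cS(1),
\]
so that by \eqref{eq:ls} one has $L_\cS(\rho,p)(x) = \varphi(\rho(x))\one_{\{\rho\neq 1\}}(x) + p(x)\one_{\{\rho=1\}}(x)$, and then to reduce monotonicity of $L_\cS$ to strict monotonicity of $\varphi$ across the three regimes $\rho<1$, $\rho=1$, $\rho>1$ dictated by \eqref{eq:main2}.

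First I would compute $\varphi'(\rho) = \rho \cS''(\rho)$ on $\Ro$; combined with strict convexity of $\cS$ on each of $(0,1)$ and $(1,+\infty)$ from Assumption~\ref{as:gen}, integrating $\varphi'$ from $\rho_1$ to $\rho_2$ shows that $\varphi$ is strictly increasing on $(0,1)$ and on $(1,+\infty)$ separately. Next I would take the one-sided limits at $\rho=1$ using continuity of $\cS$ and \eqref{eq:slr}, obtaining $\lim_{\rho\to 1-}\varphi(\rho) = \cS'(1-)$ and $\lim_{\rho\to 1+}\varphi(\rho) = \cS'(1+)$. Together with strict monotonicity on each side, this yields the crucial pointwise inequalities $\varphi(\rho) < \cS'(1-)$ for every $\rho\in(0,1)$ and $\varphi(\rho) > \cS'(1+)$ for every $\rho>1$.

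With these ingredients I would prove \eqref{eq:mon11} by case analysis on the positions of $\rho^1(x)$ and $\rho^2(x)$ relative to $1$. When both values lie strictly on the same side of $1$, monotonicity of $\varphi$ gives directly $L_\cS(\rho^1,p^1)(x) = \varphi(\rho^1(x)) < \varphi(\rho^2(x)) = L_\cS(\rho^2,p^2)(x)$. In the mixed cases---namely $\rho^1(x)<1<\rho^2(x)$, or exactly one of $\rho^i(x)$ equals $1$---I would chain the inequality $\varphi(\rho^1(x))<\cS'(1-)\le p^i(x) \le \cS'(1+)<\varphi(\rho^2(x))$, invoking \eqref{eq:main2} at the index where $\rho^i(x)=1$ to confine $p^i(x)$ inside $[\cS'(1-),\cS'(1+)]$ and using only the relevant half of the chain depending on which index is at the critical level.

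For the ``in particular'' statement, if $L_\cS(\rho^1,p^1)(x) = L_\cS(\rho^2,p^2)(x)$, the contrapositive of \eqref{eq:mon11} forces $\rho^1(x)=\rho^2(x)$. If this common value differs from $1$, then \eqref{eq:main2} determines $p^i(x)\in\{\cS'(1-),\cS'(1+)\}$ uniquely from the sign of $\rho^i(x)-1$, so $p^1(x)=p^2(x)$ automatically; if instead $\rho^1(x)=\rho^2(x)=1$, then $p^1(x) = L_\cS(\rho^1,p^1)(x) = L_\cS(\rho^2,p^2)(x) = p^2(x)$ directly from \eqref{eq:ls}. There is no genuine obstacle: the only mildly delicate point is establishing the strict jump inequalities $\varphi(\rho)<\cS'(1-)$ and $\varphi(\rho)>\cS'(1+)$, which nevertheless follow transparently from strict monotonicity of $\varphi$ on each side combined with the one-sided limits at $\rho=1$.
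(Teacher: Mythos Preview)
Your proposal is correct and follows essentially the same approach as the paper: both define the auxiliary function $\varphi(\rho)=\rho\cS'(\rho)-\cS(\rho)+\cS(1)$, compute $\varphi'(\rho)=\rho\cS''(\rho)>0$ to get strict monotonicity on each of $(0,1)$ and $(1,+\infty)$, evaluate the one-sided limits $\varphi(1\pm)=\cS'(1\pm)$, and then run the same case analysis combining these facts with the constraint \eqref{eq:main2}. The paper organizes the cases slightly differently (it first treats $\rho^1=1<\rho^2$ and $\rho^1<1=\rho^2$ and then combines them for $\rho^1<1<\rho^2$), but the substance is identical.
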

\begin{proof}

First of all, if we have \eqref{eq:1mon} and $\rho^1(x) = \rho^2(x)$, then \eqref{eq:ls} and \eqref{eq:main2} imply $p^1(x) = p^2(x)$. Thus, it is enough to show that $\rho^1(x) = \rho^2(x)$. 

Let us show now that $L_S$ is a monotone operator in the sense of
\eqref{eq:mon11}. First, note that $\rho \mapsto \rho \cS'(\rho) - \cS(\rho)$ is strictly increasing in $\Ro$. Indeed, from the strict convexity of $\cS$ (Assumption~\ref{as:gen}), one has that
\begin{align}
\label{eq:mon12}
\partial_\rho (\rho \cS'(\rho) - \cS(\rho)) = \rho \cS''(\rho) > 0 \hbox{ in } \Ro.
\end{align} 
Therefore, \eqref{eq:mon11} holds if $\rho^1(x), \rho^2(x) \in (0,1)$ or $\rho^1(x), \rho^2(x) \in (1,+\infty)$. Thus, it remains to treat the remaining cases.

\medskip

Consider the case that $\rho^1(x) = 1 < \rho^2(x)$. Let us recall that $\cS$ and $\cS'$ are continuous in $\Rp$ and $\Rp \setminus \{1\}$, respectively (by Assumption~\ref{as:gen}). As $\rho \mapsto \rho \cS'(\rho) - \cS(\rho)$ is strictly increasing in $(1,+\infty)$, we have that
\begin{align}
\label{eq:mon21}
L_\cS(\rho^2,p^2) &= \rho^2(x) \cS'(\rho^2(x)) - \cS(\rho^2(x)) + \cS(1) > \lim_{\rho \to 1+} \rho \cS'(\rho) - \cS(\rho) + \cS(1) = \cS'(1+)\\
&\nonumber\ge p^1(x)=L_S(\rho^1,p^1)(x).
\end{align}
So, from \eqref{eq:mon21} and \eqref{eq:main2}, we conclude \eqref{eq:mon11}.  

Similar arguments show \eqref{eq:mon11} in the case when $\rho^1(x) < \rho^2(x) = 1$. 

Lastly, by combining the inequalities in \eqref{eq:mon11} for two previous cases, $\rho^1(x) = 1 < \rho^2(x)$ or $\rho^1(x) < 1 = \rho^2(x)$, we conclude \eqref{eq:mon11} for $\rho^1(x) < 1 < \rho^2(x)$.
\end{proof}

\begin{lemma}\label{lem:bdd}
We differentiate two cases.
\begin{itemize}
\item[(1)]Assume $m=1$ for $m$ given in \eqref{eq:g1}. Let $(\rho^1,p^1)$ and $(\rho^2,p^2)$ satisfy \eqref{eq:main2}. Then we have
\begin{align}
\label{eq:1bdd}
0 \le A \le\max\left\{\s_1,\s_2\right\}, \ \ae\ {\rm{in}}\ Q,
\end{align}
where $A=A(\rho^1,p^1, \rho^2,p^2)$ is given in \eqref{eq:A} and $\s_1,\s_2$  are from Assumption \eqref{eq:g1}-\eqref{eq:g2}.
\item[(2)] Let $m>1$. If there exist $c_0>0$ and a Borel set $E\subseteq Q$  such that $\rho^1,\rho^2\ge c_0$ a.e. on $E$, then $A\mres E\in L^\infty(E)$ and $\ds A\le \max\left\{\s_1,\frac{\s_2}{c_0^{m-1}}\right\}$ a.e. in $E$, where $A=A(\rho^1,p^1, \rho^2,p^2)$ is given again in \eqref{eq:A}.
\end{itemize}
\end{lemma}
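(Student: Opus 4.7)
The approach will be to express $L_S(\rho^1,p^1)(x) - L_S(\rho^2,p^2)(x)$ as an integral of $\tilde\rho S''(\tilde\rho)$ (plus possibly a jump contribution at $\tilde\rho = 1$), and then bound this integral from below using the growth conditions \eqref{eq:g1}--\eqref{eq:g2}. Nonnegativity of $A$ is immediate from Lemma \ref{lem:mon}, so the whole point is the upper bound.

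First I would treat the case where both $\rho^1(x), \rho^2(x)$ lie in the same connected component of $\R^+\setminus\{1\}$. Since on $\{\rho\neq 1\}$ one has $L_S(\rho,p) = \rho S'(\rho) - S(\rho) + S(1)$ and $\partial_\rho[\rho S'(\rho) - S(\rho)] = \rho S''(\rho)$, we may write
\begin{align*}
L_S(\rho^1,p^1)(x) - L_S(\rho^2,p^2)(x) = \int_{\rho^2(x)}^{\rho^1(x)} \tilde\rho\, S''(\tilde\rho)\,\dd\tilde\rho.
\end{align*}
For Case~(1) (i.e.\ $m=1$), the assumption \eqref{eq:g1} gives $\tilde\rho S''(\tilde\rho) > 1/\sigma_2$ on $(0,1)$, while \eqref{eq:2as22} and $r\ge 1$ give $\tilde\rho S''(\tilde\rho) \ge \tilde\rho^{r-1}/\sigma_1 \ge 1/\sigma_1$ on $(1,+\infty)$. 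Combining the two yields the uniform lower bound $\tilde\rho S''(\tilde\rho) \ge 1/\max\{\sigma_1,\sigma_2\}$ on $\R^+\setminus\{1\}$, which readily gives $A \le \max\{\sigma_1,\sigma_2\}$ in this sub-case. For Case~(2), with $\tilde\rho \ge c_0$ and $m-1>0$, on $[c_0,1)$ one has $\tilde\rho S''(\tilde\rho) > \tilde\rho^{m-1}/\sigma_2 \ge c_0^{m-1}/\sigma_2$, giving the claimed constant $\max\{\sigma_1, \sigma_2/c_0^{m-1}\}$.

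The main obstacle is the case where $\rho^1(x)$ and $\rho^2(x)$ lie on opposite sides of the critical value $1$ (or one of them equals $1$), because the map $\tilde\rho \mapsto \tilde\rho S'(\tilde\rho) - S(\tilde\rho)$ has a jump of size $S'(1+)-S'(1-)\ge 0$ at $\tilde\rho=1$, and on the critical set one must additionally use the constraint $p(x) \in [S'(1-),S'(1+)]$ from \eqref{eq:main2}. Assuming say $\rho^1(x) < 1 < \rho^2(x)$, I would decompose
\begin{align*}
L_S(\rho^2,p^2)(x) - L_S(\rho^1,p^1)(x) = \int_{\rho^1(x)}^{1} \tilde\rho S''(\tilde\rho)\,\dd\tilde\rho + \bigl(S'(1+) - S'(1-)\bigr) + \int_{1}^{\rho^2(x)} \tilde\rho S''(\tilde\rho)\,\dd\tilde\rho.
\end{align*}
The crucial observation is that the jump contribution has the same sign as $\rho^2(x)-\rho^1(x)$ and can therefore simply be discarded when bounding the denominator of $A$ from below. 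The remaining two integrals are estimated piecewise with the same pointwise bounds as in the first paragraph, yielding
\begin{align*}
L_S(\rho^2,p^2)(x) - L_S(\rho^1,p^1)(x) \ge \tfrac{1-\rho^1(x)}{\sigma_2} + \tfrac{\rho^2(x)-1}{\sigma_1} \ge \tfrac{\rho^2(x)-\rho^1(x)}{\max\{\sigma_1,\sigma_2\}},
\end{align*}
which is the desired bound. The boundary cases $\rho^1(x)=1$ or $\rho^2(x)=1$ are handled by exactly the same splitting, except that one of the two integrals reduces to an evaluation on the critical region where $L_S = p \in [S'(1-),S'(1+)]$; the same nonnegativity of the jump term ensures the estimate is not degraded. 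Case~(2) on $E$ proceeds identically, with the only modification being the replacement of $1/\sigma_2$ by $c_0^{m-1}/\sigma_2$ on $[c_0,1)$.
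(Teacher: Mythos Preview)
Your proposal is correct and follows essentially the same strategy as the paper: case analysis according to the position of $\rho^1(x),\rho^2(x)$ relative to $1$, combined with the pointwise lower bounds on $\tilde\rho S''(\tilde\rho)$ coming from \eqref{eq:g1}--\eqref{eq:g2}, and the observation that the jump $S'(1+)-S'(1-)\ge 0$ only helps. The only cosmetic difference is that you write the difference $L_S(\rho^1,p^1)-L_S(\rho^2,p^2)$ as an integral $\int_{\rho^2}^{\rho^1}\tilde\rho S''(\tilde\rho)\,\dd\tilde\rho$, whereas the paper invokes the mean value theorem to produce a single intermediate point $\tilde\rho$; both lead to the same lower bound $1/\max\{\sigma_1,\sigma_2\}$ (respectively $1/\max\{\sigma_1,\sigma_2/c_0^{m-1}\}$) on the integrand, so the arguments are interchangeable.
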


\begin{proof}
Let us recall the definition of $L_S$ from \eqref{eq:ls}, i.e.
$$L_\cS(\rho,p)(t,x):= \left[ \rho(t,x) \cS'(\rho(t,x)) - \cS(\rho(t,x)) + \cS(1) \right] \one_{ \{ \rho \neq 1 \} }(t,x) + p(t,x) \one_{\{ \rho=1 \}}(t,x).$$ The non-negativity of $A$ follows from the monotonicity of $L_\cS$ shown in Lemma~\ref{lem:mon}. We fix $q=(t,x) \in Q$ a Lebesgue for $\rho^1,\rho^2, p^1, p^2$ and assume that $\rho^1(t,x) \geq \rho^2(t,x)$. If $q\in \{\rho^1=1\}\cap\{\rho^2=1\}$ there is nothing to check, since $A(q)=0$ in both cases.

Let us show (1).

{\it Case 1.} If  $q\in \left(\{\rho^1>1\}\cap\{\rho^2>1\}\right)\cup\left(\{\rho^1<1\}\cap\{\rho^2<1\}\right)$ we have that 
\begin{align*}
\rho^1(q)\cS'(\rho^1(q)) - \cS(\rho^1(q))-\rho^2(q)\cS'(\rho^2(q)) + \cS(\rho^2(q))=\tilde\rho S''(\tilde\rho)(\rho^1(q)-\rho^2(q))\ge\min\left\{\frac{1}{\s_1},\frac{1}{\s_2}\right\}(\rho^1(q)-\rho^2(q)),
\end{align*}
where $\tilde\rho$ is a constant between $\rho^1(q)$ and $\rho^2(q)$. Therefore, we get that $A(q)\le\max\left\{\s_1,\s_2\right\}.$

\medskip

{\it Case 2.} If $q\in \{\rho^1>1\}\cap\{\rho^2=1\}$ we have  from \eqref{eq:main2} that
\begin{align*}
\rho^1(q)\cS'(\rho^1(q)) - \cS(\rho^1(q)) + S(1) - p^2(q) \geq \rho^1(q)\cS'(\rho^1(q)) - \cS(\rho^1(q)) - (\cS'(1+) -  \cS(1)).
\end{align*}
As $\rho \mapsto \rho \cS'(\rho ) - \cS(\rho )$ is continuous in $[1, \rho^1(q)]$ and differentiable in $(1, \rho^1(q))$, the mean value theorem yields that
\begin{align*}
\rho^1(q)\cS'(\rho^1(q)) - \cS(\rho^1(q)) - p^2(q) \geq \tilde\rho S''(\tilde\rho)(\rho^1(q)-1) \ge\frac{1}{\s_1}(\rho^1(q)-1),
\end{align*}
where $\tilde\rho$ is between $1$ and $\rho^1(q)$. Parallel arguments show \eqref{eq:1bdd} on the region $\{\rho^1=1\}\cap\{\rho^2<1\}.$

\medskip

{\it Case 3.} If $q\in\{\rho^1>1\}\cap\{\rho^2<1\}$ from similar arguments as in Case 2, we have that
\begin{align*}
\rho^1(q)\cS'(\rho^1(q)) - \cS(\rho^1(q)) - (\cS'(1+) -  \cS(1)) \geq \frac{1}{\s_1}(\rho^1(q)-1)
\end{align*}
and
\begin{align*}
(\cS'(1-) -  \cS(1)) - [\rho^2(q)\cS'(\rho^2(q)) - \cS(\rho^1(q))]  \geq \frac{1}{\s_2}(1 - \rho^2(q)).
\end{align*}
As $\cS'(1+) \geq \cS'(1-)$, we conclude that
\begin{align*}
L_\cS(\rho^1,p^1)(q) - L_\cS(\rho^2,p^2)(q)\ge \s_1(\rho^1(q)-1) + \s_2(1 - \rho^2(q)) = \min\left\{\frac{1}{\s_1},\frac{1}{\s_2}\right\}(\rho^1(q) - \rho^2(q)).
\end{align*}

The proof of (2) follows the very same steps as the one of (1). By the lower bound $c_0>0$ on the densities in $E$, we conclude that $\ds A\le \max\left\{\s_1,\frac{\s_2}{c_0^{m-1}}\right\}.$

\end{proof}

\begin{lemma}\label{lem:estim_reg}
Let $\e>0$ and let $\vphi_\e$ be a smooth solution to \eqref{eq:dual-reg}. Then there exists a constant $C=C(T,\|\nabla\zeta\|_{L^2})>0$ such that
\begin{itemize}
\item[(1)] $\sup_{t\in[0,T]}\|\nabla\vphi_\e\|_{L^2(\Om)}\le C$;
\item[(2)] $\|A_\e^{-\frac12}\Delta\vphi_\e\|_{L^{2}(Q)}\le C$.
\end{itemize}
\end{lemma}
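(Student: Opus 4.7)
\textbf{Plan for proving Lemma \ref{lem:estim_reg}.}

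Since $A_\theta$ is smooth and satisfies $\min\{\e,1/K\}\le A_\theta\le \max\{K_1,1/\d\}$ uniformly in $\theta$, and $\Phi_\theta$ is smooth with $\|\nabla\Phi_\theta\|_{L^\infty}\le\|\nabla\Phi\|_{L^\infty}+1$ for $\theta$ small, the equation \eqref{eq:dual-reg} is a uniformly parabolic backward Neumann problem with smooth coefficients, hence classical theory yields a unique smooth solution $\vphi_\theta$. The strategy is a standard $H^1$ energy estimate obtained by testing against $-\Delta\vphi_\theta$, where the drift is handled by Young's inequality so that the Laplacian term can be absorbed.

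Multiplying \eqref{eq:dual-reg} by $-\Delta\vphi_\theta$ and integrating over $\Om$, the boundary condition $\nabla\vphi_\theta\cdot\n = 0$ justifies $-\int_\Om \partial_t\vphi_\theta\,\Delta\vphi_\theta\,dx = \frac{1}{2}\frac{d}{dt}\int_\Om|\nabla\vphi_\theta|^2\,dx$, so that
\begin{equation*}
\frac{1}{2}\frac{d}{dt}\int_\Om|\nabla\vphi_\theta|^2\,dx \;=\; \int_\Om\frac{(\Delta\vphi_\theta)^2}{A_\theta}\,dx \;-\; \int_\Om\nabla\Phi_\theta\cdot\nabla\vphi_\theta\,\Delta\vphi_\theta\,dx.
\end{equation*}
The cross term is estimated by writing $\Delta\vphi_\theta = A_\theta^{1/2}\cdot(A_\theta^{-1/2}\Delta\vphi_\theta)$ and applying Young:
\begin{equation*}
\left|\int_\Om\nabla\Phi_\theta\cdot\nabla\vphi_\theta\,\Delta\vphi_\theta\,dx\right| \;\le\; \frac{1}{2}\int_\Om\frac{(\Delta\vphi_\theta)^2}{A_\theta}\,dx \;+\; \frac{1}{2}\int_\Om A_\theta|\nabla\Phi_\theta|^2|\nabla\vphi_\theta|^2\,dx.
\end{equation*}
Setting $M:=\max\{K_1,1/\d\}(\|\nabla\Phi\|_{L^\infty}+1)^2$ (a constant independent of $\theta$ and $\e$) and absorbing gives
\begin{equation*}
\frac{d}{dt}\int_\Om|\nabla\vphi_\theta|^2\,dx \;\ge\; \int_\Om\frac{(\Delta\vphi_\theta)^2}{A_\theta}\,dx \;-\; M\int_\Om|\nabla\vphi_\theta|^2\,dx.
\end{equation*}

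Integrating backward from $t$ to $T$ and using the terminal condition $\vphi_\theta(T,\cdot)=\zeta$, I obtain
\begin{equation*}
\int_\Om|\nabla\vphi_\theta(t)|^2\,dx \;+\; \int_t^T\!\!\int_\Om\frac{(\Delta\vphi_\theta)^2}{A_\theta}\,dx\,ds \;\le\; \int_\Om|\nabla\zeta|^2\,dx \;+\; M\int_t^T\!\!\int_\Om|\nabla\vphi_\theta|^2\,dx\,ds.
\end{equation*}
Dropping the (nonnegative) $L^2$-term on the left and applying the backward Gronwall inequality produces the bound $\sup_t\int_\Om|\nabla\vphi_\theta|^2\,dx \le e^{MT}\|\nabla\zeta\|_{L^2}^2$, which is (1). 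Plugging this back into the inequality at $t=0$ yields $\|A_\theta^{-1/2}\Delta\vphi_\theta\|_{L^2(Q)}^2 \le (1+MT e^{MT})\|\nabla\zeta\|_{L^2}^2$, which is (2).

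The main technical point (and the only real delicacy) is bookkeeping: the resulting constant $C$ genuinely depends on $T$, $\|\nabla\zeta\|_{L^2}$, $\|\nabla\Phi\|_{L^\infty}$, $K_1$, and $\d$ through $M$; the statement of the lemma appears to under-report this dependence. What matters for the application in Theorem~\ref{thm:L1contr} is that $C$ is independent of $\theta$ and $\e$, which is exactly what the estimate above delivers, since the limit $\theta\da 0$ is taken first in the outer argument while $\d$ and $K$ remain fixed until the very end.
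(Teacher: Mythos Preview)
Your proof is correct and follows exactly the route the paper has in mind: the paper omits the argument entirely, referring to \cite[Lemma 3.1]{DiMMes}, where the identical energy estimate (test against $-\Delta\vphi$, integrate by parts using the Neumann condition, absorb the cross term by Young, then Gronwall backward from the terminal datum) is carried out. Your remark about the constant actually depending on $K_1$, $\d$, and $\|\nabla\Phi\|_{L^\infty}$ through $M$ is well taken; the statement of the lemma under-reports this, but as you note, independence of $\theta$ and $\e$ is what the application requires.
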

\begin{proof}
The proof of this results follows the same lines as the one of \cite[Lemma 3.1]{DiMMes}, therefore we omit it.
\end{proof}

\begin{corollary}

\label{cor:uni}
Let $\rho_0 \in \sP(\Om)$ satisfy $\cJ(\rho_0)<+\infty$. A solution pair to \eqref{eq:main1}-\eqref{eq:main2} such that $L_S(\rho,p) \in L^2(Q)$ is uniquely determined by $\rho_0$.
\end{corollary}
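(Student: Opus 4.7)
The plan is to deduce Corollary~\ref{cor:uni} directly from Theorem~\ref{thm:L1contr} in two short steps: first conclude uniqueness of $\rho$, then bootstrap this to uniqueness of $p$ via the PDE itself together with the algebraic constraint \eqref{eq:main2}.

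For the density, I would take two solution pairs $(\rho^1,p^1)$ and $(\rho^2,p^2)$ to \eqref{eq:main1}--\eqref{eq:main2} sharing the initial datum $\rho_0$ and satisfying $L_S(\rho^i,p^i)\in L^2(Q)$. Applying Theorem~\ref{thm:L1contr} with $\rho_0^1=\rho_0^2=\rho_0$, the right-hand side of the contraction estimate vanishes, so $\|\rho^1_t-\rho^2_t\|_{L^1(\Om)}=0$ for a.e.\ $t\in[0,T]$. Hence $\rho^1=\rho^2=:\rho$ a.e.\ in $Q$.

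For the pressure, I would split according to \eqref{eq:main2}. On the set $\{\rho\neq 1\}$, the value of $p^i$ is pinned by $\rho$ alone, taking the constant value $S'(1-)$ or $S'(1+)$, so $p^1=p^2$ there. It remains to identify $p^1$ and $p^2$ on the critical region $\{\rho=1\}$. Since both solutions satisfy \eqref{eq:main1} with the same $\rho$ and the same potential $\Phi$, subtracting the two equations yields, in the sense of distributions,
\begin{equation*}
\Delta\bigl(L_S(\rho,p^1)-L_S(\rho,p^2)\bigr)=0\quad\text{in }(0,T)\times\Om,\qquad \nabla\bigl(L_S(\rho,p^1)-L_S(\rho,p^2)\bigr)\cdot\n=0\quad\text{on }[0,T]\times\partial\Om.
\end{equation*}
By the very definition \eqref{eq:ls} of $L_S$ and the fact that $\rho^1=\rho^2$, the difference $w:=L_S(\rho,p^1)-L_S(\rho,p^2)=(p^1-p^2)\one_{\{\rho=1\}}$ is in $L^2(Q)$ and vanishes a.e.\ on $\{\rho\neq 1\}$. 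Thus for a.e.\ $t$, $w(t,\cdot)$ is harmonic in $\Om$ with vanishing Neumann trace, hence constant; since it vanishes on $\{\rho(t,\cdot)\neq 1\}$, which has positive Lebesgue measure (as $\rho(t,\cdot)$ is a probability density on $\Om$ with $\sL^d(\Om)>1$, or, in the degenerate case $\sL^d(\Om)=1$ and $\rho(t,\cdot)\equiv 1$, trivially $w\equiv 0$ after integration in $x$), the constant is zero. Therefore $p^1=p^2$ a.e.\ on $\{\rho=1\}$ as well.

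The argument is essentially immediate: the only delicate point is the elementary harmonic-function step above, which is really just a Liouville-with-Neumann observation, and the small book-keeping needed to rule out the borderline case where $\{\rho=1\}$ exhausts $\Om$. Everything else is a direct invocation of Theorem~\ref{thm:L1contr} together with \eqref{eq:main2}.
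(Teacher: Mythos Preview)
Your proof follows the same route as the paper: apply Theorem~\ref{thm:L1contr} to get $\rho^1=\rho^2$, use \eqref{eq:main2} to pin down $p^1=p^2$ on $\{\rho\neq1\}$, then subtract the two PDEs to handle $\{\rho=1\}$. Your Liouville-with-Neumann step is in fact spelled out more carefully than the paper's version, which simply tests against $\vphi\in C^2_c(\Om)$, obtains $\int_{\{\rho_t=1\}}(p^1_t-p^2_t)\Delta\vphi\,\dd x=0$, and concludes ``by the arbitrariness of $\vphi$'' (implicitly: $w$ is harmonic, hence analytic, and vanishes on a set of positive measure).

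One small gap: your handling of the borderline case $\sL^d(\Om)=1$, $\rho(t,\cdot)\equiv1$ is not justified. The phrase ``trivially $w\equiv0$ after integration in $x$'' does not establish anything: in that situation $w=p^1-p^2$ is harmonic with zero Neumann trace, hence constant, but there is no mechanism forcing the constant to vanish. Indeed the PDE then reduces to $\Delta(p+\Phi)=0$ with homogeneous Neumann condition, so $p=-\Phi+c$ for any constant $c$ keeping $p\in[S'(1-),S'(1+)]$, and $p$ is genuinely non-unique. The paper's own proof does not address this edge case either, so this is a limitation of the statement rather than a defect specific to your argument. (Minor correction: your condition ``$\sL^d(\Om)>1$'' should read ``$\sL^d(\Om)\neq1$'', since $\sL^d(\Om)<1$ also rules out $\rho\equiv1$.)
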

\begin{proof}
From the contraction result in Theorem \ref{thm:L1contr} we deduce the uniqueness of $\rho$. Now suppose that there exists to pressure fields $p^1,p^2$ solving \eqref{eq:1gm} with the same $\rho$. Taking the difference of these two equations we get 
$$\Delta(L_\cS(\rho,p^1) - L_\cS(\rho,p^2))=0,\ {\rm{in}}\ \sD'((0,T)\times\Om).$$
For a.e. $t\in[0,T]$ and for any $\vphi\in C^2_c(\Om)$ we have that 
$$0=\int_\Om (L_\cS(\rho_t,p^1_t) - L_\cS(\rho_t,p^2_t)) \Delta\vphi\dd x=\int_{\{\rho_t=1\}}(p^1_t-p^2_t)\Delta\vphi\dd x,$$
where in the last equality we used the fact that $p^1_t=p^2_t$ a.e. in $\{\rho_t<1\}\cup \{\rho_t>1\}.$ By the arbitrariness of $\vphi$ we conclude that $p^1_t=p^2_t$ a.e. on $\{\rho_t=1\}$ and therefore the uniqueness of $p$ follows.

\end{proof}


\section{Discussions}\label{sec:disc}

\subsection{The emergence of the `critical region' $\{\rho=1\}$ -- an example}
We consider $d=1$ and we show that the critical region $\{\rho_t=1\}$ is of positive measure, whenever the two regions $\{\rho_t>1\}$ and $\{\rho_t<1\}$ are also of positive measure. We will see that this also implies that the critical region is expected to emerge for positive times, even if $\sL^1(\{\rho_0=1\})=0$ (and if $\sL^1(\{\rho_0<1\})>0$ and $\sL^1(\{\rho_0>1\})>0$). This phenomenon corresponds to the growth of the critical region for self-organized criticality in \cite{BanJan}.

\begin{proposition} 
\label{prop:eme}
Let $\Om \subset \R$ and $(\rho,p)$ be given in Theorem~\ref{thm:exi}. If $t\in(0,T)$ is a Lebesgue point both for $t\mapsto\rho_t$ and $t\mapsto p_t$ with 
\begin{equation}\label{ass:71}
\sL^1(\{\rho_t<1\})>0\ \ {\rm{and}}\ \  \sL^1(\{\rho_t>1\})>0
\end{equation}
then $\sL^1(\{\rho_t=1\})>0$.  
\end{proposition}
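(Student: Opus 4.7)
The strategy is by contradiction, and the key point is to exploit the extra Sobolev regularity of $p$ combined with the dimensional constraint $d=1$. Recall from Theorem~\ref{thm:exi} that $p\in L^2([0,T];H^1(\Om))\cap L^\infty(Q)$, so for a.e.\ $t\in(0,T)$ one has $p_t\in H^1(\Om)$. Since $\Om\subset\R$ is a closed bounded interval, the Sobolev embedding $H^1(\Om)\hookrightarrow C(\overline\Om)$ guarantees that $p_t$ admits a continuous representative on $\Om$. The Lebesgue point hypothesis allows me to pick this particular $t$ and work with the pointwise values of $p_t$.

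Suppose, toward contradiction, that $\sL^1(\{\rho_t=1\})=0$. Introduce the open set $U:=\{x\in\Om:\,1<p_t(x)<2\}$, which is open by continuity of $p_t$. From the characterization \eqref{eq:rel}, we have $p_t=1$ a.e.\ on $\{\rho_t<1\}$ and $p_t=2$ a.e.\ on $\{\rho_t>1\}$, whence $U\subseteq\{\rho_t=1\}$ up to a Lebesgue null set. Consequently $\sL^1(U)=0$, and since $U$ is open in $\R$, this forces $U=\emptyset$. In other words, $p_t(x)\notin(1,2)$ for every $x\in\Om$.

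Combining this with the a.e.\ bound $p_t\in[1,2]$ (valid by \eqref{eq:rel}), continuity lifts the equality $p_t\in\{1,2\}$ from a.e.\ to everywhere on $\Om$. But $\Om$ is a closed interval, hence connected, so the continuous function $p_t$ cannot take two distinct values without taking intermediate ones; therefore $p_t$ is identically equal to $1$ or identically equal to $2$ on $\Om$.

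This contradicts the assumption \eqref{ass:71}: the condition $\sL^1(\{\rho_t<1\})>0$ together with \eqref{eq:rel} forces $p_t=1$ on a set of positive measure, while $\sL^1(\{\rho_t>1\})>0$ forces $p_t=2$ on a set of positive measure. The two cannot coexist with $p_t$ being constant, which closes the contradiction and proves $\sL^1(\{\rho_t=1\})>0$. The only substantive ingredient is the $d=1$ Sobolev embedding; the remainder is a soft topological argument. If one wanted to extend this phenomenon to higher dimensions the argument would break down, since $H^1$ functions need not be continuous, and one would presumably need a more delicate analysis (for instance capacity or Lusin-type quasicontinuity arguments), which is the main obstruction to a general-dimension statement.
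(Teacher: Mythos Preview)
Your proof is correct and follows essentially the same strategy as the paper's: both exploit the one-dimensional Sobolev embedding to obtain continuity of $p_t$, then use the relation \eqref{eq:rel} and a connectedness/intermediate-value argument to show that $p_t$ must take values strictly between $1$ and $2$ on an open (hence positive-measure) set contained in $\{\rho_t=1\}$. The paper argues directly (finding $x_0$ with $p_t(x_0)=3/2$ and taking the open preimage $p_t^{-1}((5/4,7/4))$), while you argue by contradiction via connectedness; these are logically equivalent formulations of the same idea.

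One minor remark on your closing commentary: the paper notes (in the remark following the proposition) that a version in higher dimensions is available, based on the fact that a nonconstant $H^1$ function cannot take only finitely many values. So the obstruction you identify to the continuity argument is real, but there is a substitute that avoids quasicontinuity or capacity.
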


\begin{proof}
Let us show that $p(t, \cdot) \in C^{0,\frac{1}{2}}(\Om)$ for a.e. $t \in [0,T]$. 
From Theorem~\ref{thm:exi} we know that $\partial_x p \in L^2(Q)$.
As a consequence, we have that
\begin{align*}
\int_0^T \osc_{x \in [a,b]} p(t,x)\dd t \leq \int_0^T \int_a^b |\partial_x p(t,x)|\dd t \leq \| \partial_x p \|_{L^2(Q)} T^{\frac{1}{2}} |b-a|^{\frac{1}{2}}.
\end{align*}
Thus, $p \in L^1(0,T; C^{0,\frac{1}{2}}(\Om))$ and we conclude. 

\medskip

Let $t\in(0,T)$ be a Lebesgue point for both $t\mapsto p_t$ and $t\mapsto\rho_t$ such that $\sL^1(\{\rho_t<1\})>0$ and $\sL^1(\{\rho_t>1\})>0$. Then \eqref{ass:71} and \eqref{eq:rel} imply that there exists $\{U_i\}_{i \in \{1,2\}}$ subsets of $\Om$ such that $\sL^1(U_i) > 0$ and $p_t = i$ a.e. in $U_i$ for $i \in \{ 1,2\}$. As $p_t$ is continuous in $\Om$ for a.e. $t \in [0,T]$, there exists a point $x_0 \in \Om$ such that $p_t(x_0) = 3/2$. Since $N:=p_t^{-1}\left(\left(5/4, 7/4\right)\right)$ is a nonempty open set, $N$ has a positive measure. From \eqref{eq:rel}, we have that $N \subset \{\rho_t = 1 \}$ 
and thus we conclude.
\end{proof}

\begin{remark}
A similar result can be stated in higher dimensions as well, based on the fact that Sobolev functions cannot take finitely many values, except if they are constants.
\end{remark}

\subsection{Formal derivation of a free boundary problem -- an example}

Next, we formally derive the free boundary motion corresponding to the particular problem in \eqref{eq:exi}-\eqref{eq:rel}. For the analysis, we assume that $\rho$ and $p$ are continuous in $Q$ and smooth in $\{ p\rho < 1 \}$, $\{1 < p\rho < 2\}$ and $\{ p\rho > 2 \}$, which also have smooth boundaries.
Under this assumption, we deduce the following free boundary problem,
\begin{align}
\label{eq:for}
\begin{cases}
\partial_t\rho - \Delta \rho -\nabla\cdot(\nabla\Phi\rho)=0,  \quad  &\hbox{ in }  \{ p\rho < 1 \},\\
\rho =1,  \quad  &\hbox{ in }  \{1 < p\rho < 2\},\\
\partial_t\rho - 2 \Delta \rho-\nabla\cdot(\nabla\Phi\rho) =0,  \quad  &\hbox{ in }  \{ p\rho > 2 \},
\end{cases}
\hbox{ and }
\begin{cases}
p =1,  \quad  &\hbox{ in }  \{ p\rho < 1 \},\\
 - \Delta p =\Delta\Phi,  \quad  &\hbox{ in }  \{1 < p\rho < 2\},\\
p =2,  \quad  &\hbox{ in }  \{ p\rho > 2 \},
\end{cases}
\end{align}
with boundary conditions
\begin{align}
\label{eq:2for}
\begin{cases}
| D(p\rho)^{1+} | - | D(p\rho)^{1-} | &=0,  \quad  \hbox{ on }  \{ p\rho = 1 \},\\
| D(p\rho)^{2+} | - | D(p\rho)^{2-} | &=0,  \quad  \hbox{ on }  \{ p\rho = 2 \},
\end{cases}
\end{align}
where for any $f:Q \to \R$ and $c\in \R$, denotes
\begin{align*}
Df^{c\pm}(t,x):= \lim\limits_{\substack{(s,y) \to (t,x), \\ (s,y) \in \{\pm(f-c)>0\}}} Df(s,y).
\end{align*}

\medskip

As the condition \eqref{eq:rel} implies
\begin{align}
\label{eq:for11}
\begin{cases}
\rho<1,\quad p=1  \quad  &\hbox{ in }  \{ p\rho < 1 \},\\
\rho=1,\quad 1<p<2,  \quad  &\hbox{ in }  \{1 < p\rho < 2\},\\
\rho>1,\quad p=2,  \quad  &\hbox{ in }  \{ p\rho > 2 \},\\
\end{cases}
\end{align}
the first system of equations in \eqref{eq:for} is a direct consequence of Theorem~\ref{thm:exi}. Next, we consider the second system of equations in \eqref{eq:for}. For a test function $\xi \in \cC_c^\infty(Q)$ such that $\xi$ is compactly supported in $\{ 1< p \rho < 2 \}$, \eqref{eq:for11} implies 
\begin{align}\label{eq:for21}
0 &= \int_Q - \rho \partial_t\xi + D(p\rho) \cdot D\xi +D\Phi\cdot D\xi \dd x \dd t = \int_Q -  \partial_t\xi + (\rho Dp + p D\rho+D\Phi) \cdot D\xi \dd x \dd t\\
\nonumber & =  \int_Q (Dp+D\Phi) \cdot D\xi \dd x \dd t.
\end{align}
Thus, we conclude that  $- \Delta p =\Delta\Phi$ in $\{1 < p\rho < 2\}$. The other cases follow from \eqref{eq:for11}.

\medskip

Lastly, let us find the boundary condition \eqref{eq:2for} on $\{ p\rho=1\}$ and $\{ p\rho=2\}$. As \cite[Theorem 3.1]{EvaPor04}, we deduce the condition based on integration by parts. Note that the boundary condition \eqref{eq:2for} can be regarded as Rankine-Hugoniot conditions. For a test function $\xi \in \cC_c^\infty(Q)$, \eqref{eq:exi} implies that
\begin{align}
\label{eq:for31}
0 &= \int_{Q} - \rho \partial_t\xi + [D(p\rho)+\rho D\Phi] \cdot D\xi\dd x\dd t = \int_{\{p\rho<1\}} - \rho \partial_t\xi + [D(p\rho)+\rho D\Phi] \cdot D\xi \dd x \dd t\\ 
&+ \int_{\{1 < p\rho<2\}} - \rho \partial_t\xi + [Dp+D\Phi] \cdot D\xi dx dt + \int_{\{p\rho>2 \}} - \rho \partial_t\xi + [D(p\rho)+D\phi] \cdot D\xi \dd x \dd t.\nonumber
\end{align}
For  a set $N = \{p\rho<1\}, \{1 < p\rho<2\}$ or $\{p\rho>2 \}$, the smoothness of $p$ and $\rho$ in $N$ and \eqref{eq:for} imply that
\begin{align}
\label{eq:for32}
\int_{N} - \rho \partial_t\xi + [D(p\rho)+\rho D\Phi] \cdot D\xi \dd x \dd t &= \int_{\partial N} (- \rho \n_t + [D(p\rho)+\rho D\Phi] \cdot \n_x) \xi \dd\sH^d. 
\end{align}
where $\n_t$ and $\n_x$ are the outward normal vectors on $\partial N$ in $x$ and $t$ directions, respectively. From \eqref{eq:for31} and \eqref{eq:for32}, we conclude that
\begin{align}
\label{eq:for33}
0 = \int_{\partial \{ p\rho<1 \}} [D(p\rho)^{1-} - D(p\rho)^{1+}] \cdot \n_x \xi \dd\sH^d + \int_{\partial \{ p \rho >2 \}} [D(p\rho)^{2+} - D(p\rho)^{2-}] \cdot \n_x \xi \dd\sH^d.
\end{align}
By the arbitrariness of $\xi$, \eqref{eq:for33} implies that
\begin{align*}
\begin{cases}
[D(p\rho)^{1+} - D(p\rho)^{1-}] \cdot \n_x &= 0 \hbox{ on } \{p\rho=1\},\\
[D(p\rho)^{2+} - D(p\rho)^{2-}] \cdot \n_x &= 0 \hbox{ on } \{p\rho=2\}. 
\end{cases}
\end{align*}
As $\n_x$ is parallel to $D(p\rho)$ on the level set of $p\rho$, we conclude \eqref{eq:2for}.

\subsection{A nontrivial stationary solution  -- an example}

In this subsection, in one spacial dimension, we study stationary solutions to our problems. For simplicity, let us consider $\Om := (0,l) \subset  \R$ for $l > 0$. Let $(\rho,p)$ be a solution to \eqref{eq:exi}-\eqref{eq:rel} with potential $\Psi(x) = 2x$, where we have associated energy functional,
\begin{align*}
\cJ(\rho(x)) = \int_0^l \cS(\rho(x))\dd x + \int_0^l \Psi(x) \rho(x) \dd x,
\end{align*}
where $S$ is given in \eqref{eq:sl}.
From Theorem~\ref{thm:exi}, there exists a solution $(\rho,p)$ of
\be\label{eq:1sta1}
\left\{
\ba{ll}
\partial_t\rho - \partial^2_x (\rho p) - 2\partial_x \rho =0, & {\rm{in}}\ (0,T)\times (0,l),\\
\rho(0,\cdot)=\rho_0, & {\rm{in}}\ (0,l),\\
\partial_x (\rho p ) + 2\rho= 0 , & {\rm{in}}\ [0,T] \times \partial (0,l),
\ea
\right.
\ee
and $(\rho,p)$ satisfies \eqref{eq:rel}. A stationary problem associated to \eqref{eq:1sta1} is as follows: find $\rho,p : [0,l] \to \R$ satisfying \eqref{eq:rel} and 
\be\label{eq:2sta}
\left\{
\ba{ll}
\partial^2_x (\rho p) + 2\partial_x \rho =0, & {\rm{in}}\ (0,l),\\
\partial_x (\rho p ) + 2\rho= 0 , & {\rm{at}}\ x=0 \hbox{ and } x=l.
\ea
\right.
\ee
The solution $(\rho,p)$ can be also characterized as minimizers of the the free energy $\cJ$. Writing down the optimality conditions (using Lemma \ref{lem:popt}) we have
\begin{align}
\label{eq:sta11}
\rho =
\begin{cases}
\exp \left( A - x \right) &\hbox{ in } \left[0, A \right) \cap [0,l],\\
1 &\hbox{ in } \left[ A, A + \frac{1}{2} \right] \cap [0,l],\\
\exp \left( 2A+1 - 2x \right) &\hbox{ in } \left(A + \frac{1}{2},l  \right] \cap [0,l],
\end{cases}
\end{align}
where $A$ is chosen to satisfy 
\begin{align}
\label{eq:sta12}
\int_0^l \rho \dd x = 1. 
\end{align}
Depending on the value $l$, some cases in \eqref{eq:sta11} may not be present (see Figure~\ref{fig:sta}). 

\begin{figure}[h]
	\centering
	\begin{subfigure}[t]{0.25\textwidth}
\begin{tikzpicture}[xscale=1.3,yscale=0.6]
\draw[scale=2,->] (0,0) -- (1.6,0);
\draw[scale=2,->] (0,0) -- (0,2.3);
\draw[scale=2,dashed] (0,1) -- (1.6,1);
\draw[scale=2,domain=0:0.28,smooth,variable=\x,blue] plot ({\x},{exp(0.28-\x)});
\draw[scale=2,domain=0.28:0.78,smooth,variable=\x,blue] plot ({\x},{1});
\draw[scale=2,domain=0.78:1,smooth,variable=\x,blue] plot ({\x},{exp(2*0.28+1-2*\x)});
\node[left] at (0,2) {$1$};
\node[below] at (2,0){$1$};
\draw[scale=2] (1,-0.05) -- (1,0.05); 
\end{tikzpicture}	
		\caption{$l=1$}
	\end{subfigure}
	\hspace{1.5cm}
	\begin{subfigure}[t]{0.25\textwidth}
\begin{tikzpicture}[xscale=1.3,yscale=0.6]
\draw [scale=2,->] (0,0) -- (1.6,0);
\draw [scale=2,->] (0,0) -- (0,2.3);
\draw [scale=2,dashed] (0,1) -- (1.6,1);
\draw[scale=2,domain=0:0.57,smooth,variable=\x,blue] plot ({\x},{exp(0.57-\x)});
\draw[scale=2,domain=0.57:0.8,smooth,variable=\x,blue] plot ({\x},{1});
\node [left] at (0,2) {$1$};
\node[below] at (1.6,0){$0.8$};
\draw[scale=2] (0.8,-0.05) -- (0.8,0.05);
\end{tikzpicture}	
		\caption{$l=0.8$}
	\end{subfigure}
	\hspace{1.5cm}
	\begin{subfigure}[t]{0.25\textwidth}
\begin{tikzpicture}[xscale=1.3,yscale=0.6]
\draw [scale=2,->] (0,0) -- (1.6,0);
\draw [scale=2,->] (0,0) -- (0,2.3);
\draw [scale=2,dashed] (0,1) -- (1.6,1);
\draw[scale=2,domain=0:0.6,smooth,variable=\x,blue] plot ({\x},{exp(-\x)/(1-exp(-0.6))});
\node [left] at (0,2) {$1$};
\node[below] at (1.2,0){$0.6$};
\draw[scale=2] (0.6,-0.05) -- (0.6,0.05);
\end{tikzpicture}	
		\caption{$l=0.6$}
	\end{subfigure}	
	\caption{Stationary solutions}
	\label{fig:sta}
\end{figure}
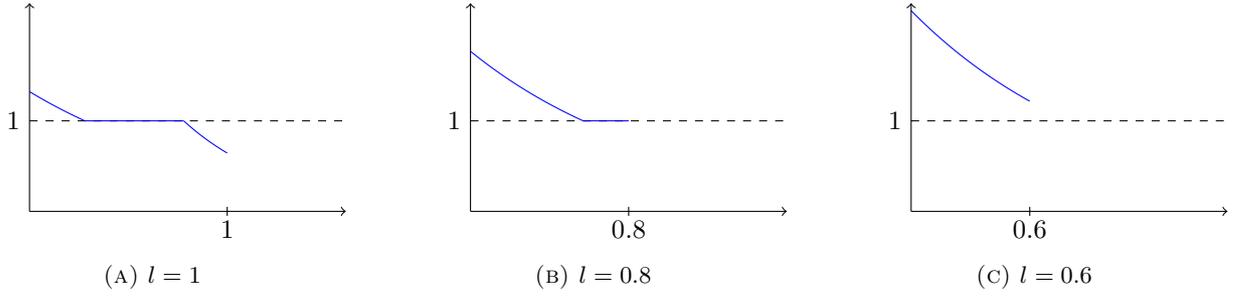

\begin{lemma}
If $l > \ln \left( \frac{3}{2} \right) + \frac{1}{2} $, then all three sets $\{\rho < 1 \}$, $\{ \rho = 1 \}$ and $\{\rho > 1\}$ have positive measure.
\end{lemma}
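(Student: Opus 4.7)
The plan is to use the normalization constraint \eqref{eq:sta12} as an equation for the constant $A$ appearing in \eqref{eq:sta11}, and to show that under the hypothesis $l>\ln(3/2)+1/2$ this equation admits a root $A^\ast\in(0,l-\tfrac12)$. Once such a root is found, the three sets are, up to a null set, $\{\rho<1\}=[0,A^\ast)$, $\{\rho=1\}=[A^\ast,A^\ast+\tfrac12]$ and $\{\rho>1\}=(A^\ast+\tfrac12,l]$, all of positive Lebesgue measure. Uniqueness of the minimizer of $\cJ$ (by strict convexity of $S$) will then ensure that this is \emph{the} stationary solution.

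The first step is a direct computation. Assuming for now $0<A<l-\tfrac12$, so that all three regimes in \eqref{eq:sta11} are present, one has
\begin{align*}
\int_0^l\rho(x)\,dx &= \int_0^A e^{A-x}\,dx+\int_A^{A+1/2}1\,dx+\int_{A+1/2}^l e^{2A+1-2x}\,dx \\
&= (e^A-1)+\tfrac12+\tfrac12\bigl(1-e^{2A+1-2l}\bigr) = e^A-\tfrac12 e^{2A+1-2l}.
\end{align*}
Hence \eqref{eq:sta12} reduces to $g(A)=0$, where
\[
g(A):=e^A-\tfrac12 e^{2A+1-2l}-1.
\]

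The second step is to locate a root of $g$ in $(0,l-\tfrac12)$ via the intermediate value theorem. At the left endpoint, $g(0)=-\tfrac12 e^{1-2l}<0$. At the right endpoint, a telescoping of the exponents gives
\[
g(l-\tfrac12)=e^{l-1/2}-\tfrac12 e^{0}-1=e^{l-1/2}-\tfrac32,
\]
and the hypothesis $l>\ln(3/2)+\tfrac12$ is precisely the statement $e^{l-1/2}>\tfrac32$, i.e.\ $g(l-\tfrac12)>0$. Continuity of $g$ and the intermediate value theorem thus yield $A^\ast\in(0,l-\tfrac12)$ with $g(A^\ast)=0$, and inserting $A=A^\ast$ into \eqref{eq:sta11} produces a stationary solution with all three sets of positive measure.

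There is no substantial obstacle: the whole argument is an application of the intermediate value theorem to a single explicit transcendental function $g$, and the interest of the proof lies in the observation that the endpoint value $g(l-\tfrac12)=e^{l-1/2}-\tfrac32$ changes sign exactly at $l=\ln(3/2)+\tfrac12$, which explains the sharp threshold appearing in the statement. The monotonicity of $g$ on the relevant interval (one checks $g'(A)=e^A(1-e^{A+1-2l})>0$ for $A<2l-1$, which certainly covers $A\in(0,l-\tfrac12)$ since $l>\tfrac12$) also gives uniqueness of $A^\ast$, consistent with the uniqueness of the minimizer of $\cJ$.
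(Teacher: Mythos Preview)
Your proof is correct and essentially identical to the paper's: your function $g$ is exactly the paper's $f$, and both arguments evaluate it at $0$ and $l-\tfrac12$ and invoke the intermediate value theorem. The only differences are cosmetic (you compute the integral first and then define $g$, while the paper defines $f$ first and then verifies that $\int_0^l\rho=f(A)+1$), and you add a monotonicity remark yielding uniqueness of $A^\ast$, which the paper does not include.
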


\begin{proof}
We consider the continuous function $f : \R \to \R$ defined by $$f(x) := \exp(x) - \frac{1}{2} \exp(2x+1-2l) - 1.$$ We claim that there exists a solution of $f(x) = 0$ in $(0, l- \frac{1}{2})$. First, $f(0) = - \frac{1}{2}\exp(1-2l) < 0$. Also, as $l > \ln \left( \frac{3}{2} \right) + \frac{1}{2}$, it holds that
\begin{align*}
f\left( l - \frac{1}{2}\right) =  \exp\left( l - \frac{1}{2} \right) - \frac{3}{2} > 0.
\end{align*}
Therefore, by the continuity of $f$ and by the intermediate value theorem, we can find $A \in (0, l- \frac{1}{2})$ such that $f(A) = 0$. For given $A$, we recall $\rho$ from \eqref{eq:sta11}. Since we have
\begin{align}
\label{eq:ex13}
A, \ \ A+\frac{1}{2} \in (0, l),
\end{align}
all three sets $\{\rho < 1 \}$, $\{ \rho = 1 \}$ and $\{\rho > 1\}$ have positive measure. 

\medskip

It remains to show that with this choice of $A$, $\rho$ given in \eqref{eq:sta11} satisfies \eqref{eq:sta12}. From \eqref{eq:sta11} and \eqref{eq:ex13}, it holds that
\begin{align*}
\int_0^l \rho(x) \dd x &= \int_{0}^{A} \exp \left( A - x \right) \dd x + \int_{A}^{A+\frac{1}{2}} 1 \dd x + \int_{A + \frac{1}{2}}^{l} \exp \left( 2A+1 - 2x \right)  \dd x\\
&=  (\exp(A) - 1) +  \frac{1}{2} + \left( \frac{1}{2} -\frac{1}{2} \exp(2A + 1 - 2l) \right) = f(A) +1 = 1.
\end{align*}
and we conclude.
\end{proof}

\begin{remark}
By a parallel argument as above, one can check that a set $\{\rho > 1\}$ will have zero measure if $l \in \left(0, \ln \left( \frac{3}{2} \right) + \frac{1}{2} \right]$. To see this, let us differentiate two cases. 

{\it Case 1.} If $l \in (0, \ln 2]$, then we have
\begin{align*}
\rho(x) = \frac{\exp(-x)}{1 - \exp(-l)} \quad  {\rm{in}}\ [0,l].
\end{align*} 
{\it Case 2.} If $l \in \left(\ln 2, \ln \left( \frac{3}{2} \right) + \frac{1}{2} \right]$, then it holds that
\begin{align*}
\rho(x) =
\begin{cases}
\exp \left( A - x \right) &{\rm{in}\ } \left[0, A \right),\\
1 &{\rm{in\ }} \left[ A, l \right].
\end{cases}
\end{align*}
where $A \in [l-\frac{1}{2}, l)$ is a solution of $\exp(A) - A = 2-l$.
\end{remark}

\medskip

\section{Acknowledgements}
The authors thank Jos\'e A. Carrillo, Inwon Kim and Filippo Santambrogio for their interests in this project. They also thank Michael R\"ockner for pointing out some interesting references on related problems. We thank the two anonymous referees for their comments and questions that helped us to improve the presentation of our results. The first author was partially supported by NSF grants DMS-1566578 and the Kwanjeong Educational Foundation. The second author was partially supported by the Air Force under the Grant AFOSR MURI FA9550-18-1-0502.


\appendix

\section{Optimal transport toolbox}

Let us recall now some basic definitions and results from the theory of optimal transport. Let $\Pi (\mu,\nu)$ be the set of all Borel probability measure $\pi$ on $\Om \times \Om$ such that
\begin{align*}
\pi(A \times \Om) = \mu(A), \quad \pi(\Om \times B) = \nu(B) \hbox{ for all measurable subsets } A,B \subset \Om.
\end{align*}
For $\mu, \nu \in \sP_2(\Om)$ we define the $2$-Wasserstein or Monge-Kantorovich distance as
\begin{align}
\label{def:w}
W_2(\mu,\nu) := \min \left\{ \int_{\Om \times \Om} |x-y|^2 \dd \gamma : \gamma \in \Pi (\mu,\nu) \right\}^{\frac{1}{2}}.
\end{align}

For $\phi : \Om \to \R$ measurable, we use the notations 
\begin{align*}
\phi^+(x) := \max\{\phi(x),0\}, \phi^-(x) := \max\{-\phi(x),0\} \hbox{ and } \phi^c(x):= {\rm{ess}}\inf\limits_{y \in \Om} \left\{\frac{1}{2}|x-y|^2 - \phi(y)\right\}
\end{align*}
where $x \in \Om.$

\subsection{Basic facts from optimal transport}

Let us recall the definition and properties of Kantorovich potentials and optimal transport maps. There results are well-known in the literature, we refer for instance to \cite{OTAM} for the proofs of the statements.

\begin{definition}
\label{def:kan}
Let $\mu,\nu\in\sP(\Om)$ be given.
$\,$
\begin{enumerate}
\item 
We say that  $\ophi : \Om \rightarrow \R$ is a Kantorovich potential from $\mu$ to $\nu$ if $( \ophi, \ophi^c )$ is a maximizer of the \emph{Kantorovich} problem:
\small
\begin{align*}
\sup \left\{ \int_\Om \phi \dd\mu + \int_\Om \psi \dd\nu : (\phi, \psi) \in L^1_\mu(\Om)\times L^1_\nu(\Om), \phi(x) + \psi(y) \leq \frac{1}{2}|x-y|^2,\  \mu\otimes\nu-{\rm{a.e.}}\ (x,y)\in \Om\times\Om \right\}.
\end{align*}
\normalsize
We denote the set of Kantorovich potential from $\mu$ to $\nu$ by $\cK(\mu,\nu)$.
\item
We say that  a Borel map $T : \Om \rightarrow \Om$ is a optimal transport map from $\mu$ to $\nu$ if $T$ is a minimizer of the following problem:
\begin{align*}
\inf \left\{ \int_\Om |x-T(x)|^2 \dd\mu : T_{\#} \mu =\nu \right\}.
\end{align*}
Here, $(T_{\#} \mu)(A):= \mu(T^{-1}(A))$ for any Borel set $A \subseteq \Om$.
\end{enumerate}
\end{definition}

\begin{lemma} [\cite{OTAM}]
\label{lem:kan} For $\mu \in \sP^{\rm{ac}}(\Om)$ and $\nu\in\sP(\Om)$, there exists a Lipschitz continuous Kantorovich potential $\ophi$ and an optimal transport map $T$ from $\mu$ to $\nu$. Also, it holds that
\begin{align}
\label{eq:1kan}
x - T(x) = \nabla \ophi(x) {\rm{\ for\ a.e.\ }}x\in\spt(\mu) {\rm{\ and\ }} W_2(\mu,\nu) = \| \nabla \ophi \|_{L^2_\mu}.
\end{align}
\end{lemma}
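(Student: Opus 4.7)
The plan is to deduce this statement from the classical theory of Kantorovich duality together with Brenier's theorem, adapted to the quadratic cost $c(x,y) = \tfrac{1}{2}|x-y|^2$ on the bounded set $\Omega$. First, I would invoke the Kantorovich duality for the quadratic optimal transport problem between $\mu$ and $\nu$, which guarantees that the supremum defining $\cK(\mu,\nu)$ is attained. A standard double $c$-transform argument (replacing any maximizing pair $(\phi,\psi)$ by $(\phi^{cc},\phi^c)$) reduces attention to $c$-concave potentials. Equivalently, writing $u(x) := \tfrac{1}{2}|x|^2 - \ophi(x)$, one sees that $u$ is convex on $\Omega$; since $\Omega$ is the closure of a bounded convex open set, any convex function is locally Lipschitz on the interior and, in fact, Lipschitz on any compact convex subset such as $\Omega$ itself. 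Hence $\ophi$ is Lipschitz continuous on $\Omega$, which gives the first claim.

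Next, I would appeal to Brenier's theorem to produce the optimal transport map. Since $\mu \ll \sL^d$, the convex function $u$ is differentiable $\mu$-almost everywhere (Rademacher's theorem applied to the locally Lipschitz $u$, combined with Alexandrov's theorem), so $\nabla u(x) = x - \nabla\ophi(x)$ is well-defined $\mu$-a.e. One then shows by a standard argument that the map $T := \id - \nabla\ophi = \nabla u$ pushes $\mu$ onto $\nu$ and is optimal for the Monge problem: the optimal coupling $\gamma \in \Pi(\mu,\nu)$ satisfies $\ophi(x) + \ophi^c(y) = \tfrac{1}{2}|x-y|^2$ for $\gamma$-a.e. $(x,y)$, which, after differentiating in $x$, yields $y = x - \nabla\ophi(x)$ on the support. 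Therefore $\gamma = (\id,T)_\# \mu$, from which $T_\#\mu = \nu$ and the identity $x - T(x) = \nabla\ophi(x)$ for a.e. $x \in \spt(\mu)$ follow.

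Finally, the Wasserstein identity is a direct computation: since $(\id,T)_\# \mu$ is optimal,
\[
W_2^2(\mu,\nu) = \int_{\Omega\times\Omega} |x-y|^2 \, d\gamma(x,y) = \int_{\Omega} |x - T(x)|^2\, d\mu(x) = \int_{\Omega} |\nabla\ophi(x)|^2\, d\mu(x) = \|\nabla\ophi\|_{L^2_\mu}^2,
\]
which gives the second identity in \eqref{eq:1kan}.

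The main delicacy, although routine in this setting, is the passage from the Kantorovich optimal plan to the Monge map: this rests crucially on the absolute continuity of $\mu$ (which ensures that the convex potential $u$ is differentiable $\mu$-a.e.) and on the fact that the quadratic cost has $c$-subdifferentials that are singletons wherever the potential is differentiable. Since all of these ingredients are classical (and available in the reference \cite{OTAM} cited in the statement), no new argument is required and the result is essentially a re-statement of Brenier's theorem in the notation used throughout the paper.
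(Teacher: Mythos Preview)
Your proposal is essentially correct and follows the classical route (Kantorovich duality, $c$-concavity, Brenier's theorem). Note that the paper does not actually prove this lemma: it is cited directly from \cite{OTAM} without proof, so there is no ``paper's own proof'' to compare against. Your sketch is a faithful summary of the standard argument one finds there.

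One small imprecision: your justification of the Lipschitz continuity of $\ophi$ via convexity of $u(x)=\tfrac12|x|^2-\ophi(x)$ is not quite right as stated. A convex function on a compact convex set need not be Lipschitz up to the boundary (e.g.\ $-\sqrt{1-x^2}$ on $[-1,1]$). The clean argument, used elsewhere in the paper (Lemma~\ref{lem:lip_new}), is that a $c$-concave $\ophi$ satisfies $\ophi=(\ophi^c)^c=\inf_{y\in\Omega}\bigl(\tfrac12|x-y|^2-\ophi^c(y)\bigr)$, which is an infimum of functions with common Lipschitz constant $\mathrm{diam}(\Omega)$ in $x$; hence $\ophi$ inherits this Lipschitz bound. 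With this correction the proof is complete.
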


\begin{lemma}{\cite[Theorem 1.3]{Vil03},\cite[Proposition 1.11]{OTAM}}\label{lem:kandual}
Let $\mu,\nu \in \sP(\Om)$. 
Define $\cL:L^1_\mu(\Om)\times L^1_\nu(\Om)\to\R$ as
\begin{align}
\label{eq:ce}
\cL(\phi,\psi) := \int_{\Om} \phi \dd\mu + \int_{\Om} \psi \dd\nu.
\end{align}
Then, it holds that
\begin{align*}
&\frac{1}{2} W_2^2(\mu,\nu) = \max \left\{ \cL(\phi,\psi) : (\phi,\psi) \in C_b(\Om)\times C_b(\Om),\ \phi(x) + \psi(y) \leq \frac{1}{2}|x-y|^2 {\rm{\ for\ all\ }} x,y\in \Om \right\},\\
& = \sup \left\{ \cL(\phi,\psi) : (\phi,\psi) \in L^1_\mu(\Om) \times L^1_\nu(\Om),\ \phi(x) + \psi(y) \leq \frac{1}{2}|x-y|^2 {\rm{\ for\ }}\mu\otimes\nu-{\rm{a.e.}}\ (x,y)\in \Om\times\Om \right\}.
\end{align*}
\end{lemma}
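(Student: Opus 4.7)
\textbf{Proof plan for Lemma \ref{lem:kandual}.} The statement bundles together two classical facts: the Kantorovich duality for $W_2^2$, and the stability of the supremum when one enlarges the admissible class from $C_b\times C_b$ (with pointwise constraint) to $L^1_\mu\times L^1_\nu$ (with constraint only in the $\mu\otimes\nu$-a.e.\ sense). The plan is to prove the first equality in full, then sandwich the two suprema to conclude the second.

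\textbf{First equality.} I would invoke the Kantorovich--Rubinstein duality in its standard form, as proved in \cite{Vil03, OTAM}. The scheme is: (i) the infimum in the primal problem $\inf_{\gamma\in\Pi(\mu,\nu)} \int \tfrac{1}{2}|x-y|^2\, d\gamma$ is attained by weak-$*$ compactness of $\Pi(\mu,\nu)$ and lower semicontinuity of $\gamma\mapsto \int \tfrac{1}{2}|x-y|^2\, d\gamma$ on the compact $\Om\times\Om$; (ii) the supremum in the dual problem (over $C_b\times C_b$ admissible pairs) is attained along a maximizing sequence built out of $c$-concave functions and their $c$-transforms, using that $c(x,y)=\tfrac12|x-y|^2$ is continuous on $\Om\times\Om$; (iii) the two optimal values coincide by the Fenchel--Rockafellar theorem (or equivalently, by the min-max argument in Villani's book). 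Then the common value equals $\tfrac12 W_2^2(\mu,\nu)$ by definition \eqref{def:w}. This is the classical result and I would simply quote it from the cited references rather than reproduce the machinery.

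\textbf{Second equality.} The inclusion $\,\tfrac12 W_2^2(\mu,\nu)\le \sup_{L^1}\cL\,$ is immediate: every $C_b$-admissible pair with pointwise inequality is also $L^1$-admissible with inequality holding $\mu\otimes\nu$-a.e., so enlarging the feasible set cannot decrease the supremum. The reverse inequality is the substantive step. Given an admissible pair $(\phi,\psi)\in L^1_\mu\times L^1_\nu$ with $\phi(x)+\psi(y)\le \tfrac12|x-y|^2$ for $\mu\otimes\nu$-a.e.\ $(x,y)$, I would perform the standard $c$-transform improvement: define
\[
\tilde\psi(y):=\essinf_{x\in\Om}\Bigl\{\tfrac{1}{2}|x-y|^2-\phi(x)\Bigr\},
\]
where the essential infimum is taken with respect to $\mu$. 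By Fubini applied to the null set $N=\{\phi(x)+\psi(y)>\tfrac12|x-y|^2\}$, for $\nu$-a.e.\ $y$ one has $\psi(y)\le \tfrac12|x-y|^2-\phi(x)$ for $\mu$-a.e.\ $x$, hence $\psi(y)\le \tilde\psi(y)$ $\nu$-a.e. Moreover $\tilde\psi$ inherits the $\tfrac12|x-y|^2$-concavity structure, so it admits an upper semicontinuous representative on $\Om$ satisfying $\phi(x)+\tilde\psi(y)\le \tfrac12|x-y|^2$ \emph{everywhere} on the support of $\mu\otimes\nu$. Symmetrically, the $c$-transform $\hat\phi$ of $\tilde\psi$ satisfies $\phi\le \hat\phi$ $\mu$-a.e., $\hat\phi+\tilde\psi\le \tfrac12|x-y|^2$ pointwise on $\spt(\mu)\times\spt(\nu)$, and inherits Lipschitz regularity from the cost. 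Since $\cL(\phi,\psi)\le \cL(\hat\phi,\tilde\psi)$ and the improved pair now satisfies the pointwise constraint on $\spt(\mu)\times\spt(\nu)$, Kantorovich duality (first part of the lemma, or equivalently testing against any optimal $\gamma\in\Pi(\mu,\nu)$, whose support lies in $\spt(\mu)\times\spt(\nu)$) yields $\cL(\hat\phi,\tilde\psi)\le \tfrac12 W_2^2(\mu,\nu)$.

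\textbf{Anticipated difficulties.} The routine part is the first equality (just citation of \cite{Vil03, OTAM}). The genuine subtlety lies in the second equality: the $\mu\otimes\nu$-a.e.\ constraint does \emph{not} automatically yield the $\gamma$-a.e.\ constraint for a transport plan $\gamma\in\Pi(\mu,\nu)$, because $\gamma$ can be singular with respect to $\mu\otimes\nu$ (e.g.\ concentrated on the graph of a Brenier map). This is why the naive Fubini-based argument fails, and why the $c$-transform detour is needed: it upgrades the a.e.\ inequality to a pointwise one on the (compact) supports, after which testing against an optimal plan is legitimate. Integrability of $\tilde\psi$ and $\hat\phi$ (so that $\cL$ is well-defined on them) follows from the boundedness of $\Om$ combined with the $L^1$ integrability of the starting pair, and from the fact that $c$-transforms of bounded integrable functions with respect to $\tfrac12|x-y|^2$ on a bounded set are Lipschitz, hence bounded.
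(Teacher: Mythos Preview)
The paper does not provide its own proof of this lemma: it is stated purely as a citation of \cite[Theorem 1.3]{Vil03} and \cite[Proposition 1.11]{OTAM}, and is used only as a black box (notably in the proof of Proposition~\ref{prop:dual}). Your proposal therefore goes well beyond what the paper does.

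That said, your plan is sound. The first equality is indeed standard Kantorovich duality, and a citation suffices. For the second equality you correctly isolate the real issue: an optimal plan $\gamma\in\Pi(\mu,\nu)$ is typically singular with respect to $\mu\otimes\nu$ (e.g.\ supported on a graph), so the $\mu\otimes\nu$-a.e.\ constraint cannot be integrated directly against $\gamma$; the $c$-transform detour upgrades the inequality to one that holds pointwise on $\spt(\mu)\times\spt(\nu)$, where it can legitimately be tested against $\gamma$. One small point to tighten: the claim that $\tilde\psi$ ``admits an upper semicontinuous representative'' satisfying the inequality everywhere deserves a line of justification---since $\Om$ is bounded, $y\mapsto\tfrac12|x-y|^2$ is Lipschitz uniformly in $x$, so the $\mu$-essential infimum $\tilde\psi$ is already Lipschitz in $y$ and hence everywhere defined; then the second transform $\hat\phi$ can be taken as a genuine infimum over $y\in\spt(\nu)$, yielding the pointwise bound you need.
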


\begin{proposition}
\label{prop:dual}
For $r \in [1,+\infty]$, let $\mu \in L^r(\Om) \cap \sP(\Om)$ and $\nu \in \sP(\Om)$.
Then, it holds that
\begin{align}
\label{eq:1dual}
\sup\limits_{\phi \in L^{r'}(\Om)}  \cL(\phi,\phi^c) = \frac{1}{2} W_2^2(\mu,\nu)
\end{align}
where $r' := \frac{r}{r-1}$ ($r'=1$ if $r=+\infty$ and $r'=+\infty$ if $r=1$) and $\cL$ is given in \eqref{eq:ce}.
\end{proposition}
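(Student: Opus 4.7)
The plan is to prove both inequalities separately, using Lemma~\ref{lem:kan} and Lemma~\ref{lem:kandual} as the main ingredients. Let us denote $D := \sup_{\phi \in L^{r'}(\Om)} \cL(\phi,\phi^c)$.

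For the upper bound $D \leq \frac{1}{2}W_2^2(\mu,\nu)$, I would take an arbitrary $\phi \in L^{r'}(\Om)$ (which we may assume satisfies $\phi^c \in L^1_\nu(\Om)$, otherwise there is nothing to check). Since $\mu \in L^r(\Om)$ and $\phi \in L^{r'}(\Om)$, H\"older's inequality gives $\phi \in L^1_\mu(\Om)$. From the definition of the $c$-transform as an essential infimum, one has $\phi(x) + \phi^c(y) \leq \frac{1}{2}|x-y|^2$ for $\mu \otimes \nu$-a.e.\ $(x,y) \in \Om \times \Om$, so the pair $(\phi, \phi^c)$ is admissible in the second form of Lemma~\ref{lem:kandual}, which yields $\cL(\phi,\phi^c) \leq \frac{1}{2}W_2^2(\mu,\nu)$. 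Taking the supremum over $\phi$ gives the desired inequality.

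For the lower bound $D \geq \frac{1}{2}W_2^2(\mu,\nu)$, the key observation is that $\mu \in L^r(\Om) \cap \sP(\Om) \subset \sP^{\rm{ac}}(\Om)$, so Lemma~\ref{lem:kan} provides a Lipschitz continuous Kantorovich potential $\ophi \in \cK(\mu,\nu)$. Since $\Om$ is bounded, $\ophi$ is bounded on $\Om$, hence $\ophi \in L^\infty(\Om) \subset L^{r'}(\Om)$ for every $r' \in [1,+\infty]$. Its $c$-transform $\ophi^c$ is also bounded on $\Om$ (by the explicit estimate $|\ophi^c(x)| \leq \tfrac{1}{2}\,\mathrm{diam}(\Om)^2 + \|\ophi\|_{L^\infty}$), so $\ophi^c \in L^1_\nu(\Om)$. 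By the definition of Kantorovich potentials in Definition~\ref{def:kan} and Lemma~\ref{lem:kandual}, $(\ophi,\ophi^c)$ is an actual maximizer of the dual problem, giving $\cL(\ophi,\ophi^c) = \frac{1}{2}W_2^2(\mu,\nu) \leq D$.

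Combining the two inequalities yields \eqref{eq:1dual}. There is no real obstacle here: the only delicate point is to guarantee the existence of a Kantorovich potential lying in $L^{r'}(\Om)$ for every admissible $r'$, which is handled uniformly by the Lipschitz regularity provided by Lemma~\ref{lem:kan} together with the boundedness of $\Om$. The absolute continuity of $\mu$, which is implicit in the hypothesis $\mu \in L^r(\Om) \cap \sP(\Om)$, is precisely what makes Lemma~\ref{lem:kan} applicable.
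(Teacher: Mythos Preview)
Your proof is correct and follows essentially the same strategy as the paper: the upper bound via H\"older's inequality (placing $\phi \in L^1_\mu$) matches the paper's Step~1, and your lower bound via an explicit Lipschitz Kantorovich potential from Lemma~\ref{lem:kan} plays the same role as the paper's Step~2, where instead the authors invoke a density argument together with the attainment of the $C_b$-maximum from \cite[Proposition~1.11]{OTAM}. Your route is slightly more direct, since exhibiting a bounded optimizer in $L^{r'}(\Om)$ avoids introducing the intermediate supremum $\I_1$ over pairs $(\phi,\psi)\in L^{r'}\times L^1_\nu$.
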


\begin{proof}

\textbf{Step 1.}
Let us show that
\begin{align}
\label{eq:dual11}
\frac{1}{2} W_2^2(\mu,\nu) = \I_1
\end{align}
where
\begin{align}
\label{eq:dual12}
\I_1 &:= \sup \left\{ \cL(\phi,\psi) : (\phi, \psi) \in L^{r'}(\Om) \times L^1_\nu(\Om),\ \phi(x) + \psi(y) \leq \frac{1}{2}|x-y|^2 {\rm{\ for\ }}\mu\otimes\nu-{\rm{a.e.}}\ (x,y)\in \Om\times\Om \right\}.
\end{align}
By H\"{o}lder's inequality, it holds that
\begin{align}
\label{eq:dual13}
\| \phi \|_{L^1_\mu(\Om)} = \int_\Om |\phi(x)| \mu(x)\dd x \leq \| \phi \|_{L^{r'}(\Om)} \| \mu \|_{L^{r}(\Om)}.
\end{align}
As $\mu \in L^r(\Om) \cap \sP(\Om)$, we conclude that
\begin{align*}
L^{r'}(\Om) \subset L^1_\mu(\Om) \hbox{ and thus } C_b(\Om) \times C_b(\Om) \subset L^{r'}(\Om) \times L^1_\nu(\Om) \subset L^1_\mu(\Om) \times L^1_\nu(\Om).
\end{align*}
From Lemma~\ref{lem:kandual}, we conclude \eqref{eq:dual11}.

\medskip

\textbf{Step 2.}
It remains to show that
\begin{align}
\label{eq:dual21}
\sup\limits_{\phi \in L^{r'}(\Om)} \cL(\phi,\phi^c) = \I_1
\end{align}
for $\I_1$ given in \eqref{eq:dual12}. Indeed, let us notice that by density we have 
$$\sup_{\phi\in L^{r'}(\Om)}\cL(\phi,\phi^c)=\sup_{\phi\in C_b(\Om)}\cL(\phi,\phi^c)=\max_{\phi\in C_b(\Om)}\cL(\phi,\phi^c),$$ and the latter two quantities are finite by \cite[Proposition 1.11]{OTAM}. Therefore the thesis of the proposition follows. 
\end{proof}

\subsection{Some properties of minimizers in the minimizing movements scheme and optimality conditions}

\begin{lemma}\label{lem:postg}
For $\rho_{k}$ given in \eqref{eq:step} and $\cS$ satisfying \eqref{eq:1n}, it holds that $\rho_{k}>0$ a.e.
\end{lemma}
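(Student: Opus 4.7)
The plan is a standard perturbation argument by contradiction exploiting the infinite negative slope of $S$ at the origin. Suppose $\sL^d(A)>0$ where $A:=\{\rho_k=0\}$. Since $\rho_k$ is a probability density, $\int_{\Om\setminus A}\rho_k\dd x = 1$, so there exist $c>0$ and a Borel set $B\subseteq\Om$ of positive measure such that $\rho_k\ge c$ a.e.\ on $B$. Choose Borel sets $A'\subseteq A$ and $B'\subseteq B$ with $\sL^d(A')=\sL^d(B')=\alpha\in(0,\sL^d(A)\wedge\sL^d(B))$. For $\varepsilon\in(0,\alpha c)$, define the competitor
\begin{equation*}
\rho^\varepsilon := \rho_k + \frac{\varepsilon}{\alpha}\bigl(\one_{A'}-\one_{B'}\bigr).
\end{equation*}
By construction $\rho^\varepsilon\in\sP(\Om)$ and $\rho^\varepsilon\ge 0$.

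The idea is that the internal energy contribution on $A'$ dominates all other terms for $\varepsilon$ small. Indeed, by the continuity and convexity of $S$ on $[0,+\infty)$, one has
\begin{equation*}
\alpha\bigl[S(\varepsilon/\alpha)-S(0)\bigr] \;=\; \varepsilon \cdot \frac{S(\varepsilon/\alpha)-S(0)}{\varepsilon/\alpha},
\end{equation*}
and the quotient on the right tends to $S'(0+)=-\infty$ (by \eqref{eq:1n}) as $\varepsilon\da 0$ with $\alpha$ fixed. Therefore, for any $M>0$, we can find $\varepsilon>0$ small enough so that this term is less than $-M\varepsilon$. On the other hand, since $\rho_k$ stays in a compact interval $[c/2,\|\rho_k\|_{L^\infty}]$ on $B'$ and $S$ is $C^1$ on $(0,+\infty)$, the change $\int_{B'}[S(\rho_k-\varepsilon/\alpha)-S(\rho_k)]\dd x$ is bounded by $K_S\varepsilon$ for some constant $K_S$ depending only on $S$, $c$ and $\|\rho_k\|_{L^\infty}$.

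For the potential and transport terms, we have $|\cF(\rho^\varepsilon)-\cF(\rho_k)|\le \|\Phi\|_{L^\infty}\varepsilon \cdot (2/\alpha)\cdot\alpha = 2\|\Phi\|_{L^\infty}\varepsilon$. The Wasserstein cost can be controlled by modifying an optimal plan $\pi_k\in\Pi(\rho_k,\rho_{k-1})$: remove a mass of size $\varepsilon/\alpha$ from $\pi_k\mres(B'\times\Om)$ and redistribute the corresponding second marginal $\nu_{B'}$ by coupling it with $\one_{A'}\cdot\sL^d/\alpha$ in the first variable. This produces an admissible plan $\pi^\varepsilon\in\Pi(\rho^\varepsilon,\rho_{k-1})$ whose cost differs from that of $\pi_k$ by at most $\mathrm{diam}(\Om)^2\cdot\varepsilon$. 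Hence the $\frac{1}{2\tau}W_2^2$ contribution also changes by $O(\varepsilon)$ with a constant depending on $\tau$, $\Om$ but not on $M$.

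Collecting all terms,
\begin{equation*}
\Bigl(\cJ(\rho^\varepsilon)+\cF(\rho^\varepsilon)+\tfrac{1}{2\tau}W_2^2(\rho^\varepsilon,\rho_{k-1})\Bigr)-\Bigl(\cJ(\rho_k)+\cF(\rho_k)+\tfrac{1}{2\tau}W_2^2(\rho_k,\rho_{k-1})\Bigr) \;\le\; (-M+K)\varepsilon,
\end{equation*}
where $K$ depends on $S,\Phi,\tau,\Om,c,\|\rho_k\|_{L^\infty}$ but not on $M$. Choosing $M>K$ contradicts the minimality of $\rho_k$ in \eqref{eq:step}. The main technical point to verify carefully is the $O(\varepsilon)$ bound on the Wasserstein difference; the rest is bookkeeping.
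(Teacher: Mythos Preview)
Your argument is essentially correct and proceeds by a genuinely different route than the paper's. The paper uses as competitor the convex combination $\mu_\varepsilon:=(1-\varepsilon)\rho_k+\varepsilon\mu_1$ with $\mu_1$ the uniform density on $\Om$, and then controls the Wasserstein term in one line via the convexity of $\rho\mapsto W_2^2(\rho,\rho_{k-1})$ along linear interpolations: $W_2^2(\mu_\varepsilon,\rho_{k-1})\le(1-\varepsilon)W_2^2(\rho_k,\rho_{k-1})+\varepsilon W_2^2(\mu_1,\rho_{k-1})$, giving the $O(\varepsilon)$ bound for free. The internal energy term is then handled via monotonicity of the subdifferential of $S$. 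Your approach instead moves a lump of mass directly from a set where $\rho_k$ is positive to the vanishing set; this makes the entropy gain on $A'$ completely transparent, but forces you to build an admissible plan $\pi^\varepsilon$ by hand. Your sketch of the plan modification is correct: using the disintegration $\pi_k=\rho_k(x)\dd x\otimes K_x(\dd y)$, one removes $\frac{\varepsilon}{\alpha}\one_{B'}(x)\dd x\otimes K_x(\dd y)\le\pi_k$ (with second marginal some $\nu$ of total mass $\varepsilon$) and adds back the product $\frac{1}{\alpha}\one_{A'}\dd x\otimes\nu$, which indeed yields the $\text{diam}(\Om)^2\varepsilon$ bound.

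Two small points to clean up. First, you invoke $\|\rho_k\|_{L^\infty}$, but at this stage nothing guarantees $\rho_k$ is bounded; you don't actually need it, since by convexity of $S$ and monotonicity of its subdifferential one has $S(\rho_k-\varepsilon/\alpha)-S(\rho_k)\le -\frac{\varepsilon}{\alpha}\xi$ for any $\xi\in\partial S(c/2)$, uniformly on $B'$, regardless of how large $\rho_k$ is there. Second, $S$ is not $C^1$ on all of $(0,+\infty)$ under Assumption~\ref{as:gen} (it fails at $1$), so the bound on $B'$ should be phrased through subgradients rather than $S'$. Both fixes are cosmetic; the structure of your argument is sound.
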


\begin{proof} 
The proof is inspired by \cite[Lemma 8.6]{OTAM}. The difference is that we consider the sub-differential of $\cS$ instead of its derivative.

\medskip

\textbf{Step 1.} For simplicity, let us use the notation $\mu := \rho_{k}$ and consider a competitor 
\begin{align}
\label{eq:mu1}
\mu_1:= \frac{1}{\sL^d(\Om)}. 
\end{align}
Define $\mu_\e:= (1-\e)\mu + \e \mu_1$ for $\e \in (0,1)$. 
From convexity of Wasserstein distance, we have
\begin{align*}
\cI_1:= \cJ(\mu) - \cJ(\mu_\e) &\leq \frac{1}{2\t}W_2^2(\mu_\e,\rho_{k-1}) - \frac{1}{2\t}W_2^2(\mu,
\rho_{k-1}) \leq \e \left\{ \frac{1}{2\t}W_2^2(\mu_1,\rho_{k-1}) - \frac{1}{2\t}W_2^2(\mu,
\rho_{k-1}) \right\}.
\end{align*}
The compactness of $\Om$ implies 
\begin{align}
\label{eq:post1}
\cI_1 &\leq C_1\e \hbox{ for some } C_1>0.
\end{align}

\medskip

\textbf{Step 2.} Set $A:= \{ x \in \Om : \mu >0\}$ and $B:= \{ x \in \Om : \mu = 0 \}$. Let us show that $\sL^d(B)=0$. 
For sufficiently small $\e>0$, it holds that $\e \mu_1<1$ and thus
\begin{align*}
\cI_1 
& = \int_A \cS(\mu(x)) - \cS(\mu_\e(x))+\Phi[\mu(x)-\mu_\e(x)]\dd x + (\cS(0) - \cS(\e \mu_1)) \sL^d(\cB)-\e\frac{1}{\sL^d(\Om)}\int_B\Phi\dd x. 
\end{align*}
By convexity of $\cS$, it holds that 
\begin{align*}
\cI_1 
&\geq \e \int_A [\xi_\e(x)+\Phi] (\mu(x) - \mu_1)\dd x + (\cS(0) - \cS(\e \mu_1)) \sL^d(B)-\e\frac{1}{\sL^d(\Om)}\int_B\Phi\dd x ,
\end{align*}
where $\xi_\e(x) \in \partial \cS(\mu_\e(x))$.

\medskip

 From \eqref{eq:post1}, we conclude that for all $\xi_\e(x) \in \partial \cS(\mu_\e(x))$
\begin{align}
\label{eq:post2}
\cI_2 := \int_A [\xi_\e(x)+\Phi] (\mu(x) - \mu_1)\dd x + \frac{1}{\e}(\cS(0) - \cS(\e \mu_1)) \sL^d(\cB) \leq C_1+C.
\end{align}

\medskip

Note that by the convexity of $S$, its subdifferential is monotone, therefore for all $\e \in [0,1]$, 
\begin{align*}
(\xi_\e(x) - \xi_1)  (\mu_{\e}(x) - \mu_1) \geq 0,
\end{align*}
and thus
\begin{align}
\label{eq:post3}
\xi_\e(x)  (\mu(x) - \mu_1) \geq \xi_1 (\mu(x) - \mu_1),
\end{align}
for a.e. $x\in\Om$ where $\xi_1 \in \partial \cS(\mu_1)$.
Therefore, 
\begin{align*}
\cI_2 \geq \int_A [\xi_1+\Phi] (\mu(x) - \mu_1) \dd x +  \frac{1}{\e}(\cS(0) - \cS(\e \mu_1)) \sL^d(B).
\end{align*}
Since $\cS'(0+) = -\infty$ from \eqref{eq:1n}, the right hand side blows up as $\e$ goes to zero unless $\sL^d(B)=0$. As $\cI_2$ is bounded by $C_1+C$ from \eqref{eq:post2}, we conclude that $\sL^d(B)=0$, and thus $\mu > 0$ a.e.
\end{proof}

\section{Some results from convex analysis}\label{sec:opt}

For a Banach space $\fX$ and $F : \fX \rightarrow \Rinfb$, we say that $F^* : \fX^* \rightarrow \Rinfb$ is a Legendre transform of $F$ if 
\begin{align}
\label{def:lt}
F^*(y) := \sup\limits_{ x \in \fX }{\{ \langle x, y \rangle_{\fX,\fX^*} - F(x) \}} \hbox{ for } y \in \fX^*. 
\end{align}
Here, $\fX^*$ stands for the topological dual space of $\fX$. We will denote by $C_b(\Om)$ the space of bounded continuous functions in $\Om$. In the derivation of optimality conditions associated to the minimizing movement schemes, in Section \ref{sec:opt}, we use subdifferential calculus in $L^r(\Om)$ ($r\in[1,+\infty]$) spaces. Let us recall some basic results on this. 

Let us recall the definition of subdifferentials on $L^r(\Om)^*$ for $r \in [1,+\infty]$.

\begin{definition}
\label{def:subdif}
\cite[(1.9), (1.10) \& (1.13)]{Roc71}
For $\psi : \R \rightarrow \Rinf$, $r \in [1,+\infty]$ and $\Psi : L^r(\Om) \rightarrow \Rinf$ defined by  
\begin{align}
\label{eq:psi}
\Psi(\mu) := \int_\Om \psi(\mu(x))\dd x, 
\end{align}
we say that $\xi \in L^r(\Om)^*$  belongs to the subdifferential of $\Psi$ at $\mu \in L^r(\Om)$ if 
\begin{align}
\label{eq:2psi}
\Psi(\nu)  \geq \Psi(\mu)+  \langle \xi, \nu - \mu \rangle_{L^r(\Om)^*, L^r(\Om)}
\end{align}
for every $\nu \in L^r(\Om)$. 
We denote by $\partial \Psi(\mu)$ the set of subdifferentials of $\Psi$ at the point $\mu \in L^r(\Om)$.
\end{definition}

\begin{definition}\cite[Definition 1.3.1]{EkeTem}
\label{def:gam}
Let $\fX$ be a Banach space. The set of functions $F : \fX \to \Rinfb$ which are pointwise supremum of a family of continuous affine function is denoted by $\Gamma(\fX)$.
\end{definition}

\begin{lemma}\cite[Proposition 1.3.1]{EkeTem}
\label{lem:equ}
 The following properties are equivalent to each other:
\begin{enumerate}
\item
$F \in \Gamma(\fX)$.
\item
$F$ is a convex lower semicontinuous function from $\fX$ into $\Rinfb$ and if $F$ takes the value $-\infty$, then $F$ is identically equal to $-\infty$.
\end{enumerate}
\end{lemma}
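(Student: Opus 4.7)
The plan is to establish the equivalence by proving each implication separately, with the non-trivial direction relying on the Hahn-Banach separation theorem applied to the epigraph of $F$.

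First I would handle the easier direction (1) $\Rightarrow$ (2). If $F$ is the pointwise supremum of a family $\{\ell_i\}_{i \in I}$ of continuous affine functions on $\fX$, then convexity is immediate (each $\ell_i$ is convex, and pointwise suprema of convex functions are convex) and lower semicontinuity follows from the analogous fact for pointwise suprema of continuous functions. For the $-\infty$ dichotomy, observe that if $F(x_0) = -\infty$ at some $x_0$, then $\ell_i(x_0) \le F(x_0) = -\infty$ for every $i \in I$; since no continuous affine function takes the value $-\infty$, the family must be empty, hence $F \equiv \sup \emptyset \equiv -\infty$ on $\fX$.

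For the reverse direction (2) $\Rightarrow$ (1), I would split into cases. The case $F \equiv -\infty$ is realized as the supremum of the empty family, and the case $F \equiv +\infty$ is realized as the supremum of all continuous affine functions on $\fX$. Otherwise $F : \fX \to \R \cup \{+\infty\}$ is proper, convex, and lower semicontinuous, so its epigraph
\[
\mathrm{epi}(F) := \{(x,\alpha) \in \fX \times \R : F(x) \le \alpha\}
\]
is a nonempty, convex, closed subset of the Banach space $\fX \times \R$. Given any $x_0 \in \fX$ and any $c < F(x_0)$, the point $(x_0,c)$ lies outside $\mathrm{epi}(F)$, so the geometric form of the Hahn-Banach theorem produces a continuous linear functional $\Lambda(x,\alpha) = \langle \xi, x\rangle_{\fX^*,\fX} + \lambda \alpha$ and a number $\gamma \in \R$ with
\[
\langle \xi, x_0\rangle + \lambda c < \gamma \le \langle \xi, x\rangle + \lambda \alpha \quad \text{for all } (x,\alpha) \in \mathrm{epi}(F).
\]
Letting $\alpha \to +\infty$ at any fixed $x \in \mathrm{dom}(F)$ forces $\lambda \ge 0$; a short argument (either using that $F$ is proper and the separation is strict on the excluded point, or otherwise employing a second separation at a genuinely interior epigraph point) rules out $\lambda = 0$, so $\lambda > 0$. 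Dividing by $\lambda$ yields a continuous affine function $\ell(x) := \lambda^{-1}(\gamma - \langle \xi, x\rangle)$ satisfying $\ell \le F$ pointwise and $\ell(x_0) > c$.

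The final step is the envelope representation: taking the pointwise supremum over the (nonempty) family $\mathcal{A} := \{\ell : \fX \to \R \text{ continuous affine with } \ell \le F\}$ gives a function $\widetilde{F} \le F$, and the previous separation argument shows that for every $x_0 \in \fX$ and every $c < F(x_0)$ some $\ell \in \mathcal{A}$ satisfies $\ell(x_0) > c$, hence $\widetilde{F}(x_0) \ge F(x_0)$, proving $F \equiv \widetilde{F} \in \Gamma(\fX)$. The main obstacle I anticipate is the Hahn-Banach step in case $F$ takes only the values $\{-\infty, +\infty\}$ on a nontrivial partition — but the dichotomy hypothesis in (2) precisely rules out this pathology, which is why that assumption is included and is the crux of the equivalence.
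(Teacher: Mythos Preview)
Your argument is correct and follows the standard route via Hahn--Banach separation on the epigraph, which is precisely the proof given in the cited reference (Ekeland--Temam). The paper itself does not supply a proof for this lemma; it simply quotes the statement from \cite[Proposition 1.3.1]{EkeTem} as a known tool used to verify that the energy functionals lie in $\Gamma(L^r(\Om))$.

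One minor point worth tightening: in the step ruling out $\lambda = 0$, the case $x_0 \notin \mathrm{dom}(F)$ genuinely requires the auxiliary construction you allude to (combining the vertical separating hyperplane with a non-vertical one obtained from a point where $F$ is finite), whereas for $x_0 \in \mathrm{dom}(F)$ the contradiction is immediate since $(x_0, F(x_0)) \in \mathrm{epi}(F)$ would force $\langle \xi, x_0 \rangle < \gamma \le \langle \xi, x_0 \rangle$. Your parenthetical acknowledges this, but since the lemma is only invoked in the paper as a black box, no further detail is needed.
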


\begin{lemma}\cite[Proposition 1.5.6]{EkeTem}
\label{lem:subsum}
If $F_1, F_2 \in \Gamma(\fX)$ and if there exists $\hmu \in \fX$ such that $F_1(\hmu), F_2(\hmu) < +\infty$ and  either $F_1$ or $F_2$ is continuous at $\hmu$, then it holds that
\begin{align*}
\partial F_1(\mu) + \partial F_2 (\mu) = \partial (F_1 + F_2)(\mu) \hbox{ for all } \mu \in \fX.
\end{align*}
\end{lemma}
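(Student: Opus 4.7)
The statement is the Moreau--Rockafellar sum rule for subdifferentials; as it is cited as \cite[Proposition 1.5.6]{EkeTem}, the paper does not need to prove it, but the proof proceeds by classical Hahn--Banach separation. Here is how I would lay out a self-contained argument.

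The plan is to prove the two inclusions separately. The \emph{easy inclusion} $\partial F_1(\mu)+\partial F_2(\mu)\subseteq\partial(F_1+F_2)(\mu)$ is a direct consequence of adding the two defining inequalities: if $\xi_i\in\partial F_i(\mu)$ ($i=1,2$), then for every $\nu\in\fX$ one has $F_i(\nu)\ge F_i(\mu)+\langle\xi_i,\nu-\mu\rangle$, and summing gives $\xi_1+\xi_2\in\partial(F_1+F_2)(\mu)$. No continuity hypothesis is needed here, and Lemma~\ref{lem:equ} is not invoked.

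The \emph{hard inclusion} $\partial(F_1+F_2)(\mu)\subseteq\partial F_1(\mu)+\partial F_2(\mu)$ is the substantive part and requires the qualification condition. Fix $\xi\in\partial(F_1+F_2)(\mu)$ and set $G_1(\nu):=F_1(\nu)-\langle\xi,\nu\rangle$ and $G_2(\nu):=F_2(\nu)$, so that $G_1+G_2$ attains its minimum at $\mu$, equal to $F_1(\mu)-\langle\xi,\mu\rangle+F_2(\mu)=:c$. Consider in $\fX\times\R$ the two sets
\begin{align*}
C_1&:=\{(\nu,a):\ G_1(\nu)-G_1(\mu)<a\},\\
C_2&:=\{(\nu,a):\ a\le G_2(\mu)-G_2(\nu)\}.
\end{align*}
Both are convex (using convexity of $G_1$ and $G_2$, which in turn follows from $F_1,F_2\in\Gamma(\fX)$ via Lemma~\ref{lem:equ}), and they are disjoint: if $(\nu,a)$ lay in both then $G_1(\nu)+G_2(\nu)<G_1(\mu)+G_2(\mu)=c$, contradicting that $\mu$ is a minimizer. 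The continuity assumption on $F_1$ (or $F_2$) at the admissible point $\hmu$ gives that $G_1$ (respectively $G_2$) is continuous at $\hmu$, hence $C_1$ (respectively $C_2$) has nonempty interior in $\fX\times\R$.

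I would now apply the geometric Hahn--Banach theorem to produce a nonzero continuous linear functional $(\eta,\lambda)\in\fX^*\times\R$ and a constant $\alpha\in\R$ separating $C_1$ and $C_2$, i.e.\ $\langle\eta,\nu\rangle+\lambda a\ge\alpha\ge\langle\eta,\nu'\rangle+\lambda a'$ for $(\nu,a)\in C_1$, $(\nu',a')\in C_2$. Letting $a\to+\infty$ forces $\lambda\ge0$; the case $\lambda=0$ is ruled out by the continuity/interior hypothesis (one checks $\eta=0$ would follow, contradicting that the separating functional is nonzero). After normalizing $\lambda=1$, unfolding the definitions of $C_1$ and $C_2$ yields, for all $\nu,\nu'\in\fX$,
\begin{align*}
G_1(\nu)-G_1(\mu)+\langle\eta,\nu\rangle\ge\alpha\ge G_2(\nu')-G_2(\mu)+\langle\eta,\nu'\rangle
\end{align*}
(taking $a$ just above $G_1(\nu)-G_1(\mu)$ on the left and $a'=G_2(\mu)-G_2(\nu')$ on the right). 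Setting $\nu=\mu$ in the left inequality and $\nu'=\mu$ in the right gives $\alpha=\langle\eta,\mu\rangle$. Rearranging then reads, for every $\nu$,
\begin{align*}
G_1(\nu)\ge G_1(\mu)+\langle-\eta,\nu-\mu\rangle,\qquad G_2(\nu)\ge G_2(\mu)+\langle\eta,\nu-\mu\rangle,
\end{align*}
which is precisely $-\eta\in\partial G_1(\mu)=\partial F_1(\mu)-\xi$ and $\eta\in\partial G_2(\mu)=\partial F_2(\mu)$. Hence $\xi=(\xi-\eta)+\eta\in\partial F_1(\mu)+\partial F_2(\mu)$, as required.

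The main obstacle is the normalization step $\lambda\ne 0$: without the qualification condition (existence of $\hmu$ at which one of the $F_i$ is continuous and both are finite) the sets $C_1,C_2$ can be separated only by a vertical hyperplane, yielding no useful information. Handling this carefully—by using that continuity at $\hmu$ implies $C_1$ or $C_2$ has nonempty interior, so any separating hyperplane must be nonvertical—is the one place where all the hypotheses are consumed and is the heart of the proof.
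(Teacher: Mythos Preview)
The paper does not prove this lemma at all; it is merely quoted from \cite[Proposition 1.5.6]{EkeTem} as a known result, exactly as you anticipated. Your Hahn--Banach separation argument is the standard proof and is correct in structure and conclusion. One small typo: in the displayed chain of inequalities after normalizing $\lambda=1$, the right-hand side should read $\alpha\ge G_2(\mu)-G_2(\nu')+\langle\eta,\nu'\rangle$ (not $G_2(\nu')-G_2(\mu)$); with this sign your subsequent deduction $\eta\in\partial G_2(\mu)$ then follows correctly, as you in fact wrote.
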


\section{An Aubin-Lions lemma and some of its consequences}\label{sec:appendix_aubin-lions}

In \cite{RosSav} the authors presented the following version of the classical Aubin-Lions lemma (see \cite{Aub}):

\begin{theorem}{\cite[Theorem 2]{RosSav}}\label{thm:aubin_refined}
Let $B$ be a Banach space and $\cU$ be a family of measurable $B$-valued function. Let us suppose that there exist a normal coercive integrand $\fF:(0,T)\times B\to [0,+\infty]$, meaning that 
\begin{itemize}
\item[(1)] $\fF$ is $\sB(0,T)\otimes\sB(B)$-measurable, where $\sB(0,T)$ and $\sB(B)$ denote the $\s$-algebgras of the Lebesgue measurable subsets of $(0,T)$ and of the Borel subsets of $B$ respectively;
\item[(2)] the maps $v\mapsto \fF_t(v):=\fF(t,v)$ are l.s.c. for a.e. $t\in(0,T)$;
\item[(3)] $\{v\in B:\fF_t(v)\le c\}$ are compact for any $c\ge 0$ and for a.e. $t\in(0,T),$
\end{itemize}
and a l.s.c. map $g:B\times B\to [0,+\infty]$ with the property
$$\left[u,v\in D(\fF_t),\ g(u,v)=0\right]\Rightarrow u=w,\ \text{for }\ae\ t\in(0,T).$$
If $$\sup_{u\in\cU}\int_0^T\fF(t,u(t))\dd t<+\infty\ \ \text{and}\ \ \lim_{h\da 0}\sup_{u\in\cU}\int_0^{T-h}g(u(t+h),u(t))\dd t=0,$$
then $\cU$ is relatively compact in $\sM(0,T; B).$
\end{theorem}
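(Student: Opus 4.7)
The plan is to prove this compactness theorem via a variant of the classical Fr\'echet--Kolmogorov criterion adapted to $\sM(0,T; B)$, combining compactness in $B$ (from the coercivity of $\fF$) with equicontinuity in time (from the bound on $g$). Given a sequence $(u_n)_{n\in\N} \subset \cU$, the goal is to extract a subsequence converging in Lebesgue measure on $(0,T)$ to some limit $u : (0,T) \to B$.

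First I would exploit the integrability bound $\sup_n \int_0^T \fF(t, u_n(t)) \dd t \le C$ together with the compactness of sublevel sets of $\fF_t$. By Chebyshev's inequality, for any $M > 0$ one has
\begin{equation*}
\sL^1 \{t \in (0,T) : \fF_t(u_n(t)) > M\} \le C/M,
\end{equation*}
uniformly in $n$. Hence for every $\eta > 0$ there is $M = M(\eta)$ such that, on a subset of $(0,T)$ of measure at least $T - \eta$, every $u_n(t)$ lies in the compact set $K_{M,t} = \{v \in B : \fF_t(v) \le M\}$. Using separability and a diagonal extraction along a countable dense subset $D \subset (0,T)$, one obtains a subsequence (still denoted $u_n$) and a candidate function $u$ defined on $D$ such that $u_n(t) \to u(t)$ strongly in $B$ for every $t \in D$.

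Next, I would propagate the convergence to a.e.\ $t \in (0,T)$ using the time-translation hypothesis on $g$. The key idea: for any $t$ near some $s \in D$, the uniform smallness of $\int_0^{T-h} g(u_n(\cdot + h), u_n(\cdot)) \dd \tau$ as $h \to 0$ gives (along a further subsequence, by Fubini and a diagonal argument) that $g(u_n(t), u_n(s)) \to 0$ for a.e.\ $t$ close to $s$. Combined with compactness of sublevel sets of $\fF_t$ and the lower semicontinuity of $g$, any $B$-limit point $v$ of $(u_n(t))_n$ must satisfy $g(v, u(s)) = 0$, and hence $v = u(s)$ by the non-degeneracy hypothesis. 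Letting $s \to t$ along $D$, this defines $u(t)$ consistently and yields $u_n \to u$ in measure.

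The main obstacle will be making the diagonalization and limit identification rigorous given that $g$ is merely a lower semicontinuous nonnegative function, not a metric or even symmetric. The flexibility of working with an abstract $g$ is precisely what makes the theorem useful in applications (where $g$ may be a Wasserstein-type distance, a relative entropy, or some other pseudo-distance which does \emph{not} satisfy a triangle inequality), but one cannot appeal to any triangle inequality to compare $u_n(t)$ and $u_n(s)$ for $s,t$ close. Instead, one must carefully weave together (i) the pointwise compactness from $\fF_t$, which provides $B$-convergent subsequences, and (ii) the lower semicontinuity of $g$ on $B \times B$, which allows identification of limits, into a single selection argument. This interplay is the technical heart of the proof and motivates the abstract framework of \cite{RosSav}.
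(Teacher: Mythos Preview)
The paper does not prove this statement: it is quoted verbatim from \cite[Theorem~2]{RosSav} and placed in the appendix solely as a tool to prove Lemma~\ref{lem:al}. So there is no ``paper's own proof'' to compare against.

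As for your sketch itself, there is a genuine gap in the first step. You propose to extract, by diagonalization along a countable dense set $D\subset(0,T)$, a subsequence such that $u_n(t)\to u(t)$ in $B$ for every $t\in D$. But the Chebyshev bound $\sL^1\{t:\fF_t(u_n(t))>M\}\le C/M$ gives a set of ``good'' times whose complement has small measure \emph{depending on $n$}; for a fixed $t\in D$ there is no reason that $(u_n(t))_{n}$ stays inside any compact sublevel set of $\fF_t$, so the pointwise extraction need not succeed. The actual argument in \cite{RosSav} avoids pointwise-in-time extraction altogether: it works directly with the topology of convergence in measure on $(0,T)$, using a covering/tightness argument in which one first restricts to time sets where the sublevel constraint holds uniformly, and then exploits the integral smallness of $g$ under time shifts together with lower semicontinuity to identify limits in measure. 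Your second paragraph already hints at this difficulty (no triangle inequality for $g$, only integral control), but the fix requires abandoning the dense-set diagonal scheme rather than repairing it.
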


Many recent papers (including \cite{KimMes18,Lab}) on gradient flows in the Wasserstein space used the previous theorem to gain pre-compactness of interpolated curves. In our setting we use the following result.

\begin{lemma}
\label{lem:al}
Let $T>0$ and let $q\in[1,+\infty)$ and $n>0$ be such that $nq^*> 1$, where $q^*:= \frac{qd}{d-q}$ (with the convention $q^*\in(0,+\infty)$ is arbitrary if $q\ge d$, and therefore, $n>0$ and $nq^*>1$ can also be arbitrary). Suppose that $(\rho^\t)_{\t>0}$ is a sequence of curves on $[0,T]$ with values in $\sP(\Om)$ and suppose that there exists $C>0$ such that 
\begin{equation}\label{assumption-AL}
W_2^2(\rho^\t_t,\rho^\t_s)\le C|t-s+\t|,\ \forall\ 0\le s<t\le T
\end{equation}
and
$((\rho^\t)^{n})_{\t>0}$ is uniformly bounded in $L^{q}([0,T]; W^{1,q}(\Om))$ by $C$. We suppose moreover that there exists $\b\ge 1$ such that  $\|\rho^\t_t\|_{L^\beta(\Om)}\le C$ for a.e. $t\in [0,T]$.
\begin{itemize}
\item[(1)] Then, $(\rho^\t)_{\t>0}$ is pre-compact in $L^{\g}(Q)$, with $1\le\g\le\b$ if $\b<nq^*$ and $1\le \gamma<nq^*$, if $\b\ge nq^*$.
\item[(2)] If in addition, $(\rho^\t)_{\t>0}$ is uniformly bounded in $L^{\b_2}(Q)$ for some $\b_2>\g$ (where $\g$ is given in (1)), then $(\rho^\t)_{\t>0}$ is pre-compact in $L^{\g_2}(Q)$, for any $1\le\g_2<\b_2$. 
\end{itemize}
\end{lemma}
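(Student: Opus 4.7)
The plan is to invoke Theorem~\ref{thm:aubin_refined} with the Banach space $B=L^1(\Om)$, the normal coercive integrand
\[
\fF_t(v):= \|v^n\|_{W^{1,q}(\Om)}^q + \|v\|_{L^\beta(\Om)}^\beta + \chi_{\sP(\Om)}(v)
\]
(with $\chi_{\sP(\Om)}$ equal to $0$ on probability measures and $+\infty$ elsewhere), and with $g(u,v):= W_2(u,v)$ extended to $+\infty$ whenever $u$ or $v$ is not a probability measure. The sublevels of $\fF_t$ are compact in $L^1(\Om)$: a bound on $\|v^n\|_{W^{1,q}(\Om)}$ combined with Rellich--Kondrachov yields precompactness of $\{v^n\}$ in $L^s(\Om)$ for every $s<q^*$; since $nq^*>1$ we may choose such $s$ so that $ns>1$, giving precompactness of $\{v\}$ in $L^{ns}(\Om)\hookrightarrow L^1(\Om)$. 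The function $g$ is lower semicontinuous for $L^1$--convergence (which implies narrow convergence, and $W_2$ is narrowly l.s.c.) and vanishes only on the diagonal. The uniform bound on $\int_0^T\fF_t(\rho^\t(t))\,dt$ is then part of the hypotheses.

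The delicate point is the time-increment condition. By Cauchy--Schwarz and \eqref{assumption-AL},
\[
\int_0^{T-h}W_2(\rho^\t(t+h),\rho^\t(t))\,dt \le \sqrt{T}\,\Big(\!\int_0^{T-h}W_2^2(\rho^\t(t+h),\rho^\t(t))\,dt\Big)^{\!1/2} \le T\sqrt{C(h+\t)},
\]
which is \emph{not} uniform in $\t$ as $h\downarrow 0$. To circumvent this, I would use that pre-compactness is a statement about arbitrary sequences: given $\t_k\downarrow 0$, restrict attention to the countable family $\{\rho^{\t_k}\}_k$. For each $\eta>0$, choose $K$ with $\t_k<\eta^2$ for $k\ge K$, so that the bound above is at most $T\sqrt{C(h+\eta^2)}$ for such $k$. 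For the finitely many $k<K$, each $\rho^{\t_k}$ is a measurable curve into the Polish space $(\sP(\Om),W_2)$, hence $\int_0^{T-h}W_2(\rho^{\t_k}(t+h),\rho^{\t_k}(t))\,dt\to 0$ as $h\downarrow 0$ by translation continuity (Lusin plus uniform continuity on a compact set), and the maximum over the finite index set vanishes as well. Thus $\limsup_{h\downarrow 0}\sup_k\int_0^{T-h}W_2(\rho^{\t_k}(t+h),\rho^{\t_k}(t))\,dt\le T\sqrt{C}\,\eta$, and letting $\eta\to 0$ verifies the Rossi--Savar\'e hypothesis. The theorem then gives pre-compactness of $\{\rho^{\t_k}\}$ in $\sM(0,T;L^1(\Om))$; a further subsequence yields $\rho^{\t_k}(t)\to\rho(t)$ in $L^1(\Om)$ for a.e.\ $t$, and in particular $\rho^{\t_k}\to\rho$ almost everywhere on $Q$.

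The final step is to upgrade this a.e.\ convergence to $L^\gamma(Q)$ by Vitali. The Sobolev embedding $W^{1,q}(\Om)\hookrightarrow L^{q^*}(\Om)$ applied to $(\rho^\t)^n$ furnishes the uniform bound $\|\rho^\t\|_{L^{nq}(0,T;L^{nq^*}(\Om))}\le C$. Interpolating via H\"older between this and the hypothesis $\|\rho^\t\|_{L^\infty(0,T;L^\beta(\Om))}\le C$ produces a uniform bound in $L^{\gamma^\star}(Q)$ with $\gamma^\star:=nq+\beta q/d$; a short algebraic computation shows $\gamma^\star>\beta$ precisely when $\beta<nq^*$, and $\gamma^\star\ge nq^*$ when $\beta\ge nq^*$. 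Hence in both cases $\gamma^\star$ strictly majorises any $\gamma$ in the range claimed in part (1), and Vitali's convergence theorem applied to the a.e.-convergent subsequence delivers $L^\gamma(Q)$-convergence. Part (2) follows from the same Vitali step, this time using the additional uniform $L^{\beta_2}(Q)$ bound in place of $L^{\gamma^\star}(Q)$, which covers any $\gamma_2<\beta_2$.

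The main obstacle is the uniform verification of the time-increment condition in the Rossi--Savar\'e framework, which is dealt with by the two-scale splitting ($k<K$ versus $k\ge K$) of an arbitrary sequence $\t_k\downarrow 0$; all other steps (compactness of sublevels, lower semicontinuity of $g$, interpolation, Vitali) are standard once the choice of $\fF_t$ above is made.
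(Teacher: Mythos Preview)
Your argument is correct and follows the same overall route as the paper: apply the Rossi--Savar\'e version of Aubin--Lions (Theorem~\ref{thm:aubin_refined}) with a coercive functional built from $\|\rho^n\|_{W^{1,q}}$ and the probability constraint, use $g=W_2$, and then upgrade the resulting convergence to the desired $L^\gamma(Q)$ statement. The differences are in the details of the upgrade and, more importantly, in the treatment of the time-increment condition.

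For the upgrade, the paper chooses the ambient Banach space $B=L^{n\alpha}(\Om)$ with $n\alpha=\beta$ (when $\beta<nq^*$), so that Rossi--Savar\'e already delivers a.e.-in-$t$ convergence in $L^\beta(\Om)$; the pointwise-in-time bound $\|\rho^\t_t\|_{L^\beta}\le C$ then allows a direct dominated-convergence argument in $t$ to reach $L^\beta(Q)$. Your choice $B=L^1(\Om)$ gives only a.e.\ convergence on $Q$, which you then upgrade via the interpolation bound $\|\rho^\t\|_{L^{\gamma^\star}(Q)}\le C$ with $\gamma^\star=nq+\beta q/d$ and Vitali. Both work; the paper's version is a touch shorter because it avoids the interpolation step, while yours has the advantage that the same Vitali mechanism handles parts (1) and (2) uniformly.

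The one point where your proof is genuinely more careful than the paper's is the time-increment condition $\lim_{h\downarrow 0}\sup_{\t}\int_0^{T-h}W_2(\rho^\t(t+h),\rho^\t(t))\,dt=0$. The paper simply invokes \eqref{assumption-AL} without further comment, but as you observe, that estimate yields only a bound of order $\sqrt{h+\t}$, which is not uniform in $\t$. Your two-scale splitting (finitely many indices handled by Lusin/translation continuity, the tail by $\t_k<\eta^2$) is the standard way to close this gap and is correctly executed here.
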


\begin{proof}
Let us use the previously stated Aubin-Lions lemma, i.e. Theorem \ref{thm:aubin_refined}. 
Let $1\le \a<q^*$ be fixed (that we set up later) and let us set $B:=L^{n\a}(\Om),$ $\fF:L^{n\a}(\Om)\to[0,+\infty]$ defined as 
$$
\fF(\rho):=\left\{
\ba{ll}
\|\rho^{n}\|_{W^{1,q}(\Om)}, & \text{if } \rho^n\in W^{1,q}(\Om),\ \rho\in\sP(\Om),\\[5pt]
+\infty, & \text{otherwise}
\ea
\right.
$$
and $g:L^{n\a}(\Om)\times L^{n\a}(\Om)\to[0,+\infty]$ defined as 
$$
g(\mu,\nu):=\left\{
\ba{ll}
W_2(\mu,\nu), & \text{if } \mu,\nu \in \sP(\Om),\\[5pt]
+\infty, & \text{otherwise}.
\ea
\right.
$$
In this setting, $(\rho^\t)_{\t>0}$ and $\fF$ satisfy the assumptions of Theorem \ref{thm:aubin_refined}. Indeed, from the assumption, one has in particular that $\ds\int_0^T\|(\rho^{\t}_t)^{n}\|_{W^{1,q}(\Om)}^q\dd t\le C.$ The injection $W^{1,q}(\Om)\hookrightarrow L^{\a}(\Om)$ is compact for any $1\le \a< q^*$, the injection $i:s\mapsto s^{\frac{1}{n}}$ is continuous from $L^\a(\Om)$ to $L^{n\a}(\Om)$ and the sub-level sets of $\rho\mapsto\|\rho^{n}\|_{W^{1,q}(\Om)}$ are compact in $L^{n\a}(\Om)$. 

Moreover,  by the fact that $g$ defines a distance on  $D(\fF)$ and from \eqref{assumption-AL}, one has that $g$ also satisfies the assumptions from Theorem \ref{thm:aubin_refined}, hence the implication of the theorem holds and one has that $\left(\rho^{\t}\right)_{\t\ge 0}$ is pre-compact in $\sM(0,T;L^{n\a}(\Om)).$ Let us notice that \eqref{assumption-AL} implies that there exists $\rho\in C([0,T];\sP(\Om))$ such that up to passing to a subsequence $(\rho_\t)_{\t>0}$ converges uniformly (w.r.t. $W_2$) to $\rho$ as $\t>0$. Up to passing to another subsequence, $\rho$ is the limit also in $\sM(0,T;L^{n\a}(\Om)).$

From our assumption, we know that $\|\rho^\t_t\|_{L^\beta(\Om)}\le C$ for a.e. $t\in [0,T]$. Now, if $\b<nq^*$, then setting $\a$ such that $n\a=\beta$, Lebesgue's dominated convergence theorem implies the strong pre-compactness of $(\rho^\t)_{\t>0}$ in $L^\b(Q)$. Otherwise, Lebesque's dominated convergence implies the strong pre-compactness in $L^\g(Q)$ for any $1\le \g< nq^*.$ This concludes the proof of (1).

\medskip

To show (2), we notice that (1) already implies that $\rho^\t\to\rho$, strongly in $L^\g(Q)$ as $\t\da 0$ and in particular a.e. in $Q$. Furthermore, by the by the uniform bounds in $L^{\b_2}(\Om)$, with $\b_2>\g$, for any $1\le\g_2<\b_2$ we have that  
$$\int_{Q}(\rho^\t)^{\g_2}\dd x\dd t\le \left(T\sL^d(\Om)\right)^{1-\frac{\g_2}{\b_2}}\|\rho^\t\|_{L^{\b_2}}^{\g_2},$$
which implies that $(\rho^\t)^{\g_2}$ is uniformly integrable on $Q$. Therefore, Vitali's convergence theorem yields the claim. 
\end{proof}

\bibliographystyle{alpha}
\bibliography{alpar_2}{}

\end{document}